\newcommand\myurl[1]{\href{https://arxiv.org/abs/#1}{{\fontfamily{lmtt}\selectfont{arxiv: #1}}}}
\newcommand\mydoi[1]{\href{https://doi.org/#1}{{\fontfamily{lmtt}\selectfont{doi: #1}}}}
\theoremstyle{plain}
\newtheorem{thm}{Theorem}[section]
\newtheorem{lem}[thm]{Lemma}
\newtheorem{rmk}[thm]{Remark}
\newtheorem{prop}[thm]{Proposition}
\newtheorem{defn}[thm]{Definition}
\newtheorem{exmp}[thm]{Example}
\newtheorem{con}[thm]{Convention}
\newcommand{\sgn}{\textup{sgn}\,}
\newcommand{\im}{\textup{im}\,}
\newcommand{\id}{\textup{id}}
\newcommand{\Aut}{\textup{Aut}\,}
\newcommand{\rdim}{\textup{rdim}\,}
\newcommand{\pslr}{\textup{PSL}_{2}(\mathbb{R})}
\newcommand{\pslc}{\textup{PSL}_{2}(\mathbb{C})}
\newcommand{\ev}{ev^{\beta}_{j}}
\newcommand{\evz}{ev^{\beta}_{0}}
\newcommand{\evi}{evi^{\beta}_{j}}
\newcommand{\evb}{evb^{\beta}_{j}}
\newcommand{\evbz}{evb^{\beta}_{0}}
\newcommand{\M}{\mathcal{M}_{k+1,l}(\beta)}
\newcommand{\Mg}{\mathcal{M}_{k+1,l;m}(\beta)}
\newcommand{\Mgz}{\mathcal{M}_{k+1,l;0}(\beta)}
\newcommand{\Mgg}{\mathcal{M}_{k+1,l;0,m}(\beta)}
\newcommand{\Mh}{\mathcal{M}_{k+1,l;\perp}(\beta)}
\newcommand{\Ml}{\mathcal{M}_{k+1,l;+,m}(\beta)}
\newcommand{\Mr}{\mathcal{M}_{k+1,l;-,m}(\beta)}
\newcommand{\Ms}{\mathcal{M}_{k+1,l;\pm,m}(\beta)}
\newcommand{\Mx}{\mathcal{M}_{l+1}(\beta)}
\newcommand{\qop}{\mathfrak{q}_{k,l}^{\beta}}
\newcommand{\qg}{\mathfrak{q}_{k,l;m}^{\beta}}
\newcommand{\qgg}{\mathfrak{q}_{k,l;0,m}^{\beta}}
\newcommand{\qh}{\mathfrak{q}_{k,l;\perp}^{\beta}}
\newcommand{\qs}{\mathfrak{q}_{k,l;\pm,m}^{\beta}}
\newcommand{\qx}{\mathfrak{q}_{\emptyset,l}^{\beta}}
\newcommand{\co}{\mathcal{CO}}
\newcommand{\oc}{\mathcal{OC}}
\newcommand{\occ}{\overline{\mathcal{OC}}}
\newcommand{\dl}{\mathfrak{q}^{\gamma,b}_{1,0}}
\newcommand{\zl}{\mathfrak{q}^{\gamma,b}_{0,0}}
\newcommand{\lml}{\mathfrak{q}^{\gamma,b}_{2,0}}
\newcommand{\xmx}{\mathfrak{q}^{\gamma}_{\emptyset,2}}
\newcommand{\xmcx}{\overline{\mathfrak{q}}^{\gamma}_{\emptyset,2}}
\newcommand{\xml}{\mathfrak{q}^{\gamma,b}_{1,1;1}}
\newcommand{\xtl}{\mathfrak{q}^{\gamma,b}_{0,1}}
\newcommand{\ltcx}{\mathfrak{p}^{\gamma,b}_{1,0}}
\newcommand{\qhx}{QH^{*}(X)}
\newcommand{\qhcx}{\overline{QH}^{*}(X)}
\newcommand{\cqhx}{({QH}^{*}(X))^{\vee}}
\newcommand{\qhl}{HF^{*}(L)}
\newcommand{\pdl}{PD([L])}
\newcommand{\Cinf}{\widehat{\mathbb{C}}}
\newcommand{\Cp}{\mathbb{C}P}
\newcommand{\Rinf}{\widehat{\mathbb{R}}}
\newcommand{\D}{D_{\pm}}
\title{Algebraic structures in Lagrangian Floer cohomology modelled on differential forms}
\author{Peleg Bar-Lev}
\date{February 2024}
\begin{document}

\vspace*{1em}

\let\thefootnote\relax\footnotetext{\textit{Date:} April 2024.}\footnotetext{\textit{2020 Mathematics Subject Classification.} 53D37, 53D45 (Primary) 53D12, 14N35 (Secondary).}\footnotetext{\textit{Key words and phrases.} Lagrangian Floer cohomology, quantum cohomology, quantum module structure, closed-open map, open-closed map, moduli space, geodesic constraint, bounding cochain, bulk deformation.}

\begin{center}
{\Large\textbf{Algebraic Structures in Lagrangian Floer Cohomology Modelled on Differential Forms}}

\vspace{1em}

{\Large Peleg Bar-Lev}
\end{center}

\vspace{1em}

\textbf{Abstract.} We define a structure of an algebra on the Lagrangian Floer cohomology of a Lagrangian submanifold over the quantum cohomology of the ambient symplectic manifold. The structure is analogous to the one defined by Biran-Cornea, but is constructed in the differential forms model. In the spirit of Ganatra and Hugtenburg, we define another such algebra structure using a closed-open map. We show that the two structures coincide. As an application, we show that the module structure for the 2-dimensional Clifford torus is given by multiplication by a Novikov coefficient, similarly to the Biran-Cornea module structure for this case.

\tableofcontents

\section{Introduction}

\subsection{Main results}

Let $(X^{2n},\omega)$ be a closed symplectic manifold and $L\subset X$ a closed, connected, Lagrangian submanifold with a relative spin structure $\mathfrak{s}$ as in \cite{LIFT}*{Definition 8.1.2}. In particular, $L$ is oriented. Let $J$ be an $\omega$-tame almost complex structure on $X$. In \cite{LQH}, Biran-Cornea construct a quantum homology $QH_{*}(L)$ modelled on the pearl complex, assuming monotonicity. They show that it is an algebra over $QH_{*}(X)$, and canonically isomorphic as a ring to the Floer homology $HF_{*}(L)$ modelled on periodic orbits of a Hamiltonian system.
\par
In the present thesis we consider the Floer cohomology $\qhl$ modelled on differential forms, defined using an $A_{\infty}$ algebra as in the construction of Fukaya-Oh-Ohta-Ono in \citelist{\cite{tor}\cite{torbd}\cite{cyc}}. It can be thought of as a Lagrangian analogue to the quantum cohomology $QH^{*}(X)$. We use the setup of Solomon-Tukachinsky as in \cite{Ainf} and therefore work under their regularity assumptions (see Section \ref{reg}).
\par
As a module, $\qhx$ is isomorphic to the de Rham cohomology $H^{*}(X)$ with coefficients in a Novikov ring, and as a ring it is equipped with a quantum product defined by deforming the usual wedge product using pseudoholomorphic curves. The chains of $\qhl$ are also differential forms, but now both the coboundary and the product are deformed. Precise definitions are given in Section \ref{cohomology}.
\par
Although the pearl model is not known to be equivalent to the differential forms model, it is widely believed that this is the case (when working over the same coefficient ring). It is therefore natural to expect that $\qhl$ has a structure of an algbera over $\qhx$, analogous to the algbera structure in \cite{LQH}. We show that this is indeed the case.
\par
In Proposition \ref{algmap}, we define a map
\begin{equation*}
    \circledast:\qhx\otimes\qhl\rightarrow\qhl,
\end{equation*}
by using pseudoholomorphic disks with a geodesic constraint as in \cite{RQH}*{Section 3}, in an analogous manner to the module operation in \cite{LQH}*{Section 3.2}.
\begin{thm}\label{algebra}
The map $\circledast$ makes $\qhl$ a unital graded algebra over $\qhx$.
\end{thm}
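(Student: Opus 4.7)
The plan is to verify the algebra axioms at the chain level and show they descend to cohomology. Since $\circledast$ is defined via moduli spaces $\Mg$ of pseudoholomorphic disks with a geodesic constraint of length $m$ (as in Proposition \ref{algmap}), the standard strategy in this setup is to apply Stokes' theorem on appropriate moduli spaces and read off the algebraic identities from the boundary decomposition. The chain map property and the algebra relations should all arise this way, and the grading assertion is a routine dimension count once the relevant moduli spaces have been shown to have the expected virtual dimensions.

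First I would verify unitality: one must check that $1_{\qhx}\circledast x = x$ on cohomology, where $1_{\qhx}$ is the unit of the quantum cohomology. Pulling back the unit via the interior evaluation gives a trivial constraint, and the moduli space $\Mg$ with this insertion should degenerate, up to correction terms that are exact, to a moduli space computing the identity on $\qhl$; this reduces to standard arguments about forgetful maps and the unit property of the $A_\infty$ operations $\mathfrak{m}_k$, combined with the fact that the geodesic constraint becomes vacuous once the bulk insertion is collapsed.

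The main obstacle is establishing the compatibility $(a *_X b)\circledast x = a\circledast(b\circledast x)$ of $\circledast$ with the quantum product $*_X$ on $\qhx$. The natural approach is to consider a 1-parameter family of moduli spaces that interpolates between the two composed configurations: either a moduli space where two interior marked points are connected by a geodesic whose length varies, or a space where the geodesic constraint is combined with a sphere bubble carrying the product. As the interpolation parameter degenerates, the codimension-one boundary of the parameterized moduli space decomposes into strata corresponding to: (i) the geodesic shrinking to zero length and producing an interior sphere bubble realizing $a *_X b$, matched against the geodesic constraint defining $\circledast$, thus giving $(a*_X b)\circledast x$; (ii) the geodesic breaking with a disk bubble in between, giving the iterated operation $a\circledast(b\circledast x)$; and (iii) disk bubbles along the boundary and sphere bubbles away from the geodesic, which contribute terms involving $\mathfrak{m}_k$ applied to $\circledast$-outputs together with the differential $d_b$ of $\qhl$. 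Applying Stokes to a product of pulled-back forms shows that the sum of these contributions is exact in $\qhl$, so on cohomology the bubble terms vanish and the two remaining terms give the desired equality.

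Finally, the grading statement follows from the virtual dimension formula for $\Mg$: tracking the shifts from the interior and boundary evaluations together with the codimension of the geodesic constraint (as in \cite{RQH}*{Section 3}) shows that $\circledast$ has degree zero, so the isomorphism class induced on cohomology respects the $\mathbb{Z}$-grading (or $\mathbb{Z}/N$-grading in the non-orientable-profile cases) inherited from the Novikov ring and the de Rham degrees. Combined with the first two steps, this yields the full unital graded algebra structure claimed.
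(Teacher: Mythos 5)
Your strategy for the module axiom $(y_1*y_2)\circledast a=y_1\circledast(y_2\circledast a)$ is essentially the paper's: the interpolating space you describe is the moduli space $\Mgg$ of disks with $z_0,w_1,w_2,z_m$ constrained to lie on a common geodesic, and its codimension-one boundary decomposes exactly as you predict --- the two interior marked points colliding and bubbling off to a sphere (yielding the quantum product term), the disk breaking along the geodesic (yielding the iterated action), and the remaining disk degenerations, which combine into exact terms. The unit statement via the forgetful map and the degree count also match the paper (propositions \ref{gfunclass} and \ref{gdegree}).

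There is, however, a genuine gap: the theorem asserts an \emph{algebra} structure over $\qhx$, and your proposal only verifies the unital graded \emph{module} axioms. Missing is the compatibility of $\circledast$ with the product $\circ$ on $\qhl$, namely
\begin{align*}
y\circledast(a_1\circ a_2)=(y\circledast a_1)\circ a_2=(-1)^{|y||a_1|}\,a_1\circ(y\circledast a_2),
\end{align*}
which is what upgrades the module to an algebra. This identity does not follow from your interpolation argument on $\Mgg$; it requires a separate Stokes argument on the three-point geodesic spaces $\Mg$ with two boundary inputs and one interior input, where the relevant boundary strata are disk bubbles carrying either both boundary inputs or one boundary input together with the geodesic-constrained configuration (this is Proposition \ref{alg}, obtained from Proposition \ref{gstruc} with $k=2$ and $m=1,2$). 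A second, more minor omission: at the chain level every one of these identities holds only modulo terms with $\mathfrak{q}_{0,0}^{\gamma,b}$ inserted at boundary points, and one must invoke the bounding-pair condition $\mathfrak{q}_{0,0}^{\gamma,b}=c\cdot 1$ together with the unit properties of the (geodesic) operators to discard them; your proposal never mentions the bounding cochain, without which the operations are not even chain maps.
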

To prove Theorem \ref{algebra}, we construct the moduli space of pseudoholomorphic disks with a geodesic through $4$ marked points. By analysing its boundary, we obtain structure equations as in \citelist{\cite{Ainf}\cite{RQH}}. An analogous moduli space was used in \cite{LQH}*{Section 3.2}.
\par
Furthermore, we define closed-open and open-closed maps, and demonstrate their algebraic properties. In Proposition \ref{cochain}, we define the closed-open map
\begin{equation*}
    \co:\qhx\rightarrow\qhl,
\end{equation*}
Additionally, in Definition \ref{ocdef} we define an open-closed map
\begin{equation*}
    \oc:\qhl\rightarrow\qhx.
\end{equation*}
\par
On the chain level, the closed-open map is defined as a deformation of the pullback along the inclusion $L\hookrightarrow X$ (that is, the restriction map of differential forms from $X$ to $L$), by using pseudoholomorphic disks. To define the open-closed map, we construct the quantum cohomology of currents $\qhcx$, which is a module over $\qhx$, and is isomorphic as a module to $\qhx$. The map $\oc$ is then constructed from the map $\co$ by using the duality between currents and forms, and by applying the isomorphism mentioned above. The use of currents is required to push differential forms forward along maps which are not necessarily submersions.
\par
These maps extend to Hugtenburg's closed-open and open-closed maps which, instead of $\qhl$, involve Hochschild cohomology (see \cite{OC}*{sections 4.8.1 and 4.8.2}). In pseudo-cycle model, analogous maps are Sheridan's $\co^{0}$ and $\oc^{0}$ (see \cite{pseudocyc}*{sections 2.5 and 2.6}), but an equivalence of this model to the model of differential forms is not known in the literature.
\par
Denoting by
\begin{align*}
    \circ:\qhl\otimes\qhl\rightarrow\qhl
\end{align*}
and by
\begin{align*}
    *:\qhx\otimes\qhx\rightarrow\qhx
\end{align*}
the products which are defined in Section \ref{cohomology}, and by $\pdl\in H^{n}(X)$ the Poincaré dual to $[L]\in H_{n}(X)$, we prove the following theorem.
\begin{thm}\label{maps}
The map $\co$ is an algebra homomorphism, and the map $\oc$ is a degree $n$ module homomorphism. Furthermore, for $y\in\qhx$, $a\in\qhl$ we have
\begin{align*}
    \co(y)\circ a&=(-1)^{|y||a|}a\circ\co(y)
\end{align*}
and
\begin{align*}  
    y*\oc(a)&=\oc(\co(y)\circ a).
\end{align*}
We also have
\begin{align*}
    \oc(1)=\pdl.
\end{align*}
\end{thm}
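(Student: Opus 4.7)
The plan is to deduce each identity from a structure equation obtained by analysing the codimension-one boundary of an auxiliary moduli space of pseudoholomorphic disks, in the same spirit as the proofs of Theorem \ref{algebra} and Proposition \ref{cochain}. Certain boundary strata will produce the desired operations, while the remaining strata either cancel in pairs or are absorbed into $A_{\infty}$ differentials and vanish upon passing to $\qhl$ and $\qhx$.

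For the centrality relation $\co(y)\circ a=(-1)^{|y||a|}a\circ\co(y)$, I would introduce a one-parameter moduli space of disks with one interior marked point carrying $y$ and two boundary marked points carrying $a$ and the output, where the cyclic position of the interior point relative to the boundary input varies. The two ends of this parameter space correspond to the interior point colliding with the boundary on either side of the input, producing bubbled configurations that give $\co(y)\circ a$ and $a\circ\co(y)$, with the Koszul sign arising from the cyclic reordering of marked points. For the algebra homomorphism identity $\co(y*y')=\co(y)\circ\co(y')$, I would use disks with two interior marked points and one boundary output and analyse the two families of codimension-one degenerations: sphere bubbles containing both interior points, yielding $\co(y*y')$, and nodal disks separating the two interior points, yielding $\co(y)\circ\co(y')$. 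The unit axiom $\co(1)=1$ follows directly from the definition together with the constant-disk contribution.

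The open-closed identities follow by dualising. Since $\oc$ is constructed from $\co$ via a duality pairing between currents and forms and via the isomorphism $\qhcx\cong\qhx$, the defining relation essentially takes the form $\langle y,\oc(a)\rangle_{X}=\langle\co(y),a\rangle_{L}$ for suitable pairings. The $\qhx$-module homomorphism property of $\oc$ then follows formally from $\co$ being an algebra homomorphism, while the Frobenius identity $y*\oc(a)=\oc(\co(y)\circ a)$ is the adjoint of the centrality of $\co$. For $\oc(1)=\pdl$, the $\beta=0$ contribution to $\oc(1)$ is precisely the pushforward of $1\in\Omega^{0}(L)$ along $L\hookrightarrow X$, which is the current of integration over $L$, namely $\pdl$; the positive-energy contributions vanish by an $S^{1}$-symmetry or unit-forgetting argument on the relevant moduli of disks with a trivial boundary constraint.

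The main obstacle is expected to be the careful bookkeeping of signs and of the currents-versus-forms duality in the proof of the Frobenius identity. In particular, ensuring that the $A_{\infty}$-error terms cancel after passing to cohomology, and that the dualisation between $\co$ and $\oc$ respects the intrinsic $\qhx$-module structure on $\qhcx$ constructed earlier, requires tracking orientations on the moduli spaces with geodesic constraints. Once this is pinned down, the geometric inputs reduce to the same moduli spaces already used for Theorem \ref{algebra} and Proposition \ref{cochain}.
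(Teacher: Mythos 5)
Your overall architecture matches the paper's: the identities involving only $\co$ come from boundary analyses of moduli spaces, and the identities for $\oc$ then follow formally from the characterization $\langle y,\oc(a)\rangle_{X}=\langle\co(y),a\rangle_{L}$ together with the Frobenius property of the Poincar\'e pairings on $L$ and $X$ and the ring-homomorphism property of $\co$; your treatment of $\oc(1)=\pdl$ via the energy-zero contribution is also the paper's argument (Proposition \ref{topint}). The centrality relation is obtained essentially as you describe, from the ordinary $A_{\infty}$ structure equation for $\mathcal{M}_{2,1}(\beta)$ (Proposition \ref{struc} with $k=l=1$), whose two disk-bubbling strata carrying the interior point on either side of the boundary input give the two sides of the identity.

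The genuine gap is in your proof that $\co$ intertwines the products. The unconstrained moduli space $\mathcal{M}_{1,2}(\beta)$ of disks with two interior marked points does not exhibit the degenerations you invoke in codimension one: the locus where both interior points escape to a sphere bubble (which would produce $\co(y*y')$) has real codimension two, and the codimension-one disk-bubbling strata each push at most the $A_{\infty}$-type terms into the bubble, never producing $\co(y)\circ\co(y')$. To make both desired configurations appear as codimension-one boundary one must first cut the moduli space down by a one-real-dimensional interior constraint; the paper uses Hugtenburg's horocyclic moduli space $\Mh$, whose structure equation (Proposition \ref{hstruc}, applied with $k=0$, $l=2$) contains exactly the sphere-bubble term and the two-disk-bubble term as the contributions from the two ends of the constraint interval. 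A secondary omission: ``algebra homomorphism'' here also means $\co(y_{2}*y_{1})=y_{2}\circledast\co(y_{1})$, i.e., compatibility with the module structure $\circledast$ of Theorem \ref{algebra}; the paper derives this from yet another constrained moduli space, $\Mgz$ (the geodesic through $z_{0},w_{2},w_{1}$; Proposition \ref{gzstruc} with $k=0$, $l=2$), and your plan does not account for this half of the statement. Your attribution of the identity $y*\oc(a)=\oc(\co(y)\circ a)$ to the ``adjoint of centrality'' is also slightly off --- the paper uses the Frobenius property of $\langle\cdot,\cdot\rangle_{L}$ with respect to $\circ$ together with the ring-homomorphism property, not centrality --- but this is a formal step either way.
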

\par
In the spirit of Ganatra \cite{sympH}*{Section 5.5} and Hugtenburg \cite{OC}*{Section 4.8.2}, since $\co$ is an algebra homomorphism, we can define on $\qhl$ an additional algebra structure over $\qhx$, given by
\begin{align*}
    \qhx\otimes\qhl&\rightarrow\qhl\\
    (y,a)&\mapsto\co(y)\circ a.
\end{align*}
We show that this algebra structure coincides with the one given by $\circledast$ in the following sense.
\begin{thm}\label{compare}
For $y\in\qhx$, $a\in\qhl$ we have
\begin{align*}
    y\circledast a=\co(y)\circ a.
\end{align*}
\end{thm}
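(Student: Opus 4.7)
The strategy is to exhibit $y\circledast a$ and $\co(y)\circ a$ as the two $t$-boundary contributions of a single one-parameter family of moduli spaces, so that the desired equality drops out of a Stokes-type structure equation modulo the Floer coboundary $\mathfrak{m}_{1}^{\gamma,b}$.

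Concretely, the first step is to define an interpolating moduli space $\mathcal{N}_{k+1,l}(\beta,t)$ for $t\in[0,1]$, built out of $J$-holomorphic disks with $k+1$ boundary marked points, $l$ interior marked points and one extra distinguished interior marked point linked to a distinguished boundary marked point by a geodesic segment of rescaled length $t$. At $t=1$ one recovers precisely the moduli space that defined $\circledast$ in Proposition \ref{algmap} (the geodesic appearing in the constraint in the $\circledast$-definition, following \cite{RQH}*{Section 3}), while at $t=0$ the geodesic collapses and the interior input is forced to sit on the boundary circle at the distinguished boundary marked point. The second step is to analyze the $t=0$ stratum and identify it, as a fiber product of evaluation maps, with the moduli space that computes $\co(y)\circ a$: the interior input producing $\co(y)$ on one disk component meeting, through a shared boundary marked point, another disk component carrying the boundary input $a$, glued by the Floer product $\mathfrak{m}_{2}^{\gamma,b}$. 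This matches the definition of $\co$ in Proposition \ref{cochain} and of the product $\circ$ from Section \ref{cohomology}.

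The third step is to push forward the appropriate wedge of pullbacks of $y$, of $a$ and of the bulk/bounding chain insertions along the evaluation $ev_{0}^{\beta}$ on the total space $\bigcup_{t}\mathcal{N}_{k+1,l}(\beta,t)$, and to apply Stokes together with the regularity framework of \cite{Ainf}. The codimension-one strata come in three families: the two endpoints $t=0,1$, which give $\co(y)\circ a$ and $y\circledast a$ respectively; the disk-bubbling and node-forming strata along the boundary circle, which produce terms of the form $\mathfrak{m}_{1}^{\gamma,b}(\text{something})$ and $\mathfrak{m}_{k}^{\gamma,b}$-compositions that match on the two sides by the same argument that proves associativity and unitality in Theorem \ref{algebra}; and the sphere-bubbling and interior-node strata, which are dealt with exactly as in the proofs of Theorems \ref{algebra} and \ref{maps}. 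Summing these structure equations and reducing modulo $\im \mathfrak{m}_{1}^{\gamma,b}$ gives the identity on $\qhl$.

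The main obstacle I expect is the rigorous setup of the $t=0$ degeneration: one has to show that the collapsing geodesic limit is a transverse fiber product whose pushforward literally equals $\co(y)\circ a$, including signs, and that no spurious boundary strata appear from the interior marked point colliding with other interior marked points or with boundary marked points other than the distinguished one. Once the moduli space is constructed with its full corner structure and orientation in the style of \cite{Ainf} and \cite{RQH}, the remaining bubbling and sign bookkeeping is parallel to the proofs of Theorems \ref{algebra} and \ref{maps}.
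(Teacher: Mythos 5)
Your plan is essentially the paper's proof: Theorem \ref{compare} is established (Proposition \ref{compare2}) by constructing a one-parameter interpolating moduli space whose codimension-one boundary contains both the geodesic locus defining $\circledast$ and the disk-bubbled configuration computing $\co(y)\circ a$, and then reading off the resulting structure equation modulo $\im\mathfrak{q}^{\gamma,b}_{1,0}$. The paper's concrete realization of your family is the space $\Mr$ of disks with the interior marked point constrained to one side of the geodesic through $z_{0},z_{m}$ (Section \ref{sspace}), whose boundary strata are exactly the two configurations you describe together with the exact and unit terms; applying Proposition \ref{rstruc} with $k=m=l=1$ yields the identity.
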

Here $|\cdot|$ denotes the degree of a homogeneous element of a graded module.
\par
To prove Theorem \ref{compare}, we construct the moduli space of pseudoholomorphic disks with an interior marked point constrained to one side of a geodesic, and obtain structure equations by analysing its boundary. To the author's best knowledge, such moduli spaces have not previously appeared in the literature.
\par
As an application of Theorem \ref{compare}, we show in Section \ref{exm} that the module structure for the Clifford torus $\mathbb{T}^{2}_{\textup{clif}}\subset\Cp^{2}$ is given by multiplication by a Novikov coefficient. This can be compared to the computation of the Biran-Cornea module structure for this case, which is also given by multiplication by a Novikov coefficient (see \cite{LQH}*{Theorem 2.3.2}).
\par
A priori, the constructions in this work depend on the choice of the symplectic form $\omega$ and the almost complex structure $J$. We conjecture that invariance can be shown in the presence of a pseudo-isotopy that satisfies the additional regularity assumptions of \cite{Ainf}*{Section 4}. However, we do expect dependence on the relative spin structure $\mathfrak{s}$. We use the standard complex structures on the Riemann sphere $\Cinf$ and on the upper half-plane $\mathcal{H}$, and the induced structure on genus zero nodal domains $\Sigma$. Note that these complex structures are unique up to biholomorphism.

\subsection{Regularity assumptions}\label{reg}

Our regularity assumptions can be summarized as follows.
\begin{enumerate}
    \item The moduli spaces $\M$ and $\Mx$ defined in Section \ref{op} are smooth orbifolds with corners.
    \item The evaluation maps denoted by $\evbz$ and $\evz$ in Section \ref{op} are proper submersions.
    \item The evaluation maps denoted by $\evbz$ in Section \ref{gop} are proper submersions.
    \item The evaluation maps denoted by $\evbz$ in Section \ref{horop} are proper submersions.
    \item The evaluation maps denoted by $\evbz$ in Section \ref{space} are proper submersions.
\end{enumerate}
Assumptions 1 and 2 are the assumptions of Solomon-Tukachinsky in \cite{Ainf}, and assumption 3 is their assumption in \cite{RQH}. Assumption 4 is the assumption of Hugtenburg in \cite{OC}. Note that, even if these assumptions hold for some $\omega$-tame almost complex structure $J$, we do not know if they hold for generic such $J$.
\par
As mentioned in \cite{RQH}*{Section 1.3.12}, assumptions 1, 2 and 3 hold for homogeneous spaces. Similarly, \cite{OC}*{Lemma 4.19} shows that assumption 4 also hold in this case. Applying the same argument as in \cite{RQH}*{Section 1.3.12}, while replacing the moduli space $\Mg$ as defined in section \ref{gop} with the moduli space $\Mgg$ defined in section \ref{space}, shows that assumption 5 also holds for homogeneous spaces.
\par
In particular, our assumptions hold for $(X,L)=(\mathbb{C}P^{n}, \mathbb{R}P^{n})$ with the standard structures $J=J_{0}$ and $\omega=\omega_{FS}$ (note that in this case, for $L$ to be orientable we require $n$ odd). More generally, Grassmannians, flag varieties and products thereof satisfy our assumptions.
\par
Using virtual fundamental class techniques, our results should extend to general target manifolds. The only exception is Lemma \ref{BPlem}, which is used to verify the existence of bounding pairs in Proposition \ref{BPexist}. As Remark \ref{BPrmk} states, the existence of bounding pairs could also be verified if assuming monotonicity.

\subsection{Acknowledgements}
I would like to express my gratitude to my advisor, Sara Tukachinsky, for her dedicated support and guidance. I would also like to thank Roi Blumberg, Itai Bar-Deroma, and Kai Hugtenburg, for interesting and helpful conversations. The author was partially supported by ISF grant 2793/21.

\section{Background}\label{background}

\subsection{Fiber products and transversality}\label{fibtrans}

We use the definitions of manifolds with corners, their boundary, and their fiber products, given in \cite{corner}. We call a map $f:M\rightarrow N$ of manifolds with corners smooth if all its partial derivatives, including one-sided derivatives, exist and are continuous at every point. This notion of a smooth map agrees with the notion of a weakly smooth map from \cite{corner}. When $N$ is boundaryless this implies a stronger notion of smoothness (smoothness in the sense of \cite{corner}), which allows the definition of fiber products. We call such a smooth map a submersion if it a submersion as defined in \cite{corner}*{Definition 3.2}. In particular, projections to the components of a direct product are surjective submersions.
\par
We also use the definitions of transversality and orientation given in \cite{corner}, including the conventions for orienting boundaries and fiber products. In particular, for transversality see \cite{corner}*{Definition 6.1}. To orient fiber products, let $Y_{1},Y_{2}$ be oriented manifolds with corners, and let $Z$ be an oriented manifold without corners. Let $f_{1}:Y_{1}\rightarrow Z$, $f_{2}:Y_{2}\rightarrow Z$ be transversal smooth maps, and let $W=Y_{1}\times_{Z}Y_{2}$ be the fiber product taken with respect to these maps. Denote by $p_{i}:W\rightarrow Y_{i}$ the projections. Fixing $w=(y_{1},y_{2})\in W$ and writing $z=f_{1}(y_{1})=f_{2}(y_{2})$, we have a short exact sequence given by
\begin{center}
\begin{tikzcd}
    0\rightarrow T_{w}W\arrow{rr}{d_{w}p_{1}\oplus d_{w}p_{2}}&&T_{y_{1}}Y_{1}\oplus T_{y_{2}}Y_{2}\arrow{rr}{d_{y_{1}}f_{1}-d_{y_{2}}f_{2}}&&T_{z}Z\rightarrow0.
\end{tikzcd}
\end{center}
Choosing a splitting gives an isomorphism
\begin{align*}
    \tau:T_{y_{1}}Y_{1}\oplus T_{y_{2}}Y_{2}\rightarrow T_{w}W\oplus T_{z}Z.
\end{align*}
We require the sign of this isomorphism to be $\sgn(\tau)=(-1)^{\dim Y_{2}\cdot\dim Z}$.
\par
Recall the following properties of the orientation of fiber products.
\begin{prop}[\cite{corner}*{Proposition 7.5}]\label{fiber}
Let $Y_{1},Y_{2},Y_{3}$ be oriented manifolds with corners, $Z_{1},Z_{2}$ oriented manifolds without corners, and $f_{1}:Y_{1}\rightarrow Z_{1}$, $f_{2}:Y_{2}\rightarrow Z_{1}$, $g_{2}:Y_{2}\rightarrow Z_{2}$, $g_{3}:Y_{3}\rightarrow Z_{2}$ smooth maps. Assume that $f_{1}\pitchfork f_{2}$ and $g_{2}\pitchfork g_{3}$. Then we have
\begin{align*}
    Y_{1}\times_{Z_{1}}Y_{2}&=(-1)^{(\dim Y_{1}+\dim Z_{1})(\dim Y_{2}+\dim Z_{1})}Y_{2}\times_{Z_{1}}Y_{1},\\(Y_{1}\times_{Z_{1}}Y_{2})\times_{Z_{2}}Y_{3}&=Y_{1}\times_{Z_{1}}(Y_{2}\times_{Z_{2}}Y_{3}),\\Y_{1}\times_{Z_{1}\times Z_{2}}(Y_{2}\times Y_{3})&=(-1)^{\dim Z_{2}(\dim Y_{2}+\dim Z_{1})}Y_{1}\times_{Z_{1}}(Y_{2}\times_{Z_{2}}Y_{3}),
\end{align*}
where the fiber products are taken with respect to the given maps.
\end{prop}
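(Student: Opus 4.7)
The plan is to verify each of the three identities pointwise as oriented vector space statements, by writing down the defining short exact sequences for both sides and comparing the orientations induced by the splitting convention $\sgn(\tau)=(-1)^{\dim Y_{2}\cdot\dim Z}$. Since the fiber products in each identity coincide set-theoretically (and tangent-space-wise), the entire content is in sign bookkeeping, and it suffices to fix an arbitrary point $w$ of each fiber product and compute at its tangent space.

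For the commutativity identity, I would set up the short exact sequences for $W=Y_{1}\times_{Z}Y_{2}$ and $W'=Y_{2}\times_{Z}Y_{1}$. The middle term changes from $T_{y_{1}}Y_{1}\oplus T_{y_{2}}Y_{2}$ to $T_{y_{2}}Y_{2}\oplus T_{y_{1}}Y_{1}$, contributing a sign $(-1)^{\dim Y_{1}\dim Y_{2}}$. The surjection changes from $df_{1}-df_{2}$ to $df_{2}-df_{1}$, which after passing through a splitting flips the orientation on $T_{z}Z$ and contributes $(-1)^{\dim Z}$. Finally, the convention itself reads $(-1)^{\dim Y_{2}\dim Z}$ versus $(-1)^{\dim Y_{1}\dim Z}$. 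Multiplying these three signs and using that $\dim W=\dim Y_{1}+\dim Y_{2}-\dim Z$ lets me repackage the result as $(-1)^{(\dim Y_{1}+\dim Z)(\dim Y_{2}+\dim Z)}$.

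For associativity, I would observe that both $(Y_{1}\times_{Z_{1}}Y_{2})\times_{Z_{2}}Y_{3}$ and $Y_{1}\times_{Z_{1}}(Y_{2}\times_{Z_{2}}Y_{3})$ canonically identify with the triple fiber product, and compare the two orientations via a $3\times 3$ diagram whose rows and columns are short exact sequences. Choosing compatible splittings of the outer sequences that restrict to splittings of the inner ones shows that both orientation conventions assemble into the same isomorphism $\bigoplus_{i}T_{y_{i}}Y_{i}\to T_{w}W\oplus T_{z_{1}}Z_{1}\oplus T_{z_{2}}Z_{2}$, with total sign $(-1)^{\dim Y_{2}\dim Z_{1}+\dim Y_{3}\dim Z_{2}}$ either way. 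For the distributivity identity, I would unpack the left-hand side by writing the single short exact sequence over $Z_{1}\times Z_{2}$ and noting that the middle term $T_{y_{1}}Y_{1}\oplus T(Y_{2}\times Y_{3})=TY_{1}\oplus TY_{2}\oplus TY_{3}$ is fibered over $TZ_{1}\oplus TZ_{2}$. To recover the iterated form on the right, I must permute $TY_{3}$ past $TZ_{1}$ in the target to pair $TY_{i}\to TZ_{i}$ correctly, which costs $(-1)^{\dim Y_{3}\dim Z_{1}}$; comparing the two splitting conventions $(-1)^{\dim(Y_{2}\times Y_{3})\dim(Z_{1}\times Z_{2})}$ versus $(-1)^{\dim Y_{2}\dim Z_{1}+\dim Y_{3}\dim Z_{2}}$ (from the iterated calculation above) and collecting contributes the remaining terms, yielding exactly $(-1)^{\dim Z_{2}(\dim Y_{2}+\dim Z_{1})}$.

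The main obstacle throughout is purely combinatorial sign tracking: the sign convention is bilinear in the dimensions of the $Y_{i}$ and $Z_{j}$, and any slip in ordering summands or in choosing splittings compatibly across the diagrams will produce a wrong exponent. To minimize risk I would organize the computation by always fixing a canonical ordering of the tangent spaces on the source and target sides of $\tau$, and insert all transpositions explicitly with their associated signs before invoking the convention. The identities themselves then follow by matching exponents modulo $2$ using $\dim W=\dim Y_{1}+\dim Y_{2}-\dim Z$ and its analogues.
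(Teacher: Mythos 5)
This proposition is not proved in the paper at all: it is quoted, with attribution, from \cite{corner}*{Proposition 7.5} and used as a black box, so there is no in-paper argument to compare yours against. That said, your strategy is the standard (and correct) one for establishing these identities from the stated convention: work pointwise, write the defining short exact sequences for both sides, and track how the normalization $\sgn(\tau)=(-1)^{\dim Y_{2}\cdot\dim Z}$ transforms under reordering of summands, negation of the surjection, and regrouping. Your treatment of the commutativity identity is correct: the three contributions $(-1)^{\dim Y_{1}\dim Y_{2}}$, $(-1)^{\dim Z}$, and $(-1)^{(\dim Y_{1}+\dim Y_{2})\dim Z}$ do combine to $(-1)^{(\dim Y_{1}+\dim Z)(\dim Y_{2}+\dim Z)}$ since $\dim Z^{2}\equiv\dim Z\pmod 2$ (the relation $\dim W=\dim Y_{1}+\dim Y_{2}-\dim Z$ is not actually needed there).

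The gap is in the intermediate sign you assert for associativity and then reuse for the third identity. Assembling $(Y_{1}\times_{Z_{1}}Y_{2})\times_{Z_{2}}Y_{3}$ into an isomorphism $TY_{1}\oplus TY_{2}\oplus TY_{3}\rightarrow TW\oplus TZ_{1}\oplus TZ_{2}$ forces you to move $TZ_{1}$ past $TY_{3}$ and to reorder $TZ_{2},TZ_{1}$, and the total sign comes out to $(-1)^{\dim Y_{2}\dim Z_{1}+\dim Y_{3}\dim Z_{2}+\dim Y_{3}\dim Z_{1}+\dim Z_{1}\dim Z_{2}}$, not $(-1)^{\dim Y_{2}\dim Z_{1}+\dim Y_{3}\dim Z_{2}}$; the right-association produces the same value, so associativity still follows (only agreement of the two assemblies matters there), but the absolute value matters for the third identity, where you compare it against $(-1)^{\dim(Y_{2}\times Y_{3})\dim(Z_{1}\times Z_{2})}$. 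Running your stated numbers through that comparison, together with your extra transposition factor $(-1)^{\dim Y_{3}\dim Z_{1}}$, yields $(-1)^{\dim Y_{2}\dim Z_{2}}$ rather than the required $(-1)^{\dim Z_{2}(\dim Y_{2}+\dim Z_{1})}$ — a leftover discrepancy of $(-1)^{\dim Z_{1}\dim Z_{2}}$ that "collecting the remaining terms" does not visibly absorb. (This happens to be invisible in the paper's applications, where $Z_{1}=Z_{2}=\Cinf$ is even-dimensional, but the proposition is stated in general.) With the corrected assembly sign the third identity does come out as stated, so the architecture of your argument is sound; the bookkeeping in the second and third parts needs to be redone explicitly before the proof closes.
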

\begin{prop}[\cite{corner}*{Proposition 7.4}]\label{fiberbdry}
Let $Y_{1},Y_{2}$ be oriented manifolds with corners, $Z$ an oriented manifold without corners, and $f_{1}:Y_{1}\rightarrow Z$, $f_{2}:Y_{2}\rightarrow Z$ smooth maps. Assume that $f_{1}\pitchfork f_{2}$. Then we have
\begin{align*}
    \partial(Y_{1}\times_{Z}Y_{2})&=\partial Y_{1}\times_{Z}Y_{2}+(-1)^{\dim Y_{1}+\dim Z}Y_{1}\times_{Z}\partial Y_{2},
\end{align*}
where the fiber products are taken with respect to the given maps (or to their restriction to the boundary).
\end{prop}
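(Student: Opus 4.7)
The plan is to establish the equality first as unoriented manifolds with corners, and then verify that the orientations match the convention $\sgn(\tau)=(-1)^{\dim Y_{2}\cdot\dim Z}$ fixed earlier in the section. A natural local model makes both the underlying smooth structure and the sign computation routine, once the conventions are kept straight.

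For the unoriented identification, since $Z$ has no corners, transversality of $f_{1}$ and $f_{2}$ implies that the codimension-$k$ corner stratum of $Y_{1}\times_{Z}Y_{2}$ consists of pairs $(y_{1},y_{2})$ with $y_{i}$ in a corner of $Y_{i}$ of codimension $k_{i}$, where $k_{1}+k_{2}=k$. In particular, the codimension-$1$ stratum decomposes, away from codimension-$2$ corners, as $(\partial Y_{1}\times_{Z}Y_{2})\sqcup(Y_{1}\times_{Z}\partial Y_{2})$. The restrictions $f_{1}|_{\partial Y_{1}}$ and $f_{2}|_{\partial Y_{2}}$ remain transversal to $f_{2}$ and $f_{1}$, respectively, so each summand is itself a smooth manifold with corners.

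For orientations, I would pick a point in each boundary component and compare the outward-normal-first convention with the fiber-product convention. First, fix $w=(y_{1},y_{2})\in\partial Y_{1}\times_{Z}Y_{2}$ with outward normal $\nu\in T_{y_{1}}Y_{1}$ to $\partial Y_{1}$. The image of $(\nu,0)$ under $\tau^{-1}$ is an outward normal $\tilde{\nu}\in T_{w}(Y_{1}\times_{Z}Y_{2})$ to $\partial(Y_{1}\times_{Z}Y_{2})$, and contraction with $\tilde{\nu}$ commutes with the splitting defining $\tau$. Since $\dim Y_{2}$ is unchanged, the prefactor $(-1)^{\dim Y_{2}\cdot\dim Z}$ is the same on both sides, so no extra sign appears and the first term occurs with coefficient $+1$. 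Second, fix $w\in Y_{1}\times_{Z}\partial Y_{2}$ with outward normal $\nu\in T_{y_{2}}Y_{2}$. Now the outward normal to $\partial(Y_{1}\times_{Z}Y_{2})$ is the $\tau^{-1}$-preimage of $(0,\nu)$. Extracting $\nu$ from the second slot rather than the first requires commuting it past a basis of $T_{y_{1}}Y_{1}$, which contributes $(-1)^{\dim Y_{1}}$. In addition, the prefactor in $\sgn\tau$ shifts from $(-1)^{\dim Y_{2}\cdot\dim Z}$ to $(-1)^{(\dim Y_{2}-1)\dim Z}$, contributing $(-1)^{\dim Z}$. The two effects combine to give $(-1)^{\dim Y_{1}+\dim Z}$, matching the claimed sign.

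The main obstacle is precisely this second sign computation: the outward-normal-first rule, the splitting-dependent isomorphism $\tau$, and the $(-1)^{\dim Y_{2}\cdot\dim Z}$ prefactor all interact, and the exponent of the prefactor changes when $Y_{2}$ is replaced by $\partial Y_{2}$. Once this bookkeeping is verified, gluing the two local descriptions along codimension-$2$ corners (which do not affect the boundary orientation) yields the proposition.
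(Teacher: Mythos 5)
Your proposal is correct, but note that the paper does not prove this statement at all: it is quoted verbatim from \cite{corner}*{Proposition 7.4} as part of the imported toolkit, so there is no internal proof to compare against. Your argument is the natural direct verification. The unoriented decomposition of the codimension-one stratum into $(\partial Y_{1}\times_{Z}Y_{2})\sqcup(Y_{1}\times_{Z}\partial Y_{2})$ is exactly what transversality over a boundaryless $Z$ gives, and your sign bookkeeping is right: writing the fiber-product orientation via an adapted basis, the first component picks up no sign because the outward normal stays in the first slot and the prefactor $(-1)^{\dim Y_{2}\cdot\dim Z}$ is unchanged, while the second component picks up $(-1)^{\dim Y_{1}}$ from commuting the outward normal of $\partial Y_{2}$ past $T_{y_{1}}Y_{1}$ and $(-1)^{\dim Z}$ from the prefactor changing to $(-1)^{(\dim Y_{2}-1)\dim Z}$; the product is the claimed $(-1)^{\dim Y_{1}+\dim Z}$. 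Two small points of hygiene: you write ``the image of $(\nu,0)$ under $\tau^{-1}$,'' but $(\nu,0)$ lies in $T_{y_{1}}Y_{1}\oplus T_{y_{2}}Y_{2}$, which is the \emph{domain} of $\tau$, so you mean its image under $\tau$ (and strictly only its $T_{w}W$-component, modulo tangential terms, is the outward normal); and you should state explicitly that you are using the outward-normal-first convention for orienting boundaries, since the computation silently depends on it --- it is indeed the convention of \cite{corner}, and it is the one that makes the quoted formula come out with these signs.
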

We will also need the following lemmas on transversality, which are easy to verify.
\begin{lem}\label{transcomp}
Let $f_{1}:Y_{1}\rightarrow Z$ and $f_{2}:Y_{2}\rightarrow Z$, and let $g:W\rightarrow Y_{2}$ be a surjective submersion. Then $f_{1}\pitchfork f_{2}$ if and only if $f_{1}\pitchfork f_{2}\circ g$.
\end{lem}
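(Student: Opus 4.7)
The plan is to verify both directions of the equivalence by checking the pointwise linear-algebraic condition, using that $dg_{w}$ is surjective onto $T_{g(w)}Y_{2}$ (since $g$ is a submersion) and that every $y_{2}\in Y_{2}$ is in the image of $g$ (since $g$ is surjective). Recall that transversality of two maps $f_{1},f_{2}$ (valued in a boundaryless $Z$) amounts to the requirement that at every pair $(y_{1},y_{2})$ with $f_{1}(y_{1})=f_{2}(y_{2})=z$, the images $df_{1}(T_{y_{1}}Y_{1})$ and $df_{2}(T_{y_{2}}Y_{2})$ span $T_{z}Z$, together with the analogous condition along each boundary and corner stratum.

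For the forward direction, assume $f_{1}\pitchfork f_{2}$ and fix $(y_{1},w)\in Y_{1}\times W$ with $f_{1}(y_{1})=(f_{2}\circ g)(w)=z$. Setting $y_{2}=g(w)$, transversality gives $df_{1}(T_{y_{1}}Y_{1})+df_{2}(T_{y_{2}}Y_{2})=T_{z}Z$. Since $g$ is a submersion, $dg_{w}(T_{w}W)=T_{y_{2}}Y_{2}$, hence $d(f_{2}\circ g)_{w}(T_{w}W)=df_{2}(T_{y_{2}}Y_{2})$, which yields the required spanning condition for $f_{1}$ and $f_{2}\circ g$. For the backward direction, assume $f_{1}\pitchfork f_{2}\circ g$ and fix $(y_{1},y_{2})\in Y_{1}\times Y_{2}$ with $f_{1}(y_{1})=f_{2}(y_{2})=z$. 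Surjectivity of $g$ provides some $w\in W$ with $g(w)=y_{2}$; transversality at $(y_{1},w)$ then gives $df_{1}(T_{y_{1}}Y_{1})+d(f_{2}\circ g)_{w}(T_{w}W)=T_{z}Z$. Since $d(f_{2}\circ g)_{w}(T_{w}W)\subseteq df_{2}(T_{y_{2}}Y_{2})$, we conclude $df_{1}(T_{y_{1}}Y_{1})+df_{2}(T_{y_{2}}Y_{2})=T_{z}Z$ as desired.

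The only real subtlety is that the definition of transversality from \cite{corner} imposes the spanning condition along each corner stratum. The key point is that a submersion in the sense of \cite{corner}*{Definition 3.2} restricts to a submersion between corresponding strata and is compatible with the corner structure of $W$ and $Y_{2}$, so the pointwise argument above applies verbatim on every stratum. This is the step I would be most careful to write out, since it is the only place where the Joyce corner conventions enter; the rest is pure linear algebra.
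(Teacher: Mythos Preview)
Your proof is correct and is precisely the standard verification the paper has in mind: the paper does not spell out a proof, merely stating that the lemma is ``easy to verify,'' and your pointwise linear-algebraic argument using surjectivity of $dg_{w}$ and of $g$ is exactly that verification. Your care about the stratum-wise condition from \cite{corner}*{Definition 6.1} is appropriate and is the only point requiring the specific Joyce conventions; otherwise nothing beyond the chain rule is used.
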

\begin{lem}\label{transprod}
Let $f_{1,i}:Y_{1,i}\rightarrow Z_{i}$ and $f_{2,i}:Y_{2,i}\rightarrow Z_{i}$ be transversal for $i=1,2$, and write $F_{j}=f_{j,1}\times f_{j,2}:Y_{j,1}\times Y_{j,2}\rightarrow Z_{1}\times Z_{2}$ for $j=1,2$. Then $F_{1}\pitchfork F_{2}$.
\end{lem}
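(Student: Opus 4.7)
The plan is to verify the transversality condition pointwise, exploiting the fact that the product structure decomposes everything into the two factors. Fix points $(y_{1,1},y_{1,2})\in Y_{1,1}\times Y_{1,2}$ and $(y_{2,1},y_{2,2})\in Y_{2,1}\times Y_{2,2}$ with $F_{1}(y_{1,1},y_{1,2})=F_{2}(y_{2,1},y_{2,2})=(z_{1},z_{2})$; so in particular $f_{1,i}(y_{1,i})=f_{2,i}(y_{2,i})=z_{i}$ for $i=1,2$. The goal is to show the image of $d_{(y_{1,1},y_{1,2})}F_{1}$ together with the image of $d_{(y_{2,1},y_{2,2})}F_{2}$ spans $T_{(z_{1},z_{2})}(Z_{1}\times Z_{2})$, and to verify the analogous condition at corners as formulated in \cite{corner}*{Definition 6.1}.

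The first step is to observe that under the canonical identification $T(Y_{j,1}\times Y_{j,2})=TY_{j,1}\oplus TY_{j,2}$ and $T(Z_{1}\times Z_{2})=TZ_{1}\oplus TZ_{2}$, the differential of the product map factors as $d_{(y_{j,1},y_{j,2})}F_{j}=d_{y_{j,1}}f_{j,1}\oplus d_{y_{j,2}}f_{j,2}$. Consequently, for each $i$, projecting onto $T_{z_{i}}Z_{i}$ sends $\im d F_{1}+\im d F_{2}$ onto $\im d f_{1,i}+\im d f_{2,i}$, which equals $T_{z_{i}}Z_{i}$ by the assumed transversality of $f_{1,i}$ and $f_{2,i}$. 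Since both projections are surjective and the summands respect the direct sum decomposition, $\im d F_{1}+\im d F_{2}$ equals all of $T_{(z_{1},z_{2})}(Z_{1}\times Z_{2})$.

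The only possible obstacle — and the reason the lemma needs explicit mention — is checking that the corner-refined portion of Joyce's transversality definition is preserved under taking products. Concretely, one must verify that when $(y_{j,1},y_{j,2})$ lies in a stratum of codimension $c_{j,1}+c_{j,2}$ of the product, the required spanning of the inward-pointing cones/strata in $T_{z_{i}}Z_{i}$ again decomposes factorwise, so that the conditions for $f_{1,i}\pitchfork f_{2,i}$ at each $y_{j,i}$ directly yield the product condition. Since $Z_{1}$ and $Z_{2}$ have no corners in our setting (they appear only as targets), this reduces to the tangent-space calculation of the previous paragraph, and no further subtlety arises.
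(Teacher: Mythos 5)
Your argument is correct: the key point that $\im\,dF_{j}$ itself splits as $\im\,df_{j,1}\oplus\im\,df_{j,2}$ (so the sum of images is $(\im\,df_{1,1}+\im\,df_{2,1})\oplus(\im\,df_{1,2}+\im\,df_{2,2})$, not merely a subspace surjecting onto each factor) is exactly what makes the linear algebra work, and the corner condition reduces factorwise since strata of a product are products of strata and the $Z_{i}$ are boundaryless. The paper states this lemma without proof ("easy to verify"), and your verification is the intended one.
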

\par
We use the definition of orbifolds with corners given in \cite{orb}. In particular, the category of orbifolds with corners is obtained as a localization by refinements of the category of étale proper groupoids with corners. Recall that the object set $Y_{0}$ and the morphism set $Y_{1}$ of an étale proper groupoid $Y$ are manifolds with corners, and the associated topological space $|Y|$ is the quotient of objects by morphisms. 
\par
A morphism of étale proper groupoids with corners is given by a functor $F:Y\rightarrow Z$ where the underlying maps $F_{0}:Y_{0}\rightarrow Z_{0}$ and $F_{1}:Y_{1}\rightarrow Z_{1}$ of manifolds with corners are smooth. Such a morphism induces a continuous map $|F|:|Y|\rightarrow|Z|$. A morphism $f:Y\rightarrow Z$ of orbifolds with corners is given by a pair of morphisms of étale proper groupoids with corners $F:Y'\rightarrow Y$ and $R:Y'\rightarrow Z$, with $R$ a refinement. We abbreviate $f=F|R$.
\par
Let $f=F|R:Y\rightarrow Z$ be a morphism of orbifolds with corners. We say that $f$ is a submersion if $F_{0}$ is a submersion, and that it is proper if $|F|$ is proper. In particular, projections to the components of a direct product are submersions. In addition, we say that $f$ is transversal to another morphism $g=G|S:Y\rightarrow W$ of orbifolds with corners if $F_{0}\pitchfork G_{0}$.
\par
Thus checking transversality for morphisms of orbifolds with corners reduces to checking transversality of the underlying maps on objects, which are maps of manifolds with corners. Henceforth, when checking transversality, we will only discuss these underlying maps. Furthermore, it follows that Lemma \ref{transcomp} also holds when $Y_{1},Y_{2}$ are orbifolds with corners, and Lemma \ref{transprod} also holds when the $Y_{1,i},Y_{2,i}$ are orbifolds with corners.
\par
We use the definitions of orientation, boundaries, and fiber products of orbifolds with corners given in \cite{orb}, including the conventions for orienting boundaries and fiber products. For manifolds with corners, these agree with the conventions of \cite{corner}. We have the following.
\begin{prop}\label{orbprop}
Proposition \ref{fiber} also holds when $Y_{1},Y_{2},Y_{3}$ are orbifolds with corners. Proposition \ref{fiberbdry} also holds when $Y_{1},Y_{2}$ are orbifolds with corners.
\end{prop}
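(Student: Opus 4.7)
The plan is to reduce both assertions to the corresponding statements for manifolds with corners, using the fact stated in the excerpt that the orbifold-with-corners category is the localization by refinements of the category of étale proper groupoids with corners, and that orientations, boundaries, and fiber products of orbifolds with corners are defined via the corresponding operations on the underlying object and morphism manifolds with corners. In particular, after passing to a common refinement, a transversal pair of morphisms $f_i = F_i | R_i : Y_i \to Z$ can be represented so that the fiber product orbifold $Y_1 \times_Z Y_2$ has object set $(Y_1)_0 \times_{Z_0} (Y_2)_0$ and morphism set $(Y_1)_1 \times_{Z_1} (Y_2)_1$, with orientation convention inherited from the manifold-with-corners fiber products as defined in Section \ref{fibtrans}.

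For the first assertion, each of the three identities in Proposition \ref{fiber} is an equality of oriented orbifolds with corners. I would represent both sides by étale proper groupoids whose object and morphism manifolds with corners are the appropriate iterated fiber products. Applying Proposition \ref{fiber} to the object manifolds with corners and, independently, to the morphism manifolds with corners, yields canonical diffeomorphisms compatible with source, target, identity, inverse, and composition, and therefore determines an isomorphism of groupoids. Since the sign discrepancies in Proposition \ref{fiber} depend only on dimensions of $Y_i$ and $Z_j$, which coincide with the dimensions of the underlying object manifolds with corners, the same signs appear in the orbifold version. Functoriality in refinements guarantees that the isomorphism descends to the localized category, giving the claimed equalities of oriented orbifolds with corners.

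For the second assertion, recall that $\partial Y$ for an orbifold with corners $Y$ is represented by the groupoid whose object and morphism sets are $\partial Y_0$ and $\partial Y_1$, with the induced structure and orientation from \cite{corner}. Applying Proposition \ref{fiberbdry} to the underlying object manifolds with corners then gives a canonical oriented diffeomorphism $\partial((Y_1)_0 \times_{Z_0} (Y_2)_0) \cong \partial(Y_1)_0 \times_{Z_0} (Y_2)_0 \sqcup (-1)^{\dim Y_1 + \dim Z} (Y_1)_0 \times_{Z_0} \partial(Y_2)_0$, with the analogous identity on morphisms. Together these assemble into an isomorphism of oriented étale proper groupoids with corners which descends to the orbifold level.

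The main bookkeeping obstacle is verifying that the sign and orientation conventions for boundaries and fiber products of orbifolds with corners as set up in \cite{orb} really do agree with the conventions on manifolds with corners from \cite{corner} used in Propositions \ref{fiber} and \ref{fiberbdry}; this is asserted in the excerpt. Once that compatibility is invoked, the proof is a straightforward translation of the manifold-with-corners identities to the object and morphism levels of the representing groupoids, followed by descent along refinements.
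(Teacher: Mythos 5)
Your proposal is correct and takes essentially the same approach as the paper: the paper's proof simply invokes the construction of fiber products of orbifolds with corners from \cite{orb}, noting that when $Z$ is a manifold the object and morphism sets of $Y_{1}\times_{Z}Y_{2}$ are the manifold-with-corners fiber products $(Y_{1})_{0}\times_{Z}(Y_{2})_{0}$ and $(Y_{1})_{1}\times_{Z}(Y_{2})_{1}$, so Propositions \ref{fiber} and \ref{fiberbdry} apply at the object and morphism levels. Your additional remarks on descent along refinements and on the agreement of orientation conventions are exactly the compatibility the paper implicitly relies on.
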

\begin{proof}
The result follows directly from the construction in \cite{orb}*{Section 5.1}. This is because whenever $Y_{1},Y_{2}$ are orbifolds with corners and $Z$ is a manifold, \cite{orb}*{Example 3.1} shows that in their notation we have
\begin{align*}
    (Y_{1}\times_{Z}Y_{2})_{0}=(Y_{1})_{0}\times_{Z}(Y_{2})_{0},\\
    (Y_{1}\times_{Z}Y_{2})_{1}=(Y_{1})_{1}\times_{Z}(Y_{2})_{1}.
\end{align*}
\end{proof}

\subsection{Differential forms and currents}\label{orientint}

For an orbifold with corners $M$, let $A^{*}(M)$ be the differential graded algebra of smooth differential forms on $M$ with coefficients in $\mathbb{R}$. Let $M,N$ be orbifolds with corners, and let $f:M\rightarrow N$. Let $\rdim f=\dim M-\dim N$. Following \cite{orb}*{Section 4.1}, we have the pullback
\begin{align*}
    f^{*}:A^{k}(N)\rightarrow A^{k}(M).
\end{align*}
In the case $f$ is a proper submersion we also have the pushforward
\begin{align*}
    f_{*}:A^{k}(M)\rightarrow A^{k-\rdim f}(N),
\end{align*}
given by integration over the fiber.
\par
Let $M,N,P$ be oriented orbifolds with corners, $f:M\rightarrow N$, $g:P\rightarrow N$, $\xi\in A^{*}(M)$, and $\tau\in A^{*}(N)$. Assume that $f$ is a proper submersion. The following properties of the pushforward are given in \cite{orb}*{Theorem 1}.
\begin{prop}\label{subcomp}
If $g$ is a proper submersion then
\begin{align*}
    f_{*}\circ g_{*}=(f\circ g)_{*}.
\end{align*}
\end{prop}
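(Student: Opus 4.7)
The plan is to reduce the statement to the case of manifolds with corners, and then to a local Fubini computation. By the definition of pushforward for orbifolds with corners given in \cite{orb}, if we present $f = F|R$ and $g = G|S$ in étale proper groupoid charts, then $f_*$ and $g_*$ are computed on object spaces as fiber integrals along the smooth, proper submersions $F_0, G_0$ of manifolds with corners, with a compatibility for morphisms ensuring that the result descends to an operation on $A^*$ of the orbifold. Consequently it suffices to prove the analogous identity $F_{0,*} \circ G_{0,*} = (F_0 \circ G_0)_*$ for proper submersions of oriented manifolds with corners. Note also that $\rdim(f \circ g) = \rdim f + \rdim g$, so the two sides of the claimed identity land in the same degree.

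For the manifolds-with-corners computation, I would work locally in submersion normal form. Near any point of $P$ we choose coordinates in which $g$ looks like a projection $\mathbb{R}^{a+b}_{\ge 0} \to \mathbb{R}^{a}_{\ge 0}$ (up to the appropriate corner structure), and near the image we choose coordinates in which $f$ looks like a projection $\mathbb{R}^{a+c}_{\ge 0} \to \mathbb{R}^{c}_{\ge 0}$. Splitting a local form on $P$ into a sum of monomials $\phi(x,y,z)\,dx^I \wedge dy^J \wedge dz^K$, only the top-degree fiber monomials contribute on each side, and Fubini's theorem for iterated ordinary integration yields
\begin{equation*}
\int_{g^{-1}(m)} \int_{f^{-1}(n)} = \int_{(f\circ g)^{-1}(n)}
\end{equation*}
on these coefficient functions, pointwise in the base. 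Linearity and a partition-of-unity argument then globalize the identity.

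The main obstacles are bookkeeping rather than conceptual: first, one must verify that the orientation conventions used in the definitions of $f_*, g_*, (f\circ g)_*$ (transported from the fiber-product conventions of \cite{corner} via Proposition \ref{orbprop}) are consistent, so that no hidden sign appears when composing two fiber integrations along oriented fibers; second, one must handle the corners in the local normal form of a proper submersion and confirm that integration over the fiber is well-defined at boundary/corner strata in such a way that Fubini still applies. Both points are already addressed in the setup of \cite{orb}*{Section 4.1 and Theorem 1}, so the argument essentially amounts to invoking that reference once the reduction to the manifolds-with-corners case has been made.
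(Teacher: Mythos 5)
The paper offers no proof of this proposition at all: it is stated as a direct citation of \cite{orb}*{Theorem 1}, and your proposal correctly ends by deferring the substantive points (orientation conventions and behaviour at corners) to that same reference. Your sketch of the reduction to object spaces of groupoid charts and the local Fubini argument is a reasonable outline of the standard proof of functoriality of fiber integration, so there is nothing to object to; it simply supplies detail the paper chose to outsource.
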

\begin{prop}\label{int}
We have
\begin{align*}
    f_{*}(f^{*}\tau\wedge\xi)=\tau\wedge f_{*}\xi.
\end{align*}
\end{prop}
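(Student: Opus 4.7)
The plan is to reduce the identity to a pointwise local computation in the base $N$, using the definition of integration over the fiber.

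First, I would reduce the statement to a claim about the underlying maps of étale proper groupoids. Writing $f = F|R$ with $F : M' \to N$ a groupoid morphism and $R$ a refinement, the orbifold pushforward $f_{*}$ is induced by the manifold-level pushforward $(F_{0})_{*}$ on the object manifolds (applied to forms invariant under the morphism groupoid), and pullback behaves similarly. Wedge product is also compatible with these presentations, so it is enough to prove the identity for the underlying proper submersion $F_{0} : M'_{0} \to N_{0}$ of oriented manifolds with corners.

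Second, the identity is local in $N_{0}$. Fix $q \in N_{0}$, and work in a chart around $q$. Since $F_{0}$ is a proper submersion, the fiber $F_{0}^{-1}(q)$ is compact, so by the submersion normal form together with a partition-of-unity argument along this fiber, after shrinking the chart one can cover a neighborhood of $F_{0}^{-1}(q)$ by charts on $M'_{0}$ in which $F_{0}$ is the projection onto the base coordinates. In such product coordinates $F_{0}^{*}\tau$ involves only base differentials; the fiber integral extracts the top-degree fiber part of its integrand; and the wedge with $F_{0}^{*}\tau$ commutes past this integral by linearity of integration in the fiber variables. This yields $(F_{0})_{*}(F_{0}^{*}\tau \wedge \xi) = \tau \wedge (F_{0})_{*}\xi$ locally, and hence globally on $N_{0}$.

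The main obstacle I would expect is rigorously arranging the simultaneous fiber-adapted coordinates in the category of manifolds with corners, where both $M'_{0}$ and $N_{0}$ may carry boundary strata and the fiber integral must be interpreted in a way compatible with the boundary conventions of \cite{corner}. The orientation and sign conventions on fiber products and on the pushforward, fixed in \cite{corner} and \cite{orb}, are precisely those needed for the projection formula to hold without additional signs, so once the local trivialization is set up correctly no further sign subtleties arise.
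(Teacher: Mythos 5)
Your proposal is correct in outline, but note that the paper does not actually prove this proposition: it is quoted verbatim from \cite{orb}*{Theorem 1}, so there is no in-paper argument to compare against. What you have written is essentially the standard proof that the cited reference supplies — reduce to the underlying maps of object manifolds in the groupoid presentation, localize in the base, put the proper submersion into product form near a compact fiber, and observe that wedging with a pulled-back form commutes with the fiber integral because the pullback contributes only base differentials. The one point worth being careful about, which you flag correctly, is that the absence of extra signs depends on the specific left-module convention for the projection formula (the pulled-back factor sits to the left of $\xi$, and the fiber coordinates are integrated from the right); with the orientation conventions of \cite{corner} and \cite{orb} this is exactly the statement that holds. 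Within the logic of the present paper, however, the intended "proof" is simply the citation, and your reconstruction, while sound, is doing work the paper delegates to its reference.
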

\begin{prop}\label{proj}
Let $M\times_{N}P$ be the fiber product taken with respect to $f$ and to $g$, and denote by
\begin{align*}
    p&:M\times_{N}P\rightarrow M,\\
    q&:M\times_{N}P\rightarrow P
\end{align*}
the projections. Then
\begin{align*}
    q_{*}p^{*}\xi=g^{*}f_{*}\xi.
\end{align*}
\end{prop}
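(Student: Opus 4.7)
The plan is to reduce to a local computation on manifolds with corners and then lift to orbifolds via the étale proper groupoid presentation, which is the strategy of \cite{orb}*{Theorem 1}.

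First I would pass from orbifolds to manifolds. By Proposition \ref{orbprop} (via \cite{orb}*{Example 3.1}), the object manifold of $M\times_{N}P$ is $M_{0}\times_{N_{0}}P_{0}$, with $p$ and $q$ lifting to the genuine projections on object sets. Differential forms on orbifolds correspond to invariant forms on object manifolds, and both pullback and pushforward commute with these identifications; it therefore suffices to establish the formula for manifolds with corners.

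Second, in the manifold setting I would argue locally. Since $f$ is a proper submersion and $N$ is corner-free, after choosing local trivialisations I can identify $M$ with $U\times F$ over $U\subset N$ so that $f$ becomes the first projection, where $F$ is the (possibly cornered) fiber. In this chart $f_{*}\xi$ is fiberwise integration of $\xi$ along $F$. The fiber product $M\times_{N}P$ is then locally $g^{-1}(U)\times F$ with $q$ the first projection, while $p^{*}\xi$ pulls $\xi$ back through $g$ on the first factor and the identity on $F$. The identity $q_{*}p^{*}\xi = g^{*}f_{*}\xi$ reduces, pointwise on $P$, to the tautology that integrating $\xi(g(\cdot),-)$ over $F$ equals $(f_{*}\xi)\circ g$, and a partition of unity on $P$ assembles these local equalities into the global statement.

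The hard part will be tracking orientations. The convention from Section \ref{fibtrans} orients $M\times_{N}P$ with a twist by $(-1)^{\dim P\cdot\dim N}$ depending on the order of the factors, and fiberwise integration carries its own sign convention; I would need to check that these combine to produce exactly the sign on the right-hand side, with no extra sign. I expect this to be compatible with Propositions \ref{subcomp} and \ref{int} and to follow from careful bookkeeping of the splitting of $TM$ into horizontal and vertical parts along $f$ and its pullback along $p$, but it is the one place where a slip of signs is easy.
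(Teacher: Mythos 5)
The paper does not prove this proposition: it is quoted directly from \cite{orb}*{Theorem 1}, so there is no internal argument to compare against. Your sketch is the standard proof of the push--pull (base-change) formula and its outline is sound: the reduction to manifolds with corners via object manifolds of a groupoid presentation is exactly how Proposition \ref{orbprop} treats the other fiber-product statements, and once $f$ is locally trivialized as a projection $U\times F\rightarrow U$ (using that $f$ is a proper submersion and $N$ is corner-free), the identity becomes the pointwise tautology you describe. (A partition of unity on $P$ is not actually needed: both sides are forms on $P$ and equality of forms is local; local triviality of $f$ over $U$ already puts the entire fiber $f^{-1}(g(x))$ inside one chart.) Two loose ends remain. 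First, for $q_{*}$ to be defined at all you must observe that properness and submersivity are stable under base change, so that $q$ is itself a proper submersion; this is implicit in your chart description but should be stated. Second, and more substantively, the orientation verification you defer is precisely where the content of \cite{orb}*{Theorem 1} lies: the fiber-product orientation of Section \ref{fibtrans} carries the twist $\sgn(\tau)=(-1)^{\dim Y_{2}\cdot\dim Z}$, and one must check that the resulting orientation on the fibers of $q$ matches, under $p$, the fiber orientation used to define $f_{*}$, so that the formula holds with no sign. As a proposal this is acceptable, but the proof is not complete until that computation is carried out.
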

\begin{prop}[Stokes' theorem]\label{Stokes}
Assume $\partial N=\emptyset$. Then we have
\begin{align*}
    d(f_{*}\xi)=f_{*}(d\xi)+(-1)^{\dim M+|\xi|}(f|_{\partial M})_{*}\xi.
\end{align*}
\end{prop}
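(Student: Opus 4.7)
The plan is to reduce the assertion to the classical Stokes' theorem for manifolds with corners, working through the groupoid presentations of the orbifolds involved. Since both sides of the claimed identity are local in $N$ and compatible with refinements of the groupoid presentation, I would first invoke the construction in \cite{orb}*{Section 5.1} to replace $f$ by a smooth map of manifolds with corners $F_{0}: M_{0} \to N_{0}$, where $F_{0}$ is a proper submersion and $\partial N_{0} = \emptyset$; the identity on object manifolds then implies the identity on orbifolds, because the pushforward of forms on orbifolds is defined via the pushforward on object manifolds.

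Next, I would cover $N_{0}$ by chart domains $V \subseteq \mathbb{R}^{\dim N}$ over which the proper submersion $F_{0}$ is trivial (using an Ehresmann-type argument adapted to corners: a proper submersion of manifolds with corners is locally a projection $V \times F \to V$, with $F$ a compact manifold with corners and with $\partial(V \times F) = V \times \partial F$). By choosing a partition of unity on $N_{0}$ subordinate to such a cover and using that pullback, pushforward, and $d$ are linear and that $f_{*}$ commutes with multiplication by pullbacks (Proposition \ref{int}), it suffices to verify the identity when $F_{0}$ is such a trivial fibration.

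On a trivial fibration, I would decompose $\xi$ into its bidegree components under the splitting $T(V \times F) = TV \oplus TF$. Only the component $\xi_{\mathrm{top}}$ of top fiber degree $\rdim f$ contributes to $f_{*}\xi$; write it as $\sum_{I} \eta_{I}(v,y) \wedge \omega_{I}(y)$ with $\omega_{I}$ fiber forms of top degree. Differentiation under the integral sign gives $d(f_{*}\xi) = f_{*}(d_{V}\xi_{\mathrm{top}})$ up to a sign coming from moving $d_{V}$ past the fiber part. The remaining contribution to $f_{*}(d\xi)$ comes from the piece $d_{F}\xi_{\mathrm{sub}}$ of the exterior derivative applied to the components $\xi_{\mathrm{sub}}$ of fiber degree exactly $\rdim f - 1$, and for these the classical fiberwise Stokes' theorem on the compact manifold with corners $F$ produces the boundary term $(f|_{\partial M})_{*}\xi$.

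The main obstacle is pinning down the sign $(-1)^{\dim M + |\xi|}$. This requires consistent use of the orientation conventions in \cite{corner} and \cite{orb}: the outward-normal-first convention for $\partial M$, the induced fiber orientation from the exact sequence $0 \to TF \to TM \to f^{*}TN \to 0$, and the sign conventions for fiber products that already appeared in Proposition \ref{fiber}. All other ingredients are standard, and once the conventions are fixed, the sign is produced by combining the Koszul sign from commuting $d_{V}$ past a $\rdim f$-form with the sign relating the boundary orientation on $\partial F$ to the one on $\partial M$; using $\rdim f = \dim M - \dim N$ and $\partial N = \emptyset$ should yield the stated exponent.
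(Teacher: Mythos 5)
The paper does not prove this statement at all: it is quoted verbatim (together with Propositions \ref{subcomp}--\ref{proj}) from \cite{orb}*{Theorem 1}, so there is no internal proof to compare against. Your sketch is essentially the standard argument for fiberwise Stokes that one finds in that reference: reduce from orbifolds to the object manifolds of a groupoid presentation, localize over $N_{0}$ with a partition of unity (pulled back via $f^{*}$ so that the correction terms $f^{*}d\rho\wedge\xi$ cancel upon summation), trivialize the proper submersion locally as $V\times F\to V$ with $F$ a compact manifold with corners, and split the classical computation into the horizontal derivative of the top fiber-degree component and the fiberwise Stokes contribution from the components of fiber degree $\rdim f-1$. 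All of these steps are sound; the reduction to objects is legitimate because forms on an \'etale proper groupoid are invariant forms on $Y_{0}$ and the pushforward is defined on objects, and the Ehresmann-type local triviality does hold for submersions in the sense of \cite{corner} since $\partial N=\emptyset$ forces $f|_{\partial M}$ to remain a submersion onto $N$.

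The one substantive gap is the sign. The exponent $(-1)^{\dim M+|\xi|}$ is the only content of the statement that goes beyond the classical theorem, and your proposal defers it with ``should yield the stated exponent.'' A blind proof needs to actually carry out that bookkeeping: fix the convention that the fiber orientation is determined by the short exact sequence $0\to TF\to TM\to f^{*}TN\to0$ together with the sign rule of Section \ref{fibtrans}, fix the outward-normal-first orientation of $\partial M$, and track the Koszul sign from commuting $d$ past the fiber integration. Without this, one cannot distinguish the stated convention from, say, $(-1)^{|\xi|+\rdim f}$ or $(-1)^{\dim N+|\xi|}$, which appear in other sources with different conventions. Since the proposition is used repeatedly in Sections \ref{computegg} and \ref{sbdry} precisely through its sign, this is not a cosmetic omission; but it is a finite, mechanical verification rather than a missing idea.
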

We turn to discuss currents on compact orbifolds. The space $\overline{A}^{k}(M)$ of currents of degree $k$ on a compact orbifold $M$ is the continuous dual of $A^{\dim M-k}(M)$ with the weak* topology. Note that $\overline{A}^{*}(M)$ is a module over $A^{*}(M)$ with
\begin{align*}
    (\xi_{1}\wedge\lambda)(\xi_{2})=\lambda(\xi_{2}\wedge\xi_{1})
\end{align*}
for $\xi_{1},\xi_{2}\in A^{*}(M)$ and $\lambda\in\overline{A}^{*}(M)$.
\begin{defn}
Define
\begin{align*}
    d:\overline{A}^{k}(M)\rightarrow\overline{A}^{k+1}(M)
\end{align*}
by
\begin{align*}
    d\lambda(\xi)=(-1)^{|\xi|+1}\lambda(d\xi).
\end{align*}
\end{defn}
\begin{prop}
$d$ makes $\overline{A}^{*}(M)$ a cochain complex. Denote its cohomology by $\overline{H}^{*}(M)$.
\end{prop}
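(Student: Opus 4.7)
The statement has two substantive pieces: first, that $d$ as defined actually lands in $\overline{A}^{k+1}(M)$ (i.e.\ that $d\lambda$ is continuous and has the claimed degree), and second, that $d\circ d=0$. I would handle them in that order.

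For well-definedness, I would first check the degree: if $\lambda\in\overline{A}^{k}(M)$, then $\lambda$ is a continuous linear functional on $A^{\dim M-k}(M)$. For $d\lambda$ to live in $\overline{A}^{k+1}(M)$, it should be a functional on $A^{\dim M-k-1}(M)$. Taking $\xi\in A^{\dim M-k-1}(M)$, the form $d\xi$ lies in $A^{\dim M-k}(M)$, so $\lambda(d\xi)$ makes sense. The sign $(-1)^{|\xi|+1}$ is a scalar, so linearity in $\xi$ is immediate. Continuity follows because $d:A^{*}(M)\to A^{*+1}(M)$ is continuous in the $C^{\infty}$ topology used on the space of test forms, and composition with a continuous functional is continuous.

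For the cochain identity, I would compute directly from the definition. Fix $\lambda\in\overline{A}^{k}(M)$ and a test form $\xi\in A^{\dim M-k-2}(M)$. Then
\begin{align*}
(d\circ d)\lambda(\xi)&=(-1)^{|\xi|+1}(d\lambda)(d\xi)\\
&=(-1)^{|\xi|+1}(-1)^{|d\xi|+1}\lambda(d(d\xi))\\
&=(-1)^{|\xi|+1}(-1)^{|\xi|+2}\lambda(0)=0,
\end{align*}
using $|d\xi|=|\xi|+1$ and $d^{2}=0$ on $A^{*}(M)$, which is inherited pointwise from the classical identity on smooth forms (a property of $A^{*}(M)$ that is already built into the differential graded algebra structure stated at the beginning of Section \ref{orientint}).

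The proof is thus essentially formal, with no real obstacle: the signs are chosen precisely so that $d^{2}=0$ propagates from forms to currents by duality, and the only genuine verification is the continuity claim, which is routine given the standard topology on $A^{*}(M)$. I would not expect to need any of the pushforward machinery (Propositions \ref{subcomp}--\ref{Stokes}) here; those will be used later when relating $d$ on currents to boundary operators coming from geometric operations, but they are not needed for this basic cochain-complex statement.
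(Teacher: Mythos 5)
Your proof is correct; the paper states this proposition without proof, treating it as the routine verification you carry out. Your sign computation and the degree/continuity checks are exactly the intended argument, and you are right that none of the pushforward machinery is needed here.
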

For $\xi_{1},\xi_{2}\in A^{*}(M)$, denote by
\begin{align*}
    \langle\xi_{2},\xi_{1}\rangle_{M}=\int_{M}\xi_{2}\wedge\xi_{1}
\end{align*}
the Poincaré pairing on $M$.
\begin{defn}\label{curhom}
Define
\begin{align*}
    \varphi:A^{k}(M)\rightarrow\overline{A}^{k}(M)
\end{align*}
by
\begin{align*}
    \varphi(\xi_{1})(\xi_{2})=\langle\xi_{2},\xi_{1}\rangle_{M}.
\end{align*}
\end{defn}
\begin{prop}\label{curhomprop}
$\varphi$ is an injective module homomorphism. If $\partial M=\emptyset$, then $\varphi$ satifies $d\circ\varphi=\varphi\circ d$ and thus descends to a map
\begin{align*}
    \hat{\varphi}:H^{*}(M)\rightarrow\overline{H}^{*}(M)
\end{align*}
in cohomology.
\end{prop}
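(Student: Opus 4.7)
The plan is to verify the three assertions of the proposition---module homomorphism, injectivity, and the cochain map property when $\partial M=\emptyset$---directly from the definitions, with Stokes' theorem (Proposition \ref{Stokes}) providing the only nontrivial input.

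For the module homomorphism, I would compute both sides evaluated at an arbitrary test form. For $\eta,\xi_1,\xi_2\in A^*(M)$, unwinding gives
\begin{align*}
(\eta\wedge\varphi(\xi_1))(\xi_2)=\varphi(\xi_1)(\xi_2\wedge\eta)=\int_M\xi_2\wedge\eta\wedge\xi_1=\varphi(\eta\wedge\xi_1)(\xi_2),
\end{align*}
so $\varphi$ intertwines the left $A^*(M)$-action.

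For the cochain map property, assume $\partial M=\emptyset$ and take $\xi_1\in A^k(M)$ and $\xi_2\in A^{\dim M-k-1}(M)$ (the only degrees in which both sides can be nonzero). Applying Proposition \ref{Stokes} to $\xi_2\wedge\xi_1$ and the projection from $M$ to a point, the boundary contribution vanishes and
\begin{align*}
0=\int_M d(\xi_2\wedge\xi_1)=\int_M d\xi_2\wedge\xi_1+(-1)^{|\xi_2|}\int_M\xi_2\wedge d\xi_1.
\end{align*}
Substituting this into the definitions of $\varphi$ and of $d$ on currents yields
\begin{align*}
d\varphi(\xi_1)(\xi_2)=(-1)^{|\xi_2|+1}\varphi(\xi_1)(d\xi_2)=(-1)^{|\xi_2|+1}\int_M d\xi_2\wedge\xi_1=\int_M\xi_2\wedge d\xi_1=\varphi(d\xi_1)(\xi_2),
\end{align*}
so $d\circ\varphi=\varphi\circ d$, and $\varphi$ descends to $\hat\varphi:H^*(M)\to\overline H^*(M)$.

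The main obstacle, such as it is, is injectivity in the orbifold setting. Assuming $\varphi(\xi_1)=0$ with $\xi_1\neq 0$, I would pick a point $p\in M$ where $\xi_1$ is nonzero together with an orbifold chart $\tilde U/G$ around $p$ in which $\xi_1$ lifts to $\sum_I f_I\,dx^I$ with some $f_{I_0}(\tilde p)\neq 0$. Choose a $G$-invariant bump function $\phi$ supported in a small neighborhood of $\tilde p$ on which $f_{I_0}$ does not change sign, let $I_0^c$ be the complementary multi-index, and take the local model for $\xi_2$ to be $\epsilon\,\phi\,dx^{I_0^c}$ with $\epsilon\in\{\pm 1\}$ so that $dx^{I_0^c}\wedge dx^{I_0}$ is the positive volume form. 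After $G$-averaging if needed, $\xi_2$ descends to a smooth form on $M$ with $\int_M\xi_2\wedge\xi_1\neq 0$, contradicting $\varphi(\xi_1)=0$. The only subtlety is arranging that the bump function is invariant under the local isotropy group, which is straightforward.
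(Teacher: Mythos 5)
Your proof is correct, and all the signs check out against the paper's conventions: the module identity matches the paper's definition $(\xi_1\wedge\lambda)(\xi_2)=\lambda(\xi_2\wedge\xi_1)$, and the two factors of $(-1)^{|\xi_2|+1}$ (one from the definition of $d$ on currents, one from Stokes applied to $\xi_2\wedge\xi_1$ with $\partial M=\emptyset$) cancel exactly as you claim. The paper itself states this proposition without proof, treating it as standard, so there is no argument to compare against; your verification, including the local bump-form construction for injectivity with $G$-averaging in an oriented orbifold chart, is the natural one and is complete.
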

\begin{prop}[\cite{dR}*{Theorem 14}]\label{curhomiso}
$\hat{\varphi}$ is a module isomorphism.
\end{prop}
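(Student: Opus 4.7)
The plan is to reduce to de Rham's classical theorem for currents on a compact manifold, and then to extend to the orbifold setting via the étale proper groupoid presentation. First, observe that by Proposition \ref{curhomprop} the chain map $\varphi$ is an $A^{*}(M)$-module homomorphism, so the induced map $\hat\varphi$ in cohomology is automatically a module homomorphism over $H^{*}(M)$. It therefore suffices to show that $\hat\varphi$ is a linear isomorphism.

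For a closed smooth manifold $M$, this is the classical de Rham theorem for currents \cite{dR}*{Theorem 14}. The usual proof realizes both $A^{*}$ and $\overline{A}^{*}$ as fine resolutions of the constant sheaf $\underline{\mathbb{R}}$: both admit partitions of unity, and both are acyclic on contractible opens via the Poincaré lemma for forms and its classical analogue for currents. Since $\varphi$ is a morphism of resolutions lifting the identity on $\underline{\mathbb{R}}$, it induces an isomorphism in cohomology. A more hands-on alternative uses smoothing operators: convolution with mollifiers built from a Riemannian metric and a partition of unity yields, for each $\epsilon>0$, a chain homotopy $R_{\epsilon}-\id=dH_{\epsilon}+H_{\epsilon}d$ on $\overline{A}^{*}(M)$ with $R_{\epsilon}$ landing in $\varphi(A^{*}(M))$, which gives both surjectivity and injectivity of $\hat\varphi$.

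For a general compact orbifold $M$, I would work in the étale proper groupoid presentation as in \cite{orb}, where smooth forms correspond to $M_{1}$-invariant smooth forms on the object manifold $M_{0}$, and currents are described analogously. Locally $M$ is modelled by $U/G$ with $G$ finite, and by averaging the manifold-level smoothing or homotopy operators over $G$ one obtains $G$-equivariant versions that descend to the orbifold chart. A Mayer--Vietoris argument applied simultaneously to $A^{*}$ and $\overline{A}^{*}$ then assembles these local isomorphisms into a global one. The main obstacle is this orbifold extension: the averaging and patching are conceptually routine, but one must check that all operators remain continuous in the weak* topology used in the definition of currents, so that the dualization underlying Definition \ref{curhom} is compatible throughout. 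In practice this step is most efficiently handled by citing the existing orbifold de Rham theory, which is precisely what the appeal to \cite{dR}*{Theorem 14} accomplishes.
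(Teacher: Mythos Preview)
The paper does not supply a proof of this proposition at all: it is stated with the bracketed citation \cite{dR}*{Theorem 14} and no argument follows. So there is nothing in the paper to compare your sketch against beyond the bare reference to de Rham.

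Your outline is sound as a proof sketch. The reduction to showing that $\hat\varphi$ is a linear isomorphism, via Proposition~\ref{curhomprop}, is correct, and the two arguments you give for the manifold case (sheaf-theoretic and smoothing/regularization) are the standard ones. One remark: in the paper's actual applications, $\hat\varphi$ is only invoked for the closed symplectic manifold $X$ (see Proposition~\ref{qcuriso}), so the orbifold extension you sketch, while natural given that Section~\ref{orientint} is phrased for orbifolds, is not strictly required. The cited reference \cite{dR} is de Rham's book on manifolds, which suffices for that application; your averaging-and-Mayer--Vietoris extension to orbifolds is a genuine addition beyond what the paper establishes or needs, and your caveat about checking weak* continuity of the homotopy operators is the right place to flag where care is required.
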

\begin{defn}
Define
\begin{align*}
    f_{*}:\overline{A}^{k}(M)\rightarrow\overline{A}^{k-\rdim f}(N)
\end{align*}
by
\begin{align*}
    f_{*}\lambda(\xi)=\lambda(f^{*}\xi).
\end{align*} 
\end{defn}
\begin{prop}
If $f$ is a proper submersion, then $f_{*}\circ\varphi=\varphi\circ f_{*}$. 
\end{prop}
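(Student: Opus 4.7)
The plan is to prove the equality by pairing both sides against an arbitrary test form $\eta\in A^{*}(N)$ and reducing to the projection formula of Proposition \ref{int} together with functoriality of the pushforward (Proposition \ref{subcomp}). Since $\varphi$ is built from the Poincaré pairing and both $f_{*}$ on forms and $f_{*}$ on currents carry no explicit signs in their definitions, no sign bookkeeping is needed beyond checking that the relevant degrees line up so that the integrals are nonzero in top degree.

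First, fix $\xi\in A^{k}(M)$ and a test form $\eta\in A^{\dim N-k+\rdim f}(N)$ (the degree required so that both sides, viewed as currents on $N$ evaluated on $\eta$, are potentially nonzero). On the left-hand side, by the definition of $f_{*}$ on currents and of $\varphi$,
\begin{align*}
    (f_{*}\varphi(\xi))(\eta)=\varphi(\xi)(f^{*}\eta)=\int_{M}f^{*}\eta\wedge\xi.
\end{align*}
On the right-hand side,
\begin{align*}
    \varphi(f_{*}\xi)(\eta)=\int_{N}\eta\wedge f_{*}\xi.
\end{align*}

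The key step is to identify $\int_{M}f^{*}\eta\wedge\xi$ with $\int_{N}\eta\wedge f_{*}\xi$. By the projection formula of Proposition \ref{int}, $f_{*}(f^{*}\eta\wedge\xi)=\eta\wedge f_{*}\xi$. Then, letting $p_{M}:M\to\text{pt}$ and $p_{N}:N\to\text{pt}$ denote the constant maps, we have $p_{M}=p_{N}\circ f$, so Proposition \ref{subcomp} gives $(p_{M})_{*}=(p_{N})_{*}\circ f_{*}$. Applying this to the top-degree form $f^{*}\eta\wedge\xi$ yields
\begin{align*}
    \int_{M}f^{*}\eta\wedge\xi=\int_{N}f_{*}(f^{*}\eta\wedge\xi)=\int_{N}\eta\wedge f_{*}\xi.
\end{align*}
Combining the three displays shows $(f_{*}\varphi(\xi))(\eta)=\varphi(f_{*}\xi)(\eta)$ for every $\eta$, whence $f_{*}\varphi(\xi)=\varphi(f_{*}\xi)$.

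There is no real obstacle here: the proposition is essentially a tautology once the projection formula is available, and the mild subtlety is only to verify that expressing $\int_{M}$ as $\int_{N}\circ f_{*}$ is legitimate, which follows from viewing integration over a compact orbifold as pushforward to a point and invoking functoriality. The properness and submersion assumptions on $f$ are used only to ensure that $f_{*}$ on forms is defined and that Propositions \ref{subcomp} and \ref{int} apply.
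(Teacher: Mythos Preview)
Your proof is correct. The paper states this proposition without proof, and your argument---unravelling the definitions of $\varphi$ and of $f_{*}$ on currents, then applying the projection formula (Proposition~\ref{int}) together with functoriality of pushforward (Proposition~\ref{subcomp}) to rewrite $\int_{M}$ as $\int_{N}\circ f_{*}$---is exactly the standard verification one would expect. Note that compactness of $N$ is implicit here (it is needed for $\varphi$ on $N$, and hence for the statement, to make sense), so your use of $p_{N}:N\to\mathrm{pt}$ as a proper submersion is justified.
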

\begin{prop}[\cite{orb}*{Proposition 6.1}]
The analogue of propositions \ref{subcomp} and \ref{int} holds where the differential form $\xi$ is replaced by a current $\lambda$.
\end{prop}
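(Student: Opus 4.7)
The statement is essentially a formal consequence of the definitions of pullback on forms, pushforward on currents, and the module structure of currents over forms. The plan is to verify each identity by pairing both sides against an arbitrary test form and then unwinding the definitions given in Section \ref{orientint}.

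For the analogue of Proposition \ref{subcomp}, I would fix $\lambda \in \overline{A}^{*}(M)$ and a test form $\xi \in A^{*}(N')$ on the appropriate target $N'$, and apply the definition $f_{*}\lambda(\xi) = \lambda(f^{*}\xi)$ twice together with the functoriality of pullback $(f\circ g)^{*} = g^{*}\circ f^{*}$, which is standard for differential forms on orbifolds with corners. This reduces $(f\circ g)_{*}\lambda(\xi) = \lambda((f\circ g)^{*}\xi)$ to $\lambda(g^{*}f^{*}\xi) = g_{*}\lambda(f^{*}\xi) = f_{*}(g_{*}\lambda)(\xi)$. No signs appear, and properness of $f$ and $g$ ensures the pushforwards are defined.

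For the analogue of Proposition \ref{int}, I would pair the identity $f_{*}(f^{*}\tau \wedge \lambda) = \tau \wedge f_{*}\lambda$ against a test form $\xi \in A^{*}(N)$. Unwinding definitions, the left-hand side becomes
\begin{align*}
f_{*}(f^{*}\tau \wedge \lambda)(\xi) = (f^{*}\tau \wedge \lambda)(f^{*}\xi) = \lambda(f^{*}\xi \wedge f^{*}\tau) = \lambda(f^{*}(\xi \wedge \tau)),
\end{align*}
while the right-hand side becomes
\begin{align*}
(\tau \wedge f_{*}\lambda)(\xi) = f_{*}\lambda(\xi \wedge \tau) = \lambda(f^{*}(\xi \wedge \tau)),
\end{align*}
using in turn the module structure on currents, the definition of $f_{*}$ on currents, and compatibility of pullback with wedge product. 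The two sides agree on the nose.

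The only potentially subtle issue is degree-tracking in the module action $(\xi_{1}\wedge\lambda)(\xi_{2}) = \lambda(\xi_{2}\wedge\xi_{1})$: one must verify that the test form $\xi$ lies in the correct degree for each pairing to be nonzero, so that the manipulations above respect the grading on $\overline{A}^{*}$. This is a bookkeeping check rather than a genuine obstacle, and I expect no further difficulty beyond it.
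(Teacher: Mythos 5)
Your verification is correct, and both identities do hold on the nose with the paper's conventions: pairing $f_{*}(f^{*}\tau\wedge\lambda)$ against a test form and using $(\xi_{1}\wedge\lambda)(\xi_{2})=\lambda(\xi_{2}\wedge\xi_{1})$ together with $f_{*}\lambda(\xi)=\lambda(f^{*}\xi)$ reduces both sides to $\lambda(f^{*}(\xi\wedge\tau))$, and the composition identity is immediate from functoriality of pullback. The paper itself gives no argument here --- it simply cites \cite{orb}*{Proposition 6.1} --- so your route differs in that you prove the statement directly rather than deferring to the reference. This is actually the more careful option in context: the paper explicitly remarks that its sign conventions for the module structure on currents, for $\varphi$, and for the pushforward of currents differ from those of \cite{orb}, so a bare citation leaves open whether compensating signs should appear in the transported identities; your computation settles that none do. The degree bookkeeping you flag also checks out: $f_{*}\lambda\in\overline{A}^{k-\rdim f}(N)$ pairs with forms of degree $\dim N-(k-\rdim f)=\dim M-k$, which is exactly the degree in which $f^{*}\xi$ must land for $\lambda$ to accept it, so every pairing in your chain is in the correct degree. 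One very minor caveat: for the current-valued pushforward the submersion hypothesis on $f$ is not needed (only the pullback $f^{*}$ on forms is used), so your appeal to properness and submersivity is harmless but stronger than required; the hypotheses are inherited from the statements of propositions \ref{subcomp} and \ref{int} rather than forced by the proof.
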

\begin{rmk}
Our sign conventions for the module structure on currents, for the Poincaré pairing, for the map $\varphi$, and for the pushforward of currents differ from the ones in \cite{orb}.
\end{rmk}

\subsection{Operators}\label{op}

Denote by $\mu:H_{2}(X,L;\mathbb{Z})\rightarrow\mathbb{Z}$ the Maslov index as in \cite{Maslov}*{Section 2}, and let $\Pi$ be a quotient of $H_{2}(X,L;\mathbb{Z})$ by some subgroup of $\ker(\omega\oplus\mu)$, so that $\omega,\mu$ descend to $\Pi$. Denote by $\varpi:H_{2}(X;\mathbb{Z})\rightarrow\Pi$ the composition of the projection $H_{2}(X,L;\mathbb{Z})\rightarrow\Pi$ with the map $H_{2}(X;\mathbb{Z})\rightarrow H_{2}(X,L;\mathbb{Z})$ induced by the projection, and denote by $\beta_{0}$ the zero element of $\Pi$.
\par
Let $\Lambda$ be the Novikov ring given by
\begin{align*}
    \Lambda=\Big\{\sum_{i=0}^{\infty}a_{i}T^{\beta_{i}}\:\Big|\:a_{i}\in\mathbb{R},\,\beta_{i}\in\Pi,\,\omega(\beta_{i})\geq0,\,\lim_{i\rightarrow\infty}\omega(\beta_{i})=\infty\Big\},
\end{align*}
where $T^{\beta}$ is a formal variable of degree $\mu(\beta)$ and $T^{\beta_{1}}T^{\beta_{2}}=T^{\beta_{1}+\beta_{2}}$. Let $t_{0},...,t_{N}$ be formal variables with degrees in $\mathbb{Z}$, and let
\begin{align*}
    R=\Lambda[[t_{0},...,t_{N}]].
\end{align*}
Define a valuation $v:R\rightarrow\mathbb{R}$ by
\begin{align*}
    v\Big(\sum_{i=0}^{\infty}a_{i}T^{\beta_{i}}\prod_{j=0}^{N}t_{j}^{\kappa_{ij}}\Big)=\inf_{a_{i}\neq0}\Big(\omega(\beta_{i})+\sum_{j=0}^{N}\kappa_{ij}\Big).
\end{align*}
Let $\mathcal{I}\subset R$ be the ideal $\mathcal{I}=\{c\in R\:|\:v(c)>0\}$.
\par
The valuation $v$ induces a valuation on tensor products with $R$. Thinking of $R$ as a differential graded algebra with trivial differential, set
\begin{align*}
    &C=A^{*}(L)\otimes R,\quad E=A^{*}(X)\otimes R,
\end{align*}
where $\otimes$ stands for the completed tensor product over $\mathbb{R}$ of differential graded algebras, with respect to the induced valuation. 
\par
\begin{con}
We denote by $\alpha$ a general element of $C$, and by $\eta$ a general element of $E$, possibly adding an index. We use the same notation for lists of elements of the same set. If not clear from the context, we specify whether we refer to an element or to a list.
\end{con}
\begin{con}
We keep writing $|\xi|$ for the degree of a homogeneous element $\xi$ of a graded module $\Upsilon$. For example, the degree of $\alpha\in C$ combines its degree as a differential form with the degree coming from the formal variables.
\end{con}
\par
In the following, we recall the definition of the operators and the regularity assumptions from \cite{Ainf}*{Section 2.2}. We refer there for complete detail, including examples where these assumptions hold. For $\beta\in\Pi$ and $k,l\geq0$, let $\M$ be the moduli space of open genus zero stable maps of degree $\beta$, with $k+1$ cyclically ordered boundary marked points and $l$ internal marked points, defined up to reparameterization.
\par
Elements of $\M$ are represented by tuples of the form $(\Sigma,u,\vec{z},\vec{w})$, where $\Sigma$ is a genus zero nodal Riemann surface with one boundary component, $u:(\Sigma,\partial\Sigma)\rightarrow(X,L)$ is $J$-holomorphic of degree $\beta$, and $\vec{z}=(z_{0},...,z_{k})$, $\vec{w}=(w_{1},...,w_{l})$ where $z_{j}\in\partial\Sigma$, $w_{j}\in\mathring{\Sigma}$ are distinct from one another and from the nodal points and the $z_{j}$ are cyclically ordered respecting the orientation on $\partial\Sigma$.
\par
We assume all the moduli spaces $\M$ are orbifolds with corners (see Section \ref{reg}), where the objects are the stable maps described above, and the morphisms are isomorphisms of such stable maps. Denote by $(\M)_{0}$ the set of objects of $\M$, which is a manifold with corners.
\par
For a manifold with corners $N$, any map $(\M)_{0}\rightarrow N$ of manifolds with corners which is invariant under isomorphisms of stable maps can also be thought of as a map of orbifolds with corners. Such a map is smooth as a map of orbifolds with corners if and only the underlying map on objects is smooth as a map of manifolds with corners.
\par
The moduli space $\M$ is a stratified space, where open stable maps whose domain $\Sigma$ has $d+1$ disk components and $e$ sphere components belong to a startum of codimension $d+2e$. This space has a natural orientation induced by the relative spin structure $\mathfrak{s}$, constructed in \cite{LIFT}*{Chapter 8}. We also have the following (see \cite{LIFT}*{Theorem 2.1.32}).
\begin{prop}\label{dim}
We have
\begin{align*}
    \dim\M=n-2+k+2l+\mu(\beta)\equiv n+k\pmod{2}.
\end{align*}
\end{prop}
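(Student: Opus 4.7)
The plan is to compute the virtual dimension of the top stratum of $\M$ via the Atiyah--Singer index theorem, and then to read off the parity from the orientability of $L$. I would first consider a smooth $J$-holomorphic disk $u:(D^{2},\partial D^{2})\to(X,L)$ of class $\beta$, with no marked points and no automorphisms modded out, and compute the index of the linearized Cauchy--Riemann operator $D_{u}\bar{\partial}$ acting on sections of $u^{*}TX$ with totally real boundary condition $u^{*}TL$. By the Riemann--Roch theorem for Riemann surfaces with boundary and a totally real boundary bundle (as used in \cite{LIFT}*{Chapter 2}), this Fredholm index equals $n\chi(D^{2})+\mu(\beta)=n+\mu(\beta)$, where $\mu(\beta)$ is the Maslov index of the boundary loop.

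Next I would account for the marked points and reparameterizations. The choice of $k+1$ boundary marked points on $\partial\Sigma$ adds $k+1$ to the dimension (each marked point varies in a $1$-dimensional submanifold), and the $l$ interior marked points add $2l$ (each varies in the $2$-dimensional $\mathring{\Sigma}$). Dividing by the $3$-dimensional automorphism group $\pslr$ of the disk subtracts $3$. Summing, the expected dimension is
\begin{equation*}
    n+\mu(\beta)+(k+1)+2l-3=n-2+k+2l+\mu(\beta),
\end{equation*}
which, under the smoothness assumption (Assumption 1 in Section \ref{reg}), is the actual dimension of the top stratum. The assertion that nodal stable maps form lower strata of codimension $d+2e$, already recorded in the paragraph preceding the proposition, ensures that this number is the dimension of $\M$ itself.

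For the parity, I would invoke the fact that the Maslov index $\mu(\beta)$ is even whenever $L$ is orientable: along the boundary loop $u|_{\partial D^{2}}$, the trivialization of $\det(u^{*}TL)$ provided by an orientation of $L$ forces the winding number of $\det^{2}(u^{*}TL|_{\partial D^{2}})$, which computes $2\mu$ up to a factor equal to twice $\mu(\beta)$, to give $\mu(\beta)\in 2\mathbb{Z}$. Since $2l$ and $\mu(\beta)$ are both even, reducing the formula above modulo $2$ yields $\dim\M\equiv n-2+k\equiv n+k\pmod{2}$.

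The only real subtlety is coordinating the Fredholm index with the orbifold dimension in the presence of automorphisms of stable maps and boundary strata; this is handled in \cite{LIFT}*{Theorem 2.1.32}, which I would cite rather than reprove. Everything else is a direct bookkeeping exercise once the index computation is in hand.
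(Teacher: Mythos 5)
Your argument is correct, but note that the paper does not actually prove this proposition: it is stated with a reference to \cite{LIFT}*{Theorem 2.1.32}, so you are reconstructing the standard index-theoretic proof underlying that citation rather than paralleling an argument in the text. Your bookkeeping is right: the Riemann--Roch index $n\chi(D^{2})+\mu(\beta)=n+\mu(\beta)$ for the totally real boundary problem, plus $k+1$ for boundary marked points and $2l$ for interior ones, minus $\dim\pslr=3$, gives $n-2+k+2l+\mu(\beta)$, and the codimension statement for nodal strata (recorded just before the proposition) makes this the dimension of the whole space. For the parity, the fact you need is simply that orientability of $L$ (which the paper guarantees, since $L$ carries a relative spin structure) forces $\mu(\beta)\in2\mathbb{Z}$, because the Maslov index mod $2$ is the evaluation of $w_{1}(TL)$ on the boundary loop; your sentence about the winding of $\det^{2}(u^{*}TL|_{\partial D^{2}})$ is garbled as written (``computes $2\mu$ up to a factor equal to twice $\mu(\beta)$'' is not a meaningful statement), though the intended mechanism --- an orientation trivializes $\det(u^{*}TL)$ along the boundary, so the square has even winding --- is the right one. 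With that sentence cleaned up, the proof is complete and is exactly what one would write if one chose to prove the proposition rather than cite it.
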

\par
Denote by
\begin{align*}
    \evb:\M\rightarrow L,\quad j=0,...,k,\\
    \evi:\M\rightarrow X,\quad j=1,...,l,
\end{align*}
the evaluation maps at the boundary and interior marked points, respectively. Note the abuse of notation since $k,l$ are not specified. We assume all the maps $\evbz$ are proper submersions  (see Section \ref{reg}).
\par
In the following, let $\alpha=\bigotimes_{j=1}^{k}\alpha_{j}\in C^{\otimes k}$ and $\eta=\bigotimes_{j=1}^{l}\eta_{j}\in E^{\otimes l}$. Define
\begin{align*}
    \varepsilon(\alpha)=\sum_{j=1}^{k}j|\alpha_{j}|+\frac{k(k+1)}{2}+1,
\end{align*}
and for $(k,l,\beta)\neq(1,0,\beta_{0}),(0,0,\beta_{0})$ define
\begin{align*}
    \qop:C^{\otimes k}\otimes E^{\otimes l}\rightarrow C
\end{align*}
by
\begin{align*}
    \qop(\alpha;\eta)=(-1)^{\varepsilon(\alpha)}(\evbz)_{*}\Big(\bigwedge_{j=1}^{l}(\evi)^{*}\eta_{j}\wedge\bigwedge_{j=1}^{k}(\evb)^{*}\alpha_{j}\Big).
\end{align*}
Define also
\begin{align*}
    \mathfrak{q}_{1,0}^{\beta_{0}}(\alpha)=d\alpha,\quad \mathfrak{q}_{0,0}^{\beta_{0}}=0,
\end{align*}
and set
\begin{align*}
    \mathfrak{q}_{k,l}=\sum_{\beta\in\Pi}T^{\beta}\qop.
\end{align*}
\par
Similarly, for $\beta\in H_{2}(X;\mathbb{Z})$ and $l\geq0$, let $\Mx$ be the moduli space of nodal, $J$-holomorphic, stable spheres in $X$ of degree $\beta$ with $l+1$ marked points, and denote by
\begin{align*}
    \ev:\Mx\rightarrow X
\end{align*}
the evaluation maps. We assume all the spaces $\Mx$ are smooth orbifolds and the maps $\evz$ are proper submersions (see Section \ref{reg}).
\par
Recall that $\mathfrak{s}$ determines $w_{\mathfrak{s}}\in H^{2}(X;\mathbb{Z}/2\mathbb{Z})$ such that $w_{2}(TL)=i^{*}w_{\mathfrak{s}}$. We can think of $w_{\mathfrak{s}}$ as acting on $H_{2}(X;\mathbb{Z})$ via composition with the map $H_{2}(X;\mathbb{Z})\rightarrow H_{2}(X;\mathbb{Z}/2\mathbb{Z})$ induced by the projection.
\par
For $(l,\beta)\neq(1,0),(0,0)$ define
\begin{align*}
    \qx:E^{\otimes l}\rightarrow E
\end{align*}
by
\begin{align*}
    \qx(\eta)=(-1)^{w_{\mathfrak{s}}(\beta)}(\evz)_{*}\Big(\bigwedge_{j=1}^{l}(\ev)^{*}\eta_{j}\Big).
\end{align*}
Define also
\begin{align*}
    \mathfrak{q}_{\emptyset,1}^{0}(\eta)=0,\quad \mathfrak{q}_{\emptyset,0}^{0}=0,
\end{align*}
and set
\begin{align*}
    \mathfrak{q}_{\emptyset,l}=\sum_{\beta\in H_{2}(X;\mathbb{Z})}T^{\varpi(\beta)}\qx.
\end{align*}
\par
These operators have the following properties.
\begin{prop}[Degree, \citelist{\cite{Ainf}*{Proposition 3.5}\cite{RQH}*{Lemma 2.15}}]\label{degree}
The map $\mathfrak{q}_{k,l}$ is of degree $2-k-2l$, and the map $\mathfrak{q}_{\emptyset,l}$ is of degree $4-2l$.
\end{prop}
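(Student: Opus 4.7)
My plan is a direct degree-counting argument. For $\qop$, I would first note that pullback preserves total degree in $A^{*}\otimes R$, so the integrand $\bigwedge_{j=1}^{l}(\evi)^{*}\eta_{j}\wedge\bigwedge_{j=1}^{k}(\evb)^{*}\alpha_{j}$ has degree $\sum_{j}|\eta_{j}|+\sum_{j}|\alpha_{j}|$. Pushforward along the proper submersion $\evbz$ shifts degrees by $-\rdim(\evbz)=\dim L-\dim\M$, which by Proposition \ref{dim} equals $2-k-2l-\mu(\beta)$. Since $T^{\beta}$ has degree $\mu(\beta)$, the $\beta$-component $T^{\beta}\qop$ shifts total degree by $2-k-2l$, independently of $\beta$, so the full sum $\mathfrak{q}_{k,l}$ is homogeneous of that degree. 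The degenerate cases defined separately, $\mathfrak{q}_{1,0}^{\beta_{0}}=d$ and $\mathfrak{q}_{0,0}^{\beta_{0}}=0$, fit the same formula ($d$ has degree $1=2-1-0$, and $0$ is of every degree).

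For $\qx$ the argument is identical once the dimension of $\Mx$ is in hand. By the standard virtual-dimension formula for genus-zero stable maps with $l+1$ marked points, $\dim\Mx=2n+2c_{1}(\beta)+2l-4=2n+\mu(\varpi(\beta))+2l-4$, using $\mu(\varpi(\beta))=2c_{1}(\beta)$ for $\beta\in H_{2}(X;\mathbb{Z})$. Hence $(\evz)_{*}$ shifts degrees by $-\rdim(\evz)=4-2l-\mu(\varpi(\beta))$, and multiplying by $T^{\varpi(\beta)}$ restores the Maslov term, giving a total shift of $4-2l$. The signs $(-1)^{\varepsilon(\alpha)}$ and $(-1)^{w_{\mathfrak{s}}(\beta)}$ are degree-neutral and irrelevant to the count.

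The only genuine content is the cancellation of $\mu(\beta)$ between the relative dimension of the evaluation map and the degree of the Novikov weight $T^{\beta}$; there is no real obstacle. The only small thing to verify is that the completed tensor product defining $C$ and $E$ is graded, so degrees add as expected under wedge and under pullback/pushforward, which is immediate from the construction.
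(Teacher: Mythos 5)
Your degree count is correct and is exactly the argument behind the cited results ([Ainf, Proposition 3.5] and [RQH, Lemma 2.15], which the paper invokes without reproving): the relative dimension of $\evbz$ (resp.\ $\evz$) computed from Proposition \ref{dim} (resp.\ the genus-zero dimension formula) contributes $2-k-2l-\mu(\beta)$ (resp.\ $4-2l-2c_{1}(\beta)$), and the Novikov weight $T^{\beta}$ (resp.\ $T^{\varpi(\beta)}$) restores the Maslov term, with the degenerate cases checked by hand. No gaps.
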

\begin{prop}[Symmetry, \citelist{\cite{Ainf}*{Proposition 3.6}\cite{RQH}*{Lemma 2.16}}]\label{symmetry}
For a permutation $s\in S_{l}$ let $s\cdot\eta=\otimes_{j=1}^{l}\eta_{s(j)}$. Then
\begin{align*}
    \mathfrak{q}_{k,l}(\alpha;\eta)&=(-1)^{\sigma_{s}^{\eta}}\mathfrak{q}_{k,l}(\alpha;s\cdot\eta),\\
    \mathfrak{q}_{\emptyset,l}(\eta)&=(-1)^{\sigma_{s}^{\eta}}\mathfrak{q}_{\emptyset,l}(s\cdot\eta),
\end{align*}
where $\sigma_{s}^{\eta}=\sum_{j_{1}<j_{2},s^{-1}(j_{1})>s^{-1}(j_{2})}|\eta_{j_{1}}|\cdot|\eta_{j_{2}}|$.
\end{prop}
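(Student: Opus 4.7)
The plan is to realize each permutation of interior marked points as a self-diffeomorphism of the moduli space and then change variables inside the pushforward. For $s \in S_l$, I would define
\begin{align*}
    \phi_s: \M \longrightarrow \M, \qquad \phi_s(\Sigma, u, \vec{z}, w_1, \ldots, w_l) = (\Sigma, u, \vec{z}, w_{s(1)}, \ldots, w_{s(l)}),
\end{align*}
which is a diffeomorphism with inverse $\phi_{s^{-1}}$. By construction the boundary evaluations are invariant, $evb^{\beta}_{j} \circ \phi_s = evb^{\beta}_{j}$ for $j = 0, \ldots, k$, while $evi^{\beta}_{j} \circ \phi_s = evi^{\beta}_{s(j)}$ for $j = 1, \ldots, l$. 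An analogous self-diffeomorphism of $\Mx$ permutes its last $l$ marked points and leaves $ev^{\beta}_{0}$ fixed.

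Next I would verify that $\phi_s$ preserves the orientation of $\M$ induced by the relative spin structure $\mathfrak{s}$. Writing $s$ as a product of adjacent transpositions reduces this to swapping two consecutive interior marked points. In the orientation construction of \cite{LIFT}*{Chapter 8}, each interior marked point contributes a complex (hence real-even-dimensional) direct summand to the moduli tangent space, so interchanging two such summands is orientation-preserving.

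With these two inputs, the identity follows by a short computation. The Koszul sign rule in $A^{*}(\M)$ gives
\begin{align*}
    \phi_s^{*} \Big( \bigwedge_{j=1}^{l} (evi^{\beta}_{j})^{*} \eta_{s(j)} \Big) = \bigwedge_{j=1}^{l} (evi^{\beta}_{s(j)})^{*} \eta_{s(j)} = (-1)^{\sigma_{s}^{\eta}} \bigwedge_{j=1}^{l} (evi^{\beta}_{j})^{*} \eta_j,
\end{align*}
where the first equality uses $evi^{\beta}_{j} \circ \phi_s = evi^{\beta}_{s(j)}$ and the second is the standard sign for permuting homogeneous factors of degrees $|\eta_j|$, which matches $\sigma_{s}^{\eta}$. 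Wedging with $\bigwedge_{j=1}^{k} (evb^{\beta}_{j})^{*} \alpha_j$, which is $\phi_s$-invariant, and pushing forward by $evb^{\beta}_{0}$, orientation-preservation of $\phi_s$ together with $evb^{\beta}_{0} \circ \phi_s = evb^{\beta}_{0}$ gives $(evb^{\beta}_{0})_{*} \circ \phi_s^{*} = (evb^{\beta}_{0})_{*}$, yielding $\qop(\alpha; s \cdot \eta) = (-1)^{\sigma_{s}^{\eta}} \qop(\alpha; \eta)$. Summing over $\beta$ gives the first identity, and the same argument on $\Mx$ gives the second. I expect the main obstacle to be the careful unwinding of the orientation construction in \cite{LIFT} to confirm that $\phi_s$ is orientation-preserving; the parity-of-dimension reasoning is intuitive, but the full construction is intricate and deserves attention.
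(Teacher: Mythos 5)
Your argument is correct and is essentially the proof given in the cited sources (the paper itself only quotes this result from \cite{Ainf}*{Proposition 3.6} and \cite{RQH}*{Lemma 2.16} without reproving it): one relabels interior marked points by a diffeomorphism of $\M$, which is orientation-preserving because each interior marked point contributes an even-dimensional (complex) factor to the tangent space, and the stated sign is exactly the Koszul sign from reordering the pulled-back forms. The sign bookkeeping ($s(a)>s(b)$ for positions $a<b$ corresponding to pairs $j_1<j_2$ with $s^{-1}(j_1)>s^{-1}(j_2)$) checks out, so no gaps beyond the orientation verification you already flag.
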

\begin{prop}[Energy zero]\label{ezero}
We have
\begin{align*}
    \mathfrak{q}_{k,l}^{\beta_{0}}(\alpha;\eta)=\begin{cases}-\eta_{1}|_{L},&(k,l)=(0,1),\\d\alpha_{1}\,&(k,l)=(1,0),\\(-1)^{|\alpha_{1}|}\alpha_{1}\wedge\alpha_{2},&(k,l)=(2,0),\\0,&\text{otherwise}.\end{cases}
\end{align*}
\end{prop}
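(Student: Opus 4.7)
Since $\omega(\beta_{0})=0$, any $J$-holomorphic stable map representing $\beta_{0}$ is constant on every irreducible component of its domain. Hence an element of $\mathcal{M}_{k+1,l}(\beta_{0})$ is determined by a stable marked domain together with the common image point in $L$, and all its evaluation maps factor through the canonical projection $\pi\colon \mathcal{M}_{k+1,l}(\beta_{0})\to L$ sending a constant stable map to that point. Explicitly, $\evb=\pi$ and $\evi=\iota\circ\pi$, where $\iota\colon L\hookrightarrow X$ is the inclusion.

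The cases $(k,l)=(0,0)$ and $(k,l)=(1,0)$ are handled by construction: the operators $\mathfrak{q}_{0,0}^{\beta_{0}}$ and $\mathfrak{q}_{1,0}^{\beta_{0}}$ were defined separately to be $0$ and $d\alpha_{1}$, matching the formula. In both of these cases the marked domains fail the stability condition $k+1+2l\geq 3$, so there is no moduli-space integral to evaluate.

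For the remaining stable cases $k+2l\geq 2$, consider first the range $k+2l\geq 3$. By the factorization above, the integrand
\begin{equation*}
    \bigwedge_{j=1}^{l}(\evi)^{*}\eta_{j}\wedge\bigwedge_{j=1}^{k}(\evb)^{*}\alpha_{j}
\end{equation*}
is a pullback via $\pi$ of a form on $L$, so it has no components along the fibers of $\pi=\evbz$. These fibers, parametrising the marked-domain moduli, have positive dimension $k+2l-2\geq 1$, so integration over them annihilates the form and $\qop(\alpha;\eta)=0$.

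Finally, in the two exceptional cases $(k,l)=(0,1)$ and $(k,l)=(2,0)$ one has $k+2l=2$, the relevant moduli of marked domains is a single point, and $\pi$ becomes a diffeomorphism $\mathcal{M}_{k+1,l}(\beta_{0})\cong L$. Substituting into the definition of $\qop$ and computing the prefactor $\varepsilon(\emptyset)=1$, respectively $\varepsilon(\alpha_{1},\alpha_{2})\equiv|\alpha_{1}|\pmod{2}$, produces the claimed $-\eta_{1}|_{L}$ and $(-1)^{|\alpha_{1}|}\alpha_{1}\wedge\alpha_{2}$. The one step I expect to require care is the verification that the orientation on $\mathcal{M}_{k+1,l}(\beta_{0})$ induced by the relative spin structure $\mathfrak{s}$ agrees, under $\pi$, with the given orientation on $L$, so that no extra sign interferes with the explicit signs produced by $\varepsilon(\alpha)$.
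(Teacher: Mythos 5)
Your argument is correct and is the standard one: the paper itself states this proposition without proof, recalling it from Solomon--Tukachinsky \cite{Ainf}, where exactly this reasoning appears --- degree-$\beta_{0}$ stable maps are constant, so the moduli space fibers over $L$ with fiber the moduli of stable marked domains, the pushforward of a form pulled back from $L$ vanishes whenever that fiber has positive dimension $k+2l-2\geq1$, and the two zero-dimensional-fiber cases $(0,1)$ and $(2,0)$ reduce to $\mp$ the restriction/wedge with the signs you computed from $\varepsilon$. The single point you flag, that the orientation induced by $\mathfrak{s}$ on $\mathcal{M}_{k+1,l}(\beta_{0})\cong L$ is the standard one, is indeed the only convention-dependent step and is settled by the conventions of \cite{LIFT}*{Chapter 8} adopted here.
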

\begin{prop}[Fundamental class, \cite{Ainf}*{Proposition 3.7}]\label{funclass}
Assume $\eta_{i}=1$ for some $1\leq i\leq l$. Then
\begin{align*}
    \qop(\alpha;\eta)=\begin{cases}-1,&(\beta,k,l)=(\beta_{0},0,1),\\0,&\text{otherwise}.\end{cases}
\end{align*}
\end{prop}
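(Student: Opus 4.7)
The plan is to split the proof into two cases according to whether $(\beta,k,l) = (\beta_0,0,1)$ or not.

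For the case $(\beta,k,l)=(\beta_0,0,1)$, I would directly identify the moduli space. A constant disk with one boundary marked point and one interior marked point has trivial automorphism group: the automorphism group of the disk is three-dimensional, fixing a boundary point cuts this to two dimensions, and fixing an interior point cuts it to zero. Hence $\evbz:\mathcal{M}_{1,1}(\beta_0)\to L$ is an orientation-preserving diffeomorphism, under which $\evi[1]$ corresponds to the identity. Then with $\eta_1=1$ and $\varepsilon(\emptyset)=1$, we read off
\begin{align*}
\mathfrak{q}_{0,1}^{\beta_0}(\emptyset;1)=(-1)^{\varepsilon(\emptyset)}(\evbz)_{*}(\evi[1])^{*}1=-1.
\end{align*}

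For the remaining cases, the strategy is to exhibit the integrand as a pullback along the forgetful map $\pi:\mathcal{M}_{k+1,l}(\beta)\to\mathcal{M}_{k+1,l-1}(\beta)$ that drops $w_i$ (and contracts any destabilized component). Assuming $\pi$ is a proper submersion of relative dimension $2$, all the evaluation maps $\evi[j]$ for $j\neq i$, $\evb[j]$, and $\evbz$ factor as $\evi[j]'\circ\pi$, $\evb[j]'\circ\pi$, and $\evbz'\circ\pi$ respectively, for the analogous evaluation maps on $\mathcal{M}_{k+1,l-1}(\beta)$. Since $\eta_i=1$, the interior factor at index $i$ disappears, so the full integrand can be written as $\pi^{*}\tau$ for a single form $\tau$ on $\mathcal{M}_{k+1,l-1}(\beta)$. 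Applying Propositions \ref{subcomp} and \ref{int} gives
\begin{align*}
(\evbz)_{*}\pi^{*}\tau=(\evbz')_{*}\pi_{*}\pi^{*}\tau=(\evbz')_{*}(\tau\wedge\pi_{*}1),
\end{align*}
and $\pi_{*}1\in A^{-2}(\mathcal{M}_{k+1,l-1}(\beta))=0$ since $\rdim\pi=2$. Hence $\qop(\alpha;\eta)=0$ in all these cases.

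The main obstacle is the construction and regularity of $\pi$. Generically $\pi$ just deletes $w_i$, but near strata where $w_i$ sits on a component that becomes unstable after removal (for instance, a sphere bubble carrying only $w_i$ and two nodes, or a ghost disk component stabilized only by $w_i$), one must contract that component to land in the stable moduli space. One has to verify, case by case, that the resulting $\pi$ is a proper submersion of relative dimension $2$ of orbifolds with corners; the hypothesis $(\beta,k,l)\neq(\beta_0,0,1)$ is exactly what prevents the target $\mathcal{M}_{k+1,l-1}(\beta)$ from degenerating in a way that breaks this structure. Once this is in place, the projection formula argument above concludes the proof. I would model this portion of the argument on the corresponding construction in \cite{Ainf}*{Proposition 3.7}, relying on the same stability considerations used there.
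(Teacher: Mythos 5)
Your overall strategy --- identify $\mathcal{M}_{1,1}(\beta_{0})$ with $L$ for the case $(\beta,k,l)=(\beta_{0},0,1)$, and kill every other case by factoring the integrand through the forgetful map $\pi$ that drops the $i$th interior point and then invoking the projection formula together with a degree count --- is the same strategy the paper relies on; compare the proof of Proposition \ref{gfunclass}, which is explicitly modelled on the non-geodesic argument you are reconstructing. Your treatment of the special case, including the sign $(-1)^{\varepsilon(\emptyset)}=-1$, is correct. The genuine gap is in the step you yourself flag as the main obstacle: your argument needs $\pi_{*}1$ to exist as a differential form, i.e.\ it needs $\pi$ to be a proper submersion, and this is false. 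The forgetful map is a universal-curve-type map, and at points of $\M$ where the forgotten marked point limits to a node of the domain it fails to be a submersion; none of the regularity assumptions of Section \ref{reg} grant this, and no case-by-case verification will rescue it. The paper's way around this is exactly the current formalism of Section \ref{orientint}: apply the injective map $\varphi$ of Definition \ref{curhom}, replace the form $1$ by the current $\varphi(1)$, and push \emph{that} forward along $\pi$ --- the pushforward of currents is dual to pullback and is defined for an arbitrary smooth map. The same degree count then gives $\deg\pi_{*}\varphi(1)=-\rdim\pi=-2<0$, hence $\pi_{*}\varphi(1)=0$, and injectivity of $\varphi$ (Proposition \ref{curhomprop}) concludes. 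This is precisely how the paper proves the analogous propositions \ref{gfunclass} and \ref{horunit}.

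A second, smaller defect: your claim that $(\beta,k,l)\neq(\beta_{0},0,1)$ is exactly what keeps the target of $\pi$ stable is not true. For $(\beta,k,l)=(\beta_{0},1,1)$ the target would be the unstable $\mathcal{M}_{2,0}(\beta_{0})$, so the forgetful map does not exist and this case needs a separate (easy) argument: on $\mathcal{M}_{2,1}(\beta_{0})$ all maps are constant, so $evb^{\beta_{0}}_{0}=evb^{\beta_{0}}_{1}$, the integrand is $(evb^{\beta_{0}}_{0})^{*}\alpha_{1}$, and Proposition \ref{int} gives $(evb^{\beta_{0}}_{0})_{*}(evb^{\beta_{0}}_{0})^{*}\alpha_{1}=\alpha_{1}\wedge(evb^{\beta_{0}}_{0})_{*}1$ with $(evb^{\beta_{0}}_{0})_{*}1$ of degree $-1$, hence zero. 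This mirrors the separate treatment the paper gives to the exceptional case in the proof of Proposition \ref{gfunclass}.
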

\par
The following is an immediate consequence of \cite{Ainf}*{propositions 3.1 and 3.2}, Proposition \ref{symmetry}, and \cite{RQH}*{Lemma 2.14}.
\begin{prop}[Unit]\label{unit}
Assume $\alpha_{i}=c\cdot1$ for some $c\in R$ and $1\leq i\leq k$. Then
\begin{align*}
    \qop(\alpha;\eta)=\begin{cases}(-1)^{|c|}c\cdot\alpha_{2},&(\beta,k,l,i)=(\beta_{0},2,0,1),\\(-1)^{|\alpha_{1}|(|c|+1)}c\cdot\alpha_{1},&(\beta,k,l,i)=(\beta_{0},2,0,2),\\0,&\text{otherwise}.\end{cases}
\end{align*}
Alternatively, assume $\eta_{i}=1$ for some $1\leq i\leq l$. Then
\begin{align*}
    \qx(\eta)=\begin{cases}\eta_{2},&(\beta,l,i)=(0,2,1),\\\eta_{1},&(\beta,l,i)=(0,2,2),\\0,&\text{otherwise}.\end{cases}
\end{align*}
\end{prop}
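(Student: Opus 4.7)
The plan is to deduce both statements from the unit axioms and $R$-multilinearity recorded in the cited references, together with the symmetry of $\qx$.

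For the claim about $\qop$, the special case $c=1$, where $\alpha_i=1\in A^0(L)$, is precisely the unit axiom for the $\qop$ operators as proved in \cite{Ainf}*{Propositions 3.1 and 3.2}. Combined with the explicit formula $\mathfrak{q}^{\beta_0}_{2,0}(\alpha_1,\alpha_2)=(-1)^{|\alpha_1|}\alpha_1\wedge\alpha_2$ of Proposition \ref{ezero}, this yields the stated values with $c$ replaced by $1$, and gives $0$ for every other $(\beta,k,l)$. To pass to a general coefficient $c\in R$, I would apply \cite{RQH}*{Lemma 2.14}, which records the Koszul-signed $R$-multilinearity of the $\qop$. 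Pulling $c$ out of the $i$-th slot, and taking into account the shift in $\varepsilon(\alpha)$ produced by the degree $|c|$ of the $i$-th entry together with the Koszul signs generated by commuting $c$ past the preceding pullbacks, delivers precisely $(-1)^{|c|}c\cdot\alpha_2$ when $i=1$ and $(-1)^{|\alpha_1|(|c|+1)}c\cdot\alpha_1$ when $i=2$.

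For the claim about $\qx$ with some $\eta_i=1$, I would first use Proposition \ref{symmetry} to transpose the unit insertion into the last slot; because the unit has degree zero, the permutation sign $\sigma_s^\eta$ is trivial. It then suffices to analyse $\mathfrak{q}^\beta_{\emptyset,l}(\eta_1,\ldots,\eta_{l-1},1)$, with the two excluded cases $(l,\beta)=(0,0),(1,0)$ already set to zero by definition. For $(l,\beta)=(2,0)$, the moduli space $\mathcal{M}_3(0)$ is identified with $X$ together with identity evaluation maps, and $w_{\mathfrak{s}}(0)=0$, so the pushforward collapses to $\eta_1\wedge 1=\eta_1$; transposing back recovers both stated cases. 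For every other $(l,\beta)$, I would use the forgetful map $\mathcal{M}_{l+1}(\beta)\to\mathcal{M}_l(\beta)$ that erases the marked point carrying the unit. Since the pullback of $1$ equals $1$, the pushforward factors through this forgetful map, and integration of $1$ over its positive-dimensional fibre produces a form in negative degree, hence $0$.

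The main obstacle is the sign bookkeeping in the first part. The prefactor $\varepsilon(\alpha)$ depends linearly on the degrees $|\alpha_j|$, so replacing $|\alpha_i|=0$ by $|\alpha_i|=|c|$ alters it by $i|c|$; combined with the Koszul signs arising when $c$ is pulled past $\bigwedge_{j=1}^{l}(\evi)^{*}\eta_{j}\wedge\bigwedge_{j<i}(\evb)^{*}\alpha_{j}$, these contributions must conspire to reproduce exactly the two signs stated in the proposition. Once this is matched in the two non-trivial positions $i=1,2$, the remainder follows directly from the cited results.
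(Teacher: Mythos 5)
Your proposal is correct and follows essentially the same route as the paper, which derives the statement directly from the unit axioms of \cite{Ainf}*{Propositions 3.1 and 3.2}, the $R$-linearity with Koszul signs of \cite{RQH}*{Lemma 2.14}, and Proposition \ref{symmetry}; your sign check via Proposition \ref{ezero} for the $(\beta_{0},2,0)$ case and the forgetful-map/degree argument for the closed operators are the standard unpackings of those citations.
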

\par
The following is an immediate consequence of Proposition \ref{ezero} and of \cite{Ainf}*{Proposition 3.12}.
\begin{prop}[Integration]\label{topint}
We have
\begin{align*}
    \int_{L}\qop(\alpha;\eta)=\begin{cases}-\int_{L}\eta_{1}|_{L},&(\beta,k,l)=(\beta_{0},0,1),\\(-1)^{|\alpha_{1}|}\langle\alpha_{1},\alpha_{2}\rangle_{L},&(\beta,k,l)=(\beta_{0},2,0),\\0,&\text{otherwise}.\end{cases}
\end{align*}
\end{prop}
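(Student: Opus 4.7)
The proposition is a direct consolidation of Proposition \ref{ezero} with \cite{Ainf}*{Proposition 3.12}, so the plan is simply to split into cases according to whether $\beta = \beta_0$ and substitute.

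For $\beta = \beta_0$, I would read off the value of $\mathfrak{q}_{k,l}^{\beta_{0}}(\alpha;\eta)$ from Proposition \ref{ezero} and integrate over $L$. The case $(k,l) = (0,1)$ gives $-\int_L \eta_1|_L$ immediately. In the case $(k,l) = (1,0)$ the integrand is $d\alpha_1$, whose integral over $L$ vanishes by Stokes's theorem (Proposition \ref{Stokes}, applied with $f = \id$ on the closed, boundaryless $L$). The case $(k,l) = (2,0)$ yields $(-1)^{|\alpha_1|}\alpha_1\wedge\alpha_2$, which upon integration is $(-1)^{|\alpha_1|}\langle\alpha_1,\alpha_2\rangle_L$ by definition of the Poincar\'e pairing. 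All remaining $(k,l)$ contribute zero directly by the last line of Proposition \ref{ezero}.

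For $\beta \neq \beta_0$, I would cite \cite{Ainf}*{Proposition 3.12} verbatim, which supplies the vanishing of $\int_L \qop(\alpha;\eta)$ in this regime; no additional manipulation is needed at the level of this excerpt.

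No substantive obstacle is anticipated, since both ingredients are already stated or cited in the paper. The only minor bookkeeping is to confirm that the sign $(-1)^{|\alpha_1|}$ appearing in the $(k,l) = (2,0)$ line of Proposition \ref{ezero} agrees with the sign in the statement of Proposition \ref{topint}, which it does by inspection, and to note that the $(k,l)=(1,0)$ case does not appear in the stated list precisely because it collapses to zero after applying Stokes.
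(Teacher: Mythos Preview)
Your proposal is correct and matches the paper's approach exactly: the paper states the proposition as ``an immediate consequence of Proposition \ref{ezero} and of \cite{Ainf}*{Proposition 3.12}'' without further elaboration, and your case split is precisely the unpacking of that sentence.
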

\par
\begin{con}
Given $k_{1}+k_{2}=k+1$ and $1\leq i\leq k_{1}$, we write $\alpha^{1}=\bigotimes_{j=1}^{i-1}\alpha_{j}$, $\alpha^{2}=\bigotimes_{j=i}^{i+k_{2}-1}\alpha_{j}$, and $\alpha^{3}=\bigotimes_{j=i+k_{2}}^{k}\alpha_{j}$. We also write $[l]$ for the $l$-tuple $(1,...,l)$, and for a splitting $J_{1}\sqcup J_{2}=[l]$ into disjoint tuples respecting the order of $[l]$, we write $\eta^{1}=\bigotimes_{j\in J_{1}}\eta_{j}$, $\eta^{2}=\bigotimes_{j\in J_{2}}\eta_{j}$, $l_{1}=|J_{1}|$, and $l_{2}=|J_{2}|$. 
\end{con}
Set
\begin{align*}
    \iota(\alpha,\eta;i,J_{1})=|\eta^{1}|+(|\eta^{2}|+1)(|\alpha^{1}|+i+1)+\sigma_{J_{1},J_{2}}^{\eta},
\end{align*}
where $\sigma_{J_{1},J_{2}}^{\eta}=\sum_{j_{1}\in J_{1},j_{2}\in J_{2},j_{2}<j_{1}}|\eta_{j_{1}}|\cdot|\eta_{j_{2}}|$.
\begin{prop}[Structure equation for $\M$, \cite{Ainf}*{Proposition 2.4}]\label{struc}
We have
\begin{align*}
    0&=-\mathfrak{q}_{k,l}(\alpha;d\eta)\\
    &+\sum_{\substack{k_{1}+k_{2}=k+1\\1\leq i\leq k_{1}\\J_{1}\sqcup J_{2}=[l]}}(-1)^{\iota(\alpha,\eta;i,J_{1})}\mathfrak{q}_{k_{1},l_{1}}(\alpha^{1}\otimes\mathfrak{q}_{k_{2},l_{2}}(\alpha^{2};\eta^{2})\otimes\alpha^{3};\eta^{1}).
\end{align*}
\end{prop}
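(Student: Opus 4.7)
The plan is to apply Stokes' theorem to the defining pushforward of $\qop$ on $\M$ and identify the codimension-one boundary as a union of fiber products of smaller moduli spaces. Fix $\beta\in\Pi$ and set $\Omega=\bigwedge_{j=1}^{l}(\evi)^{*}\eta_{j}\wedge\bigwedge_{j=1}^{k}(\evb)^{*}\alpha_{j}\in A^{*}(\M)$. Since $\partial L=\emptyset$, Proposition \ref{Stokes} applied to $(\evbz)_{*}\Omega$ yields
\begin{align*}
d\bigl((\evbz)_{*}\Omega\bigr)=(\evbz)_{*}(d\Omega)+(-1)^{\dim\M+|\Omega|}\bigl((\evbz)|_{\partial\M}\bigr)_{*}\Omega.
\end{align*}
Expanding $d\Omega$ via the graded Leibniz rule together with $d\circ(\evb)^{*}=(\evb)^{*}\circ d$ and the analogous identity for $\evi$ splits the first term on the right into contributions involving the $d\eta_{j}$ and the $d\alpha_{j}$ separately.

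Next, I would invoke the standard gluing description of $\partial\M$: up to codimension $\geq 2$, the boundary is the disjoint union, over all data $(k_{1}+k_{2}=k+1,\,1\leq i\leq k_{1},\,J_{1}\sqcup J_{2}=[l],\,\beta_{1}+\beta_{2}=\beta)$, of strata diffeomorphic to
\begin{align*}
\mathcal{M}_{k_{1}+1,l_{1}}(\beta_{1})\times_{L}\mathcal{M}_{k_{2}+1,l_{2}}(\beta_{2}),
\end{align*}
with the fiber product taken with respect to the evaluation $\mathrm{evb}^{\beta_{1}}_{i}$ on the first factor and $\mathrm{evb}^{\beta_{2}}_{0}$ on the second. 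On such a stratum, $\Omega$ decomposes as an exterior product of forms pulled back from the two factors, and the evaluation map $\evbz$ factors through the projection to the first factor. Propositions \ref{subcomp}, \ref{int} and \ref{proj} (and their orbifold versions) then let me rewrite the boundary pushforward of $\Omega$ along the stratum as an iterated pushforward, which by definition equals $\mathfrak{q}_{k_{1},l_{1}}^{\beta_{1}}\bigl(\alpha^{1}\otimes\mathfrak{q}_{k_{2},l_{2}}^{\beta_{2}}(\alpha^{2};\eta^{2})\otimes\alpha^{3};\eta^{1}\bigr)$ up to sign.

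I would then match the three types of contributions on the right. The $d\eta_{j}$ pieces reassemble, after using the symmetry of Proposition \ref{symmetry}, into $-\mathfrak{q}_{k,l}(\alpha;d\eta)$. The $d\alpha_{j}$ pieces account precisely for the summands with $(k_{2},l_{2},\beta_{2})=(1,0,\beta_{0})$, since $\mathfrak{q}_{1,0}^{\beta_{0}}=d$ by Proposition \ref{ezero}; the outer $d$ of the pushforward accounts for the summands with $(k_{1},l_{1},\beta_{1})=(1,0,\beta_{0})$. Summing over $\beta\in\Pi$ (weighted by $T^{\beta}$) and noting that $\beta_{1}+\beta_{2}=\beta$ gives the identity.

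The main obstacle is sign bookkeeping. The total sign on each boundary term accumulates contributions from (i) the prefactor $(-1)^{\varepsilon(\alpha)}$ present in both the outer and inner $\qop$, (ii) the Stokes sign $(-1)^{\dim\M+|\Omega|}$ together with Proposition \ref{dim}, (iii) the graded commutations needed to regroup the pullback factors on each side of the node, (iv) the orientation of the boundary stratum induced by the fiber product orientation convention of Section \ref{fibtrans} (via Propositions \ref{fiber}, \ref{fiberbdry}, \ref{orbprop}), and (v) the permutation sign $\sigma^{\eta}_{J_{1},J_{2}}$ arising when the $l$ interior forms are split between the two factors. Collecting these terms and comparing with $\iota(\alpha,\eta;i,J_{1})$ is where the bulk of the work lies; the gluing identification of $\partial\M$ itself is standard.
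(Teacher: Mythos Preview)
The paper does not actually prove this proposition: it is a background result quoted from \cite{Ainf}*{Proposition 2.4}, stated without proof in Section~\ref{op}. That said, your outline is correct and is precisely the template the paper itself follows when proving its own original structure equations (Propositions~\ref{ggstruc}, \ref{lstruc}, \ref{rstruc}) in Sections~\ref{boundary}--\ref{computegg} and \ref{sbdry} onward: apply Stokes' theorem (Proposition~\ref{Stokes}) to the defining pushforward, invoke the boundary decomposition of the moduli space as a sum of fiber products (quoted in the paper as \cite{Ainf}*{Proposition 2.8}), rewrite each boundary contribution using Propositions~\ref{subcomp}, \ref{int}, \ref{proj}, and then chase signs via Lemma~\ref{signs} and the $\varepsilon$ identity. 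So there is nothing to compare against in the paper itself, but your sketch matches both the cited source's method and the paper's analogous arguments.
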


\subsection{Geodesic constraints}\label{gop}

We recall the definition of the geodesic moduli spaces from \cite{RQH}*{Section 3.1}. For a nodal Riemann surface $\Sigma$ of genus $0$ with a connected boundary $\partial\Sigma$ and complex structure $j$, denote by $\overline{\Sigma}$ a copy of $\Sigma$ with the complex structure $-j$. Then $\Sigma_{\mathbb{C}}=\Sigma\coprod_{\partial\Sigma}\overline{\Sigma}$ is a closed nodal Riemann surface of genus $0$. Denote by $D$ the set of nodal points of $\Sigma_{\mathbb{C}}$ and let $\widetilde{\Sigma}_{\mathbb{C}}=\Sigma_{\mathbb{C}}\setminus D$. Denote by $\Cinf=\mathbb{C}\cup\{\infty\}$ the Riemann sphere. As in \cite{Jhol}*{Appendix D}, we can define the cross-ratio $(x_{1},x_{2},x_{3},x_{4})\in\Cinf$ of four points in $\widetilde{\Sigma}_{\mathbb{C}}$ such that no three of them coincide. We recall the definition below.
\par
For $\nu$ a component of $\Sigma_{\mathbb{C}}$ and $x\in\Sigma_{\mathbb{C}}$, if $x\in\nu$ let $x^{\nu}=x$, and otherwise let $x^{\nu}$ be the nodal point of $\nu$ corresponding to the branch in which $x$ lies. We have the following.
\begin{prop}[\cite{Jhol}*{Remark D.3.3.}]\label{root} Let $x_{1},x_{2},x_{3}\in\widetilde{\Sigma}_{\mathbb{C}}$ be pairwise distinct. Then there exists a unique component $\nu$ of $\Sigma_{\mathbb{C}}$ such that $x_{1}^{\nu},x_{2}^{\nu},x_{3}^{\nu}$ are pairwise distinct.
\end{prop}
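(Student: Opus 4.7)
The plan is to exploit the tree structure of the dual graph of $\Sigma_{\mathbb{C}}$. Since $\Sigma_{\mathbb{C}}$ has genus zero and is a genus zero nodal curve, its dual graph $T$ --- with vertices indexing irreducible components and edges indexing nodal points --- is a finite tree. Let $\nu_j$ denote the component (vertex) containing $x_j$; note that $\nu_j$ is well-defined because $x_j \in \widetilde{\Sigma}_{\mathbb{C}}$ is not a nodal point.

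For existence, I would take $\nu$ to be the \emph{median} of $\nu_1, \nu_2, \nu_3$ in $T$, characterized as the unique vertex where the three geodesic paths in $T$ from $\nu$ to each $\nu_j$ pairwise intersect only at $\nu$. Equivalently, if $\nu \neq \nu_j$ for all $j$, then $\nu_1, \nu_2, \nu_3$ lie in three pairwise distinct components of $T \setminus \{\nu\}$; if some $\nu_j$ equals $\nu$, the other two lie in distinct components of $T \setminus \{\nu\}$. Existence and uniqueness of this median is a standard fact about finite trees. For this choice, each $x_j^\nu$ is either $x_j$ itself (when $\nu_j = \nu$, so $x_j$ lies in the smooth locus of $\nu$) or the unique nodal point on $\nu$ sitting on the edge that begins the geodesic from $\nu$ to $\nu_j$. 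The defining property of the median forces these three labels to be pairwise distinct, since no two of the associated edges (or the smooth point with any such edge's nodal endpoint) can coincide.

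For uniqueness, I would argue that any other component $\nu' \neq \nu$ fails. Let $\nu''$ be the neighbor of $\nu'$ on the unique path from $\nu'$ to $\nu$ in $T$, and let $p$ be the nodal point of $\nu'$ corresponding to the edge between $\nu'$ and $\nu''$. By the median property, at most one of $\nu_1, \nu_2, \nu_3$ lies in the connected component of $T \setminus \{\nu\}$ containing $\nu'$. For the other (at least two) indices $j$, the geodesic from $\nu'$ to $\nu_j$ must pass through $\nu''$, so $x_j^{\nu'} = p$. Hence at least two of $x_1^{\nu'}, x_2^{\nu'}, x_3^{\nu'}$ coincide, contradicting pairwise distinctness.

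The main obstacle is really only the bookkeeping of cases --- in particular keeping track of when $\nu$ equals one of the $\nu_j$'s, or when two of the $\nu_j$'s coincide --- but conceptually the result reduces to the existence and uniqueness of the Steiner point of three vertices in a finite tree, which is a standard combinatorial fact and is precisely the content of McDuff--Salamon's Remark D.3.3.
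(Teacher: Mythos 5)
Your argument is correct. Note that the paper gives no proof of this proposition at all: it is quoted verbatim from \cite{Jhol}*{Remark D.3.3} and used as a black box, so there is no in-paper argument to compare against; your tree-median (Steiner point) proof is the standard justification of the cited fact and supplies it in full. Two small points are left implicit in your write-up. First, when you pass from ``the three geodesics leave the median $\nu$ along pairwise distinct edges'' to ``the labels $x_{j}^{\nu}$ are pairwise distinct,'' you are using that the assignment sending an edge at $\nu$ to the corresponding nodal point of $\nu$ is injective; this holds because each node joins exactly two local branches and because a branch attaching to $\nu$ at two distinct nodes would create a cycle in the dual graph, contradicting that it is a tree (which in turn uses genus zero). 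Second, in the degenerate case where two or three of the $\nu_{j}$ equal the median $\nu$, the distinctness of the corresponding labels $x_{j}^{\nu}=x_{j}$ comes from the hypothesis that the $x_{j}$ are pairwise distinct points of $\widetilde{\Sigma}_{\mathbb{C}}$ (and from the fact that such points are by definition not nodal), not from the median property; you flag this under ``bookkeeping'' and it is indeed routine. The uniqueness half is exactly right: for $\nu'\neq\nu$, at most one $\nu_{j}$ can lie in the component of the tree minus $\nu$ that contains $\nu'$, so at least two of the geodesics from $\nu'$ to the $\nu_{j}$ exit $\nu'$ through the same edge and the corresponding labels coincide.
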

Let $x_{1},x_{2},x_{3},x_{4}\in\widetilde{\Sigma}_{\mathbb{C}}$ such that there are three of them which are pairwise distinct, and choose any such three $x_{i_{1}},x_{i_{2}},x_{i_{3}}$. Let $\nu$ be as in Proposition \ref{root} for $x_{i_{1}},x_{i_{2}},x_{i_{3}}$, and identify $\nu$ with $\Cinf$. Then the cross-ratio is given by
\begin{align*}
    (x_{1},x_{2},x_{3},x_{4})=\frac{(x_{2}^{\nu}-x_{3}^{\nu})(x_{4}^{\nu}-x_{1}^{\nu})}{(x_{1}^{\nu}-x_{2}^{\nu})(x_{3}^{\nu}-x_{4}^{\nu})}\in\Cinf.
\end{align*}
We can now extend the cross-ratio smoothly to $x_{1},x_{2},x_{3},x_{4}\in\widetilde{\Sigma}_{\mathbb{C}}$ such that no three of the points coincide, but there are no three of them which are pairwise distinct. In this case there are two pairs of identical points, and we have
\begin{align*}
    (x_{1},x_{2},x_{3},x_{4})=
    \begin{cases}
    \infty,&x_{1}=x_{2},x_{3}=x_{4},\\
    1,&x_{1}=x_{3},x_{2}=x_{4},\\
    0,&x_{1}=x_{4},x_{2}=x_{3}.
    \end{cases}
\end{align*}
Let
\begin{align*}
    \Delta=\{\vec{x}\in\widetilde{\Sigma}_{\mathbb{C}}^{4}|\exists i,j,k:x_{i}=x_{j}=x_{k}\}.
\end{align*}
\begin{prop}[\citelist{\cite{Jhol}*{Lemma D.4.1}\cite{complex}*{Theorem 7.4}}]\label{ratio}
The cross-ratio is a well defined smooth map $\widetilde{\Sigma}_{\mathbb{C}}^{4}\setminus\Delta\rightarrow\Cinf$, and does not depend on the choice of the three points, on $\nu$, or on the identification of $\nu$ with $\Cinf$.
\end{prop}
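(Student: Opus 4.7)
The plan is to verify the three independence claims separately and then establish smoothness, before extending to the locus where pairs of the $x_j$ coincide. First I would check independence of the identification of $\nu$ with $\Cinf$: any two such identifications differ by an element of $\pslc$, and the classical cross-ratio on $\Cinf$ is $\pslc$-invariant by a direct computation, so the formula is independent of the chosen identification.

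Next I would show independence of the choice of the three pairwise distinct points. By Proposition \ref{root}, a triple of pairwise distinct points $x_{i_1}, x_{i_2}, x_{i_3}$ singles out a unique component $\nu = \nu(i_1, i_2, i_3)$. The argument splits into two cases. When two different admissible triples yield the same component $\nu$, the equality of the two cross-ratios reduces to the classical symmetry identities for the cross-ratio of four points on $\Cinf$ under permutations of their labels. When the triples give distinct components $\nu$ and $\nu'$, the fourth point must lie off at least one of them, so $x^\nu$ or $x^{\nu'}$ is a nodal point; I would then trace the chain of components connecting $\nu$ to $\nu'$ and use the degenerate values $0,1,\infty$ from the definition to check that both expressions evaluate to the same element of $\Cinf$.

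For smoothness, on the open stratum where all four points are pairwise distinct and lie in the interior of a single irreducible component, the cross-ratio is locally a rational function of the coordinates, hence smooth as a map into $\Cinf$. Near the remaining loci I would pass to standard local coordinates for nodal Riemann surfaces and verify that the assignment $x\mapsto x^\nu$ varies smoothly across component transitions; together with the extension formulas listed after the definition, this gives smoothness on all of $\widetilde{\Sigma}_{\mathbb{C}}^{4}\setminus\Delta$.

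The main obstacle will be the second step. The geometric book-keeping of how the cross-ratio transforms when the chosen triple moves between components of $\Sigma_{\mathbb{C}}$ requires a careful case analysis, and matching the two resulting formulas hinges on correctly identifying which $x_j^\nu$ are nodal. Once this is settled, the remaining claims are standard and essentially amount to the arguments in \cite{Jhol}*{Lemma D.4.1} and \cite{complex}*{Theorem 7.4}, to which I would ultimately defer for the detailed verification.
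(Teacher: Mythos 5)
The paper gives no proof of this proposition at all: it is quoted directly from \cite{Jhol}*{Lemma D.4.1} and \cite{complex}*{Theorem 7.4}, so there is no in-paper argument to compare against. Your outline is a reasonable reconstruction of the cited proofs and correctly identifies where the real work lies (the case where two admissible triples select distinct components, forcing some $x_j^{\nu}$ to be nodal and the cross-ratio to degenerate to $0$, $1$, or $\infty$). One small correction: in your first case, where two triples select the \emph{same} component $\nu$, there is nothing to verify and no permutation identities are needed, because the displayed formula uses all four projections $x_1^{\nu},\dots,x_4^{\nu}$ in a fixed arrangement and depends on the chosen triple only through $\nu$ itself. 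Since you ultimately defer to the same references the paper cites, your proposal is consistent with how the paper handles this statement.
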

\par
For $x\in\Sigma$ denote by $\bar{x}\in\overline{\Sigma}$ the corresponding point. For $m\in\{0,...,k\}$, whenever applicable, define
\begin{align*}
    \chi_{m}:\M\rightarrow\Cinf
\end{align*}
by
\begin{align*}
    \chi_{m}([\Sigma,u,\vec{z},\vec{w}])=
    \begin{cases}
        (z_{0},w_{1},\bar{w}_{2},w_{2}),&m=0,\\
        (z_{0},z_{m},\bar{w}_{1},w_{1}),&otherwise.
    \end{cases}
\end{align*}
By Proposition \ref{ratio}, $\chi_{m}$ is invariant under isomorphisms of stable map, and is therefore a well defined smooth map. Let $I=[0,1]$, and let
\begin{align*}
    \Mg=I\times_{\Cinf}\M,
\end{align*}
where the fiber product is taken with respect to the inclusion $I\hookrightarrow\Cinf$ and to $\chi_{m}$.
\par
Denote by $\mathcal{H}\subset\Cinf$ the upper half-plane. On the open stratum of $\M$ we can identify $\Sigma$ with $\overline{\mathcal{H}}$. Under this identification, for $m\neq0$ the condition $\chi_{m}\in I$ corresponds to the points $z_{0},w_{1},z_{m}$ being constrained to lie on a hyperbolic geodesic (in this order), and for $m=0$ this condition corresponds to the points $z_{0},w_{2},w_{1}$ being constrained to lie on a hyperbolic geodesic (in this order).
\par
The following lemma appears in \cite{prep}.
\begin{lem}\label{transg}
The inclusion $I\hookrightarrow\Cinf$ is transversal to $\chi_{m}$.
\end{lem}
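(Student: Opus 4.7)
Since $I \hookrightarrow \Cinf$ is a codimension-one embedding, transversality to $\chi_m$ at a point $p \in \chi_m^{-1}(I)$ reduces to showing that the imaginary component of $d_p\chi_m$ is nontrivial; equivalently, the real-valued function $\mathrm{Im}(\chi_m) : \M \to \mathbb{R}$ has a regular point at every $p$ with $\mathrm{Re}(\chi_m(p)) \in [0,1]$ and $\mathrm{Im}(\chi_m(p)) = 0$. In addition, surjectivity of $d_p\chi_m$ at points mapping to $\partial I = \{0,1\}$ must be verified for the transversality of manifolds with boundary.

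The strategy is to exhibit, at any such $p$, an explicit tangent direction in $T_p\M$ along which $d\mathrm{Im}(\chi_m)$ does not vanish. The natural candidate is a variation of the interior marked point $w_1$ (for $m \neq 0$) or $w_2$ (for $m = 0$) while keeping the rest of the stable map fixed. On the open stratum, identifying $\Sigma \cong \overline{\mathcal{H}}$ and normalizing $z_0, z_m \in \mathbb{R}$ and $w_1 = x + iy$, a direct computation yields for $m \neq 0$ that $\mathrm{Re}(\chi_m) \equiv 1/2$ and $\partial \mathrm{Im}(\chi_m)/\partial y = 1/(z_m - z_0) \neq 0$ on the locus $\mathrm{Im}(\chi_m) = 0$ (the hyperbolic geodesic between $z_0$ and $z_m$). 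The identity $\mathrm{Re}(\chi_m) \equiv 1/2$ also follows from the Schwarz reflection symmetry $w_1 \leftrightarrow \bar w_1$, which yields the cross-ratio identity $\chi_m + \overline{\chi_m} = 1$; in particular $\chi_m^{-1}(\partial I) = \emptyset$, so boundary transversality is vacuous in this case. For $m = 0$ an analogous calculation shows that normal variations of $w_2$ to the geodesic through $z_0$ and $w_1$ produce a nonzero imaginary derivative of $\chi_0$, and any potential coincidences giving $\chi_0 \in \{0,1\}$ are excluded by the distinctness of marked points.

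For boundary strata of $\M$ where $\Sigma$ is nodal, by Proposition \ref{ratio} the cross-ratio is computed via the nodal point $w_1^\nu$ (respectively $w_2^\nu$) on the component $\nu$ where three of the relevant four points are pairwise distinct. Strata where no such $\nu$ exists force $\chi_m = \infty \notin I$ and contribute nothing to $\chi_m^{-1}(I)$, so only strata on which the cross-ratio is finite need to be considered. Since $\M$ is a smooth orbifold with corners (with $\evbz$ a submersion by the regularity assumption), the tangent space $T_p\M$ contains variations of marked points within their components as well as variations of nodal point positions preserving the combinatorial type. When $w_1$ (or $w_2$) lies on $\nu$, one argues exactly as on the open stratum; when it lies on a different component, one instead varies the position on $\nu$ of the node serving as $w_1^\nu$ (respectively $w_2^\nu$), reducing to the same local computation carried out intrinsically on $\nu$.

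The main obstacle is the case analysis over the combinatorial types of $\Sigma$ on the boundary strata: in each case one must verify that the chosen variation defines a bona fide tangent vector in $T_p\M$ (rather than an infinitesimal automorphism of the stable map or a forbidden direction) and that the four points of the cross-ratio on $\nu$ satisfy the distinctness hypothesis of Proposition \ref{ratio}. Both checks are routine consequences of stability of the domain together with the smooth orbifold-with-corners structure, but enumerating the configurations is what makes the boundary analysis the technical heart of the proof.
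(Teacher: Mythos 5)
First, a caveat on the comparison: the paper does not actually prove Lemma \ref{transg} — it is quoted from \cite{prep} — so there is no internal argument to measure yours against. Your overall strategy (exhibit, stratum by stratum, a variation of the relevant interior marked point, or of the corresponding nodal point $w_1^\nu$ on the component $\nu$, along which the cross-ratio moves off $I$) is the natural one, and your open-stratum computation for $m\neq0$ is correct: $\operatorname{Re}\chi_m\equiv\tfrac12$ wherever $\chi_m$ is finite, the degenerate values $0,1$ of the cross-ratio would force three of the four points to coincide, so $\chi_m^{-1}(\partial I)=\emptyset$ and only the interior condition $d(\operatorname{Im}\chi_m)\neq0$ needs checking; your formula $\partial_y\operatorname{Im}\chi_m=1/(z_m-z_0)$ on the locus $\operatorname{Im}\chi_m=0$ checks out.

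The genuine gap is in the $m=0$ case. You assert that "any potential coincidences giving $\chi_0\in\{0,1\}$ are excluded by the distinctness of marked points." That is false for the value $1$: Section \ref{boundary} records (again from \cite{prep}) that $\chi_0^{-1}(1)$ is nonempty and is exactly the locus where $w_1,w_2$ bubble off to a sphere, i.e.\ where $w_1^\nu=w_2^\nu=:w$ is an interior node of the component $\nu$ carrying $z_0$; there $(z_0,w,\bar w,w)=1$ by the extension of the cross-ratio to configurations with two coinciding points. Distinctness of marked points rules this out only on the open stratum. Consequently boundary transversality is not vacuous for $m=0$: at points of $\chi_0^{-1}(1)$ one must show that $d\chi_0$ surjects onto all of $T_1\Cinf=\mathbb{C}$, not merely that $\operatorname{Im}\chi_0$ has a nonzero derivative. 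Since $\chi_0^{-1}(1)$ sits inside a codimension-two (sphere-bubbling) stratum along which $\chi_0\equiv1$, the required surjectivity can only come from differentiating the cross-ratio in the directions transverse to that stratum — the smoothing parameter of the interior node — which your argument never considers. This is not a routine omitted check: the nonemptiness of $\chi_0^{-1}(1)$ and the corner structure of $\Mgz$ along it are precisely what produce the interior-bubbling term in Proposition \ref{gzstruc} and in the boundary analysis of Section \ref{boundary}, so a proof that declares this locus empty cannot be patched without adding the missing computation.
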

Thus, if $\M$ is a smooth orbifold with corners, then so are all the spaces $\Mg$. We also have the following lemma.
\begin{lem}[\cite{RQH}*{Lemma 3.11}]\label{forsgn}
Denote by $\pi:\Mg\rightarrow\mathcal{M}_{k,l}(\beta)$ the forgetful map that forgets the boundary marked point $z_{0}$ (and hence the geodesic), shifts the labeling of the rest, and stabilizes the resulting map. Then $\pi$ restricts to a diffeomorphism from the open stratum to an open subset of the open stratum that changes orientation by $\sgn\pi=n+1$.
\end{lem}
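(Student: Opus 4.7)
The plan is to first show that $\pi$ is a diffeomorphism on the open stratum by exploiting the geodesic description of the constraint, and then compute the orientation sign via a factorization $\pi = \pi' \circ p_2$ through the ordinary forgetful map.

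For the diffeomorphism step, identify $\Sigma\cong\overline{\mathcal{H}}$ on the open stratum. For $m\neq 0$, the condition $\chi_m\in I$ is equivalent to $z_0,w_1,z_m$ lying in this order on a common hyperbolic geodesic. Given $(w_1,z_m)$ in general position, the hyperbolic geodesic through $w_1$ meeting $\partial\Sigma$ at $z_m$ is unique and has a unique other endpoint on $\partial\Sigma$, which recovers $z_0$ smoothly. Hence $\pi$ is a smooth bijection with smooth inverse onto an open subset of the open stratum. The case $m=0$ is analogous, with $(w_1,z_m)$ replaced by $(w_2,w_1)$.

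For the sign, let $p_2:\Mg\hookrightarrow\M$ be the fiber-product projection (an embedding onto $\chi_m^{-1}(\mathring{I})$ on the open stratum) and $\pi':\M\to\mathcal{M}_{k,l}(\beta)$ the ordinary forgetful map. Normalize the $\pslr$-action by setting $w_1=i$ and $z_m=0$ for $m\neq 0$; a direct computation then yields $\chi_m=\frac{1}{2}-\frac{i}{2z_0}$, so $\Mg$ corresponds to $z_0=\infty$, and $\pi$ becomes the identity on the remaining free coordinates $(z_1,\ldots,z_{m-1},z_{m+1},\ldots,z_k,w_2,\ldots,w_l,\text{map parameters})$. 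The sign of $\pi$ decomposes as $\sgn p_2\cdot\sgn(\pi'|_{\chi_m^{-1}(I)})$. From the fiber-product splitting convention of Section~\ref{fibtrans} (with $\sgn\tau=(-1)^{\dim\M\cdot 2}=1$ applied to the short exact sequence $0\to T\Mg\to TI\oplus T\M\to T\Cinf\to 0$), one finds $\sgn p_2=(-1)^{\dim\M-1}$. For the second factor, a direct check at $z_0=\infty$ in the above coordinates shows that the oriented normal $d\chi_m^{-1}(N_{I\subset\Cinf})$ coincides with the boundary orientation of $\partial\Sigma$; commuting the fiber direction of $\pi'$ past the submanifold tangent space yields a further $(-1)^{\dim\M-1}$, together with the intrinsic sign $\tau_1$ of the forgetful map $\pi'$ defined by $\mathrm{or}(\M)=\tau_1\cdot\mathrm{or}(\partial\Sigma)\wedge\pi'^*\mathrm{or}(\mathcal{M}_{k,l}(\beta))$. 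The two factors of $(-1)^{\dim\M-1}$ cancel, leaving $\sgn\pi=\tau_1$.

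The main obstacle is identifying $\tau_1$ under the Solomon--Tukachinsky orientation convention on moduli spaces of disks with boundary markings, which is induced by the relative spin structure $\mathfrak{s}$; the label shift under $\pi'$ must also be tracked, as must the interplay of the $\pslr$-quotient with the boundary orientation of $\Sigma$ near $\infty$. By Proposition~\ref{dim}, $\dim\M\equiv n+k\pmod 2$, and one expects $\tau_1=(-1)^{n+1}$, yielding $\sgn\pi=(-1)^{n+1}$ as claimed. Once $\tau_1$ is pinned down, the remainder is a direct verification in the explicit local coordinates above.
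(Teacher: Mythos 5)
The paper does not actually prove this lemma: it is quoted from \cite{RQH}*{Lemma 3.11} and used as an external input, so there is no internal proof to compare your argument against. Judged on its own merits, the first half of your proposal is sound: recovering $z_{0}$ as the second boundary endpoint of the unique hyperbolic geodesic through $w_{1}$ emanating from $z_{m}$ (resp.\ through $w_{2},w_{1}$ when $m=0$) does show that $\pi$ restricts to a diffeomorphism onto an open subset of the open stratum, and your normalization $w_{1}=i$, $z_{m}=0$ giving $\chi_{m}=\tfrac12-\tfrac{i}{2z_{0}}$ is consistent with the computation appearing in Proposition \ref{ext}; on the open stratum the condition $\chi_{m}\in I$ is indeed the single codimension-one condition $\chi_{m}=\tfrac12$.

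The gap is in the sign. The entire content of the orientation claim is the value $(-1)^{n+1}$, and your argument reduces it to ``the intrinsic sign $\tau_{1}$ of the forgetful map $\pi'$,'' which you then only \emph{expect} to equal $(-1)^{n+1}$. That is exactly the quantity that must be computed: how the orientation induced by the relative spin structure (\cite{LIFT}*{Chapter 8}) behaves under forgetting a boundary marked point, including the relabelling, is a substantive input and nothing in your argument establishes it, so the proof is circular at its crux. Moreover, the intermediate factors are not well defined as written: $p_{2}$ is a codimension-one embedding and $\pi'$ is a submersion with one-dimensional fibers, so ``$\sgn p_{2}$'' and ``$\sgn(\pi'|_{\chi_{m}^{-1}(I)})$'' only acquire meaning once you fix a co-orientation of $\chi_{m}^{-1}(I)\subset\M$ (coming from the fiber-product convention of Section \ref{fibtrans}) and an orientation of the fibers of $\pi'$, and the claimed cancellation of the two factors of $(-1)^{\dim\M-1}$ cannot be verified without making those choices explicit and checking their compatibility. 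As it stands, your proposal proves the diffeomorphism statement but assumes the sign it is meant to derive.
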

\par
Denote by
\begin{align*}
    \evb:\Mg\rightarrow L,\quad j=0,...,k,\\
    \evi:\Mg\rightarrow X,\quad j=1,...,l,
\end{align*}
the evaluation maps (note the abuse of notation, since these maps are restrictions of the ones previously denoted by $\evb$, $\evi$). Assuming as in \cite{RQH} that $\evbz$ is a proper submersion (see Section \ref{reg}), define
\begin{align*}
    \qg:C^{\otimes k}\otimes E^{\otimes l}\rightarrow C
\end{align*}
by
\begin{align*}
    \qg(\alpha;\eta)=(-1)^{\varepsilon(\alpha)+|\alpha|+k}(\evbz)_{*}\Big(\bigwedge_{j=1}^{l}(\evi)^{*}\eta_{j}\wedge\bigwedge_{j=1}^{k}(\evb)^{*}\alpha_{j}\Big).
\end{align*}
Set
\begin{align*}
    \mathfrak{q}_{k,l;m}=\sum_{\beta\in\Pi}T^{\beta}\qg.
\end{align*}
\par
These operators have the following properties.
\begin{prop}[Degree, \cite{RQH}*{Lemma 3.7}]\label{gdegree}
The map $\mathfrak{q}_{k,l;m}$ is of degree $3-k-2l$.
\end{prop}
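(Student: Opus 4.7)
The plan is a direct dimension count, parallel to the proof of Proposition \ref{degree} for the $\qop$ operators, with a single new ingredient coming from the geodesic constraint.

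First I would compute $\dim \Mg$. Since $\Mg = I \times_{\Cinf} \M$ and $\Cinf$ is a manifold without corners, the fiber product formula gives
\begin{align*}
\dim \Mg = \dim I + \dim \M - \dim \Cinf = \dim \M - 1.
\end{align*}
Applying Proposition \ref{dim} then yields $\dim \Mg = n - 3 + k + 2l + \mu(\beta)$. This dimensional shift by $-1$ relative to $\M$ is the only place where the geodesic constraint enters the computation, and it accounts precisely for the change from $2-k-2l$ in Proposition \ref{degree} to $3 - k - 2l$ here.

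Next, I would trace degrees through the defining formula for $\qg$. Pullbacks preserve form degree, so the integrand $\bigwedge_{j}(\evi)^{*}\eta_{j}\wedge\bigwedge_{j}(\evb)^{*}\alpha_{j}$ has degree $|\alpha|+|\eta|$. The pushforward along $\evbz$ lowers this by $\rdim \evbz = \dim \Mg - n = k + 2l + \mu(\beta) - 3$. Summing over $\beta$ with weight $T^{\beta}$ contributes an additional $\mu(\beta)$ to the degree, and the sign prefactor $(-1)^{\varepsilon(\alpha)+|\alpha|+k}$ has no effect on degree. Collecting terms, the output has degree $|\alpha|+|\eta|+3-k-2l$, so the map $\mathfrak{q}_{k,l;m}$ has degree $3-k-2l$ as claimed.

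There is no real obstacle here: once Propositions \ref{dim} and \ref{fiber} are granted, the proof is essentially a two-line bookkeeping argument. The only subtlety worth double-checking is that $\chi_{m}$ is transversal to the inclusion $I\hookrightarrow\Cinf$, which is exactly the content of Lemma \ref{transg} and legitimises both the fiber product construction and the assertion that $\Mg$ has the expected dimension.
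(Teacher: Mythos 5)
Your dimension count is correct and is exactly the standard argument: the paper itself states this result as a citation of \cite{RQH}*{Lemma 3.7} without reproducing a proof, and the expected proof is precisely the bookkeeping you give, with the geodesic constraint cutting $\dim\Mg$ down by one relative to $\M$ and the $T^{\beta}$ weight absorbing the $\mu(\beta)$ term. No gaps.
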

\begin{prop}[Unit, \cite{RQH}*{Lemma 3.13}]\label{gunit}
Assume $\alpha_{i}=c\cdot1$ for some $c\in R$ and $1\leq i\leq k$, $i\neq m$. Then $\mathfrak{q}_{k,l;m}(\alpha;\eta)=0$.
\end{prop}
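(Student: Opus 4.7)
The plan is to exhibit a forgetful map from $\Mg$ that forgets the $i$-th boundary marked point, observe that the integrand in the definition of $\qg(\alpha;\eta)$ becomes a constant multiple of a pullback along this map, and then conclude vanishing from the fact that the pushforward of a $0$-form along a proper submersion of positive relative dimension is zero.

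Concretely, I would define a forgetful map $\pi:\Mg\rightarrow\mathcal{M}_{k,l;m'}(\beta)$ that forgets $z_i$, relabels the remaining boundary marked points to preserve cyclic order, and stabilizes the underlying stable map if necessary. Here $m'=m$ if $i>m$ and $m'=m-1$ if $i<m$. Since $i\notin\{0,m\}$, none of the four marked points entering the geodesic constraint defining $\Mg$ (namely $(z_0,w_1,\bar w_2,w_2)$ when $m=0$, or $(z_0,z_m,\bar w_1,w_1)$ when $m\neq 0$) is removed, so the constraint descends and $\pi$ is well-defined, proper, a submersion on the open stratum, and of relative dimension $1$. Writing $\widetilde{evb}_{j'}$ and $\widetilde{evi}_{j}$ for the evaluation maps on the target, the maps $\evbz$, each $\evb$ with $j\neq i$, and each $\evi$ factor as $\widetilde{evb}_{j'}\circ\pi$ or $\widetilde{evi}_{j}\circ\pi$, after the appropriate relabeling.

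Using $\alpha_i=c\cdot 1$, the pullback $(evb^{\beta}_i)^*\alpha_i$ equals the constant $c$, so the wedge appearing in the definition of $\qg(\alpha;\eta)$ takes the form $c\cdot\pi^*\widetilde{\Omega}$ for a form $\widetilde{\Omega}$ on $\mathcal{M}_{k,l;m'}(\beta)$. By Proposition \ref{subcomp} applied to $\evbz=\widetilde{evb}_0\circ\pi$, and then the projection formula Proposition \ref{int} with $\xi=1$, we get
\begin{align*}
(\evbz)_*\bigl(c\cdot\pi^*\widetilde{\Omega}\bigr)=(\widetilde{evb}_0)_*\,\pi_*\bigl(\pi^*(c\widetilde{\Omega})\bigr)=(\widetilde{evb}_0)_*\bigl(c\widetilde{\Omega}\wedge\pi_*(1)\bigr).
\end{align*}
Since $\pi$ has relative dimension $1>0$ and $1$ is a $0$-form, integration over the fibers of $\pi$ forces $\pi_*(1)=0$. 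Hence $\qg(\alpha;\eta)=0$, and the overall sign in the definition of $\qg$ is irrelevant.

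The main obstacle is verifying that $\pi$ is a genuine morphism of orbifolds with corners and that the factorizations of the evaluation maps through $\pi$ hold in a way compatible with the submersion hypothesis needed to apply Propositions \ref{subcomp} and \ref{int}. Potential subtleties at corner strata of $\Mg$, where additional disk or sphere bubbles could destabilize under forgetting $z_i$, do not affect the computation, since the pushforward of smooth differential forms is determined by the open stratum on which the described factorization is transparent.
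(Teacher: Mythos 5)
Your argument is the same strategy the paper uses for the parallel unit statements it does prove (Propositions \ref{horunit}, \ref{ggunit}, \ref{sunit}, and the closely related Proposition \ref{gfunclass}): factor through the forgetful map $\pi$ that drops the marked point carrying the unit, extract a factor of the pushforward of $1$ along $\pi$, and kill it because $\rdim\pi=1$. The one place you diverge is also the one soft spot. You push forward \emph{differential forms} along $\pi$, and in the paper's framework (Section \ref{orientint}) that pushforward, and hence Propositions \ref{subcomp} and \ref{int}, are only available when $\pi$ is a proper submersion. The forgetful map involves stabilization and fails to be a submersion along the nodal strata, and it is not among the maps assumed to be submersions in Section \ref{reg}, so this hypothesis cannot simply be asserted. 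The paper avoids the issue by applying $\varphi$ and working with \emph{currents}: the current pushforward $\pi_{*}\varphi(1)$ is always defined (it is dual to $\pi^{*}$), it has degree $\deg\varphi(1)-\rdim\pi=-1$ and therefore vanishes, and injectivity of $\varphi$ (Proposition \ref{curhomprop}) then yields $\qg(\alpha;\eta)=0$. Your fallback remark that the form-level computation is "determined by the open stratum" is morally right but is not a justification available inside the paper's formalism; making it rigorous would require showing that $\pi$ is a proper submersion over a dense open set whose complement is negligible for fiber integration, which is exactly the work the currents detour is designed to skip. Everything else in your write-up — the choice of $m'$, the observation that $i\neq 0,m$ keeps the geodesic constraint intact, and the vanishing of the fiber integral of a $0$-form over $1$-dimensional fibers — matches the paper's reasoning.
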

\par
Set
\begin{align*}
    \iota_{0}(\alpha,\eta;i,J_{1})&=|\eta^{1}|+(|\eta^{2}|+1)(|\alpha^{1}|+i)+\sigma_{J_{2},J_{1}}^{\eta},\\
    \iota_{1}(\alpha,\eta;i,J_{1})&=|\eta^{1}|+(|\eta^{2}|+1)(|\alpha^{1}|+i)+\sigma_{J_{1},J_{2}}^{\eta},\\
    \iota_{2}(\alpha,\eta;i,J_{1})&=|\eta^{1}|+|\eta^{2}|(|\alpha^{1}|+i+1)+\sigma_{J_{1},J_{2}}^{\eta},\\
    \iota_{3}(\alpha,\eta;J_{1})&=|\eta|+|\alpha|+k+n+\sigma^{\eta}_{J_{2},J_{1}},
\end{align*}
and let
\begin{align*}
    m'(m,i,k_{2})=
    \begin{cases}
    m,&1\leq m<i,\\
    i,&i\leq m<i+k_{2},\\
    m-k_{2}+1,&i+k_{2}\leq m\leq k.
    \end{cases}
\end{align*}
\begin{prop}[Structure equation for $\Mg$, $m\neq0$, \cite{RQH}*{Proposition 3.1}]\label{gstruc}
We have
\begin{align*}
    0&=-\mathfrak{q}_{k,l;m}(\alpha;d\eta)\\
    &+\sum_{\substack{k_{1}+k_{2}=k+1\\1\leq i\leq k_{1}\\J_{1}\sqcup J_{2}=[l]\\1\in J_{1}}}(-1)^{\iota_{1}(\alpha,\eta;i,J_{1})}\mathfrak{q}_{k_{1},l_{1};m'(m,i,k_{2})}(\alpha^{1}\otimes \mathfrak{q}_{k_{2},l_{2}}(\alpha^{2};\eta^{2})\otimes\alpha^{3};\eta^{1})\\
    &+\sum_{\substack{k_{1}+k_{2}=k+1\\m-k_{2}+1\leq i\leq m\\J_{1}\sqcup J_{2}=[l]\\1\in J_{2}}}(-1)^{\iota_{2}(\alpha,\eta;i,J_{1})}\mathfrak{q}_{k_{1},l_{1}}(\alpha^{1}\otimes \mathfrak{q}_{k_{2},l_{2};m-i+1}(\alpha^{2};\eta^{2})\otimes\alpha^{3};\eta^{1}).
\end{align*}
\end{prop}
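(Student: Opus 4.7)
The strategy is to apply Stokes' theorem (Proposition \ref{Stokes}) to the pushforward defining $\qg$ and to identify each resulting term with a piece of the claimed identity. Set $\xi=\bigwedge_{j=1}^{l}(\evi)^{*}\eta_{j}\wedge\bigwedge_{j=1}^{k}(\evb)^{*}\alpha_{j}$, so that $\qg(\alpha;\eta)=(-1)^{\varepsilon(\alpha)+|\alpha|+k}(\evbz)_{*}\xi$. Stokes yields
\begin{equation*}
d((\evbz)_{*}\xi) = (\evbz)_{*}(d\xi) + (-1)^{\dim\Mg+|\xi|}((\evbz)|_{\partial\Mg})_{*}\xi,
\end{equation*}
and after multiplying by $(-1)^{\varepsilon(\alpha)+|\alpha|+k}$ this splits into three pieces to be matched against the parts of the statement.

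The Leibniz expansion of $d\xi$ produces terms of type $d\eta_{j}$ and type $d\alpha_{j}$. The former collect, with Koszul signs from $|\eta^{<j}|$, into $-\qg(\alpha;d\eta)$. The latter are absorbed into the first sum at the indices $(k_{2},l_{2},\beta_{2})=(1,0,\beta_{0})$ via the energy-zero identity $\mathfrak{q}_{1,0}^{\beta_{0}}(\alpha_{j})=d\alpha_{j}$ from Proposition \ref{ezero}; since $m'(m,j,1)=m$, the outer operator is correctly $\qg_{k,l;m}$. The leftover piece $d((\evbz)_{*}\xi)$ equals, up to sign, $d\qg(\alpha;\eta)=\mathfrak{q}_{1,0}^{\beta_{0}}(\qg(\alpha;\eta))$, which is the second-sum term at $(k_{1},l_{1},\beta_{1})=(1,0,\beta_{0})$, $i=1$, $J_{1}=\emptyset$, $J_{2}=[l]$.

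For the boundary term, Proposition \ref{fiberbdry} applied to $\Mg=I\times_{\Cinf}\M$ gives $\partial\Mg=\partial I\times_{\Cinf}\M - I\times_{\Cinf}\partial\M$. The second piece records disk bubbling in $\M$, with the cross-ratio condition $\chi_{m}\in I$ transferred via Proposition \ref{root} and the extension rules for the cross-ratio. When $w_{1}$ stays on the main component (the case $1\in J_{1}$), the geodesic remains on the main, yielding the first-sum terms with $(k_{2},l_{2},\beta_{2})\neq(1,0,\beta_{0})$; when $w_{1}$ migrates to the bubble (the case $1\in J_{2}$), the same cross-ratio analysis forces $z_{m}$ to lie on the bubble too, which explains the index range $m-k_{2}+1\leq i\leq m$ and the inner index $m-i+1$ of the second sum. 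The first piece $\partial I\times_{\Cinf}\M$, corresponding to $\chi_{m}\in\{0,1\}$, sits on nodal strata where two pairs among $z_{0},z_{m},\bar w_{1},w_{1}$ collide; a direct inspection using the extension rule shows it either identifies with boundary strata already counted with cancelling orientation or lies in codimension $\geq 2$ and contributes nothing.

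The principal obstacle is the sign calculation: assembling $\iota_{1}$ and $\iota_{2}$ from the Stokes exponent $\dim\Mg+|\xi|$, the fiber-product orientation sign of Proposition \ref{fiber}, the prefactor $\varepsilon(\alpha)+|\alpha|+k$, the Koszul signs arising when the wedge $\bigwedge(\evi)^{*}\eta_{j}\wedge\bigwedge(\evb)^{*}\alpha_{j}$ is split between the two components at the node, and the orientation comparison of Lemma \ref{forsgn} relating $\Mg$ to fiber products of $\mathcal{M}$-spaces, is intricate bookkeeping that I would verify by reducing to test cases with degrees in $\{0,1\}$ and matching parities directly against $\iota_{1},\iota_{2}$. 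A secondary subtlety is handling the $\partial I$ contribution, which demands an explicit check that it produces no net new term.
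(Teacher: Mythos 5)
You should first be aware that the paper does not prove this proposition at all: it is quoted directly from \cite{RQH}*{Proposition 3.1}, and the only arguments of this type actually carried out in the text are for the new moduli spaces $\Mgg$ and $\Ms$ (Propositions \ref{ggstruc}, \ref{lstruc}, \ref{rstruc}). Your outline follows exactly the template of those proofs: Stokes' theorem applied to $(\evbz)_{*}\xi$ over $\Mg=I\times_{\Cinf}\M$, the boundary decomposition from Proposition \ref{fiberbdry}, absorption of the $d\alpha_{j}$ terms and of $d((\evbz)_{*}\xi)$ into the two sums at the degenerate indices $(\beta_{2},k_{2},l_{2})=(\beta_{0},1,0)$ (using $m'(m,i,1)=m$) and $(\beta_{1},k_{1},l_{1})=(\beta_{0},1,0)$, and the observation that $1\in J_{2}$ forces $z_{m}$ onto the bubble, which produces the range $m-k_{2}+1\leq i\leq m$ and the inner index $m-i+1$. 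This skeleton is correct and is the intended argument.

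Two points, however, fall short. First, your treatment of $\partial I\times_{\Cinf}\M$ is not what happens: for $m\neq0$ one has $\chi_{m}^{-1}(0)=\chi_{m}^{-1}(1)=\emptyset$ (this is the fact the paper imports from \cite{prep} in Section \ref{boundary}), so this piece vanishes outright. There is no cancellation of orientations and no codimension-two locus to discard; asserting ``cancelling orientation'' is simply the wrong mechanism, and the actual emptiness claim requires a geometric argument about which degenerations of the configuration $(z_{0},z_{m},\bar{w}_{1},w_{1})$ are realizable on a stable domain. Second, the identification of the exponents $\iota_{1},\iota_{2}$ is the substantive content of the proposition, and ``verify by reducing to test cases with degrees in $\{0,1\}$'' does not establish it: the exponent is an inhomogeneous quadratic expression in several degrees and combinatorial indices, and low-degree spot checks cannot certify the cross terms. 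A complete argument must track, as the paper does in Section \ref{computegg} for $\Mgg$ via the chain $\delta_{1},\dots,\delta_{5}$ and the analogue of Lemma \ref{signs}, the orientation sign $(-1)^{\delta_{1}}$ of the bubbling stratum from \cite{Ainf}*{Proposition 2.8}, the Koszul signs from splitting $\xi$ across the node, the degree shift by $\rdim evb_{0}^{\beta_{2}}$ in Proposition \ref{int}, and the normalizing prefactor $(-1)^{\varepsilon(\alpha)+|\alpha|+k}$ in the definition of $\qg$. As written, the proposal is a correct plan with the decisive computation left undone.
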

\begin{prop}[Structure equation for $\Mgz$, \cite{RQH}*{Proposition 3.2}]\label{gzstruc}
We have
\begin{align*}
    0&=-\mathfrak{q}_{k,l;0}(\alpha;d\eta)\\
    &+\sum_{\substack{k_{1}+k_{2}=k+1\\1\leq i\leq k_{1}\\J_{1}\sqcup J_{2}=[l]\\1,2\in J_{1}}}(-1)^{\iota_{1}(\alpha,\eta;i,J_{1})}\mathfrak{q}_{k_{1},l_{1};0}(\alpha^{1}\otimes \mathfrak{q}_{k_{2},l_{2}}(\alpha^{2};\eta^{2})\otimes\alpha^{3};\eta^{1})\\
    &+\sum_{\substack{k_{1}+k_{2}=k+1\\1\leq i\leq k_{1}\\J_{1}\sqcup J_{2}=[l]\\2\in J_{1},1\in J_{2}}}(-1)^{\iota_{0}(\alpha,\eta;i,J_{1})}\mathfrak{q}_{k_{1},l_{1};i}(\alpha^{1}\otimes \mathfrak{q}_{k_{2},l_{2}}(\alpha^{2};\eta^{2})\otimes\alpha^{3};\eta^{1})\\
    &+\sum_{\substack{k_{1}+k_{2}=k+1\\1\leq i\leq k_{1}\\J_{1}\sqcup J_{2}=[l]\\1,2\in J_{2}}}(-1)^{\iota_{2}(\alpha,\eta;i,J_{1})}\mathfrak{q}_{k_{1},l_{1}}(\alpha^{1}\otimes \mathfrak{q}_{k_{2},l_{2};0}(\alpha^{2};\eta^{2})\otimes\alpha^{3};\eta^{1})\\
    &+\sum_{\substack{J_{1}\sqcup J_{2}=[l]\\1,2\in J_{2}}}(-1)^{\iota_{3}(\alpha,\eta;J_{1})}\mathfrak{q}_{k_{1},l_{1}}(\alpha;\mathfrak{q}_{\emptyset,l_{2}}(\eta^{2})\otimes\eta^{1})\\
\end{align*}
\end{prop}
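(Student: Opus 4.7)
The plan is to establish the structure equation by analyzing the codimension-one boundary of $\Mgz$ and applying Stokes' theorem (Proposition \ref{Stokes}) to the pushforward defining $\qgz$, in analogy with the proofs of Propositions \ref{struc} and \ref{gstruc}. Since $\Mgz = I\times_{\Cinf}\M$ with fiber product taken along the cross-ratio map $\chi_0$, Propositions \ref{fiberbdry} and \ref{orbprop} give
\begin{align*}
\partial\Mgz \;=\; \partial I\times_{\Cinf}\M \;-\; I\times_{\Cinf}\partial\M,
\end{align*}
the minus sign arising from $\dim I+\dim\Cinf$ being odd. The $-\mathfrak{q}_{k,l;0}(\alpha;d\eta)$ summand then comes from the interior $d$-term in Stokes, by the same computation as in \cite{Ainf}*{Proposition 2.4}.

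Next, I would match each of the four sums with a component of the boundary decomposition. The piece $I\times_{\Cinf}\partial\M$ splits according to the standard disk-bubbling and sphere-bubbling description of $\partial\M$, with the distribution of the special interior points $w_1$ and $w_2$ between the two components dictating the type of geodesic constraint produced: if both $w_1,w_2$ remain on the component carrying the geodesic, it survives as a type-$0$ constraint, giving the first sum; if $w_2$ stays but $w_1$ bubbles off, the main component's geodesic becomes a type-$i$ constraint anchored at the new boundary node $z_i$, giving the second sum with $\mathfrak{q}_{k_1,l_1;i}$; if both $w_1,w_2$ travel to the bubble, the bubble carries the type-$0$ geodesic, giving the third sum. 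Sphere bubbling contributes only when both $w_1,w_2$ lie on the bubbled sphere (otherwise $\chi_0$ remains generic on the disk component and the degeneration is invisible to the geodesic constraint), producing the fourth sum via $\mathfrak{q}_{\emptyset,l_2}$. The remaining piece $\partial I\times_{\Cinf}\M$ corresponds to $\chi_0\in\{0,1\}$; since point coincidences are required, this restricts to nodal strata of $\M$, and these contributions are absorbed into the four cases above via the tree-projection $x\mapsto x^\nu$ defining $\chi_0$ on nodal surfaces. Each stratum is then rewritten as an iterated fiber product, and Propositions \ref{subcomp}, \ref{int}, \ref{proj} together with the fiber product identities of Propositions \ref{fiber} and \ref{orbprop} reassemble it into the stated composition of $\qop$-, $\qg$-, $\qgz$-, and $\qx$-type operators.

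The main obstacle will be sign bookkeeping. The exponents $\iota_0,\iota_1,\iota_2,\iota_3$ must emerge precisely from the combination of the intrinsic $\varepsilon(\alpha)+|\alpha|+k$ factor in the definition of $\qg$ and $\qgz$, the Koszul signs from permuting pullbacks of $\eta^1,\eta^2,\alpha^1,\alpha^2,\alpha^3$ past one another (producing the $\sigma^\eta$ contributions), the orientation sign $\sgn\pi = n+1$ from Lemma \ref{forsgn} relating the splittings to the forgetful forms, and the boundary-orientation sign from Proposition \ref{fiberbdry}. The calculation extends \cite{RQH}*{Proposition 3.1} with two genuinely new features: the re-indexed geodesic of type $i$ in the second sum, whose $\varepsilon$-style accounting differs from the $m\neq 0$ case because the reference interior point changes between the constraint $\chi_0$ and the constraint $\chi_i$; and the sphere-bubbling fourth sum, whose sign must absorb both the $(-1)^{w_{\mathfrak{s}}(\beta)}$ factor in the definition of $\qx$ and the contribution from Lemma \ref{forsgn}.
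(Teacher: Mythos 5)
Your overall strategy (Stokes' theorem on $(\evbz)_*$ plus a decomposition of $\partial\Mgz$ via Proposition \ref{fiberbdry}) is the right one; note that the paper itself does not prove this proposition but quotes it from \cite{RQH}*{Proposition 3.2}, and the closest in-paper model is the proof of Proposition \ref{ggstruc} in Sections \ref{boundary}--\ref{computegg}.

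There is, however, a genuine error in how you attribute the sphere-bubbling term. In this framework $\partial\M$ consists \emph{only} of two-disk-component configurations (\cite{Ainf}*{Proposition 2.8}, reproduced in Section \ref{boundary}): a stratum with $d+1$ disk and $e$ sphere components has codimension $d+2e$, so sphere bubbling is interior codimension two in $\M$ and contributes nothing to $I\times_{\Cinf}\partial\M$. The fourth sum does not come from that summand. It comes from the piece you try to dismiss, namely
\begin{align*}
\partial I\times_{\Cinf}\M=\chi_{0}^{-1}(1)-\chi_{0}^{-1}(0),
\end{align*}
where one must separately establish that $\chi_{0}^{-1}(0)=\emptyset$ and that $\chi_{0}^{-1}(1)$ is exactly the locus where $w_{1},w_{2}$ bubble off onto a sphere, identified (with the sign $(-1)^{w_{\mathfrak{s}}(\beta_{2})}$) as $\sum\mathcal{M}_{k+1,1+|J_{1}|}(\beta_{1})\times_{X}\mathcal{M}_{1+|J_{2}|}(\beta_{2})$; this is the content the paper imports from \cite{prep} in the $\Mgg$ computation. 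Your claim that the $\partial I$ contributions are ``absorbed into the four cases above via the tree-projection'' is not correct and leaves you with no mechanism to produce the $\mathfrak{q}_{\emptyset,l_{2}}$ term at all: your $I\times_{\Cinf}\partial\M$ piece can only ever yield the three disk-bubbling sums. The disk-bubbling case analysis itself (which of $w_{1},w_{2}$ lands on which component determining constraints of type $0$, type $i$, or none) is correct, and deferring the sign bookkeeping is reasonable at this level of detail, but the boundary identification must be fixed before the argument closes.
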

\par
We also have the following property.
\begin{prop}[Fundamental class on the geodesic]\label{gfunclass}
For $m\neq0$ we have
\begin{align*}
    \qg(\alpha;1\otimes\eta)=
    \begin{cases}
    (-1)^{n}\alpha_{1},&(\beta,k,l,m)=(\beta_{0},1,1,1),\\
    0,&\text{otherwise}.
    \end{cases}
\end{align*}
\end{prop}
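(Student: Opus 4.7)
The plan is to split the argument according to whether $(\beta,k,l,m)$ equals the exceptional tuple $(\beta_{0},1,1,1)$ or not.

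For every other choice of $(\beta,k,l,m)$ with $m\neq 0$ (in particular $k\geq 1$ and $l\geq 1$), I would prove vanishing via the forgetful map $\pi:\Mg\to\mathcal{M}_{k+1,l-1}(\beta)$ that drops the first interior marked point $w_{1}$, along with the geodesic constraint, and stabilizes. One first checks that the target is non-empty: this is automatic when $\beta\neq\beta_{0}$, and when $\beta=\beta_{0}$ it reduces to the stability inequality $(k+1)+2(l-1)\geq 3$, which holds for $k,l\geq 1$ except precisely at $(k,l)=(1,1)$. The map $\pi$ is then a proper submersion with one-dimensional fiber, since forgetting $w_{1}$ restores the full two-dimensional freedom of an interior marked point while the geodesic constraint cuts that freedom down by one. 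Because $\eta_{1}=1$, the wedge inside the pushforward in the definition of $\qg$ becomes $\pi^{*}\omega'$ for a form $\omega'\in A^{*}(\mathcal{M}_{k+1,l-1}(\beta))$ assembled from pullbacks by the evaluation maps on the target; this uses that $\evbz$, the $evb_{j}$ for $j=1,\ldots,k$, and the $evi_{j}$ for $j\geq 2$ all factor through $\pi$. Combining Proposition \ref{subcomp} (to push forward through $\pi$ first) with the projection formula of Proposition \ref{int} (which gives $\pi_{*}\pi^{*}\omega'=\omega'\wedge\pi_{*}1$), I conclude $\qg(\alpha;1\otimes\eta)=0$ because $\pi_{*}1$ has degree $-\rdim\pi=-1$.

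For the exceptional tuple $(\beta_{0},1,1,1)$ the forgetful argument breaks down because $\mathcal{M}_{2,0}(\beta_{0})$ is unstable and empty, so I would compute directly. Since $\beta=\beta_{0}$ the map is constant to some $p\in L$; normalizing $z_{0},z_{1}$ to $0,\infty$ on the upper half-plane via $\pslr$ makes the geodesic between them the imaginary axis, and the residual scaling automorphism lets me fix $w_{1}=i$, so the configuration is rigid and $\mathcal{M}_{2,1;1}(\beta_{0})\cong L$, with $\evbz$, $evb_{1}$, $evi_{1}$ all coinciding (up to the inclusion $L\hookrightarrow X$ for the last one). To track the orientation I would invoke Lemma \ref{forsgn}: the forgetful map $\mathcal{M}_{2,1;1}(\beta_{0})\to\mathcal{M}_{1,1}(\beta_{0})$ that drops $z_{0}$ and relabels $z_{1}$ as the new $z_{0}$ is a diffeomorphism on the open stratum of sign $(-1)^{n+1}$. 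Under this identification $evb_{1}$ on the geodesic space becomes $evb_{0}$ on $\mathcal{M}_{1,1}(\beta_{0})$, and the latter provides an orientation-preserving identification with $L$, as one verifies by recomputing $\mathfrak{q}^{\beta_{0}}_{0,1}(;\eta)=-\eta|_{L}$ from Proposition \ref{ezero} and matching signs. Hence $(\evbz)_{*}\alpha_{1}=(-1)^{n+1}\alpha_{1}$, while the prefactor in the definition of $\qg$ evaluates to $(-1)^{\varepsilon(\alpha_{1})+|\alpha_{1}|+1}=(-1)^{2|\alpha_{1}|+3}=-1$; multiplying yields $\qg(\alpha_{1};1)=(-1)^{n}\alpha_{1}$.

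The main obstacle is the orientation bookkeeping in the exceptional case, where the forgetful-map shortcut is unavailable and one must extract the sign $(-1)^{n+1}$ from Lemma \ref{forsgn} and combine it with the explicit $\varepsilon$-sign in the definition of $\qg$; the final sign locks in only after cross-checking with Proposition \ref{ezero} that the identification $\mathcal{M}_{1,1}(\beta_{0})\cong L$ via $evb_{0}$ is orientation-preserving rather than reversing.
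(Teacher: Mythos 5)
Your strategy coincides with the paper's on both branches: a forgetful map plus a degree count for the vanishing cases, and the rigid configuration together with Lemma \ref{forsgn} for the exceptional tuple. The exceptional case is carried out correctly and your sign bookkeeping agrees with the paper's ($-\alpha_{1}\wedge(evb^{\beta_{0}}_{1})_{*}1=-(-1)^{n+1}\alpha_{1}=(-1)^{n}\alpha_{1}$). The gap is in the vanishing argument: you assert that the forgetful map $\pi:\Mg\rightarrow\mathcal{M}_{k+1,l-1}(\beta)$ is a proper submersion and then push the differential form forward along $\pi$ using Propositions \ref{subcomp} and \ref{int}. That claim is not justified and is false in general: the fibers of the forgetful map are (constrained loci in) the domain curves, which are nodal over boundary strata, and the stabilization step collapses components, so $d\pi$ drops rank there (already for $\overline{\mathcal{M}}_{0,5}\rightarrow\overline{\mathcal{M}}_{0,4}$ the local model near a node of a fiber is $(x,y)\mapsto xy$). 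Without the submersion property, $\pi_{*}$ is undefined on forms and Proposition \ref{subcomp} does not apply.

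The paper runs exactly your degree count but routes it through currents, which is what the machinery of Section \ref{orientint} is for: applying the injective map $\varphi$ of Definition \ref{curhom}, replacing the factor $1$ at the forgotten interior point by the current $\varphi(1)$, and pushing that current forward along $\pi$ — the current pushforward is dual to pullback and is defined for an arbitrary smooth map. This yields
\begin{align*}
    \varphi(\qg(\alpha;1\otimes\eta))=\pm\,\mathfrak{q}^{\beta}_{k,l-1}(\alpha;\eta)\wedge\pi_{*}\varphi(1),
\end{align*}
and $\pi_{*}\varphi(1)=0$ since its degree is $-\rdim\pi=-1$; injectivity of $\varphi$ then gives the vanishing. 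So your idea is the right one, but the step ``push forward through $\pi$ first'' must be justified via $\overline{A}^{*}$ (or by an independent proof that $\pi$ is a submersion, which you should not expect to have).
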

\begin{proof}
The proof is similar to that of the non-geodesic case in \cite{Ainf}*{Proposition 3.7}. Denote by $\pi:\Mg\rightarrow\mathcal{M}_{k+1,l-1}(\beta)$ the forgetful map that forgets the first interior marked point (and hence the geodesic), shifts the labeling of the rest, and stabilizes the resulting map. It is defined whenever $(\beta,k,l,m)\neq(\beta_{0},1,1,1)$. Denote by $\evb,\evi$ the evaluation maps for $\Mg$ and by $\widetilde{\evb},\widetilde{\evi}$ the ones for $\mathcal{M}_{k+1,l-1}(\beta)$, and note that
\begin{align*}
    \evb&=\widetilde{\evb}\circ\pi,\ \ \quad 1\leq j\leq k,\\
    \evi&=\widetilde{evi^{\beta}_{j-1}}\circ\pi,\quad 2\leq j\leq l.
\end{align*}
Recall the map $\varphi$ from Definition \ref{curhom}. By propositions \ref{subcomp} and \ref{int}, we have
\begin{align*}
    \varphi(\qg(\alpha;1\otimes\eta))&=\pm\varphi\Big((\evbz)_{*}\Big(\bigwedge_{j=2}^{l}(\evi)^{*}\eta_{j}\wedge\bigwedge_{j=1}^{k}(\evb)^{*}\alpha_{j}\wedge1\Big)\Big)\\
    &=\pm(\evbz)_{*}\Big(\bigwedge_{j=2}^{l}(\evi)^{*}\eta_{j}\wedge\bigwedge_{j=1}^{k}(\evb)^{*}\alpha_{j}\wedge\varphi(1)\Big)\\
    &=\pm(\widetilde{\evbz})_{*}\pi_{*}\Big(\pi^{*}\Big(\bigwedge_{j=1}^{l-1}(\widetilde{\evi})^{*}\eta_{j}\wedge\bigwedge_{j=1}^{k}(\widetilde{\evb})^{*}\alpha_{j}\Big)\wedge\varphi(1)\Big)\\
    &=\pm \mathfrak{q}_{k,l-1}^{\beta}(\alpha;\eta)\wedge\pi_{*}\varphi(1).
\end{align*}
Now,
\begin{align*}
    \deg\pi_{*}\varphi(1)=\deg\varphi(1)-\rdim\pi=0-1=-1,
\end{align*}
therefore $\pi_{*}\varphi(1)=0$. Since by Proposition \ref{curhomprop} $\varphi$ is injective, it follows that $\qg(\alpha;\eta)=0$.
\par
We turn to the case $(\beta,k,l,m)=(\beta_{0},1,1,1)$. In this case $evb^{\beta_{0}}_{0}=evb^{\beta_{0}}_{1}$, so that by Proposition \ref{int} we have
\begin{align*}
    \mathfrak{q}_{1,1;1}^{\beta_{0}}(\alpha_{1};1)=-(evb^{\beta_{0}}_{1})_{*}(evb^{\beta_{0}}_{1})^{*}\alpha_{1}=-\alpha_{1}\wedge(evb^{\beta_{0}}_{1})_{*}1.
\end{align*}
Let $\pi:\mathcal{M}_{2,1;1}(\beta_{0})\rightarrow\mathcal{M}_{1,1}(\beta_{0})$ be the forgetful map from Lemma \ref{forsgn}. Denote by $\widetilde{evb^{\beta_{0}}_{0}}$, $\widetilde{evi^{\beta_{0}}_{1}}$ the evaluation maps for $\mathcal{M}_{1,1}(\beta_{0})$. By Lemma \ref{forsgn} and Proposition \ref{funclass} we have
\begin{align*}
    (evb^{\beta_{0}}_{1})_{*}1&=(\widetilde{evb^{\beta_{0}}_{0}})_{*}\pi_{*}1\\&=(-1)^{n+1}(\widetilde{evb^{\beta_{0}}_{0}})_{*}1\\&=
    (-1)^{n+1}(\widetilde{evb^{\beta_{0}}_{0}})_{*}(\widetilde{evi^{\beta_{0}}_{1}})^{*}1\\&=
    (-1)^{n}\mathfrak{q}_{0,1}^{\beta_{0}}(1)\\&=(-1)^{n+1}.
\end{align*}
\end{proof}

\subsection{Horocyclic constraints}\label{horop}

In \cite{OC}, Hugtenburg defines the moduli space of disks with a horocyclic constraint on the points $z_{0},w_{1},w_{2}$, which we denote by $\Mh$. Recall that a horocycle in the disk is a circle tangent to the boundary of that disk. The open stratum of $\Mh$ is the subset of the open stratum of $\M$, in which the points $z_{0},w_{1},w_{2}$ lie on a horocycle in this order (counter-clockwise). This subset is well defined since horocycles are preserved by automorphisms of the disk. The space $\Mh$ is then the closure of its open stratum inside of $\M$.
\par
More precisely, the space $\Mh$ is defined in \cite{OC}*{Section 4.4} as a fiber product
\begin{align*}
    \Mh=I\times_{D^{2}}\M
\end{align*}
with respect to given maps $I\rightarrow D^{2}$ and $\M\rightarrow D^{2}$. The proof of \cite{OC}*{Lemma 4.19} shows that under our regularity assumptions these maps are transversal, thus $\Mh$ is a smooth orbifold with corners.
\par
Denote by
\begin{align*}
    \evb:\Mh\rightarrow L,\quad j=0,...,k,\\
    \evi:\Mh\rightarrow X,\quad j=1,...,l,
\end{align*}
the evaluation maps (note the abuse of notation, since these maps are restrictions of the ones denoted by $\evb$, $\evi$ in section \ref{op}). As in \cite{OC}*{Assumptions 4.18}, we assume that the spaces $\Mh$ are oriented orbifolds with corners, and that the maps $\evbz$ are proper submersions (see Section \ref{reg}).
\par
Define
\begin{align*}
    \qh:C^{\otimes k}\otimes E^{\otimes l}\rightarrow C
\end{align*}
by
\begin{align*}
    \qh(\alpha;\eta)=(-1)^{\varepsilon(\alpha)+|\alpha|+|\eta|+n+1}(\evbz)_{*}\Big(\bigwedge_{j=1}^{l}(\evi)^{*}\eta_{j}\wedge\bigwedge_{j=1}^{k}(\evb)^{*}\alpha_{j}\Big),
\end{align*}
and set
\begin{align*}
    \mathfrak{q}_{k,l;\perp}=\sum_{\beta\in\Pi}T^{\beta}\qh.
\end{align*}
\par
\begin{con}
Given $k_{1}+k_{2}+k_{3}=k+1$ and $1\leq i_{1}<i_{2}\leq k_{1}$, write $\alpha^{1}=\bigotimes_{j=1}^{i_{1}-1}\alpha_{j}$, $\alpha^{2}=\bigotimes_{j=i_{1}}^{i_{1}+k_{2}-1}\alpha_{j}$, $\alpha^{3}=\bigotimes_{j=i_{1}+k_{2}}^{i_{2}+k_{2}-1}\alpha_{j}$, $\alpha^{4}=\bigotimes_{j=i_{2}+k_{2}}^{i_{2}+k_{2}+k_{3}-1}\alpha_{j}$, and $\alpha^{5}=\bigotimes_{j=i_{2}+k_{2}+k_{3}}^{k}\alpha_{j}$. For a splitting $J_{1}\sqcup J_{2}\sqcup J_{3}=[l]$ into disjoint tuples respecting the order of $[l]$, write $\eta^{1}=\bigotimes_{j\in J_{1}}\eta_{j}$, $\eta^{2}=\bigotimes_{j\in J_{2}}\eta_{j}$, $\eta^{3}=\bigotimes_{j\in J_{3}}\eta_{j}$, $l_{1}=|J_{1}|$, $l_{2}=|J_{2}|$, and $l_{3}=|J_{3}|$. Write also $i'=i_{2}+k_{2}$, $J'=J_{1}\sqcup J_{2}$, $\alpha'=\bigotimes_{j=1}^{i'-1}\alpha_{j}$, and $\eta'=\bigotimes_{j\in J'}\eta_{j}$. 
\end{con}
Set
\begin{align*}
    \iota_{4}(\alpha,\eta;i,J_{1})&=|\eta^{2}|(|\alpha^{1}|+i+1)+1+\sigma_{J_{1},J_{2}}^{\eta},\\
    \dagger&=\iota(\alpha',\eta';i_{1},J_{1})+\iota(\alpha,\eta;i',J').
\end{align*}
\begin{prop}[Structure equation for $\Mh$, \cite{OC}*{Proposition 5.1}]\label{hstruc}
We have
\begin{align*}
    0&=-\mathfrak{q}_{k,l}(\alpha;d\eta)\\
    &+\sum_{\substack{k_{1}+k_{2}=k+1\\1\leq i\leq k_{1}\\J_{1}\sqcup J_{2}=[l]\\1,2\in j_{1}}}(-1)^{\iota(\alpha,\eta;i,J_{1})}\mathfrak{q}_{k_{1},l_{1};\perp}(\alpha^{1}\otimes \mathfrak{q}_{k_{2},l_{2}}(\alpha^{2};\eta^{2})\otimes\alpha^{3};\eta^{1})\\
    &+\sum_{\substack{k_{1}+k_{2}=k+1\\1\leq i\leq k_{1}\\J_{1}\sqcup J_{2}=[l]\\1,2\in j_{2}}}(-1)^{\iota_{4}(\alpha,\eta;i,J_{1})}\mathfrak{q}_{k_{1},l_{1}}(\alpha^{1}\otimes \mathfrak{q}_{k_{2},l_{2};\perp}(\alpha^{2};\eta^{2})\otimes\alpha^{3};\eta^{1})\\
    &+\sum_{\substack{J_{1}\sqcup J_{2}=[l]\\1,2\in J_{2}}}(-1)^{\sigma_{J_{1},J_{2}}^{\eta}}\mathfrak{q}_{k,l_{1}}(\alpha;\mathfrak{q}_{\emptyset,l_{2}}(\eta^{2})\otimes\eta^{1})\\
    &+\sum_{\substack{k_{1}+k_{2}+k_{3}=k+1\\1\leq i_{1}<i_{2}\leq k_{1}\\J_{1}\sqcup J_{2}\sqcup J_{3}=[l]\\1\in J_{3},2\in J_{2}}}(-1)^{\dagger}\mathfrak{q}_{k_{1},l_{1}}(\alpha^{1}\otimes \mathfrak{q}_{k_{2},l_{2}}(\alpha^{2};\eta^{2})\otimes\alpha^{3}\otimes \mathfrak{q}_{k_{4},l_{3}}(\alpha^{4};\eta^{3})\otimes\alpha^{5};\eta^{1}).
\end{align*}
\end{prop}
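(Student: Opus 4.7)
The plan is to proceed by analogy with the proofs of the structure equations in Propositions \ref{struc}, \ref{gstruc}, and \ref{gzstruc}: namely, to apply Stokes' theorem (Proposition \ref{Stokes}) to the pushforward
\begin{align*}
    (\evbz)_{*}\Big(\bigwedge_{j=1}^{l}(\evi)^{*}\eta_{j}\wedge\bigwedge_{j=1}^{k}(\evb)^{*}\alpha_{j}\Big)
\end{align*}
along $\evbz:\Mh\to L$. Expanding the total differential of the integrand via the Leibniz rule produces the first summand of the claimed equation (the $d\eta$ term), while the $d\alpha_{j}$ contributions are absorbed into the boundary sums via the energy-zero identity $\mathfrak{q}_{1,0}^{\beta_{0}}(\alpha)=d\alpha$ from Proposition \ref{ezero}, exactly as in the proof of Proposition \ref{struc}.

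The next step is to analyze $\partial\Mh$. Using the fiber product description $\Mh=I\times_{D^{2}}\M$ and Proposition \ref{fiberbdry}, the boundary decomposes into a signed sum of $\partial I\times_{D^{2}}\M$ and $I\times_{D^{2}}\partial\M$. The second piece is controlled by the standard codimension-one strata of $\partial\M$: a disk bubble meeting the main component at a boundary node, or a sphere bubble meeting at an interior node. These split further according to the location of $w_{1}$ and $w_{2}$. If neither of the two lies on the bubble, the horocycle persists on the main disk, producing the $\mathfrak{q}_{k_{1},l_{1};\perp}$ summand (case $1,2\in J_{1}$). If both lie on a bubbled disk, the horocyclic constraint transfers to the bubble, yielding the $\mathfrak{q}_{k_{2},l_{2};\perp}$ summand (case $1,2\in J_{2}$). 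If both lie on a sphere bubble, the sphere factors off as a closed stable map in $\Mx$ and the horocycle degenerates onto the nodal attaching point, yielding the $\mathfrak{q}_{\emptyset,l_{2}}$ summand.

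The remaining three-component term arises from the $\partial I$ piece. At one endpoint of $I$ the horocycle through $z_{0},w_{1},w_{2}$ is maximally degenerate, in such a way that $w_{1}$ and $w_{2}$ are forced onto two separate disk bubbles attached at two distinct boundary nodes of the main component; identifying this stratum as an appropriate iterated fiber product of $\M$ with two further copies of $\M$ along the attaching nodes yields the sum indexed by $k_{1}+k_{2}+k_{3}=k+1$, $1\leq i_{1}<i_{2}\leq k_{1}$, $1\in J_{3}$, $2\in J_{2}$. The opposite endpoint of $I$ is expected to correspond to a limiting configuration (for instance, a pairwise coincidence among $z_{0},w_{1},w_{2}$) whose contribution vanishes, either by a dimension count or by an argument analogous to Proposition \ref{funclass}.

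The main obstacle lies in the bookkeeping of signs. For each boundary stratum, one must identify it as an oriented fiber product with the natural fiber product appearing in the corresponding summand, and compare the induced boundary orientation (via Proposition \ref{fiberbdry}) with the convention of Proposition \ref{fiber} (together with Proposition \ref{orbprop}). This accounts for the exponents $\iota(\alpha,\eta;i,J_{1})$, $\iota_{4}(\alpha,\eta;i,J_{1})$, and $\sigma^{\eta}_{J_{1},J_{2}}$, together with the normalization $(-1)^{\varepsilon(\alpha)+|\alpha|+|\eta|+n+1}$ entering the definition of $\mathfrak{q}_{k,l;\perp}$. The three-component term is the most delicate: its sign $\dagger=\iota(\alpha',\eta';i_{1},J_{1})+\iota(\alpha,\eta;i',J')$ is computed by decomposing the iterated fiber product twice and re-applying the single-bubble sign computation at each stage, which is why $\dagger$ appears as a sum of two copies of $\iota$.
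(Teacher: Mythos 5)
This proposition is not proved in the paper at all: it is quoted verbatim from \cite{OC}*{Proposition 5.1}, so the paper's ``proof'' is a citation. Your overall strategy --- Stokes' theorem applied to $(\evbz)_{*}\xi$ on $\Mh=I\times_{D^{2}}\M$, with $\partial\Mh=\partial I\times_{D^{2}}\M+I\times_{D^{2}}\partial\M$ by Proposition \ref{fiberbdry}, the $d\alpha_{j}$ terms absorbed via $\mathfrak{q}_{1,0}^{\beta_{0}}$, and the signs extracted by comparing fiber-product orientations --- is indeed the argument of \cite{OC} and is exactly parallel to the paper's own proofs of Propositions \ref{ggstruc}, \ref{lstruc} and \ref{rstruc} in Sections \ref{boundary}--\ref{computegg} and \ref{sbdry}.

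There is, however, a genuine error in your boundary bookkeeping. By the stratification recalled in Section \ref{op}, a configuration with $d+1$ disk components and $e$ sphere components has codimension $d+2e$ in $\M$; hence the codimension-one boundary $\partial\M$ (as in \cite{Ainf}*{Proposition 2.8}) consists \emph{only} of two-disk configurations. The sphere-bubble stratum ($e=1$) and the three-disk stratum ($d=2$) are both codimension two in $\M$, so neither can arise from $I\times_{D^{2}}\partial\M$; they enter $\partial\Mh$ precisely as the preimages of the \emph{two} endpoints of $\partial I$, whose images in $D^{2}$ have codimension-two preimage in $\M$. This is exactly the mechanism in Section \ref{boundary}, where $\chi_{0}^{-1}(1)$ is identified with the locus where $w_{1},w_{2}$ bubble off to a sphere. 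You instead assign the $\mathfrak{q}_{\emptyset,l_{2}}$ term to $I\times_{D^{2}}\partial\M$ and assert that one endpoint of $I$ contributes nothing; both claims are wrong, and with your accounting one of the two $\partial I$ contributions would be missing from the structure equation. The correct statement is: one endpoint of $I$ corresponds to $w_{1},w_{2}$ colliding (sphere bubble, term with $\mathfrak{q}_{\emptyset,l_{2}}$), the other to the horocycle degenerating to $\partial\Sigma$, forcing $w_{1}$ and $w_{2}$ onto two separate disk bubbles (the three-component term). A smaller omission: in the disk-bubbling analysis you should also explain why the mixed splittings ($1\in J_{1},2\in J_{2}$ or vice versa) contribute nothing here, in contrast with Proposition \ref{gzstruc} where such a term does appear.
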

\par
We also have the following property.
\begin{prop}[Unit]\label{horunit}
Assume $\alpha_{i}=c\cdot1$ for some $c\in R$ and $1\leq i\leq k$. Then $\qh(\alpha;\eta)=0$.
\end{prop}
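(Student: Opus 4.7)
The plan is to mimic the strategy used in the proof of Proposition \ref{gfunclass}: introduce a forgetful map that drops the boundary marked point $z_{i}$, exploit the projection formula after passing to currents, and conclude by a degree count. By $R$-linearity of $\qh$, we reduce at the outset to $c = 1$, so that $\alpha_{i}$ is the constant function $1$ on $L$.

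Define $\pi : \Mh \to \mathcal{M}_{k,l;\perp}(\beta)$ as the forgetful map that drops $z_{i}$, shifts the labeling of the remaining boundary marked points, and stabilizes the resulting map if necessary. Since $i \geq 1$, the horocycle constraint on $z_{0}, w_{1}, w_{2}$ descends to the target, so $\pi$ is well defined. The condition $l \geq 2$, implicit in the definition of $\Mh$, guarantees that dropping a single boundary marked point does not violate stability. With $\pi$ in hand, the evaluation maps factor as $\evb = \widetilde{evb^{\beta}_{j'}} \circ \pi$ for $j \neq i$ (with $j' = j$ for $j < i$ and $j' = j - 1$ for $j > i$), and $\evi = \widetilde{evi^{\beta}_{j}} \circ \pi$; in particular $\evbz = \widetilde{evb^{\beta}_{0}} \circ \pi$.

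Passing to currents via $\varphi$ (Definition \ref{curhom}) and using propositions \ref{subcomp} and \ref{int} together with their analogues for currents, the above factoring combined with $(\evb)^{*} 1 = 1$ yields
\begin{align*}
    \varphi(\qh(\alpha; \eta)) = \pm\, \mathfrak{q}_{k-1,l;\perp}^{\beta}(\tilde{\alpha}; \eta) \wedge \pi_{*} \varphi(1),
\end{align*}
where $\tilde{\alpha}$ denotes $\alpha$ with its $i$-th entry removed. Since $\pi$ forgets exactly one boundary marked point, $\rdim \pi = 1$, so $\pi_{*} \varphi(1)$ is a current of degree $-1$ and must vanish. Injectivity of $\varphi$ (Proposition \ref{curhomprop}) then gives $\qh(\alpha; \eta) = 0$.

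The main point to verify carefully is that $\pi$ is defined for every admissible $(\beta, k, l, i)$ and that the factoring of the evaluation maps is correct. No exceptional case arises here, in contrast with Proposition \ref{unit} or Proposition \ref{gfunclass}: the horocycle constraint involves no $z_{i}$ with $i \geq 1$, and the condition $l \geq 2$ already present in the definition of $\Mh$ prevents destabilization upon forgetting $z_{i}$.
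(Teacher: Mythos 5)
Your proposal is correct and follows essentially the same route as the paper: the paper's proof also introduces the forgetful map $\pi:\Mh\rightarrow\mathcal{M}_{k,l;\perp}(\beta)$ dropping $z_{i}$, passes to currents via $\varphi$ to factor out $\pi_{*}\varphi(1)$, and concludes from the degree count $\deg\pi_{*}\varphi(1)=-1$ together with injectivity of $\varphi$. Your additional remarks on well-definedness of $\pi$ and the absence of exceptional cases (since $l\geq2$ prevents destabilization) are accurate and, if anything, slightly more careful than the paper's one-line appeal to the proof of Proposition \ref{gfunclass}.
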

\begin{proof}
Denote by $\pi:\Mh\rightarrow\mathcal{M}_{k,l;\perp}(\beta)$ the forgetful map that forgets the $i$th boundary marked point, shifts the labeling of the following boundary points, and stabilizes the resulting map. Write $\alpha=\alpha^{1}\otimes c\cdot1\otimes\alpha^{2}$. Similarly to the proof of Proposition \ref{gfunclass}, we get
\begin{align*}
    \varphi(\qh&(\alpha;\eta))=\pm c\cdot \mathfrak{q}_{k-1,l;\perp}^{\beta}(\alpha^{1}\otimes\alpha^{2};\eta)\wedge\pi_{*}\varphi(1).
\end{align*}
Now,
\begin{align*}
    \deg\pi_{*}\varphi(1)=\deg\varphi(1)-\rdim\pi=0-1=-1,
\end{align*}
thus $\pi_{*}\varphi(1)=0$. Since by Proposition \ref{curhomprop} $\varphi$ is injective, it follows that $\qh(\alpha;\eta)=0$.
\end{proof}

\subsection{Bounding pairs}\label{defop}
For a graded module $\Upsilon$, we denote by $\Upsilon_{r}\subset\Upsilon$ or by $(\Upsilon)_{r}\subset\Upsilon$ the subset of homogeneous elements having degree $r$. Let $\gamma\in(\mathcal{I}E)_{2}$ be closed and let $b\in(\mathcal{I}C)_{1}$. Let
\begin{align*}
    \alpha^{b;s_{0},...,s_{k}}&=b^{\otimes s_{0}}\otimes\alpha_{1}\otimes b^{\otimes s_{1}}\otimes\cdot\cdot\cdot\otimes\alpha_{k}\otimes b^{\otimes s_{k}},\\
    \eta^{\gamma;t}&=\frac{1}{t!}\eta\otimes\gamma^{\otimes t}.
\end{align*}
Define deformed operators as follows.
\begin{align*}
    \mathfrak{q}_{k,l}^{\gamma,b}(\alpha;\eta)&=\sum_{t,s_{0},...,s_{k}\geq0}\mathfrak{q}_{k+s_{0}+...+s_{k},l+t}(\alpha^{b;s_{0},...,s_{k}};\eta^{\gamma;t}),\\
    \mathfrak{q}_{\emptyset,l}^{\gamma}(\eta)&=\sum_{t\geq0}\mathfrak{q}_{\emptyset,l+t}(\eta^{\gamma;t}),\\
    \mathfrak{q}_{k,l;0}^{\gamma,b}(\alpha;\eta)&=\sum_{t,s_{0},...,s_{k}\geq0}\mathfrak{q}_{k+s_{0}+...+s_{k},l+t;0}(\alpha^{b;s_{0},...,s_{k}};\eta^{\gamma;t}),\\
    \mathfrak{q}_{k,l;m}^{\gamma,b}(\alpha;\eta)&=\sum_{t,s_{0},...,s_{k}\geq0}\mathfrak{q}_{k+s_{0}+...+s_{k},l+t;m+s_{0}+...+s_{m-1}}(\alpha^{b;s_{0},...,s_{k}};\eta^{\gamma;t}),\\
    \mathfrak{q}_{k,l;\perp}^{\gamma,b}(\alpha;\eta)&=\sum_{t,s_{0},...,s_{k}\geq0}\mathfrak{q}_{k+s_{0}+...+s_{k},l+t;\perp}(\alpha^{b;s_{0},...,s_{k}};\eta^{\gamma;t}).
\end{align*}
Similarly, define operators of the form $\mathfrak{q}_{\bullet}^{\beta,\gamma,b}$. We have the following proposition (see \citelist{\cite{RQH}*{Lemma 3.17}\cite{OC}*{Remark 4.36}}).
\begin{prop}\label{defprop}
All the structure equations and properties of sections \ref{op}, \ref{gop} and \ref{horop} hold for the deformed operators as well, with the exception of Proposition \ref{ezero} which reads
\begin{align*}
    \mathfrak{q}_{k,l}^{\beta_{0},\gamma,b}(\alpha;\eta)=\begin{cases}db-\gamma|_{L},&(k,l)=(0,0),\\-\eta_{1}|_{L},&(k,l)=(0,1),\\d\alpha_{1}\,&(k,l)=(1,0),\\(-1)^{|\alpha_{1}|}\alpha_{1}\wedge\alpha_{2},&(k,l)=(2,0),\\0,&\text{otherwise},\end{cases}
\end{align*}
and Proposition \ref{topint} which reads
\begin{align*}
    \int_{L}\mathfrak{q}_{k,l}^{\beta,\gamma,b}(\alpha;\eta)=\begin{cases}-\int_{L}\gamma|_{L},&(\beta,k,l)=(\beta_{0},0,0),\\-\int_{L}\eta_{1}|_{L},&(\beta,k,l)=(\beta_{0},0,1),\\(-1)^{|\alpha_{1}|}\langle\alpha_{1},\alpha_{2}\rangle_{L},&(\beta,k,l)=(\beta_{0},2,0),\\0,&\text{otherwise}.\end{cases}
\end{align*}
\end{prop}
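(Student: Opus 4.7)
The plan is to derive every deformed identity from its undeformed counterpart by substitution and a combinatorial regrouping of the resulting sums. For the deformed version of, say, Proposition \ref{struc}, I would expand the claimed equation using the definition of $\mathfrak{q}^{\gamma,b}_{k,l}$, and for each tuple $(t,s_0,\ldots,s_k)$ apply the undeformed Proposition \ref{struc} to the inputs $(\alpha^{b;s_0,\ldots,s_k};\eta^{\gamma;t})$, then sum. Because $\gamma$ is closed, $d(\eta^{\gamma;t})=(d\eta)^{\gamma;t}$, so after summing the $d$-term reassembles into $-\mathfrak{q}^{\gamma,b}_{k,l}(\alpha;d\eta)$. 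The same approach reduces the deformed versions of Propositions \ref{gstruc}, \ref{gzstruc}, and \ref{hstruc} to their undeformed counterparts.

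The main combinatorial task is to rewrite the sum over splittings of the deformed inputs as a double sum: first split $\alpha$ into $\alpha^1\otimes\alpha^2\otimes\alpha^3$ and $\eta$ into $\eta^1,\eta^2$, then independently insert $b$'s around each group and distribute $\gamma$'s into $\eta^1$ and $\eta^2$. For the $\gamma$-insertions the key identity is $\binom{t}{t_1}/t!=1/(t_1!\,t_2!)$, which reconstitutes the factorial prefactor appearing in the deformed operators on each side; for the $b$-insertions the regrouping is direct, since each inserted $b$ lies in exactly one of $\alpha^1,\alpha^2,\alpha^3$. Since $|b|$ is odd and $|\gamma|$ is even, the insertions shift the degrees $|\alpha^j|$ and $|\eta^j|$ only in ways that preserve the parities entering the signs $\iota,\iota_0,\iota_1,\iota_2,\iota_3,\iota_4$, so the undeformed signs transport unchanged. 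The symmetry, unit, degree, and fundamental-class properties follow formally from their undeformed analogues, since $b$ and $\gamma$ are fixed inputs whose insertion respects permutations of the remaining arguments.

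The two exceptional cases require a direct computation. For the deformed Proposition \ref{ezero}, expand $\mathfrak{q}^{\beta_0,\gamma,b}_{k,l}(\alpha;\eta)$ and use the undeformed Proposition \ref{ezero} to restrict to $(k',l')\in\{(0,1),(1,0),(2,0)\}$. For $(k,l)=(0,0)$ the only surviving contributions are $\mathfrak{q}^{\beta_0}_{1,0}(b)=db$ and $\mathfrak{q}^{\beta_0}_{0,1}(\gamma)=-\gamma|_L$; the term $\mathfrak{q}^{\beta_0}_{2,0}(b;b)$ vanishes because $|b|=1$ forces $b\wedge b=0$. For $(k,l)=(1,0)$, summing over the two positions of a single inserted $b$ yields $(-1)^{|\alpha_1|}\alpha_1\wedge b-b\wedge\alpha_1$, which vanishes by graded commutativity, leaving $d\alpha_1$. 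The cases $(k,l)=(0,1)$ and $(k,l)=(2,0)$ are analogous, and for every other $(k,l)$ no insertion can bring $(k',l')$ into the allowed range. For Proposition \ref{topint}, the same analysis after integration produces the new term $-\int_L\gamma|_L$ at $(k,l)=(0,0)$ from a single $\gamma$-insertion.

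The main obstacle is the sign and index bookkeeping in the regrouping step for the constrained versions, Propositions \ref{gstruc}, \ref{gzstruc}, and \ref{hstruc}, where the constraint index $m$ (or $\perp$) is tied to a specific marked point that may be adjacent to inserted $b$'s or $\gamma$'s. One has to verify that the shift of the constraint index under $b$-insertions in the inner $\mathfrak{q}^{\beta}_{k,l;m}$ matches exactly the rule $m\mapsto m+s_0+\cdots+s_{m-1}$ built into the definition of $\mathfrak{q}^{\gamma,b}_{k,l;m}$, and similarly that the labels $i$ in the structure equation for $\mathfrak{q}_{k,l;0}$ interact correctly with the $b$-insertions that come before position $i$. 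Once this correspondence is made explicit, the regrouping and sign verification proceed term-by-term as in the unconstrained case.
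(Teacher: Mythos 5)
Your proposal is correct and follows the standard expansion-and-regrouping argument; the paper itself does not prove Proposition \ref{defprop} but delegates it to \cite{RQH}*{Lemma 3.17} and \cite{OC}*{Remark 4.36}, where exactly this argument (apply the undeformed identity to each deformed input configuration, reassemble via the multinomial identity for the $\gamma$'s and the block decomposition for the $b$'s, and track the shift of the constraint index $m$) is carried out. Your direct computations for the exceptional cases, including the cancellation $(-1)^{|\alpha_1|}\alpha_1\wedge b-b\wedge\alpha_1=0$ and the vanishing of $b\wedge b$, match what is needed.
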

\begin{rmk}
For the rest of this thesis, we may refer to the structure equations and properties of the non-deformed operators (except for propositions \ref{ezero} and \ref{topint}) and apply them to the deformed operators without further comment.
\end{rmk}
\par
We turn to define bounding pairs, which are required for the construction of Lagrangian Floer cohomology.
\begin{defn}\label{bpdef}
We say $(\gamma,b)$ is a bounding pair if there exists $c\in(\mathcal{I}R)_{2}$ such that
\begin{align*}
    \mathfrak{q}_{0,0}^{\gamma,b}=c\cdot1.
\end{align*}
\end{defn}
\begin{rmk}\label{deftopint}
Note that if $(\gamma,b)$ is a bounding pair then
\begin{align*}
    \int_{L}\mathfrak{q}_{0,0}^{\gamma,b}=\int_{L}c\cdot1=0,
\end{align*}
and Proposition \ref{topint} holds for the deformed operators without the adjustment made in Proposition \ref{defprop}.
\end{rmk}
\par
The following lemma and proposition ensure the existence of bounding pairs under our assumptions.
\begin{lem}\label{BPlem}
Assume that $\mathfrak{q}^{\beta}_{k,l}$ doesn't vanish. Then either $\beta=\beta_{0}$ or $\mu(\beta)\geq2$.
\end{lem}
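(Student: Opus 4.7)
The plan is to derive $\mu(\beta)\geq 2$ from a dimension count on the one-boundary-marked-point moduli $\mathcal{M}_{1,0}(\beta)$, reached from $\M$ via the forgetful map when $\beta\neq\beta_{0}$.

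First, I would observe that for $\beta\neq\beta_{0}$ the operator $\qop$ is defined purely as a pushforward along $\evbz$ (none of the exceptional energy-zero cases of Proposition \ref{ezero} arises), so non-vanishing of $\qop$ forces $\M$ to be non-empty. Moreover, since $\beta\neq\beta_{0}$, every stable map in $\M$ carries a non-constant underlying $J$-holomorphic map, so there is a well-defined forgetful map $\pi\colon\M\to\mathcal{M}_{1,0}(\beta)$ obtained by dropping $z_{1},\dots,z_{k},w_{1},\dots,w_{l}$ and collapsing any resulting unstable ghost components. Since stabilization only contracts components on which the map is constant, the total class $\beta$ is preserved, so $\mathcal{M}_{1,0}(\beta)$ is non-empty as well.

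Now Assumption 2 of Section \ref{reg} tells me that $\evbz\colon\mathcal{M}_{1,0}(\beta)\to L$ is a proper submersion, and a submersion out of a non-empty orbifold with corners cannot decrease dimension, so $\dim\mathcal{M}_{1,0}(\beta)\geq\dim L=n$. On the other hand Proposition \ref{dim} gives $\dim\mathcal{M}_{1,0}(\beta)=n-2+\mu(\beta)$. Combining the two inequalities immediately yields $\mu(\beta)\geq 2$.

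The main (mild) obstacle is verifying that the stabilization really lands in $\mathcal{M}_{1,0}(\beta)$ for the same class $\beta$, even when the original main disk component carrying $z_{0}$ happens to be constant and so gets contracted; but contraction of ghost components preserves the total class while merging $z_{0}$ onto a non-constant surviving disk component, so the resulting stable map is indeed an element of $\mathcal{M}_{1,0}(\beta)$. Once this bookkeeping is in place, the dimensional count closing out the proof is immediate.
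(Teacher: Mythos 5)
Your argument is correct and is essentially the paper's proof: both reduce to the non-emptiness of $\mathcal{M}_{1,0}(\beta)$ and then combine Proposition \ref{dim} with the assumption that $evb^{\beta}_{0}$ is a submersion to get $0\leq\rdim evb^{\beta}_{0}=\mu(\beta)-2$. The paper simply asserts $\mathcal{M}_{1,0}(\beta)\neq\emptyset$ "by assumption," whereas you spell out the forgetful/stabilization step; that added bookkeeping is sound and does not change the argument.
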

\begin{proof}
Let $\beta\neq\beta_{0}$. By assumption, $\mathcal{M}_{1,0}(\beta)\neq\emptyset$. Proposition \ref{dim} together with the assumption that $evb^{\beta}_{0}$ is a submersion imply
\begin{align*}
    0\leq\rdim evb^{\beta}_{0}=(n-2+\mu(\beta))-n=\mu(\beta)-2.
\end{align*}
\end{proof}
\begin{rmk}\label{BPrmk}
In the proof of Lemma \ref{BPlem}, the assumption that $\evbz$ is a submersion is not only a regularity assumption, but is essential to the proof. Note that even if this assumption is dropped, the conclusion of the lemma still holds in case $L$ is monotone.
\end{rmk}
\begin{prop}\label{BPexist}
Let $b=\sum_{j}e_{j}b_{j}\in(\mathcal{I}C)_{1}$ be such that $b_{j}\in A^{\leq1}(L)$ and $e_{j}\in R_{1-|b_{j}|}$, and let $\gamma=\sum_{i}\lambda_{i}\gamma_{i}\in(\mathcal{I}E)_{2}$ be such that $\gamma_{i}\in A^{\leq2}(X)$ and $\lambda_{i}\in R_{2-|\gamma_{i}|}$. Assume also that $b,\gamma$ are closed and that $\gamma|_{L}\in A^{0}(L)\otimes R_{2}$. Then $(\gamma,b)$ is a bounding pair. In particular, $(0,0)$ is a bounding pair.
\end{prop}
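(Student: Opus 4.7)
The plan is to show that $q_0 := \mathfrak{q}^{\gamma,b}_{0,0}$ lies in $A^0(L) \otimes R$, that $dq_0 = 0$, and hence (since $L$ is connected) $q_0 = c \cdot 1$ for some $c \in \mathcal{I}R_2$. The contribution from $\beta = \beta_0$ is handled immediately: by Proposition \ref{defprop} and $db = 0$, it equals $-\gamma|_L$; the hypothesis $\gamma|_L \in A^0(L) \otimes R_2$ together with $d\gamma = 0$ and the connectedness of $L$ force $\gamma|_L = c_0 \cdot 1$ for some $c_0 \in \mathcal{I}R_2$.

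For $\beta \neq \beta_0$, Lemma \ref{BPlem} gives $\mu(\beta) \geq 2$. Expanding $b = \sum_j e_j b_j$ and $\gamma = \sum_i \lambda_i \gamma_i$, each summand contributing to $\mathfrak{q}^{\beta,\gamma,b}_{0,0}$ is a pushforward along $\evbz$ of a wedge of pullbacks. Using Propositions \ref{dim} and \ref{degree} together with the form-degree bounds $b_j \in A^{\leq 1}(L)$ and $\gamma_i \in A^{\leq 2}(X)$, the form degree of any such output on $L$ is bounded by $2 - \mu(\beta) \leq 0$. Hence each non-zero summand lies in $A^0(L) \otimes R$, so $q_0 \in A^0(L) \otimes R$.

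To obtain $dq_0 = 0$, I would apply the structure equation (Proposition \ref{struc}, valid for the deformed operators by the remark following Proposition \ref{defprop}) at $(k, l) = (0, 0)$, which reduces to $\mathfrak{q}^{\gamma,b}_{1,0}(q_0) = 0$. Writing $\mathfrak{q}^{\gamma,b}_{1,0}(q_0) = dq_0 + \textup{(higher-order terms)}$, I claim the remaining terms cancel: for $\beta \neq \beta_0$, a form-degree count (using that $q_0$ is a $0$-form) bounds the output degree by $1 - \mu(\beta) \leq -1$; for $\beta = \beta_0$ with $(s_0,s_1,t) \neq (0,0,0)$, Proposition \ref{ezero} forces $(s_0,s_1,t) \in \{(1,0,0),(0,1,0)\}$, giving the contributions $-b \wedge q_0$ and $q_0 \wedge b$, which cancel by graded commutativity since $|q_0| = 2$ is even. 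Consequently $dq_0 = 0$, and by the degree and valuation counts we obtain $q_0 = c \cdot 1$ for some $c \in \mathcal{I}R_2$; the final assertion follows by taking $(\gamma, b) = (0, 0)$, which trivially satisfies all hypotheses. The main subtlety will be this last cancellation, requiring careful bookkeeping with Proposition \ref{ezero} and the evenness of $|q_0|$.
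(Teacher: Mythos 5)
Your proposal is correct and follows essentially the same route as the paper: a degree count via Lemma \ref{BPlem} showing $\mathfrak{q}^{\gamma,b}_{0,0}\in A^{0}(L)\otimes R_{2}$, followed by the $(k,l)=(0,0)$ structure equation to obtain closedness and hence $c\cdot 1$ on the connected $L$. The only cosmetic difference is that you unpack the $\beta_{0}$-contributions of $\mathfrak{q}^{\gamma,b}_{1,0}$ by hand (the cancelling $b$-insertions), whereas the paper simply invokes the deformed energy-zero statement $\mathfrak{q}^{\beta_{0},\gamma,b}_{1,0}(f)=df$ from Proposition \ref{defprop}, which already packages that cancellation.
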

\begin{proof}
Applying Proposition \ref{struc} with $k=l=0$ yields
\begin{align*}
    0&=\mathfrak{q}_{1,0}^{\gamma,b}(\mathfrak{q}_{0,0}^{\gamma,b}).
\end{align*}
Consider
\begin{align*}
    \mathfrak{q}_{0,0}^{\gamma,b}=\sum_{\beta\in\Pi}\sum_{s,t\geq0}\sum_{i_{1},...,i_{t},j_{1},...,j_{s}}\frac{1}{t!}T^{\beta}\mathfrak{q}^{\beta}_{s,t}(\otimes_{r=1}^{s}e_{r}b_{j_{r}};\otimes_{r=1}^{t}\lambda_{r}\gamma_{i_{r}}).
\end{align*}
Fix $t,s,i_{1},...,i_{t},j_{1},...,j_{s}$ and let $\hat{\gamma}=\otimes_{r=1}^{t}\gamma_{i_{r}}$, $\hat{b}=\otimes_{r=1}^{s}b_{j_{r}}$. By Proposition \ref{degree} and Lemma \ref{BPlem}, for $\beta\neq\beta_{0}$ we have
\begin{align*}
    |\mathfrak{q}^{\beta}_{s,t}(\hat{b};\hat{\gamma})|
    &=2-s-2t+\sum_{r=1}^{s}|b_{j_{r}}|+\sum_{r=1}^{t}|\gamma_{i_{r}}|-\mu(\beta)\\
    &\leq2-s-2t+s+2t-2\\
    &=0,
\end{align*}
with equality possible only when $\mu(\beta)=2$, $|\gamma_{i_{r}}|=2$ for all $1\leq r\leq t$, and $|b_{j_{r}}|=1$ for all $1\leq r\leq s$, in which case $\mathfrak{q}^{\beta}_{s,t}(\hat{b};\hat{\gamma})\in A^{0}(L)$. Therefore $\mathfrak{q}_{0,0}^{\beta,\gamma,b}\in A^{0}(L)\otimes R_{0}$. Furthermore, by Proposition \ref{defprop} we have $\mathfrak{q}_{0,0}^{\beta_{0},\gamma,b}=-\gamma|_{L}$. Therefore
\begin{align*}
    \mathfrak{q}_{0,0}^{\gamma,b}=-\gamma|_{L}+\sum_{\mu(\beta)=2}T^{\beta}\mathfrak{q}_{0,0}^{\beta,\gamma,b}\in A^{0}(L)\otimes R_{2}.
\end{align*}
\par
Consider now
\begin{align*}
    0
    &=\mathfrak{q}_{1,0}^{\gamma,b}(\mathfrak{q}_{0,0}^{\gamma,b})\\
    &=\sum_{\beta\in\Pi}\sum_{t\geq0,s\geq\rho\geq0}\sum_{i_{1},...,i_{t},j_{1},...,j_{s}}\frac{1}{t!}T^{\beta}\mathfrak{q}^{\beta}_{1+s,t}(\otimes_{r=1}^{\rho}e_{r}b_{j_{r}}\otimes\mathfrak{q}_{0,0}^{\gamma,b}\otimes\otimes_{r=\rho+1}^{s}e_{r}b_{j_{r}};\otimes_{r=1}^{t}\lambda_{r}\gamma_{i_{r}}).
\end{align*}
Fix $t,s,\rho,i_{1},...,i_{t},j_{1},...,j_{s}$ and let $\hat{\gamma}=\otimes_{r=1}^{t}\gamma_{i_{r}}$, $\hat{b}_{-}=\otimes_{r=1}^{\rho}b_{j_{r}}$, $\hat{b}_{+}=\otimes_{r=\rho+1}^{s}b_{j_{r}}$, $f\in A^{0}(L)$. Again by Proposition \ref{degree} and Lemma \ref{BPlem},
\begin{align*}
    |\mathfrak{q}^{\beta}_{1+s,t}(\hat{b}_{-}\otimes f\otimes\hat{b}_{+};\hat{\gamma})|
    &=2-(1+s)-2t+\sum_{r=1}^{s}|b_{j_{r}}|+\sum_{r=1}^{t}|\gamma_{i_{r}}|-\mu(\beta)\\
    &\leq1-s-2t+s+2t-2\\
    &<0
\end{align*}
unless $\beta=\beta_{0}$. Furthermore, by Proposition \ref{defprop} we have $\mathfrak{q}_{1,0}^{\beta_{0},\gamma,b}(f)=df$. Therefore
\begin{align*}
    0=d\mathfrak{q}_{0,0}^{\gamma,b},
\end{align*}
so that $\mathfrak{q}_{0,0}^{\gamma,b}=c\cdot1$ for some $c\in R_{2}$.
\end{proof}

\section{Geodesics through 4 points}

\subsection{The moduli space}\label{space}
In order to prove Theorem \ref{algebra}, we need the moduli space of disks with a geodesic constraint on the points $z_{0},w_{1},w_{2},z_{m}$ (in this order), denoted by $\Mgg$. We shall define this space as a fiber product of smooth orbifolds with corners following \cite{RQH}*{Section 3.1}. We shall show that this fiber product is indeed transversal, making it a smooth orbifold with corners.
\par
Recall that $\chi_{m}:\M\rightarrow\Cinf$ is defined in Section \ref{gop} via
\begin{align*}
    \chi_{m}([\Sigma,u,\vec{z},\vec{w}])=\begin{cases}
        (z_{0},w_{1},\bar{w}_{2},w_{2}),&m=0,\\
        (z_{0},z_{m},\bar{w}_{1},w_{1}),&otherwise.
    \end{cases}
\end{align*}
Define $\chi_{0,m}:\M\rightarrow\Cinf^{2}$ for $m\neq0$ by $\chi_{0,m}=(\chi_{0},\chi_{m})$ whenever applicable.
\par
To show transversality we require the following lemmas.
\begin{lem}\label{quot}
Let $\Sigma$ be a genus zero Riemann surface with a connected boundary (possibly empty), that is, either the upper half-plane $\mathcal{H}$ or the Riemann sphere $\Cinf$. If $\Sigma=\mathcal{H}$, let $k,l$ be such that $k\geq1$ and $k+2l\geq3$, and if $\Sigma=\Cinf$, let $k=0$ and $l\geq3$. Write
\begin{align*}
    \Delta=\{\vec{x}\in\Sigma^{k+l}|\exists i,j:x_{i}=x_{j}\},
\end{align*}
and let
\begin{align*}
    \Omega=((\partial\Sigma)^{k}\times\mathring{\Sigma}^{l})\setminus\Delta.
\end{align*}
Then
\begin{align*}
    G=\Aut(\Sigma)=
    \begin{cases}
    \pslr,&\Sigma=\mathcal{H},\\
    \pslc,&\Sigma=\Cinf,
    \end{cases}
\end{align*}
acts on $\Omega$ freely and properly, so that $\Omega/G$ is a smooth manifold with the quotient map $g:\Omega\rightarrow\Omega/G$ a surjective submersion.
\end{lem}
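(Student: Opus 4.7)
The plan is to verify the two hypotheses of the quotient manifold theorem for smooth Lie group actions: that the action of $G$ on $\Omega$ is free and proper. Granted these, it is standard that $\Omega/G$ inherits a unique smooth manifold structure making $g:\Omega\to\Omega/G$ a principal $G$-bundle, and in particular a surjective submersion. The only preliminary observation is that $G$ indeed acts on $\Omega$: both $\pslr$ and $\pslc$ act by biholomorphisms preserving $\partial\Sigma$, $\mathring{\Sigma}$, and distinctness of tuples of points.

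\textbf{Freeness.} The key input is that a Möbius transformation of $\Cinf$ is determined by its values on three distinct points. When $\Sigma=\Cinf$, the hypothesis $l\geq3$ directly supplies three distinct interior fixed points, and any element of $\pslc$ fixing them must be $\id$. When $\Sigma=\mathcal{H}$ and $k\geq3$, three distinct fixed boundary points again force the element to be $\id$ inside $\pslc$. The remaining subcases have $k\in\{1,2\}$ and $l\geq1$, and it is enough to treat the case of a single fixed boundary coordinate and a single fixed interior coordinate, since additional fixed marked points only strengthen the constraint. There an element of $\pslr$ fixing the interior point lies in its elliptic stabilizer $\mathrm{SO}(2)$, and every non-trivial elliptic element of $\pslr$ acts without fixed points on $\partial\mathcal{H}$; hence fixing the boundary point as well forces the element to be $\id$.

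\textbf{Properness.} It suffices to show that if $x_n\to x$ in $\Omega$ and $g_n\cdot x_n\to y$ in $\Omega$, then $\{g_n\}$ admits a convergent subsequence in $G$. By the constraints on $(k,l)$, one can single out a small subset of the marked coordinates (three distinct interior points for $\Cinf$; three distinct boundary points, or one boundary point together with one interior point, for $\mathcal{H}$) on which the orbit map $G\to(\partial\Sigma)^{k'}\times\mathring{\Sigma}^{l'}$ at $x$ is a diffeomorphism onto an open image; this rests on the dimension count matching $\dim G$ and on the freeness already proved. Continuous dependence of the inverse of this orbit map, combined with the convergence of the relevant coordinates of $x_n$ and of $g_n\cdot x_n$, then forces $g_n$ to converge to the unique group element sending the appropriate limit coordinates of $x$ to those of $y$.

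Once freeness and properness are verified, the quotient manifold theorem immediately supplies the smooth structure on $\Omega/G$ and the surjective-submersion property of $g$. I expect the main technical point to be the case analysis for freeness in the $\mathcal{H}$ setting — verifying that the threshold $k+2l\geq3$ matches exactly the dimension of $\pslr$ — together with the verification that the orbit map used in the properness step is a diffeomorphism onto an open subset; both reduce to elementary facts about the real and complex Möbius groups.
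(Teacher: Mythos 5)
Your proposal is correct and follows essentially the same route as the paper: freeness from the three-point rigidity of M\"obius transformations (the paper handles the mixed boundary/interior case for $\pslr$ uniformly by viewing a fixed interior point $w$ as the pair of fixed points $w,\bar{w}$ of the corresponding element of $\pslc$, whereas you use the elliptic stabilizer $\mathrm{SO}(2)$ --- both work), and properness by extracting convergence of the group elements from the convergence of a minimal set of marked coordinates. The one point worth tightening is your appeal to ``continuous dependence of the inverse of the orbit map'': since the orbit map varies with the base point, it is cleanest to do as the paper does and invert the joint map $(\phi,\vec{x})\mapsto(\phi.\vec{x},\vec{x})$ on $G\times\Omega_{0}$, which is a homeomorphism onto its (open) image by invariance of domain.
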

\begin{proof}
Recall that an element $\phi\in\pslc$ is determined by its action on any $3$ given points, thus if $\phi$ has $l\geq3$ fixed points then it is the identity. Hence the action of $G=\pslc$ is free. Furthermore, any element $\phi\in\pslr$ with $k$ boundary fixed points and $l$ interior fixed points can be considered as an element of $\pslc$ with $k+2l$ fixed points by taking the complex conjugate, thus if $k+2l\geq3$ then $\phi$ is the identity. Hence the action of $G=\pslr$ is free.
\par
To see that the action is proper, consider the map
\begin{align*}
    \Phi:G\times\Omega&\rightarrow\Omega\times\Omega\\
    (\phi,\vec{x})&\mapsto(\phi.\vec{x},\vec{x}).
\end{align*}
Consider $\Omega_{0}$, defined in the same manner as $\Omega$ with $k_{0}=l_{0}=1$ for $\Sigma=\mathcal{H}$ and with $l_{0}=3$ for $\Sigma=\Cinf$. We similarly define
\begin{align*}
    \Phi_{0}:G\times\Omega_{0}&\rightarrow\Omega_{0}\times\Omega_{0}\\
    (\phi,\vec{x})&\mapsto(\phi.\vec{x},\vec{x}).
\end{align*}
By the same reasoning as above, $\Phi_{0}$ is bijective, and by the invariance of domain theorem, it is a homeomorphism onto its image. Therefore $\Phi_{0}$ is a homeomorphism. Let $P:\Omega\rightarrow\Omega_{0}$ be any projection.
\par
Now let $K\subset\Omega\times\Omega$ be compact, and let $\{(\phi_{i},\vec{x}_{i})\}_{i\in\mathbb{N}}\subset\Phi^{-1}(K)$ be a sequence. Then, possibly passing to a subsequence, $\{(\phi_{i}.\vec{x}_{i},\vec{x}_{i})\}_{i\in\mathbb{N}}\subset K$ converges. In particular, since $(P\times P)(K)$ is compact, $\{(\phi_{i}.P(\vec{x}_{i}),P(\vec{x}_{i}))\}_{i\in\mathbb{N}}$ converges. Since $\Phi_{0}^{-1}((P\times P)(K))$ is compact, it follows that, again possibly passing to a subsequence, $\{\Phi_{0}^{-1}(\phi_{i}.P(\vec{x}_{i}),P(\vec{x}_{i}))\}_{i\in\mathbb{N}}=\{(\phi_{i},P(\vec{x}_{i}))\}_{i\in\mathbb{N}}$ converges. Therefore $\{(\phi_{i},\vec{x}_{i})\}_{i\in\mathbb{N}}$ converges, so that $\Phi^{-1}(K)$ is compact and the action is proper.
\end{proof}
\begin{lem}\label{fixquot}
In the setting of Lemma \ref{quot}, let $s\leq k$, $t\leq l$ be such that $s+2t\leq\dim G$. Fix some $z_{1},...,z_{s}\in\partial\Sigma$ and $w_{1},...,w_{t}\in\mathring{\Sigma}$, and let $\widehat{G}\subset G$ be the subgroup consisting of those reparameterizations which fix $z_{1},...,z_{s}$ and $w_{1},...,w_{t}$. Let
\begin{align*}
    \widehat{\Omega}=(\{(z_{1},...,z_{s})\}\times(\partial\Sigma)^{k-s}\times\{(w_{1},...,w_{t})\}\times\mathring{\Sigma}^{l-t})\setminus\Delta\subset\Omega.
\end{align*}
Then $\widehat{G}$ acts on $\widehat{\Omega}$ freely and properly, so that $\widehat{\Omega}/\widehat{G}$ is a smooth manifold with  the quotient map $q:\widehat{\Omega}\rightarrow\widehat{\Omega}/\widehat{G}$ a surjective submersion. Furthermore, the map
\begin{align*}
    \psi:\widehat{\Omega}/\widehat{G}\rightarrow\Omega/G
\end{align*}
given by
\begin{align*}
    [\vec{x}]\mapsto[\vec{x}]
\end{align*}
is a well defined diffeomorphism.
\end{lem}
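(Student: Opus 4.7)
The plan is to first establish that $\widehat{G}$ acts freely and properly on $\widehat{\Omega}$, then apply standard quotient theory to obtain the smooth manifold structure, and finally analyze $\psi$ as the map induced by the inclusion $\widehat{\Omega}\hookrightarrow\Omega$. Freeness is immediate: since $\widehat{G}\subset G$ and $\widehat{\Omega}\subset\Omega$, any point stabilizer for the $\widehat{G}$-action is already a point stabilizer for the $G$-action, hence trivial by Lemma \ref{quot}. For properness, I would observe that $\widehat{G}$ is closed in $G$ (being a joint stabilizer of finitely many points) and that $\widehat{\Omega}$ is closed in $\Omega$ (cut out by prescribing $s+t$ coordinates). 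Hence, given a compact $K\subset\widehat{\Omega}\times\widehat{\Omega}$, viewed as compact in $\Omega\times\Omega$, the preimage of $K$ under the restricted action map is the intersection of $\widehat{G}\times\widehat{\Omega}$ with the preimage under the full action map $\Phi$ from Lemma \ref{quot}, hence compact. Standard theory of free proper Lie-group actions then gives $\widehat{\Omega}/\widehat{G}$ the structure of a smooth manifold with $q$ a surjective submersion.

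For the map $\psi$: well-definedness is automatic since $\widehat{G}\subset G$. Injectivity follows from the observation that if $\vec{x},\vec{y}\in\widehat{\Omega}$ satisfy $\phi.\vec{x}=\vec{y}$ for some $\phi\in G$, then $\phi$ sends the ordered tuple $(z_{1},\ldots,z_{s},w_{1},\ldots,w_{t})$ to itself, so $\phi\in\widehat{G}$. Surjectivity reduces to showing every $G$-orbit in $\Omega$ meets $\widehat{\Omega}$, which in turn reduces to transitivity of the $G$-action on the space of ordered configurations of $s$ distinct boundary points and $t$ distinct interior points of $\Sigma$. Under the hypothesis $s+2t\leq\dim G$, this holds in each of the finitely many admissible cases by the classical $3$-transitivity of $\pslc$ on $\Cinf$ and the analogous transitivity statements for $\pslr$ on small configurations in $\mathcal{H}$ (e.g. on ordered triples of distinct boundary points, on pairs boundary-interior, etc.).

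To conclude that $\psi$ is a diffeomorphism, I would use the commutative square given by the inclusion $\iota:\widehat{\Omega}\hookrightarrow\Omega$, the quotient maps $q$ and $g$, and $\psi$, with $g\circ\iota=\psi\circ q$. Since $q$ is a surjective submersion and $g\circ\iota$ is smooth, $\psi$ is smooth. To promote the smooth bijection $\psi$ to a diffeomorphism, I would verify that it is an immersion and then count dimensions. At any $\vec{x}\in\widehat{\Omega}$ the kernel of $d(g\circ\iota)$ is $T_{\vec{x}}(G.\vec{x})\cap T_{\vec{x}}\widehat{\Omega}$; an infinitesimal $G$-action vector lies in $T_{\vec{x}}\widehat{\Omega}$ exactly when its Lie-algebra generator preserves the fixed points, i.e. lies in the Lie algebra of $\widehat{G}$. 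Hence this intersection equals $T_{\vec{x}}(\widehat{G}.\vec{x})=\ker(d_{\vec{x}}q)$, so $\psi$ is an immersion. By the orbit-stabilizer theorem applied to the transitivity used for surjectivity, $\dim\widehat{G}=\dim G-(s+2t)$, while prescribing $s+t$ coordinates gives $\dim\widehat{\Omega}=\dim\Omega-(s+2t)$, so $\dim(\widehat{\Omega}/\widehat{G})=\dim(\Omega/G)$. A bijective immersion between manifolds of equal dimension is a diffeomorphism, completing the proof.

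The main obstacle is the transitivity claim used for surjectivity: although elementary, it requires a case analysis over the admissible pairs $(s,t)$ with $s+2t\leq\dim G$ and a verification in each case that the Möbius action is transitive on the relevant ordered configurations. The remainder of the argument is a routine application of quotient-manifold theory, the orbit-stabilizer theorem, and the identification of tangent spaces to $G$- and $\widehat{G}$-orbits.
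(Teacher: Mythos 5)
Your proposal is correct in substance but reaches the hardest step by a genuinely different route. Where you and the paper agree: freeness and properness of the $\widehat{G}$-action are inherited from Lemma \ref{quot} (the paper just says the argument is ``similar''; your observation that $\widehat{\Phi}^{-1}(K)=(\widehat{G}\times\widehat{\Omega})\cap\Phi^{-1}(K)$ is closed in a compact set is a clean way to make that precise), and smoothness of $\psi$ is obtained identically via local sections of $q$. The divergence is in proving $\psi^{-1}$ smooth. The paper constructs, in three explicit cases ($\Cinf$; $\mathcal{H}$ with $t=0$; $\mathcal{H}$ with $t=1$), a smooth normalization map $\lambda:\Omega\rightarrow G$ with $h(\vec{x})=\lambda(\vec{x})(\vec{x})\in\widehat{\Omega}$, and then writes $\psi^{-1}=q\circ h\circ\sigma$ for a local section $\sigma$ of $g$; this simultaneously exhibits surjectivity of $\psi$ and smoothness of its inverse. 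You instead show $\psi$ is an immersion by identifying $\ker d(g\circ\iota)=T_{\vec{x}}(G.\vec{x})\cap T_{\vec{x}}\widehat{\Omega}=T_{\vec{x}}(\widehat{G}.\vec{x})=\ker d_{\vec{x}}q$, match dimensions via orbit--stabilizer, and invoke the inverse function theorem; this is valid and avoids the explicit Möbius formulas, but it shifts the burden onto the global transitivity claim needed for surjectivity. Be aware that this transitivity is not quite uniform over all pairs $(s,t)$ with $s+2t\leq\dim G$: $\pslr$ is transitive on ordered pairs of distinct boundary points and on boundary--interior pairs, but on ordered \emph{triples} of distinct boundary points it preserves cyclic orientation, so for $(s,t)=(3,0)$ surjectivity of $\psi$ can fail for an unlucky choice of $(z_{1},z_{2},z_{3})$. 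The paper's ``clearly bijective'' (and its Case 2 formula, whose determinant can be negative) has the same blind spot; the lemma is only ever applied with $s\leq1$, where both arguments are sound, but your write-up should restrict the transitivity claim to the configurations actually realized or add an orientation hypothesis on the fixed boundary points.
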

\begin{proof}
The proof that the action is free and proper is similar to the proof of Lemma \ref{quot}. Clearly $\psi$ is well defined and bijective.
\par
We claim $\psi$ is smooth. Denote by $f:\widehat{\Omega}\rightarrow\Omega$ the inclusion, then the following diagram commutes:
\begin{center}
\begin{tikzcd}
    \widehat{\Omega}\arrow{r}{f}\arrow{d}{q}&\Omega\arrow{d}{g}\\
    \widehat{\Omega}/\widehat{G}\arrow{r}{\psi}&\Omega/G.
\end{tikzcd}
\end{center}
Locally, in a small open $\widehat{U}\subset\widehat{\Omega}/\widehat{G}$, we have a section $\widehat{\sigma}:\widehat{U}\rightarrow\widehat{\Omega}$. Then $q\circ\widehat{\sigma}=\id$, so that
\begin{align*}
    \psi|_{\widehat{U}}=\psi\circ q\circ\widehat{\sigma}=g\circ f\circ\widehat{\sigma}.
\end{align*}
Since smoothness is a local property, it follows that $\psi$ is smooth.
\par
To show $\psi^{-1}$ is smooth, as a first step, we construct a smooth map $\lambda:\Omega\rightarrow G$ by cases.
\par
\textbf{Case 1.} $\Sigma=\Cinf$. We can assume without loss of generality that $w_{1}=0$ if $t\geq1$, $w_{2}=\infty$ if $t\geq2$, and $w_{3}=1$ if $t=3$. Define
\begin{align*}
    \lambda(\vec{x})(u)=\frac{x_{3}-x_{2}}{x_{3}-x_{1}}\frac{u-x_{1}}{u-x_{2}}.
\end{align*}
In particular,
\begin{align*}
    \lambda(\vec{x})(x_{1})=0,\quad\lambda(\vec{x})(x_{2})=\infty,\quad\lambda(\vec{x})(x_{3})=1.
\end{align*}
\par
\textbf{Case 2.} $\Sigma=\mathcal{H}$ and $t=0$. We can assume without loss of generality that $z_{1}=0$ if $s\geq1$, $z_{2}=\infty$ if $s\geq2$, and $z_{3}=1$ if $s=3$. Define
\begin{align*}
    \lambda(\vec{x})(u)=\frac{x_{3}-x_{2}}{x_{3}-x_{1}}\frac{u-x_{1}}{u-x_{2}}.
\end{align*}
In particular,
\begin{align*}
    \lambda(\vec{x})(x_{1})=0,\quad\lambda(\vec{x})(x_{2})=\infty,\quad\lambda(\vec{x})(x_{3})=1.
\end{align*}
\par
\textbf{Case 3.} $\Sigma=\mathcal{H}$ and $t=1$. We can assume without loss of generality that $w_{1}=i$ and $z_{1}=0$ if $s=1$. Define
\begin{align*}
    \lambda(\vec{x}^{\,b},\vec{x}^{\,i})(u)=\Im x^{i}_{1}\frac{u-x^{b}_{1}}{(x^{b}_{1}-\Re x^{i}_{1})u+(\Im x^{i}_{1})^{2}+(\Re x^{i}_{1})^{2}-x^{b}_{1}\Re x^{i}_{1}}.
\end{align*}
In particular,
\begin{align*}
    \lambda(\vec{x}^{\,b},\vec{x}^{\,i})(x^{b}_{1})=0,\quad\lambda(\vec{x}^{\,b},\vec{x}^{\,i})(x^{i}_{1})=i.
\end{align*}
\par
Define $h:\Omega\rightarrow\widehat{\Omega}$ by $h(\vec{x})=\lambda(\vec{x})(\vec{x})$, then the following diagram commutes:
\begin{center}
\begin{tikzcd}
    \widehat{\Omega}\arrow{d}{q}&\Omega\arrow{l}{h}\arrow{d}{g}\\
    \widehat{\Omega}/\widehat{G}&\Omega/G\arrow{l}{\psi^{-1}}.
\end{tikzcd}
\end{center}
\par
Locally, in a small open $U\subset\Omega/G$, we have a section $\sigma:U\rightarrow\Omega$. Then $g\circ\sigma=\id$, so that
\begin{align*}
    \psi^{-1}|_{U}=\psi^{-1}\circ g\circ\sigma=q\circ h\circ\sigma,
\end{align*}
thus $\psi^{-1}$ is smooth.
\end{proof}
\begin{lem}\label{diagobj}
Let $\mathcal{N}\subset\M$ be a codimension $d+2e$ stratum (allowing codimension $0$), which consists of those open stable maps with a given domain $\Sigma$ having $d+1$ disk components and $e$ sphere components, and with a given allocation of the marked points to the components of $\Sigma$ as well as a corresponding partition of $\beta$. Assume that both $\chi_{0}|_{\mathcal{N}}\neq\infty$ and $\chi_{m}|_{\mathcal{N}}\neq\infty$. Then there exists a commutative diagram
\begin{center}
\begin{tikzcd}
    &\mathcal{N}\arrow{d}{\pi}\arrow{ddrr}{\chi_{0,m}}&&\\
    Y\arrow{r}{\tau}\arrow{d}{p}&Z\arrow{drr}{\rho}&&\\
    W_{0}\times W_{m}\arrow{rrr}{r_{0}\times r_{m}}&&&\Cinf^{2}
\end{tikzcd}
\end{center}
with $Y,\,Z,\,W_{0},\,W_{m}$ smooth manifolds and $\pi,\tau,p$ surjective submersions. Write $p=(p_{0},p_{m})$, $\rho=(\rho_{0},\rho_{m})$, and $r=r_{0}\times r_{m}$.
\end{lem}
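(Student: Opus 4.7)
The strategy is to realize $Y$, $Z$, $W_{0}$, $W_{m}$ as quotients of configuration spaces of marked and nodal points on subsurfaces of $\Sigma_{\mathbb{C}}$, using Lemma \ref{quot} and Lemma \ref{fixquot} to ensure they are smooth manifolds and that the quotient maps are submersions. The key observations are that $\chi_{0,m}$ depends only on the underlying domain with its marked points (not on the $J$-holomorphic map $u$), so it factors through the map $\pi$ that forgets $u$; and that each of $\chi_{0}$, $\chi_{m}$ depends only on data lying on a single irreducible component of $\Sigma_{\mathbb{C}}$ by Proposition \ref{root}, so it further factors through a quotient retaining only that component's data.

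First I would identify the irreducible components of $\Sigma_{\mathbb{C}}$ where the cross-ratios are computed. By Proposition \ref{root}, there is a unique component $\Sigma^{0}$ of $\Sigma_{\mathbb{C}}$ on which the projections of $z_{0}$, $w_{1}$, $w_{2}$ are pairwise distinct, and a unique component $\Sigma^{m}$ for the triple $z_{0}$, $z_{m}$, $w_{1}$; the assumptions $\chi_{0}|_{\mathcal{N}}\neq\infty$ and $\chi_{m}|_{\mathcal{N}}\neq\infty$ guarantee that the fourth point of each 4-tuple also projects to the relevant component so that the resulting 4-tuple avoids the locus $\Delta$ of three coinciding points. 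I would then define $W_{0}$ as the space of 4-tuples on $\Sigma^{0}\cong\Cinf$ with no three points coinciding, taken modulo $\Aut(\Sigma^{0})$, and $r_{0}\colon W_{0}\to\Cinf$ as the cross-ratio; Lemma \ref{quot} (possibly after adjoining rigidifying auxiliary points and then applying Lemma \ref{fixquot}) shows $W_{0}$ is a smooth manifold, and Proposition \ref{ratio} shows $r_{0}$ is smooth. The space $W_{m}$ and the map $r_{m}$ are constructed analogously.

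Next I would define $Z$ as the moduli space of configurations of marked and nodal points on $\Sigma$ of the fixed combinatorial type of $\mathcal{N}$, modulo the product of per-component automorphism groups. Applying Lemma \ref{quot} and, where needed, Lemma \ref{fixquot} component by component yields a smooth manifold structure. The forgetful map $\pi\colon\mathcal{N}\to Z$ that discards $u$ is surjective (every marked nodal domain occurs as the domain of some stable map in $\mathcal{N}$ by definition of the stratum) and a submersion; the submersion property follows from the regularity assumptions of Section \ref{reg}, since the simultaneous submersivity of all boundary and interior evaluation maps on the open part of $\mathcal{N}$ forces free perturbability of every marked point relative to the domain modulus. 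The map $\rho\colon Z\to\Cinf^{2}$ is defined by $(\chi_{0},\chi_{m})$, which descends from $\mathcal{N}$ to $Z$ since cross-ratios are automorphism-invariant, and then $\chi_{0,m}=\rho\circ\pi$ holds by construction.

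Finally, I would define $Y$ as the same configuration space used to build $Z$, but quotiented only by the subgroup of the automorphism group acting trivially on $\Sigma^{0}$ and $\Sigma^{m}$, so that the two 4-tuples on those components persist as honest data. Then $\tau\colon Y\to Z$ is the further quotient, which is a submersion by Lemma \ref{fixquot}, and $p=(p_{0},p_{m})\colon Y\to W_{0}\times W_{m}$ is the natural projection onto the two 4-tuples, which is a surjective submersion because its fibers are configuration spaces on the remaining components modulo their automorphism groups (smooth again by Lemma \ref{quot}). Commutativity $\rho\circ\tau=r\circ p$ holds because both sides compute the cross-ratios of the same 4-tuples. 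The main obstacle is the case analysis: $\Sigma^{0}$ and $\Sigma^{m}$ may coincide or be distinct, may be sphere or disk components of $\Sigma_{\mathbb{C}}$, and may not carry enough marked points to rigidify $\Aut$ on their own; in each case one adjoins suitable auxiliary nodal or marked points to bring the quotient into the hypotheses of Lemma \ref{quot} or Lemma \ref{fixquot}, and verifies that the construction is independent of these choices.
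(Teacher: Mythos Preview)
Your overall strategy---factoring $\chi_{0,m}$ through domain data and using Lemmas \ref{quot} and \ref{fixquot}---is the right idea, but your execution differs substantially from the paper's and contains a genuine gap.

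\textbf{The gap.} Your justification that $\pi$ is a submersion is incorrect. You write that it ``follows from the regularity assumptions of Section \ref{reg}, since the simultaneous submersivity of all boundary and interior evaluation maps \ldots\ forces free perturbability of every marked point relative to the domain modulus.'' First, the regularity assumptions only require $evb^{\beta}_{0}$ (and $ev^{\beta}_{0}$) to be submersions, not all evaluation maps. Second, and more fundamentally, submersivity of evaluation maps concerns variation in the \emph{target} $L$ or $X$; it says nothing about the ability to vary marked points in the \emph{domain}. The correct reason $\pi$ is a submersion is that the stratum $\mathcal{N}$ is, locally, a product of the space of $J$-holomorphic maps from a fixed domain with the configuration space of marked and nodal points on that domain; this is part of what it means for $\M$ to be a smooth orbifold with corners, and it makes $\pi$ a local projection from a direct product. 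The paper states exactly this.

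\textbf{Difference in approach.} The paper does not take $Z$ to be the full domain moduli space. Instead, it works on the component $\nu$ of $\Sigma$ (not of $\Sigma_{\mathbb{C}}$) determined by Proposition \ref{root}, and lets $Z$ be the configuration of only the four projected points $z_{0}^{\nu},z_{m}^{\nu},w_{1}^{\nu},w_{2}^{\nu}$ on $\nu$ modulo $\Aut(\nu)$ (Case 1), or the analogous six points when $\nu_{0}\neq\nu_{m}$ (Case 2). The space $Y$ is then obtained by fixing two of these points via Lemma \ref{fixquot}, and $W_{0},W_{m}$ are simply the spaces where the two remaining free points live---e.g.\ $W_{0}=\mathring{\nu}\setminus\{w\}$, $W_{m}=\partial\nu\setminus\{z\}$---with $r_{0},r_{m}$ given by explicit cross-ratio formulas. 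This is much more economical than your proposal, and avoids the difficulty you flag in your last paragraph: since the paper's $W_{0}$ and $W_{m}$ record only the single free coordinates $w_{2}^{\nu}$ and $z_{m}^{\nu}$ (after gauge-fixing $z_{0}^{\nu}$ and $w_{1}^{\nu}$), there is no issue of shared entries between the two $4$-tuples, and $p$ is manifestly a surjective submersion (indeed a diffeomorphism in Case 1). Your $W_{0},W_{m}$ as quotients of full $4$-tuples would require care precisely because $(z_{0},w_{1},\bar{w}_{2},w_{2})$ and $(z_{0},z_{m},\bar{w}_{1},w_{1})$ share two entries and each contains a conjugate pair.
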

\begin{proof}
Choose a component $\nu_{0}$ of $\Sigma$ such that $(\nu_{0})_{\mathbb{C}}$ is a component of $\Sigma_{\mathbb{C}}$ as in Proposition \ref{root} for $z_{0},w_{2},w_{1}$, and similarly choose $\nu_{m}$ for $z_{0},w_{1},z_{m}$. Note that $\nu_{m}$ has to be a disk component. In the notation of section \ref{gop}, we have $w_{2}^{\nu_{0}}\notin\partial\nu_{0}$, since otherwise $\chi_{0}|_{\mathcal{N}}=\infty$. Similarly, $w_{1}^{\nu_{m}}\notin\partial\nu_{m}$, since otherwise $\chi_{m}|_{\mathcal{N}}=\infty$.
\par
To define the objects in the diagram, we consider two cases.
\par
\textbf{Case 1.} $\nu_{0}=\nu_{m}=\nu$. Fix some $z\in\partial\nu$, $w\in\mathring{\nu}$. Applying lemmas \ref{quot} and \ref{fixquot} for $\nu$, $k=2$, $l=2$, $s=1$, $t=1$, $z$, $w$ gives the smooth manifolds $Y=\widehat{\Omega}$, $Z=\Omega/G$ and the surjective submersion $\tau=\psi\circ q$.
\par
Define
\begin{align*}
    W_{0}=\mathring{\nu}\setminus\{w\},\quad W_{m}=\partial\nu\setminus\{z\},
\end{align*}
and let $p=(p_{0},p_{m})$ be the projection to the fourth and second components, which is a surjective submersion.
\par
Define also
\begin{align*}
    \pi([u,\vec{z},\vec{w}])=[z_{0}^{\nu},z_{m}^{\nu},w_{1}^{\nu},w_{2}^{\nu}].
\end{align*}
The map $\pi$ is locally a projection from a direct product, and is thus a surjective submersion.
\par
Finally, define
\begin{align*}
    \rho([\vec{x}])&=(\rho_{0}([\vec{x}]),\rho_{m}([\vec{x}]))=(((x_{1},x_{3},\bar{x}_{4},x_{4}),(x_{1},x_{2},\bar{x}_{3},x_{3}))\\
    r(x_{4},x_{2})&=(r_{0}(x_{4}),r_{m}(x_{2}))=((z,w,\bar{x_{4}},x_{4}),(z,x_{2},\bar{w},w)).
\end{align*}
Clearly the diagram commutes.
\par
\textbf{Case 2.} $\nu_{0}\neq\nu_{m}$. Fix some $z\in\partial\nu_{m}$, $w\in\mathring{\nu}_{m}$. Applying lemmas \ref{quot} and \ref{fixquot} for $\nu_{m}$, $k=2$, $l=1$, $s=1$, $t=1$, $z$, $w$ gives smooth manifolds $Y_{m}=\widehat{\Omega}$, $Z_{m}=\Omega/G$ and a surjective submersion $\tau_{m}=\psi\circ q$.
\par
\textbf{Case 2a.} $\nu_{0}=\mathcal{H}$. Fix some $z'\in\partial\nu_{0}$, $w'\in\mathring{\nu}_{0}$. Applying lemmas \ref{quot} and \ref{fixquot} for $\nu_{0}$, $k=1$, $l=2$, $s=1$, $t=1$, $z'$, $w'$ gives smooth manifolds $Y_{0}=\widehat{\Omega}$, $Z_{0}=\Omega/G$ and a surjective submersion $\tau_{0}=\psi\circ q$.
\par
\textbf{Case 2b.} $\nu_{0}=\Cinf$. Fix some distinct $z',w'\in\nu_{0}$. Applying lemmas \ref{quot} and \ref{fixquot} for $\nu_{0}$, $l=3$, $t=2$, $z'$, $w'$ gives smooth manifolds $Y_{0}=\widehat{\Omega}$, $Z_{0}=\Omega/G$ and a surjective submersion $\tau_{0}=\psi\circ q$.
\par
In both cases 2a and 2b, define
\begin{align*}
    Y=Y_{0}\times Y_{m},&\quad Z=Z_{0}\times Z_{m},
\end{align*}
so that $\tau=\tau_{0}\times\tau_{m}$ is a surjective submersion.
\par
Let
\begin{align*}
    W_{0}=\mathring{\nu}_{0}\setminus\{z',w'\},\quad W_{m}=\partial\nu_{m}\setminus\{z\},
\end{align*}
and let $p=(p_{0},p_{m})$ be the projection to the sixth and second components, which is a surejctive submersion.
\par
Define also
\begin{align*}
    \pi([u,\vec{z},\vec{w}])=[z_{0}^{\nu_{m}},z_{m}^{\nu_{m}},w_{1}^{\nu_{m}},z_{0}^{\nu_{0}},w_{1}^{\nu_{0}},w_{2}^{\nu_{0}}].
\end{align*}
Again, the map $\pi$ is locally a projection from a direct product, and is thus a surjective submersion.
\par
Finally, define
\begin{align*}
    \rho([\vec{x}])&=(\rho_{0}([\vec{x}]),\rho_{m}([\vec{x}]))=((x_{4},x_{5},\bar{x}_{6},x_{6}),(x_{1},x_{2},\bar{x}_{3},x_{3}))\\
    r(x_{6},x_{2})&=(r_{0}(x_{6}),r_{m}(x_{2}))=((z',w',\bar{x_{6}},x_{6}),(z,x_{2},\bar{w},w)).
\end{align*}
Clearly the diagram commutes.
\end{proof}
\begin{rmk}
When applying Lemma \ref{diagobj}, we only use the commutativity of the diagram and the listed properties, without the need to specifically describe the objects and the maps.
\end{rmk}
\begin{prop}\label{transgg}
The inclusion $I^{2}\hookrightarrow\Cinf^{2}$ is transversal to $\chi_{0,m}$.
\end{prop}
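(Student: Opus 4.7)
The plan is to verify transversality stratum by stratum on $\M$. Since transversality is a local condition and only needs to be checked at points whose image actually lies in $I^{2}\subset\Cinf^{2}$, we first dispose of the easy case: on any stratum $\mathcal{N}$ where $\chi_{0}|_{\mathcal{N}}\equiv\infty$ or $\chi_{m}|_{\mathcal{N}}\equiv\infty$, the image $\chi_{0,m}(\mathcal{N})$ misses $I^{2}$ and transversality holds vacuously. So we may assume we are on a stratum $\mathcal{N}$ to which Lemma \ref{diagobj} applies, producing the commutative diagram with objects $Y,Z,W_{0},W_{m}$ and surjective submersions $\pi,\tau,p$.

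The next step is to use the diagram to replace $\chi_{0,m}|_{\mathcal{N}}=\rho\circ\pi$ by the product map $r=r_{0}\times r_{m}\colon W_{0}\times W_{m}\to\Cinf^{2}$. Concretely, I would apply Lemma \ref{transcomp} three times in succession: since $\pi$ is a surjective submersion, $I^{2}\pitchfork\rho\circ\pi$ is equivalent to $I^{2}\pitchfork\rho$; since $\tau$ is a surjective submersion, this is equivalent to $I^{2}\pitchfork\rho\circ\tau=r\circ p$; and since $p$ is a surjective submersion, this is equivalent to $I^{2}\pitchfork r$. Then Lemma \ref{transprod} (extended to orbifolds, as noted after its statement) reduces this final condition to the pair of one-variable statements $I\pitchfork r_{0}$ and $I\pitchfork r_{m}$. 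This is exactly the content that the factorization was designed to extract: the factor $r_{0}$ depends only on the free point in $W_{0}$, and $r_{m}$ only on the free point in $W_{m}$, so the two cross-ratio maps can be varied independently.

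For the individual transversalities $I\pitchfork r_{0}$ and $I\pitchfork r_{m}$, I would invoke exactly the argument of Lemma \ref{transg}. Unwinding the definitions of $r_{0},r_{m}$ in the two cases of Lemma \ref{diagobj}, each is a cross-ratio in which three of the four arguments are fixed and the fourth varies over an open subset of $\partial\nu$ or $\mathring\nu$. In either case, after identifying $\nu$ with $\mathcal{H}$ or $\Cinf$, this cross-ratio is a Möbius transformation of the free variable, hence a local diffeomorphism; at any point where its value lies in the interior of $I$, its differential surjects onto $T\Cinf$ and in particular covers the direction normal to $I$. At the endpoints $0,1\in I$, one can either argue directly with boundary submersion conditions or appeal to the fact that $\chi_{0}$ and $\chi_{m}$ themselves were already proved transversal to $I$ in Lemma \ref{transg} and inherit this property through the same diagram applied separately in each variable.

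The main obstacle is purely bookkeeping: one has to verify that the diagram of Lemma \ref{diagobj} really does separate the two cross-ratios into independent factors in all of its cases (in particular Case 1, where $\nu_{0}=\nu_{m}$ and the separation is achieved by sending the fourth point to $W_{0}$ and the second point to $W_{m}$ on the \emph{same} component), and to confirm that the arguments of Lemma \ref{transg} apply verbatim in the resulting one-variable cross-ratio situations. Once this is checked, no new geometric input beyond what is already used for $\chi_{m}$ alone is needed.
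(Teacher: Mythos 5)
Your overall architecture coincides with the paper's: restrict attention to strata meeting $\chi_{0,m}^{-1}(I^{2})$ (so that $\chi_{0},\chi_{m}\neq\infty$ there), invoke Lemma \ref{diagobj}, shuttle the transversality condition through the diagram with repeated applications of Lemma \ref{transcomp}, and split the product condition $I^{2}\pitchfork r$ into the two factor conditions with Lemma \ref{transprod}. Up to running the chain of equivalences in the opposite order, this is exactly the paper's proof.

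The one place you deviate is in how you establish the individual transversalities $I\pitchfork r_{0}$ and $I\pitchfork r_{m}$, and the direct argument you sketch there does not work as stated. The map $r_{m}$ is defined on $W_{m}=\partial\nu\setminus\{z\}$, which is one-dimensional, so its differential cannot possibly surject onto the two-dimensional $T\Cinf$; transversality of $r_{m}$ to $I$ is the assertion that a certain circle in $\Cinf$ crosses $[0,1]$ transversally, which is a genuine geometric fact and not a consequence of local-diffeomorphism considerations. Likewise $r_{0}(x_{4})=(z,w,\bar{x}_{4},x_{4})$ depends on $\bar{x}_{4}$ as well as $x_{4}$, so it is not a M\"obius transformation of the free variable. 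The correct route is the one you relegate to the endpoints: Lemma \ref{transg} gives $\chi_{i}\pitchfork I$, and since $\chi_{i}=\rho_{i}\circ\pi$ and $\rho_{i}\circ\tau=r_{i}\circ p_{i}$ with $\pi$, $\tau$, and $p_{i}$ surjective submersions, two applications of Lemma \ref{transcomp} transport this to $r_{i}\pitchfork I$ at every point, interior and endpoint alike. With that substitution made throughout (rather than only at $0,1\in I$), your proof is the paper's proof.
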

\begin{proof}
Let $\mathcal{N}\subset\M$ be a codimension $d+2e$ stratum (allowing codimension $0$) that consists of those open stable maps with a given domain $\Sigma$ having $d+1$ disk components and $e$ sphere components, and with a given allocation of the marked points to the components of $\Sigma$ as well as a corresponding partition of $\beta$. It is enough to show transversality in each such $\mathcal{N}$ with $\mathcal{N}\cap\chi_{0,m}^{-1}(I^{2})\neq\emptyset$, which implies that $\chi_{0}|_{\mathcal{N}}\neq\infty$ and $\chi_{m}|_{\mathcal{N}}\neq\infty$. Fix such $\mathcal{N}$ and apply Lemma \ref{diagobj}.
\par
By abuse of notation, denote by $I:I\hookrightarrow\Cinf$ and by $I^{2}:I^{2}\hookrightarrow\Cinf^{2}$ the inclusions. Lemma \ref{transg} states that $\chi_{i}\pitchfork I$ for $i=0,m$. Applying Lemma \ref{transcomp} twice gives $\rho_{i}\pitchfork I$ and then $r_{i}\pitchfork I$. Lemma \ref{transprod} now gives $r\pitchfork I^{2}$. Again, applying Lemma \ref{transcomp} twice gives $\rho\pitchfork I^{2}$ and then $\chi_{0,m}\pitchfork I^{2}$.
\end{proof}
We can thus define
\begin{align*}
    \Mgg=I^{2}\times_{\Cinf^{2}}\M,
\end{align*}
where the fiber product is taken with respect to the inclusion $I^{2}\hookrightarrow\Cinf^{2}$ and to $\chi_{0,m}$. By proposition \ref{transgg} it is a smooth orbifold with corners. As before, denote by $\evb$ and $\evi$ the evaluation maps, and assume that $\evbz$ is a proper submersion (see Section \ref{reg}). Define the corresponding operators
\begin{align*}
    \qgg:E^{\otimes k}\otimes C^{\otimes l}\rightarrow C
\end{align*}
by
\begin{align*}
    \qgg(\alpha;\eta)=(-1)^{\varepsilon(\alpha)}(\evbz)_{*}\Big(\bigwedge_{j=1}^{l}(\evi)^{*}\eta_{j}\wedge\bigwedge_{j=1}^{k}(\evb)^{*}\alpha_{j}\Big),
\end{align*}
and set
\begin{align*}
    \mathfrak{q}_{k,l;0,m}(\alpha;\eta)=\sum_{\beta\in\Pi}T^{\beta}\qgg.
\end{align*}
\begin{rmk}
Note that the operators $\qgg$ and $\mathfrak{q}_{k,l;0,m}$ are different from the ones denoted in \cite{RQH} by $\mathfrak{q}_{k,l;0,3}^{\beta}$ and $\mathfrak{q}_{k,l;0,3}$.
\end{rmk}
\par
We then have the following.
\begin{prop}[Unit]\label{ggunit}
Assume $\alpha_{i}=c\cdot1$ for some $c\in R$ and $1\leq i\leq k$, $i\neq m$. Then $\qgg(\alpha;\eta)=0$.
\end{prop}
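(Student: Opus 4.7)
The strategy is to mimic the proofs of Proposition \ref{horunit} and Proposition \ref{gfunclass}, using a forgetful map together with a dimension count on the pushforward of the constant current. The key observation is that since $i\neq m$ (and $i\neq 0$ automatically, as $1\leq i\leq k$), the boundary marked point $z_{i}$ is not one of the four points constrained to lie on the geodesic, namely $z_{0},w_{1},w_{2},z_{m}$. Therefore, forgetting $z_{i}$ preserves the geodesic constraint and one obtains a well-defined forgetful map
\begin{equation*}
    \pi:\Mgg\rightarrow\mathcal{M}_{k-1,l;0,m''}(\beta),
\end{equation*}
where $m''=m$ if $i>m$ and $m''=m-1$ if $i<m$, which shifts the labels of the remaining boundary marked points and stabilizes the resulting map. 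Standard reasoning shows $\pi$ is a surjective submersion with $\rdim\pi=1$.

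Writing $\alpha=\alpha^{1}\otimes c\cdot 1\otimes\alpha^{2}$, and letting $\widetilde{\evb},\widetilde{\evi}$ be the evaluation maps on $\mathcal{M}_{k-1,l;0,m''}(\beta)$, we have $\evb=\widetilde{ev}^{\beta}_{j'}\circ\pi$ for $j\neq i$ (with $j'$ the shifted index) and $\evi=\widetilde{\evi}\circ\pi$. Applying the map $\varphi$ from Definition \ref{curhom}, together with propositions \ref{subcomp} and \ref{int} (and the analogue of the latter for currents), one obtains
\begin{equation*}
    \varphi(\qgg(\alpha;\eta))=\pm c\cdot \mathfrak{q}_{k-1,l;0,m''}^{\beta}(\alpha^{1}\otimes\alpha^{2};\eta)\wedge\pi_{*}\varphi(1),
\end{equation*}
where the sign is determined by the reordering used in Proposition \ref{int} and the fact that $(\evb)^{*}(c\cdot 1)=c\cdot 1$.

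To conclude, we perform a degree count:
\begin{equation*}
    \deg\pi_{*}\varphi(1)=\deg\varphi(1)-\rdim\pi=0-1=-1,
\end{equation*}
so $\pi_{*}\varphi(1)=0$. By the injectivity of $\varphi$ established in Proposition \ref{curhomprop}, it follows that $\qgg(\alpha;\eta)=0$.

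The only real obstacle is verifying that the forgetful map $\pi$ is well-defined and has the claimed properties: that forgetting $z_{i}$ does not destabilize the domain (standard, given that the geodesic constraint still pins down sufficient data on each component containing $z_{0},w_{1},w_{2},z_{m}$) and that the identifications of evaluation maps through $\pi$ hold strictly. Once this is set up, the rest is a direct transcription of the argument from Proposition \ref{horunit}.
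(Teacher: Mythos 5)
Your proof is correct and is essentially the paper's own argument: the paper proves this proposition by declaring it "verbatim the same" as Proposition \ref{horunit}, i.e.\ forgetting the $i$th boundary point (legitimate since $i\neq 0,m$ so the geodesic constraint survives) and concluding from $\deg\pi_{*}\varphi(1)=-1$ and the injectivity of $\varphi$. Your write-up is just a more explicit transcription of that same argument, including the bookkeeping of the relabeled index $m''$.
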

\begin{proof}
The proof is verbatim the same as that of Proposition \ref{horunit}, with the moduli spaces and operators with subscript $\perp$ replaced by those with subscript $0,m$.
\end{proof}
Set
\begin{align*}
    \iota_{5}(\alpha,\eta;i,J_{1})&=|\eta^{1}|+|\eta^{2}|(|\alpha^{1}|+i)+\sigma_{J_{1},J_{2}}^{\eta},\\
    \iota_{6}(\alpha,\eta;J_{1})&=(|\eta^{1}|+1)(|\eta^{2}|+1)+n,
\end{align*}
and as in section \ref{gop}, let
\begin{align*}
    m'(m,i,k_{2})=
    \begin{cases}
    m,&1\leq m<i,\\
    i,&i\leq m<i+k_{2},\\
    m-k_{2}+1,&i+k_{2}\leq m\leq k.
    \end{cases}
\end{align*}
\begin{prop}[Structure equation for $\Mgg$]\label{ggstruc}
We have
\begin{align*}
    0&=-\mathfrak{q}_{k,l;0,m}(\alpha;d\eta)\\
    &+\sum_{\substack{k_{1}+k_{2}=k+1\\1\leq i\leq k_{1}\\J_{1}\sqcup J_{2}=[l]\\1,2\in J_{1}}}(-1)^{\iota(\alpha,\eta;i,J_{1})}\mathfrak{q}_{k_{1},l_{1};0,m'(m,i,k_{2})}(\alpha^{1}\otimes \mathfrak{q}_{k_{2},l_{2}}(\alpha^{2};\eta^{2})\otimes\alpha^{3};\eta^{1})\\
    &+\sum_{\substack{k_{1}+k_{2}=k+1\\m-k_{2}<i\leq m\\J_{1}\sqcup J_{2}=[l]\\1,2\in J_{2}}}(-1)^{\iota(\alpha,\eta;i,J_{1})}\mathfrak{q}_{k_{1},l_{1}}(\alpha^{1}\otimes \mathfrak{q}_{k_{2},l_{2};0,m-i+1}(\alpha^{2};\eta^{2})\otimes\alpha^{3};\eta^{1})\\
    &+\sum_{\substack{k_{1}+k_{2}=k+1\\m-k_{2}<i\leq m\\J_{1}\sqcup J_{2}=[l]\\2\in J_{1},1\in J_{2}}}(-1)^{\iota_{5}(\alpha,\eta;i,J_{1})}\mathfrak{q}_{k_{1},l_{1};i}(\alpha^{1}\otimes \mathfrak{q}_{k_{2},l_{2};m-i+1}(\alpha^{2};\eta^{2})\otimes\alpha^{3};\eta^{1})\\
    &+\sum_{\substack{J_{1}\sqcup J_{2}=[l]\\1,2\in J_{2}}}(-1)^{\iota_{6}(\alpha,\eta;J_{1})}\mathfrak{q}_{k,l_{1}+1;m}(\alpha;\mathfrak{q}_{\emptyset,l_{2}}(\eta^{2})\otimes\eta^{1}).
\end{align*}
\end{prop}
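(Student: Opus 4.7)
My plan is to derive the structure equation by applying Stokes' theorem (Proposition \ref{Stokes}) to the pushforward defining $\qgg$ and then to identify each codimension-one stratum of $\partial\Mgg$ with a fiber product of simpler moduli spaces, paralleling the proofs of Propositions \ref{gstruc} and \ref{gzstruc} in \cite{RQH}. Concretely, I would set $\omega=\bigwedge_{j=1}^{l}(\evi)^{*}\eta_{j}\wedge\bigwedge_{j=1}^{k}(\evb)^{*}\alpha_{j}$ on $\Mgg$ and, using that $\evbz$ is a proper submersion (Section \ref{reg}), invoke Stokes' theorem to express $d((\evbz)_{*}\omega)$ as the sum of $(\evbz)_{*}(d\omega)$ and the boundary pushforward $(\evbz|_{\partial\Mgg})_{*}\omega$. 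Expanding $d\omega$ by Leibniz accounts for the $-\mathfrak{q}_{k,l;0,m}(\alpha;d\eta)$ contribution, while the boundary pushforward accounts for the remaining four sums.

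The main geometric work is the enumeration of the codimension-one strata of $\Mgg=I^{2}\times_{\Cinf^{2}}\M$. By Proposition \ref{fiberbdry}, the boundary decomposes into contributions from $\partial(I^{2})$ and from $\partial\M$; the contributions from $\partial I^{2}$ come from the cross-ratios $\chi_{0},\chi_{m}$ reaching the endpoints of $I$, which degenerate to nodal configurations and are absorbed into the $\partial\M$ stratification. For each disk-bubbling codimension-one stratum, I would classify by how the four constrained points $z_{0},w_{1},w_{2},z_{m}$ are distributed between the principal disk and the bubble: (a) all four on the principal, giving the $1,2\in J_{1}$ term with shifted geodesic index $m'(m,i,k_{2})$; (b) $z_{0}$ on the principal while $z_{m},w_{1},w_{2}$ pass to the bubble, giving the $1,2\in J_{2}$ term where the 4-point constraint lives entirely on the bubble; (c) $z_{0},w_{2}$ on the principal while $z_{m},w_{1}$ pass to the bubble, forcing each 3-point geodesic to split across the node and yielding the $2\in J_{1},1\in J_{2}$ term with separate $\mathfrak{q}_{\cdots;i}$ and $\mathfrak{q}_{\cdots;m-i+1}$ factors; and (d) a sphere-bubbling stratum carrying $w_{1},w_{2}$ off to a sphere component, which reduces the remaining disk constraint to a 3-point geodesic at the new sphere attaching point and produces the $\mathfrak{q}_{\emptyset,l_{2}}$ factor of the final sum. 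In each case I would identify the stratum with a fiber product via the gluing construction, and rewrite the boundary integral as a composition of $\mathfrak{q}$ operators using Propositions \ref{subcomp}, \ref{int}, and \ref{proj}.

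The main obstacle will be the sign bookkeeping: the exponents $\iota$, $\iota_{5}$ and $\iota_{6}$ must be derived by combining the permutation signs arising when the tensor ordering is rearranged during gluing, the $\varepsilon(\alpha)$ prefactor from the definition of $\qgg$, and the corner/boundary orientation signs dictated by the fiber-product conventions of Section \ref{fibtrans} and Propositions \ref{fiber}, \ref{fiberbdry}, \ref{orbprop}. Case (c) is particularly delicate, since a single codimension-one stratum must simultaneously accommodate the splitting of \emph{both} geodesic constraints; here I would apply Lemma \ref{diagobj} both before and after the nodal degeneration, using Case 2 of its proof to parametrize the two-component configuration. For the sphere-bubbling stratum (d) I would use the same argument as in Proposition \ref{gzstruc} of \cite{RQH}, noting that the fourth constrained point $z_{m}$ is a boundary point and hence unaffected by the sphere bubble, so the surviving constraint on the principal disk is only a 3-point geodesic. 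Throughout, I would follow the sign conventions of \cite{RQH}*{Proposition 3.2} as closely as possible, so that the two-geodesic case appears as a natural extension of the one-geodesic computation.
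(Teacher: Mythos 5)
Your proposal follows essentially the same route as the paper: Stokes' theorem applied to $\Mgg=I^{2}\times_{\Cinf^{2}}\M$, the boundary decomposition of Proposition \ref{fiberbdry}, and a classification of codimension-one strata by how $z_{0},w_{1},w_{2},z_{m}$ distribute over the components, with your cases (a)--(d) matching the paper's Cases 1--3 of Section \ref{boundary} plus the sphere-bubbling stratum. Two small points to tighten: the sphere-bubbling term is not ``absorbed into the $\partial\M$ stratification'' (sphere bubbles are codimension two in $\M$) but arises from the separate summand $\partial I^{2}\times_{\Cinf^{2}}\M$, specifically from $\chi_{0}^{-1}(1)$, and one must check that the other endpoint preimages $\chi_{0}^{-1}(0)$, $\chi_{m}^{-1}(0)$, $\chi_{m}^{-1}(1)$ are empty; and you should also record why the remaining distribution $1\in J_{1}$, $2\in J_{2}$ contributes nothing, namely that the ordering of the points along the geodesic forces $1\in J_{2}$ whenever $2\in J_{2}$.
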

\begin{figure}[H]
    \centering
    \includegraphics[scale=0.2]{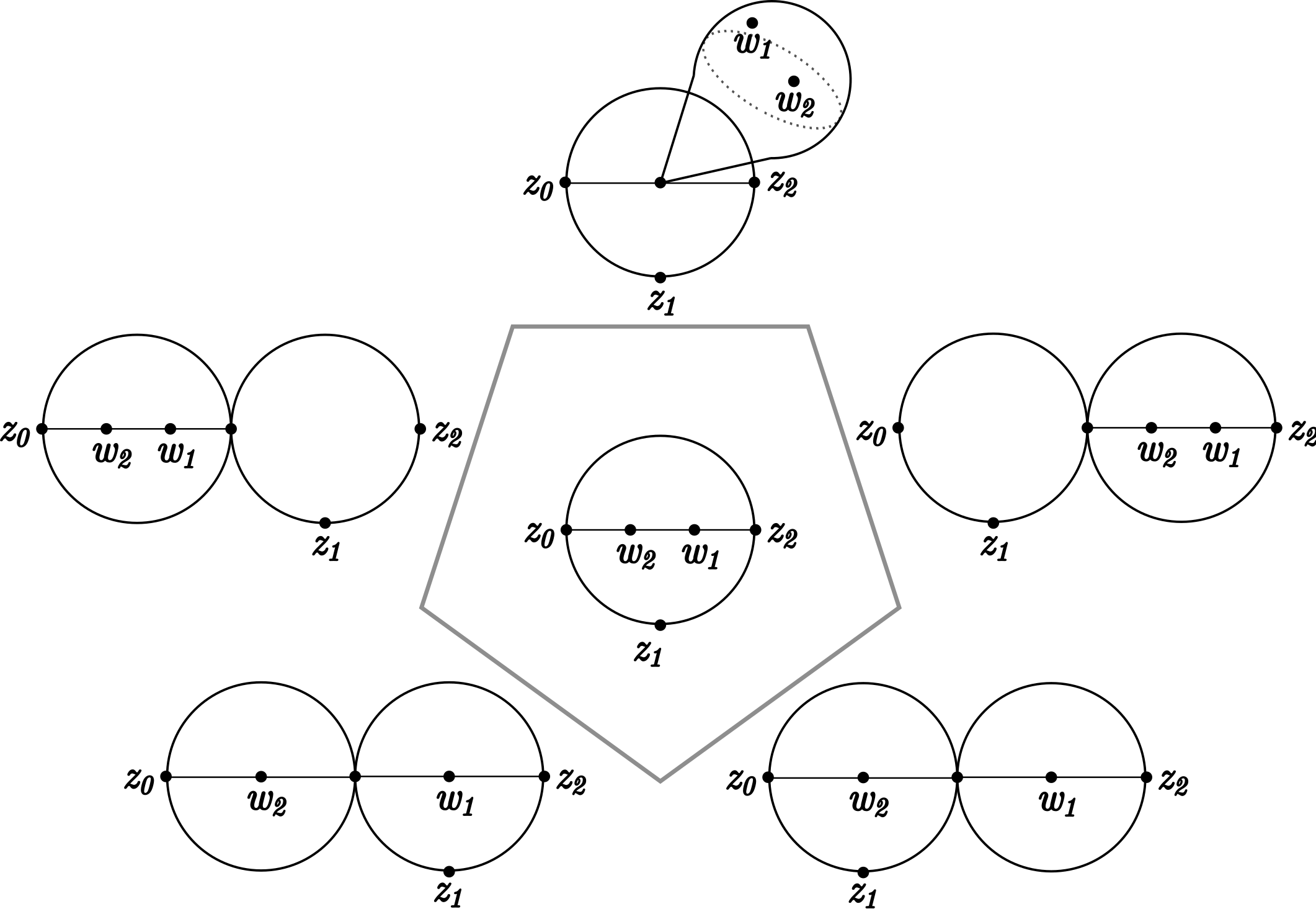}
    \caption{Boundary components of $\mathcal{M}_{3,2;0,2}(\beta_{0})$}
\end{figure}
The proof of Proposition \ref{ggstruc} is given in the next two sections. Defining deformed operators as in Section \ref{defop},
\begin{align*}
    \mathfrak{q}_{k,l;0,m}^{\gamma,b}(\alpha;\eta)&=\sum_{t,s_{0},...,s_{k}\geq0}\mathfrak{q}_{k+s_{0}+...+s_{k},l+t;0,m+s_{0}+...+s_{m-1}}(\alpha^{b;s_{0},...,s_{k}};\eta^{\gamma;t}),
\end{align*}
we get the following.
\begin{prop}
Propositions \ref{ggunit} and \ref{ggstruc} hold for the deformed operators as well.
\end{prop}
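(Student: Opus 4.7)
The plan is to establish both claims by the same deformation-by-insertion argument used to prove Proposition \ref{defprop}, following the templates of \cite{RQH}*{Lemma 3.17} and \cite{OC}*{Remark 4.36}. The two key inputs are that $b$ and $\gamma$ are $d$-closed, and that $(\gamma,b)$ is a bounding pair, so $\mathfrak{q}_{0,0}^{\gamma,b}=c\cdot 1$ for some $c\in R$.

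For the deformed unit property (deformed Proposition \ref{ggunit}), I would expand $\mathfrak{q}_{k,l;0,m}^{\gamma,b}(\alpha;\eta)$ as the sum over insertion multiplicities $s_{0},\dots,s_{k}\geq 0$ and $t\geq 0$ defined in Section \ref{defop}. Under the hypothesis $\alpha_{i}=c\cdot 1$ with $i\neq m$, the unit sits at boundary position $i+s_{0}+\cdots+s_{i-1}$ in each summand, while the distinguished geodesic position is $m+s_{0}+\cdots+s_{m-1}$. A short case-split into $i<m$ and $i>m$, using the monotonicity of the partial sums, shows these two indices never coincide, so each summand vanishes by the undeformed Proposition \ref{ggunit}, and so does the sum.

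For the deformed structure equation (deformed Proposition \ref{ggstruc}), I would apply the undeformed Proposition \ref{ggstruc} to inputs $(\alpha^{b;s_{0},\dots,s_{k}};\eta^{\gamma;t})$ and sum over all $\vec{s}$ and $t$ with the $1/t!$ weights. Since $\gamma$ is closed, $d(\eta^{\gamma;t})$ collapses to $(d\eta)^{\gamma;t}$, so the left hand side recombines into $-\mathfrak{q}_{k,l;0,m}^{\gamma,b}(\alpha;d\eta)$. On the right, each of the four summand-types is classified according to whether the inner operator is evaluated on at least one original entry $\alpha_{j}$ or $\eta_{j}$. Terms of this kind repackage, after distributing the inserted $b$'s and $\gamma$'s between the inner and the outer operators, into the four summands of the deformed equation. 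The remaining terms, in which the inner operator is evaluated only on inserted $b$'s and $\gamma$'s, contribute either an insertion of $\mathfrak{q}_{0,0}^{\gamma,b}=c\cdot 1$ into the outer operator (for inner types $\mathfrak{q}_{k_{2},l_{2}}$, $\mathfrak{q}_{k_{2},l_{2};0,m-i+1}$, and $\mathfrak{q}_{k_{2},l_{2};m-i+1}$) or an evaluation $\mathfrak{q}_{\emptyset,l_{2}}^{\gamma}$ applied only to inserted $\gamma$'s in the fourth type. The $c\cdot 1$ insertions are killed by the appropriate unit propositions \ref{unit}, \ref{gunit}, and \ref{ggunit} at every boundary position distinct from the geodesic one, and the residual placements are expected to cancel pairwise across the four summand-types.

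The main obstacle is the sign and combinatorial bookkeeping in this last step: one must verify that the $\iota$-signs of Proposition \ref{ggstruc}, combined with the degree shifts induced by the inserted $b$'s and $\gamma$'s, reproduce exactly the $\iota$-signs in the deformed equation, and simultaneously that the residual $c\cdot 1$ contributions cancel pairwise. This is a direct transcription of the sign analysis in \cite{RQH}*{Lemma 3.17}, extended to the new summand-types of Proposition \ref{ggstruc}; no genuinely new geometric input beyond the hypotheses on $(\gamma,b)$ is required.
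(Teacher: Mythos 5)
Your treatment of the deformed unit property is correct and complete: the monotonicity of the partial sums $s_{0}+\cdots+s_{j-1}$ shows the position of the inserted unit, $i+s_{0}+\cdots+s_{i-1}$, never equals the shifted geodesic index $m+s_{0}+\cdots+s_{m-1}$ when $i\neq m$, so every summand dies by the undeformed Proposition \ref{ggunit}.

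For the structure equation, however, there is a genuine misstep in how you dispose of the degenerate terms. The proposition is stated for an arbitrary closed $\gamma\in(\mathcal{I}E)_{2}$ and arbitrary $b\in(\mathcal{I}C)_{1}$: no bounding-pair hypothesis is available here (bounding pairs are introduced only in Definition \ref{bpdef}, and the deformed structure equations are in fact an ingredient in \emph{proving} that a pair is bounding, cf.\ Proposition \ref{BPexist}), and $b$ need not be $d$-closed. More importantly, the terms in which the inner operator eats only inserted $b$'s and $\gamma$'s are not supposed to be killed: the summation range $k_{1}+k_{2}=k+1$, $J_{1}\sqcup J_{2}=[l]$ in Proposition \ref{ggstruc} includes $(k_{2},l_{2})=(0,0)$, so after summing over insertion multiplicities these terms recombine into exactly the $\mathfrak{q}_{0,0}^{\gamma,b}$-insertion summands of the deformed equation (these are the terms $\mathfrak{q}^{\gamma,b}_{2,2;0,2}(\mathfrak{q}^{\gamma,b}_{0,0},\alpha;\eta_{1},\eta_{2})$, etc., that appear when the equation is applied in Proposition \ref{mod}, and they are only discarded there, after the fact, using $\mathfrak{q}_{0,0}^{\gamma,b}=c\cdot1$ and the unit properties). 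Note they can arise only in the first composition type, since the constraints $1\in J_{2}$ or $1,2\in J_{2}$ in the other three types force the inner operator to consume an original interior point. Finally, your hedge that insertions landing on the geodesic position are ``expected to cancel pairwise'' conceals the one point that actually needs an argument, and the resolution is that nothing cancels: a consecutive block of inserted boundary points never contains the original marked point carrying $\alpha_{m}$, and the formula for $m'(\tilde{m},\tilde{i},\tilde{k}_{2})$ then shows the insertion position always differs from the new geodesic position. With the degenerate terms reidentified as part of the deformed equation rather than cancelled, and using only $d\gamma=0$ for the $d\eta$ term, your recombination argument does yield the proposition.
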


\subsection{Description of the boundary}\label{boundary}
We turn to proving Proposition \ref{ggstruc}. We begin by describing the boundary of the moduli space $\Mgg=I^{2}\times_{\Cinf^{2}}\M$. By Proposition \ref{fiberbdry}, we have
\begin{align*}
    \partial\Mgg=\partial I^{2}\times_{\Cinf^{2}}\M+I^{2}\times_{\Cinf^{2}}\partial\M.
\end{align*}
\par
We begin by analysing the first summand. Write $I^{2}=I_{0}\times I_{m}$, where $I_{j}$ represents the copy of $I$ in the range of $\chi_{j}$. We obtain
\begin{align*}
    \partial I^{2}=\partial I_{0}\times I_{m}-I_{0}\times\partial I_{m}.
\end{align*}
Therefore, by Proposition \ref{fiber},
\begin{align*}
    \partial I^{2}\times_{\Cinf^{2}}\M
    &=(\partial I_{0}\times I_{m})\times_{\Cinf^{2}}\M-(I_{0}\times\partial I_{m})\times_{\Cinf^{2}}\M\\
    &=(I_{m}\times\partial I_{0})\times_{\Cinf^{2}}\M-(I_{0}\times\partial I_{m})\times_{\Cinf^{2}}\M\\
    &=I_{m}\times_{\Cinf}(\partial I_{0}\times_{\Cinf}\M)-I_{0}\times_{\Cinf}(\partial I_{m}\times_{\Cinf}\M)\\
    &=I_{m}\times_{\Cinf}(\chi_{0}^{-1}(1)-\chi_{0}^{-1}(0))-I_{0}\times_{\Cinf}(\chi_{m}^{-1}(1)-\chi_{m}^{-1}(0)).
\end{align*}
\par
It is shown in \cite{prep} that $\chi_{0}^{-1}(0)=\chi_{m}^{-1}(0)=\chi_{m}^{-1}(1)=\emptyset$, and that $\chi_{0}^{-1}(1)$ corresponds to the marked points $w_{1},w_{2}$ bubbling off to a sphere. More precisely,
\begin{align*}
    \chi_{0}^{-1}(1)\cong\sum_{\substack{\beta_{1}+\varpi(\beta_{2})=\beta\\J_{1}\sqcup J_{2}=[l]\\1,2\in J_{2}}}(-1)^{w_{\mathfrak{s}}(\beta_{2})}\mathcal{M}_{k+1,1+|J_{1}|}(\beta_{1})\times_{X}\mathcal{M}_{1+|J_{2}|}(\beta_{2})
\end{align*}
where the fiber product is taken with respect to $evi^{\beta_{1}}_{1},ev^{\beta_{2}}_{0}$. Combining the above, we get
\begin{align*}
    \partial I^{2}\times_{\Cinf^{2}}\M&=I_{m}\times_{\Cinf}\chi_{0}^{-1}(1)\\
    &\cong\sum_{\substack{\beta_{1}+\varpi(\beta_{2})=\beta\\J_{1}\sqcup J_{2}=[l]\\1,2\in J_{2}}}(-1)^{w_{\mathfrak{s}}(\beta_{2})}\mathcal{M}_{k+1,1+|J_{1}|;m}(\beta_{1})\times_{X}\mathcal{M}_{1+|J_{2}|}(\beta_{2}).
\end{align*}
\par
We turn to analysing the second summand. By \cite{Ainf}*{Proposition 2.8}, we have
\begin{align*}
    \partial\M\cong\sum_{\substack{\beta_{1}+\beta_{2}=\beta\\k_{1}+k_{2}=k+1\\J_{1}\sqcup J_{2}=[l]\\1\leq i\leq k_{1}\\(\beta_{1},k_{1},l_{1})\neq(\beta_{0},1,0)\\(\beta_{2},k_{1},l_{1})\notin\{(\beta_{0},1,0),(\beta_{0},0,0)\}}}(-1)^{\delta_{1}}\mathcal{M}_{k_{1}+1,|J_{1}|}(\beta_{1})\times_{L}\mathcal{M}_{k_{2}+1,|J_{2}|}(\beta_{2})
\end{align*}
where $\delta_{1}=k_{2}(k_{1}+i)+i+n$ and the fiber product is taken with respect to $evb^{\beta_{1}}_{i},evb^{\beta_{2}}_{0}$. This fiber product represents the marked points $\{z_{j}\}_{j=i}^{i+k_{2}-1}$ and $\{w_{j}\}_{j\in J_{2}}$ bubbling off to a disk. Thus
\begin{align*}
    (I_{0}\times I_{m})\times_{\Cinf^{2}}\partial\M
    &=\sum_{\substack{\beta_{1}+\beta_{2}=\beta\\k_{1}+k_{2}=k+1\\J_{1}\sqcup J_{2}=[l]\\1\leq i\leq k_{1}\\(\beta_{1},k_{1},l_{1})\neq(\beta_{0},1,0)\\(\beta_{2},k_{1},l_{1})\notin\{(\beta_{0},1,0),(\beta_{0},0,0)\}}}(-1)^{\delta_{1}}\mathcal{B}_{\beta_{1},k_{1},J_{1},i}
\end{align*}
where
\begin{align*}
    \mathcal{B}_{\beta_{1},k_{1},J_{1},i}\cong(I_{0}\times I_{m})\times_{\Cinf^{2}}(\mathcal{M}_{k_{1}+1,|J_{1}|}(\beta_{1})\times_{L}\mathcal{M}_{k_{2}+1,|J_{2}|}(\beta_{2})).
\end{align*}
\par
Fix $\beta_{1},k_{1},J_{1},i$. We describe $\mathcal{B}=\mathcal{B}_{\beta_{1},k_{1},J_{1},i}$ explicitly by case, assuming it is nonempty. Let
\begin{align*}
    m'=m'(m,i,k_{2})=
    \begin{cases}
    m,&1\leq m<i,\\
    i,&i\leq m<i+k_{2},\\
    m-k_{2}+1,&i+k_{2}\leq m\leq k,
    \end{cases}
\end{align*}
as in Section \ref{gop}, and let
\begin{align*}
    \chi_{j}^{i}=\chi_{j}:\mathcal{M}_{k_{i},|J_{i}|}(\beta_{i})\rightarrow\Cinf.
\end{align*}
Denote by $p_{i}:\mathcal{B}\rightarrow\mathcal{M}_{k_{i}+1,|J_{i}|}(\beta_{i})$ the projections, and let $I_{j}^{i}$ represents the copy of $I$ in the range of $\chi_{j}^{i}$.
\par
\textbf{Case 1.} If $1,2\in J_{1}$, then all of the geodesic constraints are confined to the component corresponding to $\mathcal{M}_{k_{1}+1,|J_{1}|}(\beta_{1})$. We get $\chi_{0,m}|_{\mathcal{B}}=(\chi_{0}^{1},\chi_{m'}^{1})\circ p_{1}$. Thus, by Proposition \ref{fiber},
\begin{align*}
    \mathcal{B}&\cong(I_{0}^{1}\times I_{m'}^{1})\times_{\Cinf^{2}}(\mathcal{M}_{k_{1}+1,|J_{1}|}(\beta_{1})\times_{L}\mathcal{M}_{k_{2}+1,|J_{2}|}(\beta_{2}))\\
    &=((I_{0}^{1}\times I_{m'}^{1})\times_{\Cinf^{2}}\mathcal{M}_{k_{1}+1,|J_{1}|}(\beta_{1}))\times_{L}\mathcal{M}_{k_{2}+1,|J_{2}|}(\beta_{2})\\
    &=\mathcal{M}_{k_{1}+1,|J_{1}|;0,m'}(\beta_{1})\times_{L}\mathcal{M}_{k_{2}+1,|J_{2}|}(\beta_{2}).
\end{align*}
\par
\textbf{Case 2.} If $1,2\in J_{2}$, then the geodesic condition implies that $z_{m}$ lies in the component corresponding to $\mathcal{M}_{k_{2}+1,|J_{2}|}(\beta_{2})$, that is $i\leq m<i+k_{2}$. We get $\chi_{0,m}|_{\mathcal{B}}=(\chi_{0}^{2},\chi_{m-i+1}^{2})\circ p_{1}$. Thus, by Proposition \ref{fiber},
\begin{align*}
    \mathcal{B}&\cong(I_{0}^{2}\times I_{m-i+1}^{2})\times_{\Cinf^{2}}(\mathcal{M}_{k_{1}+1,|J_{1}|}(\beta_{1})\times_{L}\mathcal{M}_{k_{2}+1,|J_{2}|}(\beta_{2}))\\
    &=(\mathcal{M}_{k_{1}+1,|J_{1}|}(\beta_{1})\times_{L}\mathcal{M}_{k_{2}+1,|J_{2}|}(\beta_{2}))\times_{\Cinf^{2}}(I_{0}^{2}\times I_{m-i+1}^{2})\\
    &=\mathcal{M}_{k_{1}+1,|J_{1}|}(\beta_{1})\times_{L}(\mathcal{M}_{k_{2}+1,|J_{2}|}(\beta_{2})\times_{\Cinf^{2}}(I_{0}^{2}\times I_{m-i+1}^{2}))\\
    &=\mathcal{M}_{k_{1}+1,|J_{1}|}(\beta_{1})\times_{L}((I_{0}^{2}\times I_{m-i+1}^{2})\times_{\Cinf^{2}}\mathcal{M}_{k_{2}+1,|J_{2}|}(\beta_{2}))\\
    &=\mathcal{M}_{k_{1}+1,|J_{1}|}(\beta_{1})\times_{L}\mathcal{M}_{k_{2}+1,|J_{2}|;0,m-i+1}(\beta_{2}).
\end{align*}
\par
\textbf{Case 3.} If $2\in J_{1},1\in J_{2}$, we get
$\chi_{0,m}|_{\mathcal{B}}=(\chi_{i}^{1}\times\chi_{m-i+1}^{2})\circ (p_{1},p_{2})$. Thus, by propositions \ref{fiber} and \ref{dim},
\begin{align*}
    \mathcal{B}&\cong(I_{i}^{1}\times I_{m-i+1}^{2})\times_{\Cinf^{2}}(\mathcal{M}_{k_{1}+1,|J_{1}|}(\beta_{1})\times_{L}\mathcal{M}_{k_{2}+1,|J_{2}|}(\beta_{2}))\\
    &=(-1)^{k_{1}k_{2}}I_{i}^{1}\times_{\Cinf}(I_{m-i+1}^{2}\times_{\Cinf}(\mathcal{M}_{k_{2}+1,|J_{2}|}(\beta_{2})\times_{L}\mathcal{M}_{k_{1}+1,|J_{1}|}(\beta_{1})))\\
    &=(-1)^{k_{1}k_{2}+k_{1}(k_{2}+1)}(I_{i}^{1}\times_{\Cinf}\mathcal{M}_{k_{1}+1,|J_{1}|}(\beta_{1}))\times_{L}(I_{m-i+1}^{2}\times_{\Cinf}\mathcal{M}_{k_{2}+1,|J_{2}|}(\beta_{2}))\\
    &=(-1)^{k_{1}}\mathcal{M}_{k_{1}+1,|J_{1}|;i}(\beta_{1})\times_{L}\mathcal{M}_{k_{2}+1,|J_{2}|;m-i+1}(\beta_{2}).
\end{align*}
\par
Note that if $2\in J_{2}$ then the geodesic condition implies that $1\in J_{2}$ as well, hence these are all the possible cases. Combining the above, we get
\begin{align*}
    &(-1)^{\delta_{2}}\mathcal{B}\cong\\
    &\cong\begin{cases}
    \mathcal{M}_{k_{1}+1,|J_{1}|;0,m'}(\beta_{1})\times_{L}\mathcal{M}_{k_{2}+1,|J_{2}|}(\beta_{2}),&1,2\in J_{1},\\
    \mathcal{M}_{k_{1}+1,|J_{1}|}(\beta_{1})\times_{L}\mathcal{M}_{k_{2}+1,|J_{2}|;0,m'}(\beta_{2}),&1,2\in J_{2},i\leq m<i+k_{2},\\
    \mathcal{M}_{k_{1}+1,|J_{1}|;i}(\beta_{1})\times_{L}\mathcal{M}_{k_{2}+1,|J_{2}|;m'}(\beta_{2}),&2\in J_{1},1\in J_{2},i\leq m<i+k_{2},\\
    \emptyset,&otherwise.
    \end{cases}
\end{align*}
where
\begin{align*}
    \delta_{2}=
    \begin{cases}
    \delta_{1}+k_{1},&2\in J_{1},1\in J_{2},\\\delta_{1},&otherwise,
    \end{cases}
\end{align*}

\subsection{Applying Stokes' Theorem}\label{computegg}

We are now ready to derive the structure equation. Set
\begin{align*}
    \xi=\bigwedge_{j=1}^{l}(\evi)^{*}\eta_{j}\wedge\bigwedge_{j=1}^{k}(\evb)^{*}\alpha_{j},
\end{align*}
and note that
\begin{align*}
    \dim\Mgg=\dim\M-2\equiv n+k\pmod{2}.
\end{align*}
Applying Stokes' theorem (Proposition \ref{Stokes}), we obtain
\begin{align*}
    d((\evbz)_{*}\xi)=(\evbz)_{*}(d\xi)+(-1)^{n+k+|\alpha|+|\eta|}(\evbz|_{\partial\Mgg})_{*}\xi.
\end{align*}
Let us describe each of the above terms using the $\mathfrak{q}$ operators. We clearly have
\begin{align*}
    d((\evbz)_{*}\xi)&=(-1)^{\varepsilon(\alpha)}\mathfrak{q}_{1,0}^{\beta_{0}}(\qgg(\alpha;\eta))\\
    &=(-1)^{\iota(\alpha,\eta;1,\emptyset)+\varepsilon(\alpha)}\mathfrak{q}_{1,0}^{\beta_{0}}(\qgg(\alpha;\eta)),
\end{align*}
in addition to
\begin{align*}
    (\evbz)_{*}(d\xi)
    &=(-1)^{\varepsilon(\alpha)}\qgg(\alpha;d\eta)\\
    &+\sum_{i=1}^{k}(-1)^{|\eta|+|\alpha^{1}|+\varepsilon(\alpha^{1}\otimes d\alpha_{i}\otimes\alpha^{3})}\qgg(\alpha^{1}\otimes d\alpha_{i}\otimes\alpha^{2};\eta)\\
    &=(-1)^{\varepsilon(\alpha)}\qgg(\alpha;d\eta)\\
    &+\sum_{i=1}^{k}(-1)^{\varepsilon(\alpha)+|\eta|+|\alpha^{1}|+i}\qgg(\alpha^{1}\otimes\mathfrak{q}_{1,0}^{\beta_{0}}(\alpha_{i})\otimes\alpha^{2};\eta)\\
    &=(-1)^{\varepsilon(\alpha)}\qgg(\alpha;d\eta)\\
    &+\sum_{i=1}^{k}(-1)^{\varepsilon(\alpha)+\iota(\alpha,\eta;i,J)+1}\qgg(\alpha^{1}\otimes\mathfrak{q}_{1,0}^{\beta_{0}}(\alpha_{i})\otimes\alpha^{2};\eta).
\end{align*}
For the last term, note that it is enough to calculate $(\evbz|_{\mathcal{B}})_{*}\xi$ for each boundary component $\mathcal{B}$ and then sum over $\mathcal{B}$.
\par
\textbf{Case 1 (boundary bubbling).} Let $\mathcal{B}$ be a boundary component of the form 
\begin{align*}
    \mathcal{B}=(-1)^{\delta_{2}}\mathcal{M}_{k_{1}+1,l_{1};g_{1}}(\beta_{1})\times_{L}\mathcal{M}_{k_{2}+1,l_{2};g_{2}}(\beta_{2}),
\end{align*}
where the set $g_{i}\in\{\emptyset,\{0\},\{1\}...,\{k_{i}\},\{0,1\},...,\{0,k_{i}\}\}$ indicates the geodesic constraints. In particular,
\begin{align*}
    \dim\mathcal{M}_{k_{i}+1,l_{i};g_{i}}=\dim\mathcal{M}_{k_{i}+1,l_{i}}-|g_{i}|,
\end{align*}
where $|g_{i}|$ stands for the cardinality of $g_{i}$. Denote by
\begin{align*}
    p_{i}:\mathcal{B}\rightarrow\mathcal{M}_{k_{i}+1,l_{i};g_{i}}
\end{align*}
the projections and let
\begin{align*}
    \xi_{1}&=\bigwedge_{j\in J_{1}}(evi_{j}^{\beta_{1}})^{*}\eta_{j}\wedge\bigwedge_{j=1}^{i-1}(evb_{j}^{\beta_{1}})^{*}\alpha_{j}\wedge\bigwedge_{j=i+1}^{k_{1}}(evb_{j}^{\beta_{1}})^{*}\alpha_{j+k_{2}-1},\\
    \xi_{2}&=\bigwedge_{j\in J_{2}}(evi_{j}^{\beta_{2}})^{*}\eta_{j}\wedge\bigwedge_{j=1}^{k_{2}}(evb_{j}^{\beta_{2}})^{*}\alpha_{j+i-1},
\end{align*}
so that
\begin{align*}
    \xi=(-1)^{(|\alpha^{2}|+|\eta^{2}|)|\alpha^{3}|+|\eta^{2}||\alpha^{1}|+\sigma_{J_{1},J_{2}}^{\eta}}p_{1}^{*}\xi_{1}\wedge p_{2}^{*}\xi_{2}.
\end{align*}
Set
\begin{align*}
    \delta_{3}=\delta_{2}+(|\alpha^{2}|+|\eta^{2}|)|\alpha^{3}|+|\eta^{2}||\alpha^{1}|+\sigma_{J_{1},J_{2}}^{\eta}.
\end{align*}
Proposition \ref{proj} gives
\begin{align*}
    (evb^{\beta_{1}}_{i})^{*}(evb^{\beta_{2}}_{0})_{*}=p_{1*}p_{2}^{*}.
\end{align*}
By propositions \ref{subcomp} and \ref{int} we obtain
\begin{align*}
    (\evbz|_{\mathcal{B}})_{*}\xi
    &=(-1)^{\delta_{3}}(evb^{\beta_{1}}_{0})_{*}p_{1*}(p_{1}^{*}\xi_{1}\wedge p_{2}^{*}\xi_{2})\\
    &=(-1)^{\delta_{3}}(evb^{\beta_{1}}_{0})_{*}(\xi_{1}\wedge p_{1*}p_{2}^{*}\xi_{2})\\
    &=(-1)^{\delta_{3}}(evb^{\beta_{1}}_{0})_{*}(\xi_{1}\wedge(evb^{\beta_{1}}_{i})^{*}(evb^{\beta_{2}}_{0})_{*}\xi_{2}).
\end{align*}
\par
Letting
\begin{align*}
    \xi_{1}^{1}&=\bigwedge_{j\in J_{1}}(evi_{j}^{\beta_{1}})^{*}\eta_{j}\wedge\bigwedge_{j=1}^{i-1}(evb_{j}^{\beta_{1}})^{*}\alpha_{j},\\
    \xi_{1}^{2}&=\bigwedge_{j=i+1}^{k_{1}}(evb_{j}^{\beta_{1}})^{*}\alpha_{j+k_{2}-1},
\end{align*}
we get
\begin{align*}
    (\evbz|_{\mathcal{B}})_{*}\xi
    &=(-1)^{\delta_{4}}(evb^{\beta_{1}}_{0})_{*}(\xi_{1}^{1}\wedge(evb^{\beta_{1}}_{i})^{*}(evb^{\beta_{2}}_{0})_{*}\xi_{2}\wedge\xi_{1}^{2}),
\end{align*}
where
\begin{align*}
    \delta_{4}=\delta_{3}+(|\alpha^{2}|+|\eta^{2}|+\rdim evb_{0}^{\beta_{2}})|\alpha^{3}|.
\end{align*}
Note that in all cases we have $g:=|g_{1}|\equiv|g_{2}|\pmod{2}$, hence
\begin{align*}
    (\evbz|_{\mathcal{B}})_{*}\xi
    &=(-1)^{\delta_{5}}\mathfrak{q}_{k_{1},l_{1};g_{1}}^{\beta_{1}}(\alpha^{1}\otimes \mathfrak{q}_{k_{2},l_{2};g_{2}}^{\beta_{2}}(\alpha^{2};\eta^{2})\otimes\alpha^{3};\eta^{1}),
\end{align*}
for
\begin{align*}
    \delta_{5}&=\delta_{4}+\varepsilon(\alpha^{1}\otimes \mathfrak{q}_{k_{2},l_{2};g_{2}}^{\beta_{2}}(\alpha^{2};\eta^{2})\otimes\alpha^{3})+\varepsilon(\alpha^{2})\\
    &+g(|\alpha|+\rdim evb_{0}^{\beta_{2}}+|\eta^{2}|+|\alpha^{2}|+k+1).
\end{align*}
We calculate
\begin{align*}
    \rdim evb_{0}^{\beta_{2}}\equiv n+k_{2}-g-n\equiv k_{2}+g\pmod{2}.
\end{align*}
\par
We also need the following version of \cite{Ainf}*{Lemma 2.9}, adapted to the geodesic case. The proof is analogous to that of the non-geodesic case.
\begin{lem}\label{signs}
We have
\begin{align*}
    \varepsilon(\alpha^{1}\otimes \mathfrak{q}^{\beta_{2}}_{k_{2},l_{2};g_{2}}(\alpha^{2};\eta^{2})\otimes\alpha^{3})+\varepsilon(\alpha^{2})&\equiv\varepsilon(\alpha)+|\alpha|+k+|\alpha^{1}|+i|\eta^{2}|\\
    &+k_{2}|\alpha^{3}|+k_{1}k_{2}+ik_{2}+gi\pmod{2}.
\end{align*}
\end{lem}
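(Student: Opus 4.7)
The lemma is the geodesic analogue of \cite{Ainf}*{Lemma 2.9}, which treats the non-geodesic case $g = 0$ and yields the same formula without the $gi$ term. The plan is to reduce to that known statement by isolating the effect of the geodesic constraints on the degree of the inner operator.

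First, I would compare the degrees of $\mathfrak{q}^{\beta_2}_{k_2,l_2;g_2}(\alpha^2;\eta^2)$ and $\mathfrak{q}^{\beta_2}_{k_2,l_2}(\alpha^2;\eta^2)$. The underlying geodesic moduli space is obtained from $\mathcal{M}_{k_2+1,l_2}(\beta_2)$ by a transverse fiber product that cuts off codimension $g_2$, so pushforward along $\evbz$ produces a form of degree exactly $g_2$ higher. This is consistent with the shift from $2-k-2l$ in Proposition \ref{degree} to $3-k-2l$ in Proposition \ref{gdegree}, i.e.\ one extra unit per geodesic constraint. Therefore the two operator values differ in degree by $g_2 \equiv g \pmod 2$.

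Next, since the inner operator output sits at position $i$ in the list $\alpha^1 \otimes (\cdot) \otimes \alpha^3$, the defining formula $\varepsilon(\alpha) = \sum_j j|\alpha_j| + \frac{k(k+1)}{2} + 1$ contributes exactly $i\cdot g \pmod 2$ extra when passing from the non-geodesic inner operator to the geodesic one, with all other terms unchanged. Adding $\varepsilon(\alpha^2)$ on both sides and invoking \cite{Ainf}*{Lemma 2.9} for the non-geodesic baseline yields the claimed congruence, the extra $gi$ term arising precisely from this degree shift.

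The bulk of the bookkeeping is absorbed into the known non-geodesic lemma; the only additional obstacle here is verifying the degree shift. Alternatively, should a self-contained proof be preferred, one can expand both sides of the congruence directly from the definition of $\varepsilon$, using the identity $\binom{k_1+1}{2} + \binom{k_2+1}{2} - \binom{k+1}{2} \equiv k_1 + k_2 + k_1 k_2 \pmod 2$, the relation $|\alpha| = |\alpha^1| + |\alpha^2| + |\alpha^3|$, the identity $k = k_1 + k_2 - 1$, and the fact (from Proposition \ref{dim}) that $\mu(\beta_2)$ is even, which eliminates any a priori $\beta_2$-dependence from the degree of the inner operator.
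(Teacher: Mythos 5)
Your reduction is correct and is essentially the paper's argument: the paper offers no independent proof, stating only that the lemma is proved analogously to the non-geodesic case of \cite{Ainf}*{Lemma 2.9}, and your observation that the sole change is the degree shift of the inner operator by $|g_{2}|\equiv g\pmod 2$ (one unit per geodesic constraint, since each cuts the moduli space by codimension one and hence raises the degree of the pushforward), entering $\varepsilon$ with weight $i$ and producing the extra $gi$, is exactly the right way to make that precise. I checked the congruence directly and it holds, including your constant-term identity $\binom{k_{1}+1}{2}+\binom{k_{2}+1}{2}-\binom{k+1}{2}\equiv k_{1}+k_{2}+k_{1}k_{2}\pmod 2$.
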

Combining the above, we get
\begin{align*}
    \delta_{1}&=k_{2}(k_{1}+i)+i+n,\\
    \implies\delta_{2}&=k_{2}(k_{1}+i)+i+n+gk_{1},\\
    \implies\delta_{3}&=(|\alpha^{2}|+|\eta^{2}|)|\alpha^{3}|+|\eta^{2}||\alpha^{1}|+\sigma_{J_{1},J_{2}}^{\eta}
    \\&+k_{2}(k_{1}+i)+i+n+gk_{1},\\
    \implies\delta_{4}&=(|\alpha^{2}|+|\eta^{2}|)|\alpha^{3}|+|\eta^{2}||\alpha^{1}|+\sigma_{J_{1},J_{2}}^{\eta}
    \\&+k_{2}(k_{1}+i)+i+n+gk_{1},\\
    &+(|\alpha^{2}|+|\eta^{2}|+k_{2}+g)|\alpha^{3}|,
\end{align*}
therefore
\begin{align*}
    \delta_{4}&\equiv|\eta^{2}||\alpha^{1}|+(k_{2}+g)|\alpha^{3}|+\sigma_{J_{1},J_{2}}^{\eta}
    \\&+k_{2}(k_{1}+i)+i+n+gk_{1}\pmod{2}.
\end{align*}
Applying Lemma \ref{signs} yields
\begin{align*}
    \delta_{5}&\equiv
    |\eta^{2}||\alpha^{1}|+(k_{2}+g)|\alpha^{3}|+\sigma_{J_{1},J_{2}}^{\eta}
    \\&+k_{2}(k_{1}+i)+i+n+gk_{1}\\
    &+\varepsilon(\alpha)+|\alpha|+k+|\alpha^{1}|+i|\eta^{2}|\\
    &+k_{2}|\alpha^{3}|+k_{1}k_{2}+ik_{2}+gi\\
    &+g(|\alpha|+k_{2}+g+|\eta^{2}|+|\alpha^{2}|+k+1)\\
    &\equiv\varepsilon(\alpha)+\sigma_{J_{1},J_{2}}^{\eta}+(|\eta^{2}|+1)(|\alpha^{1}|+i)+n+|\alpha|+k\\
    &+g(|\eta^{2}|+|\alpha^{1}|+1+i)\\
    &\equiv\iota(\alpha,\eta;i,J_{1})+\varepsilon(\alpha)+n+|\alpha|+|\eta|+k+1\\
    &+g(|\eta^{2}|+|\alpha^{1}|+i+1)\pmod{2}.
\end{align*}
\par
\textbf{Case 2 (interior bubbling).} We now turn to boundary components of the form
\begin{align*}
    \mathcal{B}=(-1)^{w_{\mathfrak{s}}(\beta_{2})}\mathcal{M}_{k+1,1+|J_{1}|;m}(\beta_{1})\times_{X}\mathcal{M}_{1+|J_{2}|}(\beta_{2}).
\end{align*}
 Let
\begin{align*}
    \xi_{1}&=\bigwedge_{j\in J_{1}}(evi_{j}^{\beta_{1}})^{*}\eta_{j}\wedge\bigwedge_{j=1}^{k}(evb_{j}^{\beta_{1}})^{*}\alpha_{j+i-1},\\
    \xi_{2}&=\bigwedge_{j\in J_{2}}(ev_{j}^{\beta_{2}})^{*}\eta_{j},
\end{align*}
so that
\begin{align*}
    \xi=(-1)^{|\eta^{2}||\alpha|}p_{1}^{*}\xi_{1}\wedge p_{2}^{*}\xi_{2}.
\end{align*}
Denote by $p_{1}:\mathcal{B}\rightarrow\mathcal{M}_{k+1,1+|J_{1}|;m}(\beta_{1})$, $p_{2}:\mathcal{B}\rightarrow\mathcal{M}_{1+|J_{2}|}(\beta_{2})$ the projections, so that Proposition \ref{proj} gives
\begin{align*}
    (evi^{\beta_{1}}_{1})^{*}(ev^{\beta_{2}}_{0})_{*}=p_{1*}p_{2}^{*}.
\end{align*}
By propositions \ref{subcomp} and \ref{int} we obtain
\begin{align*}
    (\evbz|_{\mathcal{B}})_{*}\xi
    &=(-1)^{w_{\mathfrak{s}}(\beta_{2})+|\eta^{2}||\alpha|}(evb^{\beta_{1}}_{0})_{*}p_{1*}(p_{1}^{*}\xi_{1}\wedge p_{2}^{*}\xi_{2})\\
    &=(-1)^{w_{\mathfrak{s}}(\beta_{2})+|\eta^{2}||\alpha|}(evb^{\beta_{1}}_{0})_{*}(\xi_{1}\wedge p_{1*}p_{2}^{*}\xi_{2})\\
    &=(-1)^{w_{\mathfrak{s}}(\beta_{2})+|\eta^{2}||\alpha|}(evb^{\beta_{1}}_{0})_{*}(\xi_{1}\wedge(evi^{\beta_{1}}_{1})^{*}(ev^{\beta_{2}}_{0})_{*}\xi_{2})\\
    &=(-1)^{w_{\mathfrak{s}}(\beta_{2})+|\eta^{2}||\alpha|}(evb^{\beta_{1}}_{0})_{*}(\xi_{1}\wedge(evi^{\beta_{1}}_{1})^{*}(ev^{\beta_{2}}_{0})_{*}\xi_{2})\\
    &=(-1)^{w_{\mathfrak{s}}(\beta_{2})+|\eta^{2}||\eta^{1}|}(evb^{\beta_{1}}_{0})_{*}((evi^{\beta_{1}}_{1})^{*}(ev^{\beta_{2}}_{0})_{*}\xi_{2}\wedge\xi_{1})\\
    &=(-1)^{\varepsilon(\alpha)+|\alpha|+k+|\eta^{1}||\eta^{2}|}\mathfrak{q}_{k,l_{1}+1;m}^{\beta_{1}}(\alpha;\mathfrak{q}_{\emptyset,2}^{\beta_{2}}(\eta^{2})\otimes\eta^{1}).
\end{align*}
\par
We conclude that
\begin{align*}
    0&=(-1)^{\varepsilon(\alpha)}\big(d((\evbz)_{*}\xi)-(\evbz)_{*}(d\xi)\\
    &+(-1)^{n+k+|\alpha|+|\eta|+1}(\evbz|_{\partial\Mgg})_{*}\xi\big)\\
    &=(-1)^{\iota(\alpha,\eta;1,\emptyset)}\mathfrak{q}_{1,0}^{\beta_{0}}(\qgg(\alpha;\eta))\\
    &-\qgg(\alpha;d\eta)\\
    &+\sum_{i=1}^{k}(-1)^{\iota(\alpha,\eta;i,J)}\qgg(\alpha^{1}\otimes\mathfrak{q}_{1,0}^{\beta_{0}}(\alpha_{i})\otimes\alpha^{2};\eta)\\
    &+\sum_{\substack{\beta_{1}+\beta_{2}=\beta\\k_{1}+k_{2}=k+1\\1\leq i\leq k_{1}\\J_{1}\sqcup J_{2}=[l]\\1,2\in J_{1}\\(\beta_{2},k_{2},l_{2})\neq(\beta_{0},1,0)}}(-1)^{\iota(\alpha,\eta;i,J_{1})}\mathfrak{q}_{k_{1},l_{1};0,m'(m,i,k_{2})}^{\beta_{1}}(\alpha^{1}\otimes \mathfrak{q}_{k_{2},l_{2}}^{\beta_{2}}(\alpha^{2};\eta^{2})\otimes\alpha^{3};\eta^{1})\\
    &+\sum_{\substack{\beta_{1}+\beta_{2}=\beta\\k_{1}+k_{2}=k+1\\1\leq i\leq k_{1}\\J_{1}\sqcup J_{2}=[l]\\1,2\in J_{2}\\(\beta_{1},k_{1},l_{1})\neq(\beta_{0},1,0)}}(-1)^{\iota(\alpha,\eta;i,J_{1})}\mathfrak{q}_{k_{1},l_{1}}^{\beta_{1}}(\alpha^{1}\otimes \mathfrak{q}_{k_{2},l_{2};0,m-i+1}^{\beta_{2}}(\alpha^{2};\eta^{2})\otimes\alpha^{3};\eta^{1})\\
    &+\sum_{\substack{\beta_{1}+\beta_{2}=\beta\\k_{1}+k_{2}=k+1\\1\leq i\leq k_{1}\\J_{1}\sqcup J_{2}=[l]\\2\in J_{1},1\in J_{2}}}(-1)^{\iota_{5}(\alpha,\eta;i,J_{1})}\mathfrak{q}_{k_{1},l_{1};i}^{\beta_{1}}(\alpha^{1}\otimes \mathfrak{q}_{k_{2},l_{2};m-i+1}^{\beta_{2}}(\alpha^{2};\eta^{2})\otimes\alpha^{3};\eta^{1})\\
    &+\sum_{\substack{\beta_{1}+\varpi(\beta_{2})=\beta\\J_{1}\sqcup J_{2}=[l]\\1,2\in J_{2}}}(-1)^{\iota_{6}(\alpha,\eta;J_{1})}\mathfrak{q}_{k,l_{1}+1;m}^{\beta_{1}}(\alpha;\mathfrak{q}_{\emptyset,l_{2}}^{\beta_{2}}(\eta^{2})\otimes\eta^{1}).
\end{align*}
Multiplying by $T^{\beta}$ and summing over $\beta\in\Pi$, we get Proposition \ref{ggstruc}.

\section{One-sided interior constraints}

\subsection{The moduli space}\label{sspace}
In order to prove Theorem \ref{compare}, we shall need the moduli space of disks such that the marked point $w_{1}$ is constrained to be to the left of the oriented geodesic from $z_{0}$ to $z_{m}$, denoted by $\Ml$, and similarly the space where $w_{1}$ is constrained to be to the right of this geodesic, denoted by $\Mr$. We define these spaces as fiber products of orbifolds as follows.
\par
Recall that $\chi_{m}:\M\rightarrow\Cinf$ for $m\neq0$ is defined in Section \ref{gop} via
\begin{align*}
    \chi_{m}([\Sigma,u,\vec{z},\vec{w}])=(z_{0},z_{m},\bar{w}_{1},w_{1}).
\end{align*}
Set $\mathcal{P}=\chi_{m}^{-1}(\infty)\subset\M$ and $\mathcal{Q}=\M\setminus\mathcal{P}$. Define
\begin{align*}
    \tilde{\theta}_{m}=\Im\chi_{m}:\mathcal{Q}\rightarrow\mathbb{R},
\end{align*}
where $\Im$ stands for the imaginary part. Let $\Rinf=[-\infty,\infty]$, thought of as a manifold with boundary. Similarly, let $\mathbb{R}_{+}=[0,\infty]$ and $\mathbb{R}_{-}=[-\infty,0]$.
\begin{prop}\label{ext}
We can extend $\tilde{\theta}_{m}$ to a smooth map
\begin{align*}
    \hat{\theta}_{m}:\M\rightarrow\Rinf.
\end{align*}
\end{prop}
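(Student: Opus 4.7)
The plan is to reduce the problem to extending a real-valued function across its divergence locus, by first confining $\chi_m$ to a topological circle in $\Cinf$ via a symmetry argument, and then showing by local analysis near $\mathcal{P}$ that the sign of $\Im \chi_m$ is locally constant. The main obstacle will be the local analysis, which requires handling each nodal configuration that causes $\chi_m$ to diverge.

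First I would establish the key symmetry $\Re \chi_m = 1/2$ on $\mathcal{Q}$. The anti-holomorphic involution $\sigma$ of $\Sigma_\mathbb{C}$ swapping $\Sigma$ and $\overline{\Sigma}$ fixes $z_0, z_m \in \partial \Sigma$ and interchanges $w_1 \leftrightarrow \bar w_1$. Since anti-holomorphic maps complex-conjugate cross-ratios, one has $(z_0, z_m, w_1, \bar w_1) = \overline{(z_0, z_m, \bar w_1, w_1)} = \overline{\chi_m}$; on the other hand, a direct computation in the paper's cross-ratio convention yields the identity $(x_1, x_2, x_4, x_3) = 1 - (x_1, x_2, x_3, x_4)$, giving $(z_0, z_m, w_1, \bar w_1) = 1 - \chi_m$. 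Combining, $\chi_m + \overline{\chi_m} = 1$, so $\Re \chi_m = 1/2$. By continuity, the image of $\chi_m : \M \to \Cinf$ lies in the ``circle'' $C = \{z : \Re z = 1/2\} \cup \{\infty\} \subset \Cinf$. In the chart on $\Cinf$ near $\infty$ given by $z \mapsto 1/(z - 1/2)$, the circle $C$ becomes the imaginary axis (with $\infty$ mapped to $0$), and hence the map $F := 1/(\chi_m - 1/2)$ is smooth and $\Cinf$-valued on $\M \setminus \chi_m^{-1}(1/2)$, takes values in $i\mathbb{R}$, and vanishes precisely on $\mathcal{P}$. Writing $F = -ig$ with $g : \M \setminus \chi_m^{-1}(1/2) \to \mathbb{R}$ smooth, one has $g = 1/\tilde{\theta}_m$ on $\mathcal{Q} \setminus \chi_m^{-1}(1/2)$. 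I then define $\hat{\theta}_m := 1/g$, understood as $\pm \infty$ on $\mathcal{P}$ according to the sign of $g$ in a punctured neighborhood; smoothness as a map into $\Rinf$ follows because the manifold-with-boundary chart of $\Rinf$ near $\pm \infty$ is $t \mapsto 1/t$, and $1/\hat{\theta}_m = g$ is already smooth.

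The main obstacle is verifying that $\mathrm{sgn}\, g$ is locally constant in a punctured neighborhood of each $p \in \mathcal{P}$. Since $\chi_m(p) = \infty$ requires either $z_0 = z_m$ or $\bar w_1 = w_1$ as points of the component $\nu$ of $\Sigma_\mathbb{C}$ used to compute the cross-ratio, and both fail on the open stratum, $\mathcal{P}$ is contained in positive-codimension strata corresponding to nodal degenerations. I would analyze each such stratum by introducing smoothing (gluing) coordinates for the relevant nodes and computing $\chi_m$ explicitly. In the model case of a disk bubble carrying $w_1$ attached at a boundary point $x_0 \in \partial \Sigma$, with $z_0, z_m$ placed on the real line of a half-plane model and $w_1 = a + bi$ lying near $x_0$ after smoothing, a direct computation yields $\tilde{\theta}_m = (|w_1 - c|^2 - r^2)/(2b)$, where the geodesic from $z_0$ to $z_m$ is the Euclidean semicircle of center $c \in \mathbb{R}$ and radius $r$; the sign of $\tilde{\theta}_m$ depends only on whether $x_0$ lies on the geodesic arc from $z_0$ to $z_m$ along $\partial \Sigma$ or on its complement, data which is constant on the stratum. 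The remaining cases ($z_0 = z_m$ via a nodal identification, or higher-codimension corners where several nodes degenerate simultaneously) are handled similarly, with the sign always determined by locally constant combinatorial data of the nodal configuration.
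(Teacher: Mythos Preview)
Your approach is correct but more laborious than the paper's. Both arguments recognize that $\chi_m$ takes values in the circle $C = \{\Re z = 1/2\} \cup \{\infty\}$; you obtain this via the symmetry $\overline{\chi_m} = 1 - \chi_m$, the paper via a direct computation. The smoothness arguments are essentially identical: the paper writes $\pi \circ \hat\theta_m = \chi_m$ for the immersion $\pi : \Rinf \to C \subset \Cinf$ given by $z \mapsto \tfrac12 + iz$, which is your chart argument in different notation.

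The substantive difference is in establishing that the extension across $\mathcal{P}$ is well-defined (equivalently, that $\mathrm{sgn}\, g$ is locally constant near each point of $\mathcal{P}$). You propose a stratum-by-stratum analysis with gluing coordinates, treating one model case in detail and deferring the rest to ``handled similarly''. The paper avoids this entirely: choosing the component $\nu$ on which $z_0^\nu, z_m^\nu, w_1^\nu$ are pairwise distinct (Proposition~\ref{root}) and normalizing via $\phi(w) = (w - z_0^\nu)/(w - z_m^\nu)$, one obtains the explicit formula $\tilde\theta_m = -x/(2y)$ where $\phi(w_1^\nu) = x + iy$. A point lies in $\mathcal{P}$ exactly when $w_1^\nu \in \partial\nu$, i.e.\ $y = 0$; and then $x \neq 0$ simply because $w_1^\nu \neq z_0^\nu$ by the defining property of $\nu$. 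Hence for any sequence in $\mathcal{Q}$ approaching a given point of $\mathcal{P}$, the value $\tilde\theta_m = -x/(2y)$ tends to $\pm\infty$ with sign determined by $-\mathrm{sgn}(x)$ at the limit, which depends only on that limit point. This single uniform observation replaces your case analysis across all nodal types.
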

\begin{proof}
Fix $[\Sigma,u,\vec{z},\vec{w}]\in\M$, and take $\nu$ as in Proposition \ref{root} for $z_{0},z_{m},w_{1}$. Identifying $\nu$ with $\Cinf$, we identify $\nu\cap\Sigma$ with $\overline{\mathcal{H}}$ where $\mathcal{H}\subset\Cinf$ is the upper half-plane. Let $\phi\in\pslr$ be given by
\begin{align*}
    \phi(w)=\frac{w-z_{0}^{\nu}}{w-z_{m}^{\nu}}.
\end{align*}
In particular, $\phi(z_{0}^{\nu})=0$ and $\phi(z_{m}^{\nu})=\infty$. Writing $\phi(w_{1}^{\nu})=x+iy$, we have
\begin{align*}
    \chi_{m}([\Sigma,u,\vec{z},\vec{w}])
    &=(z_{0}^{\nu},z_{m}^{\nu},\bar{w}_{1}^{\nu},w_{1}^{\nu})\\
    &=(\phi(z_{0}^{\nu}),\phi(z_{m}^{\nu}),\phi(\bar{w}_{1}^{\nu}),\phi(w_{1}^{\nu}))\\
    &=(0,\infty,x-iy,x+iy)\\
    &=\frac{1}{2}-i\frac{x}{2y}.
\end{align*}
Assume first $[\Sigma,u,\vec{z},\vec{w}]\in\mathcal{Q}$. Note that $\mathcal{P}$ consists exactly of those open stable maps with a domain $\Sigma$ such that $w_{1}^{\nu}\in\partial\nu$. We thus have
\begin{align*}
    \tilde{\theta}_{m}([\Sigma,u,\vec{z},\vec{w}])=-\frac{x}{2y}\in\mathbb{R}.
\end{align*}
\par
Now assume $[\Sigma,u,\vec{z},\vec{w}]\in\mathcal{P}$, and let $\{[\Sigma^{r},u^{r},\vec{z^{r}},\vec{w^{r}}]\}_{r=0}^{\infty}$ be a sequence in $\mathcal{Q}$ such that in $\M$ we have
\begin{align*}
    [\Sigma^{r},u^{r},\vec{z^{r}},\vec{w^{r}}]\overset{r\rightarrow\infty}{\longrightarrow}[\Sigma,u,\vec{z},\vec{w}].
\end{align*}
Note that we can assume that this sequence lies in the open stratum of $\M$. Taking $\phi^{r}\in\pslr$ given by
\begin{align*}
    \phi^{r}(w)=\frac{w-z_{0}^{r}}{w-z_{m}^{r}},
\end{align*}
and writing $\phi^{r}(w_{1}^{r})=x^{r}+iy^{r}$, we obtain
\begin{align*}
    x^{r}+iy^{r}\overset{r\rightarrow\infty}{\longrightarrow}x+iy.
\end{align*}
Since $w_{1}^{\nu}\in\partial\nu$, we have $y=0$, and since $w_{1}^{\nu}\neq z_{0}^{\nu}$, we have $x\neq0$. We get
\begin{align*}
    \tilde{\theta}_{m}([\Sigma^{r},u^{r},\vec{z^{r}},\vec{w^{r}}])\overset{t\rightarrow\infty}{\longrightarrow}
    \begin{cases}
    \infty,&x<0,\\
    -\infty,&x>0.
    \end{cases}
\end{align*}
\par
Hence $\tilde{\theta}_{m}$ can be extended continuously to
\begin{align*}
    \hat{\theta}_{m}:\M\rightarrow\Rinf.
\end{align*}
Identifying $S^{1}$ with $(\frac{1}{2}+i\mathbb{R})\cup\{\infty\}\subset\Cinf$, define $\pi:\Rinf\rightarrow S^{1}$ by
\begin{align*}
    \pi(z)=
    \begin{cases}
    \frac{1}{2}+iz,&z\in\mathbb{R}\\
    \infty,&z=\pm\infty.
    \end{cases}
\end{align*}
Note that $\pi$ is smooth and satisfies
\begin{align*}
    \pi\circ\hat{\theta}_{m}=\chi_{m}.
\end{align*}
Since $d_{y}\pi$ is a linear isomorphism for all $y\in\Rinf$, and $\chi_{m}$ is smooth, $\hat{\theta}_{m}$ is smooth as well.
\end{proof}
Let
\begin{align*}
    f:\Rinf\rightarrow\Cinf
\end{align*}
be an embedding such that
\begin{align*}
    f(\mathbb{R}_{+})=[0,1],\quad f(\mathbb{R}_{-})=[-1,0].
\end{align*}
In particular, $f(\Rinf)=[-1,1]$. Define
\begin{align*}
    \theta_{m}=f\circ\hat{\theta}_{m}:\M\rightarrow\Cinf,
\end{align*}
and let $D_{\pm}=D_{1}(\pm1)$ be a closed disk of radius $1$ centered at $\pm1\in\mathbb{C}$.
\par
\begin{prop}\label{transs}
The inclusion $\D\hookrightarrow\Cinf$ is transversal to $\theta_{m}$.
\end{prop}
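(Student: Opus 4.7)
The plan is to reduce the check to a single boundary point of $\D$ and then extract the needed derivative condition from Lemma \ref{transg}. Because $\theta_{m}=f\circ\hat{\theta}_{m}$ and $\hat{\theta}_{m}$ lands in $\Rinf$, the image of $\theta_{m}$ is contained in $f(\Rinf)=[-1,1]\subset\mathbb{C}\subset\Cinf$. Since $\D$ is a $2$-dimensional submanifold (with boundary) of the $2$-dimensional manifold $\Cinf$, at any point $p\in\mathring{\D}$ one has $T_{p}\D=T_{p}\Cinf$, so the transversality condition is automatic. I therefore only need to verify the condition at pairs $(p,x)$ with $p=\theta_{m}(x)\in\partial\D\cap[-1,1]$.

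Next I locate this intersection. For $D_{\pm}=D_{1}(\pm1)$, the circle $\partial D_{\pm}=\{|z\mp1|=1\}$ meets the interval $[-1,1]\subset\mathbb{R}$ only at $z=0$, so the unique boundary point of $\D$ potentially lying in the image of $\theta_{m}$ is $p=0$. Any preimage $x\in\theta_{m}^{-1}(0)$ satisfies $\hat{\theta}_{m}(x)=0$; since $\hat{\theta}_{m}$ takes the values $\pm\infty$ on $\mathcal{P}$, such $x$ must lie in $\mathcal{Q}$. From the explicit formula $\chi_{m}=\tfrac{1}{2}-i\tfrac{x}{2y}$ derived in the proof of Proposition \ref{ext}, the vanishing of $\tilde{\theta}_{m}=\Im\chi_{m}$ is equivalent to $\chi_{m}=\tfrac{1}{2}$. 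Thus $\theta_{m}^{-1}(0)=\chi_{m}^{-1}(\tfrac{1}{2})$, and in particular $\chi_{m}(x)$ lies in the interior of $I$.

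Now I invoke Lemma \ref{transg}: the inclusion $I\hookrightarrow\Cinf$ is transversal to $\chi_{m}$. Since the tangent line to $I$ at $\tfrac{1}{2}$ is precisely the real direction of $T_{1/2}\Cinf$, transversality forces the image of $d\chi_{m}|_{x}$ to contain a vector with nonzero imaginary part, i.e.\ $d\tilde{\theta}_{m}|_{x}$ is surjective, and hence so is $d\hat{\theta}_{m}|_{x}$. Because $f$ is an embedding and $df|_{0}$ maps $T_{0}\Rinf$ isomorphically onto the tangent line to $f(\Rinf)=[-1,1]$ at $0$, which is the real axis in $T_{0}\mathbb{C}$, the image of $d\theta_{m}|_{x}=df|_{0}\circ d\hat{\theta}_{m}|_{x}$ equals the real axis. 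On the other hand $T_{0}\partial D_{\pm}$ is perpendicular to the horizontal radius from $\pm1$ to $0$, hence equals the imaginary axis. Together the real and imaginary axes span $T_{0}\Cinf$, giving the required transversality.

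The argument is essentially a tangent-space computation once the reduction is made, and I do not expect a serious obstacle. The only subtle step is recognizing that the single relevant boundary point is $p=0$ and that its preimage lands at the interior value $\tfrac{1}{2}\in I$, which is exactly where Lemma \ref{transg} supplies the submersivity of $\Im\chi_{m}$ needed to complete $T_{0}\partial\D$ to $T_{0}\Cinf$.
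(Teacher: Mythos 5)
Your proof is correct and follows essentially the same route as the paper's: reduce to the boundary point $0=\partial\D\cap[-1,1]$, identify $\theta_{m}^{-1}(0)=\chi_{m}^{-1}(\tfrac12)$, extract surjectivity of $d\tilde{\theta}_{m}$ from Lemma \ref{transg} by taking imaginary parts, and conclude via $\im d\theta_{m}=\mathbb{R}$ together with $T_{0}\partial\D=i\mathbb{R}$.
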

\begin{proof}
Note that the inclusion $\mathring{D}\hookrightarrow\Cinf$ is a submersion, so it remains to check transversality at $\partial\D\cap[-1,1]=\{0\}$. Recall that by proposition \ref{transg} the inclusion $I\hookrightarrow\Cinf$ is transversal to $\chi_{m}$ so that the inclusion $I\hookrightarrow\mathbb{C}$ is transversal to $\chi_{m}|_{\mathcal{Q}}$. Taking the imaginary part of both the inclusion $I\hookrightarrow\mathbb{C}$ and $\chi_{m}|_{\mathcal{Q}}$, it follows that the inclusion $\{0\}\hookrightarrow\mathbb{R}$ is transversal to $\tilde{\theta}_{m}$. In particular, for $[\Sigma,u,\vec{z},\vec{w}]\in\theta_{m}^{-1}(0)=\tilde{\theta}_{m}^{-1}(0)$, we have
\begin{align*}
    \im d_{[\Sigma,u,\vec{z},\vec{w}]}\theta_{m}=d_{0}f(\im d_{[\Sigma,u,\vec{z},\vec{w}]}\tilde{\theta}_{m})=d_{0}f(\mathbb{R})=\mathbb{R}.
\end{align*}
Note also that
\begin{align*}
    T_{0}\partial D_{\pm}=i\mathbb{R},
\end{align*}
thus
\begin{align*}
    \im d_{[\Sigma,u,\vec{z},\vec{w}]}\theta_{m}\oplus T_{0}\partial\D=\mathbb{R}\oplus i\mathbb{R}=\mathbb{C}.
\end{align*}
\end{proof}
We can thus define
\begin{align*}
    \Ms=\D\times_{\Cinf}\M,
\end{align*}
where the fiber product is taken with respect to the inclusion $\D\hookrightarrow\Cinf$ and to $\theta_{m}$. By Proposition \ref{transs} it is a smooth orbifold with corners. As before, denote by $\evb$ and $\evi$ the evaluation maps.
\begin{prop}
The map $\evbz:\Ms\rightarrow L$ is a proper submersion.
\end{prop}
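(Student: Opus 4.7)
The strategy is to realize $\Ms$ as a codimension-zero suborbifold with corners of $\M$, namely the preimage $\theta_m^{-1}(\D)$, and then verify the proper submersion property stratum by stratum, using the submersion assumptions on $\M$ and on $\Mg$.

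First I would check that the projection $p_2:\Ms\to\M$ from the fiber product is a diffeomorphism of orbifolds with corners onto $\theta_m^{-1}(\D)\subset\M$. The inclusion $\D\hookrightarrow\Cinf$ is a codimension-zero embedding, so the short exact sequence from Section \ref{fibtrans} identifies $T_{(d,[x])}\Ms$ with $T_{[x]}\M$, and the dimension count $\dim\D+\dim\M-\dim\Cinf=\dim\M$ agrees. The transversality established in Proposition \ref{transs} then promotes $\theta_m^{-1}(\D)$ to a smooth orbifold with corners, matching $\Ms$.

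Next I would enumerate the boundary strata via Proposition \ref{fiberbdry}:
\[
\partial\Ms=\partial\D\times_{\Cinf}\M+\D\times_{\Cinf}\partial\M.
\]
The second summand is an open suborbifold of $\partial\M$. For the first, since $\theta_m(\M)\subset f(\Rinf)=[-1,1]$ and $[-1,1]\cap\partial\D=\{0\}$, it reduces set-theoretically to $\theta_m^{-1}(0)=\hat\theta_m^{-1}(0)$. Using that $\chi_m$ takes values on $\{\Re=1/2\}\cup\{\infty\}$, this equals $\chi_m^{-1}(1/2)$, which is also the underlying space of $\Mg=I\times_\Cinf\M$. I would then check that the smooth orbifold structures on the two fiber products coincide, since the transversality conditions in Lemma \ref{transg} and in Proposition \ref{transs} both reduce to the requirement that $\im d\chi_m$ contain a vector with nonzero imaginary part.

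Properness follows quickly: for compact $K\subset L$, the preimage $(\evbz|_\Ms)^{-1}(K)$ equals $(\evbz|_\M)^{-1}(K)\cap\theta_m^{-1}(\D)$, an intersection of a compact set (by assumption 2) with a closed set, hence compact.

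For the submersion property, I would verify the corresponding condition on each stratum. On the open stratum and on $\D\times_\Cinf\partial\M$, it is the restriction of the submersion $\evbz|_\M$ to open suborbifolds, which is a submersion by assumption 2. On $\partial\D\times_\Cinf\M\cong\Mg$, it is precisely $\evbz|_\Mg$, which is a submersion by assumption 3. Higher-codimension corners arise as iterated boundaries of these strata and are handled by applying the same assumptions to the boundaries of $\M$ and $\Mg$.

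The main technical obstacle is verifying the identification of the new boundary stratum $\partial\D\times_\Cinf\M$ with $\Mg$ as orbifolds with corners, not merely set-theoretically. This amounts to comparing two different fiber-product smooth structures --- one over the circle $\partial\D$, one over the interval $I=[0,1]$ --- both cutting out the locus $\chi_m^{-1}(1/2)\subset\M$. Once this identification is established, the relevant submersion property transfers directly from assumption 3, and the remaining verifications are routine.
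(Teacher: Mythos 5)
Your proposal is correct and follows essentially the same route as the paper: identify the interior of $\Ms$ with an open subset of the interior of $\M$, identify the boundary components either with boundary components of $\M$ or with $\Mg$ (the identification $\partial\D\times_{\Cinf}\M\cong\Mg$ is exactly what the paper establishes in Section \ref{sbdry} via the map $\psi_{\pm}$), and then invoke regularity assumptions 2 and 3 for the submersion property and a compactness argument for properness. No substantive differences.
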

\begin{proof}
We use the notation of \cite{orb} as described in Section \ref{fibtrans}.
\par
Note that the interior of $(\Ms)_{0}$ is diffeomorphic to an open subset of the interior of $(\M)_{0}$. In Section \ref{sbdry} below we show that any boundary component $\mathcal{B}_{0}$ of $(\Ms)_{0}$ is diffeomorphic either to a boundary component of $(\M)_{0}$ or to $(\Mg)_{0}$. Since both $\evbz:\M\rightarrow L$ and $\evbz:\Mg\rightarrow L$ are assumed to be submersions (see Section \ref{reg}), it follows that $\evbz:\Ms\rightarrow L$ is a submersion.
\par
Now note that $|\Ms|$ is a compact subset of $|\M|$. Since we assume that $\evbz:\M\rightarrow L$ is proper (see Section \ref{reg}), it follows that $\evbz:\Ms\rightarrow L$ is proper.
\end{proof}
We can thus define the corresponding operators
\begin{align*}
    \qs:E^{\otimes k}\otimes C^{\otimes l}\rightarrow C
\end{align*}
by
\begin{align*}
    \qs(\alpha;\eta)=(-1)^{\varepsilon(\alpha)}(\evbz)_{*}\Big(\bigwedge_{j=1}^{l}(\evi)^{*}\eta_{j}\wedge\bigwedge_{j=1}^{k}(\evb)^{*}\alpha_{j}\Big),
\end{align*}
and set
\begin{align*}
    \mathfrak{q}_{k,l;\pm,m}(\alpha;\eta)=\sum_{\beta\in\Pi}T^{\beta}\qs.
\end{align*}
\par
We then have the following.
\begin{prop}[Unit]\label{sunit}
Assume $\alpha_{i}=c\cdot1$ for some $c\in R$ and $1\leq i\leq k$, $i\neq m$ . Then $\qs(\alpha;\eta)=0$.
\end{prop}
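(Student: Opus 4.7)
The proof proposal is to mimic verbatim the argument given for Proposition \ref{horunit} (and repeated for Proposition \ref{ggunit}), since the only feature of those proofs that matters is that the extra constraint defining the moduli space depends only on marked points that survive the forgetful map.

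The plan is to introduce the forgetful map
\begin{align*}
    \pi:\Ms\rightarrow\mathcal{M}_{k,l;\pm,m''}(\beta),
\end{align*}
where $m''=m$ if $i>m$ and $m''=m-1$ if $i<m$. Forgetting the $i$-th boundary marked point is legitimate here because the one-sided interior constraint on $\Ms$ is defined via $\theta_{m}$, which depends only on the triple $(z_{0},z_{m},w_{1})$; since $1\leq i\leq k$ and $i\neq m$, none of these three points is being forgotten, so the constraint descends. After the usual stabilization of the resulting map, $\pi$ fits into the commutative diagrams relating evaluation maps on $\Ms$ with those on $\mathcal{M}_{k,l;\pm,m''}(\beta)$: writing $\widetilde{\evb},\widetilde{\evi}$ for the evaluation maps on the target, we have $\evb=\widetilde{evb^{\beta}_{j}}\circ\pi$ for $j\neq i$ (with the labels shifted appropriately) and $\evi=\widetilde{evi^{\beta}_{j}}\circ\pi$ for all $j$.

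Next, writing $\alpha=\alpha^{1}\otimes c\cdot1\otimes\alpha^{2}$, I would apply $\varphi$ to $\qs(\alpha;\eta)$ and manipulate using propositions \ref{subcomp} (functoriality of pushforward) and \ref{int} (projection formula, in the current version stated after Proposition \ref{curhomprop}) to factor out the constant $1$ as a current pushed forward along $\pi$. This gives, exactly as in the proof of Proposition \ref{horunit},
\begin{align*}
    \varphi(\qs(\alpha;\eta))=\pm\,c\cdot\mathfrak{q}_{k-1,l;\pm,m''}^{\beta}(\alpha^{1}\otimes\alpha^{2};\eta)\wedge\pi_{*}\varphi(1).
\end{align*}

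The final step is the degree count, which is the key point and is immediate:
\begin{align*}
    \deg\pi_{*}\varphi(1)=\deg\varphi(1)-\rdim\pi=0-1=-1,
\end{align*}
so $\pi_{*}\varphi(1)=0$. Since $\varphi$ is injective by Proposition \ref{curhomprop}, we conclude $\qs(\alpha;\eta)=0$. The only potential subtlety — and hence the one point deserving a sentence of justification in the write-up — is verifying that $\pi$ is genuinely well-defined on $\Ms$, i.e.\ that the one-sided constraint, viewed via $\hat{\theta}_{m}$ and hence via the cross-ratio $\chi_{m}=(z_{0},z_{m},\bar{w}_{1},w_{1})$, is invariant under forgetting $z_{i}$ for $i\neq 0,m$. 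This is clear from the definition of $\chi_{m}$ in Section \ref{gop}, so no further work is needed beyond citing Proposition \ref{horunit} as a template.
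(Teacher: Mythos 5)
Your proposal is correct and is essentially identical to the paper's argument: the paper proves Proposition \ref{sunit} by declaring it "verbatim the same" as Proposition \ref{horunit}, i.e.\ the forgetful map $\pi$ dropping the $i$-th boundary point, the factorization $\varphi(\qs(\alpha;\eta))=\pm c\cdot\mathfrak{q}_{k-1,l;\pm,m''}^{\beta}(\alpha^{1}\otimes\alpha^{2};\eta)\wedge\pi_{*}\varphi(1)$, and the degree count $\deg\pi_{*}\varphi(1)=-1$ forcing vanishing, followed by injectivity of $\varphi$. Your extra care in checking that the one-sided constraint descends under $\pi$ (because $\theta_{m}$ depends only on $z_{0},z_{m},w_{1}$, none of which is forgotten when $i\neq m$) and in tracking the shifted label $m''$ is exactly the point the paper leaves implicit.
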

\begin{proof}
The proof is verbatim the same as that of Proposition \ref{horunit}, with the moduli spaces and operators with subscript $\perp$ replaced by those with subscript $\pm,m$.
\end{proof}
Set
\begin{align*}
    \iota_{7}(\eta)=|\eta|+n+1,
\end{align*}
and as in section \ref{gop}, let
\begin{align*}
    m'(m,i,k_{2})=
    \begin{cases}
    m,&1\leq m<i,\\
    i,&i\leq m<i+k_{2},\\
    m-k_{2}+1,&i+k_{2}\leq m\leq k.
    \end{cases}
\end{align*}
\begin{prop}[Structure equation for $\Ml$]\label{lstruc}
We have
\begin{align*}
    0&=-\mathfrak{q}_{k,l;+,m}(\alpha;d\eta)\\
    &+\sum_{\substack{k_{1}+k_{2}=k+1\\1\leq i\leq k_{1}\\J_{1}\sqcup J_{2}=[l]\\1\in J_{1}}}(-1)^{\iota(\alpha,\eta;i,J_{1})}\mathfrak{q}_{k_{1},l_{1};+,m'(m,i,k_{2})}(\alpha^{1}\otimes \mathfrak{q}_{k_{2},l_{2}}(\alpha^{2};\eta^{2})\otimes\alpha^{3};\eta^{1})\\
    &+\sum_{\substack{k_{1}+k_{2}=k+1\\m-k_{2}<i\leq m\\J_{1}\sqcup J_{2}=[l]\\1\in J_{2}}}(-1)^{\iota(\alpha,\eta;i,J_{1})}\mathfrak{q}_{k_{1},l_{1}}(\alpha^{1}\otimes \mathfrak{q}_{k_{2},l_{2};+,m-i+1}(\alpha^{2};\eta^{2})\otimes\alpha^{3};\eta^{1})\\
    &+\sum_{\substack{k_{1}+k_{2}=k+1\\m<i\\J_{1}\sqcup J_{2}=[l]\\1\in J_{2}}}(-1)^{\iota(\alpha,\eta;i,J_{1})}\mathfrak{q}_{k_{1},l_{1}}(\alpha^{1}\otimes \mathfrak{q}_{k_{2},l_{2}}(\alpha^{2};\eta^{2})\otimes\alpha^{3};\eta^{1})\\
    &+(-1)^{\iota_{7}(\eta)}\mathfrak{q}_{k,l;m}(\alpha;\eta).
\end{align*}
\end{prop}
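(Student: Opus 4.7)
The plan is to adapt the strategy of the proof of Proposition \ref{ggstruc} carried out in sections \ref{boundary} and \ref{computegg}. Applying Proposition \ref{fiberbdry} to $\Ml=D_{+}\times_{\Cinf}\M$ yields
\begin{align*}
    \partial\Ml=\partial D_{+}\times_{\Cinf}\M+(-1)^{\dim D_{+}+\dim\Cinf}D_{+}\times_{\Cinf}\partial\M,
\end{align*}
so the task reduces to identifying both summands and assembling the resulting contributions via Stokes' theorem (Proposition \ref{Stokes}) applied to $\xi=\bigwedge_{j=1}^{l}(\evi)^{*}\eta_{j}\wedge\bigwedge_{j=1}^{k}(\evb)^{*}\alpha_{j}$.

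For the first summand, since $\theta_{m}(\M)\subset[-1,1]$ and $\partial D_{+}\cap[-1,1]=\{0\}$, one has $\partial D_{+}\times_{\Cinf}\M\cong\theta_{m}^{-1}(0)=\hat{\theta}_{m}^{-1}(0)=\chi_{m}^{-1}(1/2)$, which, as in \cite{prep}, is diffeomorphic to $\Mg$. This will produce the term $(-1)^{\iota_{7}(\eta)}\mathfrak{q}_{k,l;m}(\alpha;\eta)$, with the sign arising from the outer-normal direction of $\partial D_{+}$ at $0$ combined with the orientation identification used in \cite{prep}.

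For the second summand, I would use \cite{Ainf}*{Proposition 2.8} to decompose $\partial\M$ into bubbling components $\mathcal{M}_{k_{1}+1,l_{1}}(\beta_{1})\times_{L}\mathcal{M}_{k_{2}+1,l_{2}}(\beta_{2})$ and determine, for each such $\mathcal{B}$, the image $\theta_{m}(\mathcal{B})$ via the continuous extension $\hat{\theta}_{m}$ from Proposition \ref{ext}. In Case (A), where $1\in J_{1}$, the point $w_{1}$ is on the $k_{1}$-piece and $\theta_{m}|_{\mathcal{B}}$ factors through the analogous map on the $k_{1}$-piece with geodesic endpoint $z_{m'(m,i,k_{2})}$, yielding $\mathcal{M}_{k_{1}+1,l_{1};+,m'(m,i,k_{2})}(\beta_{1})\times_{L}\mathcal{M}_{k_{2}+1,l_{2}}(\beta_{2})$. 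In Case (B), where $1\in J_{2}$ and $i\leq m\leq i+k_{2}-1$, both $w_{1}$ and $z_{m}$ are on the bubble and the constraint transfers to the analogous map on the $k_{2}$-piece at position $m-i+1$. In Case (C), where $1\in J_{2}$ and $m<i$, a direct cross-ratio computation in the half-plane model with $z_{0}=0$, $z_{m}=\infty$ places the node $z_{i}$ on the negative real axis and hence on the left of the geodesic, so $\hat{\theta}_{m}|_{\mathcal{B}}=+\infty$ and $\theta_{m}|_{\mathcal{B}}=1\in\mathring{D}_{+}$; the entire $\mathcal{B}$ therefore contributes without any extra constraint, giving $\mathcal{M}_{k_{1}+1,l_{1}}(\beta_{1})\times_{L}\mathcal{M}_{k_{2}+1,l_{2}}(\beta_{2})$. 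In Case (D), where $1\in J_{2}$ and $m\geq i+k_{2}$, the analogous computation places $z_{i}$ on the positive real axis, so $\theta_{m}|_{\mathcal{B}}=-1\notin D_{+}$ and the contribution is empty.

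Finally, I would assemble everything as in Section \ref{computegg}: Stokes' theorem on $(\evbz)_{*}\xi$ produces the $d\eta$ and $d\alpha_{i}$ terms on one side, while the boundary analysis above supplies the four listed sums; multiplying through by $(-1)^{\varepsilon(\alpha)}$ and summing over $\beta$ yields the desired identity. The main obstacle will be the systematic sign bookkeeping: the $\partial D_{+}$ contribution requires a careful computation of the outer-normal orientation at $0$ together with the transversality of $\partial D_{+}\hookrightarrow\Cinf$ with $\theta_{m}$ established in Proposition \ref{transs}, and Case (C) contributes without any geodesic constraint, so its sign must be derived from scratch via the geometric identification rather than inherited from the usual bubbling sign formula. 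I expect an auxiliary lemma analogous to Lemma \ref{signs} will be required to reduce the accumulated signs in each case to the compact forms $\iota(\alpha,\eta;i,J_{1})$ and $\iota_{7}(\eta)$ that appear in the statement.
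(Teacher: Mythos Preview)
Your proposal is correct and follows essentially the same route as the paper's proof in sections \ref{sbdry} and the subsequent Stokes computation. Two minor remarks: the paper pins down the sign of the diffeomorphism $\mathcal{B}_{+}\cong\Mg$ by an explicit comparison of the two defining short exact sequences (rather than by citing \cite{prep}), obtaining $\sgn\psi_{+}=+1$; and your worry about Case (C) needing a separate sign derivation is unnecessary, since $\theta_{m}|_{\mathcal{B}}\equiv1\in\mathring{D}_{+}$ makes the fiber product $D_{+}\times_{\Cinf}\mathcal{B}$ canonically identified with $\mathcal{B}$, so all bubbling components (your Cases (A)--(C)) are processed by the same computation as in Section \ref{computegg} with the parity parameter $g$ set to $0$.
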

\begin{figure}[H]
    \centering
    \includegraphics[scale=0.18]{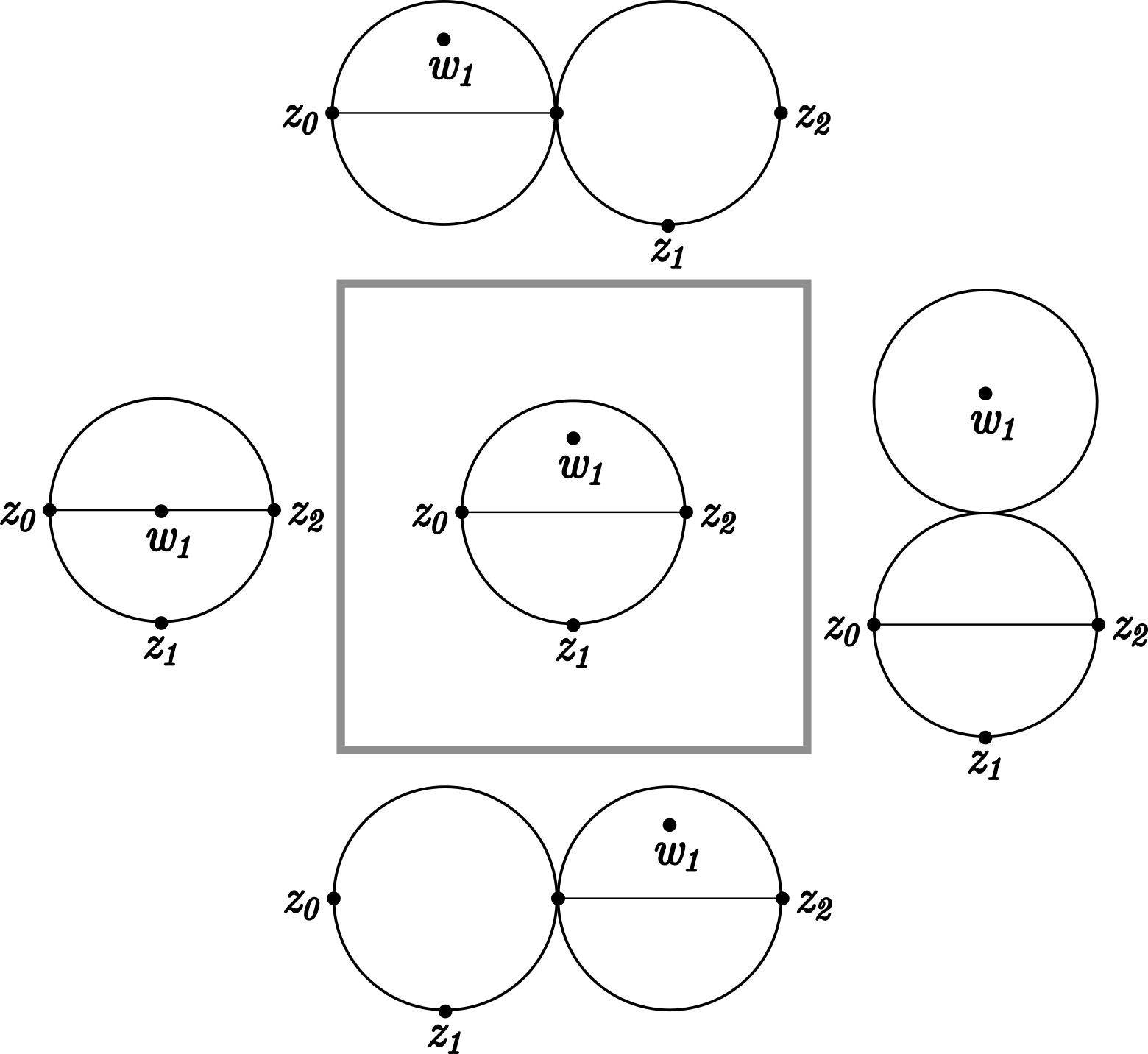}
    \caption{Boundary components of $\mathcal{M}_{3,1;+,2}(\beta_{0})$}
\end{figure}
\begin{prop}[Structure equation for $\Mr$]\label{rstruc}
We have
\begin{align*}
    0&=-\mathfrak{q}_{k,l;-,m}(\alpha;d\eta)\\
    &+\sum_{\substack{k_{1}+k_{2}=k+1\\1\leq i\leq k_{1}\\J_{1}\sqcup J_{2}=[l]\\1\in J_{1}}}(-1)^{\iota(\alpha,\eta;i,J_{1})}\mathfrak{q}_{k_{1},l_{1};-,m'(m,i,k_{2})}(\alpha^{1}\otimes \mathfrak{q}_{k_{2},l_{2}}(\alpha^{2};\eta^{2})\otimes\alpha^{3};\eta^{1})\\
    &+\sum_{\substack{k_{1}+k_{2}=k+1\\m-k_{2}<i\leq m\\J_{1}\sqcup J_{2}=[l]\\1\in J_{2}}}(-1)^{\iota(\alpha,\eta;i,J_{1})}\mathfrak{q}_{k_{1},l_{1}}(\alpha^{1}\otimes \mathfrak{q}_{k_{2},l_{2};-,m-i+1}(\alpha^{2};\eta^{2})\otimes\alpha^{3};\eta^{1})\\
    &+\sum_{\substack{k_{1}+k_{2}=k+1\\i\leq m-k_{2}\\J_{1}\sqcup J_{2}=[l]\\1\in J_{2}}}(-1)^{\iota(\alpha,\eta;i,J_{1})}\mathfrak{q}_{k_{1},l_{1}}(\alpha^{1}\otimes \mathfrak{q}_{k_{2},l_{2}}(\alpha^{2};\eta^{2})\otimes\alpha^{3};\eta^{1})\\
    &-(-1)^{\iota_{7}(\eta)}\mathfrak{q}_{k,l;m}(\alpha;\eta).
\end{align*}
\end{prop}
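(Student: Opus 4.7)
The proof should follow the same general pattern as the computation carried out in Section \ref{computegg} for $\Mgg$, and as anticipated in the proof of Proposition \ref{lstruc} for $\Ml$. Namely, I would apply Stokes' theorem (Proposition \ref{Stokes}) to the form
\begin{align*}
    \xi=\bigwedge_{j=1}^{l}(\evi)^{*}\eta_{j}\wedge\bigwedge_{j=1}^{k}(\evb)^{*}\alpha_{j}
\end{align*}
on $\Mr$, pushed forward along $\evbz$, and then identify each term with the appropriate $\mathfrak{q}$-operator. The left-hand side gives $d((\evbz)_{*}\xi)$, which, up to a sign, equals $\mathfrak{q}_{1,0}^{\beta_{0}}(\qs(\alpha;\eta))$; pulling the exterior derivative inside produces $-\qs(\alpha;d\eta)$ plus the internal differentials $d\alpha_{i}$, which reassemble into terms with $\mathfrak{q}_{1,0}^{\beta_{0}}(\alpha_{i})$ inserted. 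The nontrivial content is in the boundary term.

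By Proposition \ref{fiberbdry} applied to the fiber product $\Mr=\D_{-}\times_{\Cinf}\M$, the boundary decomposes as
\begin{align*}
    \partial\Mr=\partial\D_{-}\times_{\Cinf}\M\pm\D_{-}\times_{\Cinf}\partial\M.
\end{align*}
The first summand: since $\partial\D_{-}\cap f(\Rinf)=\{0\}$ and $\theta_{m}^{-1}(0)=\tilde{\theta}_{m}^{-1}(0)$ is exactly the locus $\chi_{m}\in I$ defining $\Mg$, this contribution is (up to sign) $\Mg$. The orientation of $\partial \D_{-}$ at $0$ is opposite to that of $\partial\D_{+}$ at $0$, which is precisely the source of the sign flip distinguishing the last term of Proposition \ref{rstruc} from the corresponding one in Proposition \ref{lstruc}; this reproduces $-(-1)^{\iota_{7}(\eta)}\mathfrak{q}_{k,l;m}(\alpha;\eta)$.

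For the second summand, I would use the description of $\partial\M$ from \cite{Ainf}*{Proposition 2.8} as a sum of disk bubbling components $\mathcal{M}_{k_{1}+1,l_{1}}(\beta_{1})\times_{L}\mathcal{M}_{k_{2}+1,l_{2}}(\beta_{2})$. For each such component, the location of $w_{1}$ determines what happens to the constraint $w_{1}\in D_{-}$ after the bubble forms, and the analysis splits into three cases according to the position of the bubble relative to the geodesic from $z_{0}$ to $z_{m}$: (i) $1\in J_{1}$, where $w_{1}$ remains on the main disk and the right-of-geodesic constraint is inherited, giving $\mathfrak{q}_{k_{1},l_{1};-,m'(m,i,k_{2})}$; (ii) $1\in J_{2}$ and $m-k_{2}<i\leq m$, where both $w_{1}$ and $z_{m}$ bubble off together with the geodesic sitting inside the bubble, giving $\mathfrak{q}_{k_{2},l_{2};-,m-i+1}$ on the bubble; (iii) $1\in J_{2}$ and $i\leq m-k_{2}$, where the bubble attaches to the portion of $\partial\Sigma$ between $z_{0}$ and $z_{m}$, which lies on the right side of the geodesic so the constraint is automatically satisfied without further restriction, yielding the unconstrained $\mathfrak{q}_{k_{2},l_{2}}$. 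The complementary case $i>m$ with $1\in J_{2}$ (bubble attached to the left portion of $\partial\Sigma$) would force $w_{1}$ to the left of the geodesic and is therefore absent from $\partial\Mr$ — this is the mirror of the corresponding present case in Proposition \ref{lstruc}.

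The main obstacle is the sign bookkeeping. To organize it, I would reuse the sign computation of Section \ref{computegg} (and in particular Lemma \ref{signs}) verbatim, noting that the orientation conventions on fiber products (Proposition \ref{fiber}) and boundaries of fiber products (Proposition \ref{fiberbdry}) produce exactly the $\iota(\alpha,\eta;i,J_{1})$ prefactors once all terms are normalized by $(-1)^{\varepsilon(\alpha)}$ and $n+k+|\alpha|+|\eta|$ in the Stokes formula. The only genuinely new sign is the relative sign between the $\partial\D_{-}$ contribution and the $\partial\D_{+}$ contribution in Proposition \ref{lstruc}, which — as already noted — is $-1$, and which is what converts $+(-1)^{\iota_{7}(\eta)}\mathfrak{q}_{k,l;m}$ into $-(-1)^{\iota_{7}(\eta)}\mathfrak{q}_{k,l;m}$. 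Summing over $\beta$ with weight $T^{\beta}$ and collecting yields the claimed identity.
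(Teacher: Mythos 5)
Your proposal is correct and follows essentially the same route as the paper: the same Stokes'-theorem setup, the same decomposition $\partial(\D\times_{\Cinf}\M)$ via Proposition \ref{fiberbdry}, the identification of $\partial\D\times_{\Cinf}\M$ with $\pm\Mg$ via the reversed boundary orientation of $D_{-}$ at $0$, and the same three-way case analysis for disk bubbling (inherited constraint, constraint on the bubble, vacuous constraint) with the complementary case empty. The sign bookkeeping you defer to Section \ref{computegg} and Lemma \ref{signs} is exactly how the paper handles it (with $g$ set to $0$), so nothing essential is missing.
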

\begin{figure}[H]
    \centering
    \includegraphics[scale=0.18]{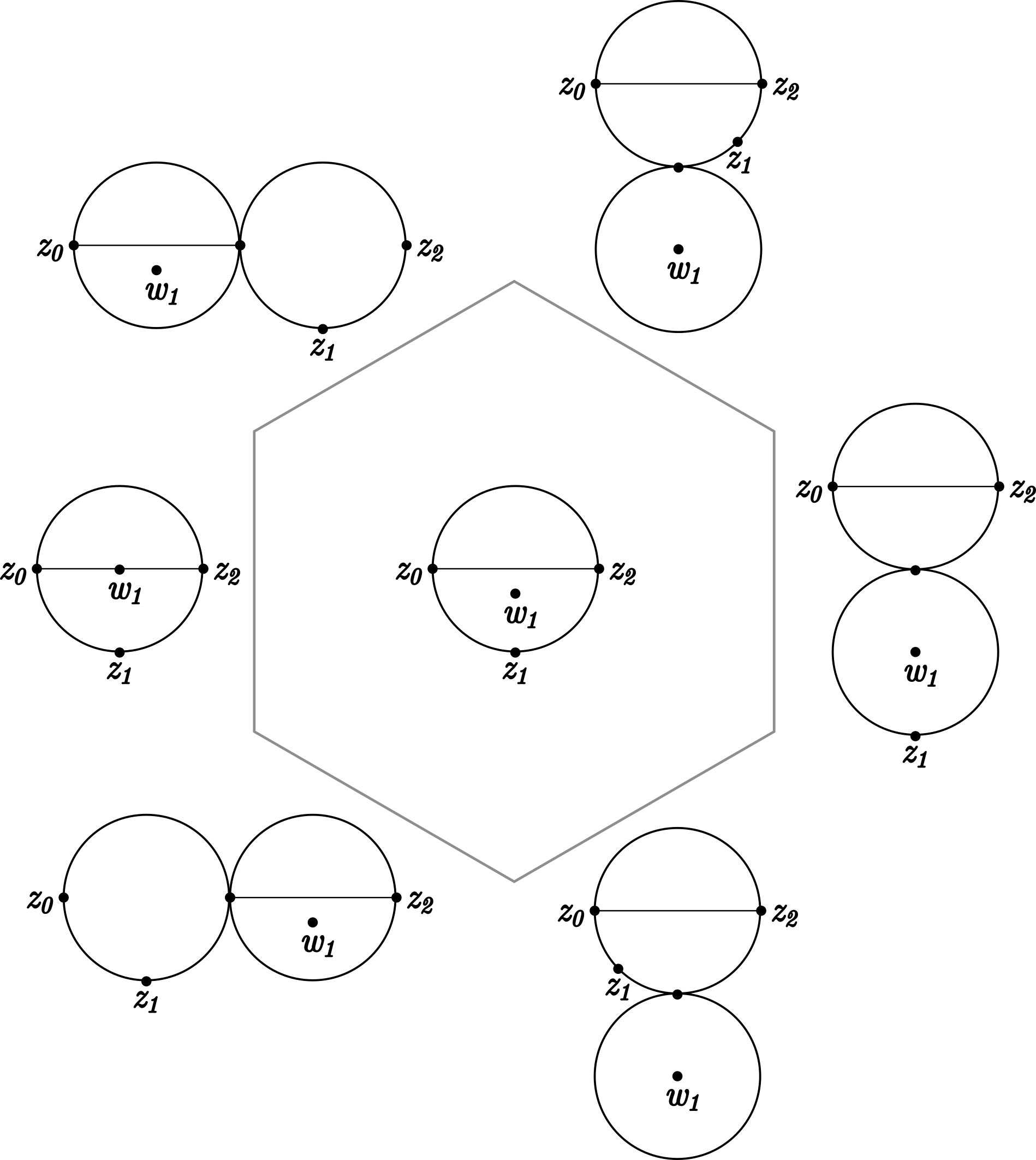}
    \caption{Boundary components of $\mathcal{M}_{3,1;-,2}(\beta_{0})$}
\end{figure}
The proof of propositions \ref{lstruc} and \ref{rstruc} is given in the next two sections. Defining deformed operators as in Section \ref{defop},
\begin{align*}
    \mathfrak{q}_{k,l;\pm,m}^{\gamma,b}(\alpha;\eta)&=\sum_{t,s_{0},...,s_{k}\geq0}\mathfrak{q}_{k+s_{0}+...+s_{k},l+t;\pm,m+s_{0}+...+s_{m-1}}(\alpha^{b;s_{0},...,s_{k}};\eta^{\gamma;t}),
\end{align*}
we get the following.
\begin{prop}
Propositions \ref{sunit}, \ref{lstruc} and \ref{rstruc} hold for the deformed operators as well.
\end{prop}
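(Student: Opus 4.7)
The plan is to deduce both claims from their undeformed counterparts by expanding the definitions of the deformed operators, in the same spirit as Proposition~\ref{defprop} (see \cite{RQH}*{Lemma 3.17} and \cite{OC}*{Remark 4.36}).

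For the unit property, I would unfold
\[
\mathfrak{q}^{\gamma,b}_{k,l;\pm,m}(\alpha;\eta) = \sum_{t,s_0,\ldots,s_k \geq 0} \mathfrak{q}^{\beta}_{k+s_0+\cdots+s_k,\, l+t;\, \pm,\, m+s_0+\cdots+s_{m-1}}(\alpha^{b;s_0,\ldots,s_k};\, \eta^{\gamma;t})
\]
and observe that in each summand the input $\alpha_i = c \cdot 1$ lies at boundary position $i + s_0 + \cdots + s_{i-1}$, which never coincides with the geodesic-marked position $m + s_0 + \cdots + s_{m-1}$ when $i \neq m$. The undeformed Proposition~\ref{sunit} therefore forces every summand to vanish.

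For the structure equations, I would apply the undeformed Proposition~\ref{lstruc} (respectively \ref{rstruc}) to the input $(\alpha^{b;s_0,\ldots,s_k};\, \eta^{\gamma;t})$ for each choice of $(s_0,\ldots,s_k,t)$ and then sum the resulting identities. Since $d\gamma = 0$, we have $d(\eta^{\gamma;t}) = (d\eta)^{\gamma;t}$, so the $d\eta$-term assembles correctly into $-\mathfrak{q}^{\gamma,b}_{k,l;\pm,m}(\alpha; d\eta)$. The main work is reorganizing the remaining terms: each nested expression of the form $\pm\, \mathfrak{q}^{\beta_1}_{\bullet}(\tilde\alpha^1 \otimes \mathfrak{q}^{\beta_2}_{\bullet}(\tilde\alpha^2; \tilde\eta^2) \otimes \tilde\alpha^3;\, \tilde\eta^1)$ requires splitting each $s_j$ and the $t$ copies of $\gamma$ between the inner and outer operators, after which a Fubini-type interchange reassembles the sum into the corresponding deformed expression. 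The extra $\pm\, \mathfrak{q}^{\gamma,b}_{k,l;m}$ term is produced by the analogous interchange applied to its undeformed counterpart.

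The main obstacle is sign bookkeeping: $\iota(\alpha,\eta;i,J_1)$ and $\iota_7(\eta)$ depend on degrees and boundary indices that shift upon insertion of $b$ and $\gamma$. This is handled by using that $|\gamma| = 2$ is even (so $\gamma$-insertions do not affect parity-sensitive expressions) and that each $|b| = 1$ insertion produces a sign contribution precisely compensated by the accompanying shift in the boundary index. This verification parallels the calculations carried out for $\mathfrak{q}^{\gamma,b}_{k,l;m}$ in \cite{RQH}*{Lemma 3.17} and for $\mathfrak{q}^{\gamma,b}_{k,l;\perp}$ in \cite{OC}*{Remark 4.36}; no essentially new ingredient enters.
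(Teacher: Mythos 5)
Your proposal is correct and follows exactly the route the paper intends: the paper states this proposition without proof, relying on the same deformation-expansion argument used for Proposition \ref{defprop} (citing \cite{RQH}*{Lemma 3.17} and \cite{OC}*{Remark 4.36}), which is precisely what you spell out. Your index computation for the unit property and your Fubini-type reassembly with the parity observations $|\gamma|=2$ and $|b|=1$ are the standard ingredients, so nothing is missing.
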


\subsection{Description of the boundary}\label{sbdry}

We turn to proving propositions \ref{lstruc} and \ref{rstruc}. We begin by describing the boundary of $\Ms=\D\times_{\Cinf}\M$. Proposition \ref{fiberbdry} gives
\begin{align*}
    \partial\Ms
    &=\partial\D\times_{\Cinf}\M+\D\times_{\Cinf}\partial\M.    
\end{align*}
\par
We begin by analysing the first summand, which consists of the boundary component
\begin{align*}
    \mathcal{B}_{\pm}=\partial\D\times_{\Cinf}\M.
\end{align*}
In the following, the signs $\pm$ and $\mp$ correspond to the cases of $\Ms$, where the upper sign corresponds to the case $\Ml$ and the lower sign to the case $\Mr$. Recall from Section \ref{gop} the definition of the geodesic moduli space as a fiber product
\begin{align*}
    \Mg=I\times_{\Cinf}\M,
\end{align*}
where the fiber product is taken with respect to the inclusion $I\hookrightarrow\Cinf$ and to $\chi_{m}$.
\par
As in the proof of Proposition \ref{ext}, we have $\theta_{m}^{-1}(0)=\chi_{m}^{-1}(\frac{1}{2})$. Note that $\partial\D\cap\im\theta_{m}=\{0\}$ and that $I\cap\im\chi_{m}=\{\frac{1}{2}\}$, thus we have a diffeomorphism
\begin{align*}
    \psi_{\pm}:\mathcal{B}_{\pm}\rightarrow\Mg
\end{align*}
given by
\begin{align*}
    \psi_{\pm}(0,[\Sigma,u,\vec{z},\vec{w}])\mapsto(\tfrac{1}{2},[\Sigma,u,\vec{z},\vec{w}]).
\end{align*}
In the following we compute the sign of $\psi_{\pm}$.
\par
Denote by
\begin{align*}
    p:\Mg\rightarrow\M
\end{align*}
the projection and fix $\lambda=[\Sigma,u,\vec{z},\vec{w}]\in\theta_{m}^{-1}(0)=\chi_{m}^{-1}(\frac{1}{2})$. As in section \ref{fibtrans}, the orientation of $\Mg$ is determined by splitting the following short exact sequence:
\begin{center}
\begin{tikzcd}
    0\rightarrow T_{(\frac{1}{2},\lambda)}\Mg\arrow{rr}{0\oplus d_{(\frac{1}{2},\lambda)}p}&&\mathbb{R}\oplus T_{\lambda}\M\arrow{rr}{q-d_{\lambda}\chi_{m}}&&\mathbb{C}\rightarrow0,
\end{tikzcd}  
\end{center}
where
\begin{align*}
    q:\mathbb{R}\oplus T_{\lambda}\M\rightarrow\mathbb{C}
\end{align*}
is the projection to the first coordinate (thinking of $\mathbb{R}$ as a subset of $\mathbb{C}$), and we identify $T_{\frac{1}{2}}I=\mathbb{R}$ and $T_{\frac{1}{2}}\Cinf=\mathbb{C}$. Similarly, the orientation of $\mathcal{B}_{\pm}$ is determined by splitting the following short exact sequence:
\begin{center}
\begin{tikzcd}
    0\rightarrow T_{(0,\lambda)}\mathcal{B}_{\pm}\arrow{rr}{0\oplus d_{(0,\lambda)}(p\circ\psi_{\pm})}&&\mathbb{R}\oplus T_{\lambda}\M\arrow{rr}{\mp i\cdot q-d_{\lambda}\theta_{m}}&&\mathbb{C}\rightarrow0,
\end{tikzcd}  
\end{center}
where we identify $T_{0}\D=\mp i\mathbb{R}\cong\mathbb{R}$ and $T_{0}\Cinf=\mathbb{C}$.
\par
Note that we have an isomorphism of exact sequences given by
\begin{center}
\begin{tikzcd}[column sep = 5mm]
    0\arrow{r}&T_{(0,\lambda)}\mathcal{B}_{\pm}\arrow{rrr}{0\oplus d_{(0,\lambda)}(p\circ\psi_{\pm})}\arrow{d}{d_{(0,\lambda)}\psi_{\pm}}&&&\mathbb{R}\oplus T_{\lambda}\M\arrow{rrrr}{\mp i\cdot q-d_{\lambda}\theta_{m}}\arrow{d}{\pm\textup{id}\oplus\textup{id}}&&&&\mathbb{C}\arrow{d}{i}\arrow{r}&0\;\\
    0\arrow{r}&T_{(\frac{1}{2},\lambda)}\Mg\arrow{rrr}{0\oplus d_{(\frac{1}{2},\lambda)}p}&&&\mathbb{R}\oplus T_{\lambda}\M\arrow{rrrr}{q-d_{\lambda}\chi_{m}}&&&&\mathbb{C}\arrow{r}&0.
\end{tikzcd}  
\end{center}
Therefore, choosing a splitting
\begin{align*}
    \tau_{\pm}:\mathbb{R}\oplus T_{\lambda}\M\rightarrow T_{(0,\lambda)}\mathcal{B}_{\pm}\oplus\mathbb{C}
\end{align*}
induces a splitting
\begin{align*}
    (d_{(0,\lambda)}\psi_{\pm}\oplus i)\circ\tau_{\pm}\circ(\pm\textup{id}\oplus\textup{id}):\mathbb{R}\oplus T_{\lambda}\M\rightarrow T_{(\frac{1}{2},\lambda)}\Mg\oplus\mathbb{C}.
\end{align*}
We conclude that
\begin{align*}
    1=\sgn(d_{(0,\lambda)}\psi_{\pm}\oplus i)\cdot\sgn(\tau_{\pm})\cdot\sgn(\pm\textup{id}\oplus\textup{id})=\pm\sgn(\psi_{\pm}).
\end{align*}
In particular,
\begin{align*}
    \mathcal{B}_{\pm}\cong\pm\Mg.
\end{align*}
\par
We turn to analysing the second summand. Recall that \cite{Ainf}*{Proposition 2.8} states that
\begin{align*}
    \partial\M\cong\sum_{\substack{\beta_{1}+\beta_{2}=\beta\\k_{1}+k_{2}=k+1\\J_{1}\sqcup J_{2}=[l]\\1\leq i\leq k_{1}\\(\beta_{1},k_{1},l_{1})\neq(\beta_{0},1,0)\\(\beta_{2},k_{1},l_{1})\notin\{(\beta_{0},1,0),(\beta_{0},0,0)\}}}(-1)^{\delta_{1}}\mathcal{M}_{k_{1}+1,|J_{1}|}(\beta_{1})\times_{L}\mathcal{M}_{k_{2}+1,|J_{2}|}(\beta_{2}),
\end{align*}
where $\delta_{1}=k_{2}(k_{1}+i)+i+n$ and the fiber product is taken with respect to $evb^{\beta_{1}}_{i},evb^{\beta_{2}}_{0}$. This fiber product represents the marked points $\{z_{j}\}_{j=i}^{i+k_{2}-1}$ and $\{w_{j}\}_{j\in J_{2}}$ bubbling off to a disk. Thus
\begin{align*}
    \D\times_{\Cinf}\partial\M&
    =\sum_{\substack{\beta_{1}+\beta_{2}=\beta\\k_{1}+k_{2}=k+1\\J_{1}\sqcup J_{2}=[l]\\1\leq i\leq k_{1}\\(\beta_{1},k_{1},l_{1})\neq(\beta_{0},1,0)\\(\beta_{2},k_{1},l_{1})\notin\{(\beta_{0},1,0),(\beta_{0},0,0)\}}}(-1)^{\delta_{1}}\mathcal{B}_{\beta_{1},k_{1},J_{1},i}
\end{align*}
where
\begin{align*}
    \mathcal{B}_{\beta_{1},k_{1},J_{1},i}\cong\D\times_{\Cinf}(\mathcal{M}_{k_{1}+1,|J_{1}|}(\beta_{1})\times_{L}\mathcal{M}_{k_{2}+1,|J_{2}|}(\beta_{2})).
\end{align*}
Fix $\beta_{1},k_{1},J_{1},i$. We describe $\mathcal{B}=\mathcal{B}_{\beta_{1},k_{1},J_{1},i}$ explicitly by case. Let
\begin{align*}
    m'=m'(m,i,k_{2})=
    \begin{cases}
    m,&1\leq m<i,\\
    i,&i\leq m<i+k_{2},\\
    m-k_{2}+1,&i+k_{2}\leq m\leq k,
    \end{cases}
\end{align*}
as in Section \ref{gop}, and let
\begin{align*}
    \theta_{m'}^{i}=\theta_{m'}:\mathcal{M}_{k_{i},|J_{i}|}(\beta_{i})\rightarrow\Cinf.
\end{align*}
Denote by $p_{i}:\mathcal{B}\rightarrow\mathcal{M}_{k_{i}+1,|J_{i}|}(\beta_{i})$ the projections, and let $D_{\pm}^{i}$ represents the copy of $D_{\pm}$ in the range of $\theta_{m}^{i}$.
\par
\textbf{Case 1.} If $w_{1}$ is in the component corresponding to $\mathcal{M}_{k_{1}+1,|J_{1}|}(\beta_{1})$, that is $1\in J_{1}$, then $\theta_{m}|_{\mathcal{B}}=\theta_{m'}^{1}\circ p_{1}$. Therefore, by Proposition \ref{fiber},
\begin{align*}
    \mathcal{B}&\cong(\D^{1}\times_{\Cinf}\mathcal{M}_{k_{1}+1,|J_{1}|}(\beta_{1}))\times_{L}\mathcal{M}_{k_{2}+1,|J_{2}|}(\beta_{2})\\
    &=\mathcal{M}_{k_{1}+1,|J_{1}|;\pm,m'}(\beta_{1})\times_{L}\mathcal{M}_{k_{2}+1,|J_{2}|}(\beta_{2}).
\end{align*}
\par
\textbf{Case 2.} Assume now that $w_{1}$ is in the component corresponding to $\mathcal{M}_{k_{2}+1,|J_{2}|}(\beta_{2})$, that is $1\in J_{2}$.
\par
\textbf{Case 2a.} If $i\leq m<i+k_{2}$, then $\theta_{m}|_{\mathcal{B}}=\theta_{m-i+1}^{2}\circ p_{2}$. Therefore, by Proposition \ref{fiber},
\begin{align*}
    \mathcal{B}&\cong(\mathcal{M}_{k_{1}+1,|J_{1}|}(\beta_{1})\times_{L}\mathcal{M}_{k_{2}+1,|J_{2}|}(\beta_{2}))\times_{\Cinf}\D^{2}\\
    &=\mathcal{M}_{k_{1}+1,|J_{1}|}(\beta_{1})\times_{L}(\mathcal{M}_{k_{2}+1,|J_{2}|}(\beta_{2})\times_{\Cinf}\D^{2})\\
    &=\mathcal{M}_{k_{1}+1,|J_{1}|}(\beta_{1})\times_{L}(\D^{2}\times_{\Cinf}\mathcal{M}_{k_{2}+1,|J_{2}|}(\beta_{2}))\\
    &=\mathcal{M}_{k_{1}+1,|J_{1}|}(\beta_{1})\times_{L}\mathcal{M}_{k_{2}+1,|J_{2}|;\pm,m-i+1}(\beta_{2}).
\end{align*}
\par
\textbf{Case 2b.} If $1\leq m<i$, then $\theta_{m}|_{\mathcal{B}}=1$. In the case of $\Ml$ we obtain
\begin{align*}
    \mathcal{B}&\cong\mathcal{M}_{k_{1}+1,|J_{1}|}(\beta_{1})\times_{L}\mathcal{M}_{k_{2}+1,|J_{2}|}(\beta_{2}),
\end{align*}
and in the case of $\Mr$ we obtain
\begin{align*}
    \mathcal{B}&=\emptyset.
\end{align*}
\par
\textbf{Case 2c.} Similarly, if $i+k_{2}\leq m\leq k$, then $\theta_{m}|_{\mathcal{B}}=-1$.
In the case of $\Ml$ we obtain
\begin{align*}
    \mathcal{B}&=\emptyset,
\end{align*}
and in the case of $\Mr$ we obtain
\begin{align*}
    \mathcal{B}&\cong\mathcal{M}_{k_{1}+1,|J_{1}|}(\beta_{1})\times_{L}\mathcal{M}_{k_{2}+1,|J_{2}|}(\beta_{2}).
\end{align*}

\subsection{Applying Stokes' theorem}

We are now ready to derive the structure equation. Set 
\begin{align*}
    \xi=\bigwedge_{j=1}^{l}(\evi)^{*}\eta_{j}\wedge\bigwedge_{j=1}^{k}(\evb)^{*}\alpha_{j},
\end{align*}
and note that
\begin{align*}
    \dim\Ms=\dim\M\equiv n+k\pmod{2}.
\end{align*}
Applying Stokes' theorem (Proposition \ref{Stokes}), we obtain
\begin{align*}
    d((\evbz)_{*}\xi)=(\evbz)_{*}(d\xi)+(-1)^{n+k+|\alpha|+|\eta|}(\evbz|_{\partial\Ms})_{*}\xi.
\end{align*}
As in section \ref{computegg}, we have
\begin{align*}
    d((\evbz)_{*}\xi)=(-1)^{\iota(\alpha,\eta;1,\emptyset)+\varepsilon(\alpha)}\mathfrak{q}_{1,0}^{\beta_{0}}(\qs(\alpha;\eta))
\end{align*}
in addition to
\begin{align*}
    (\evbz)_{*}(d\xi)
    &=(-1)^{\varepsilon(\alpha)}\qs(\alpha;d\eta)\\
    &+\sum_{i=1}^{k}(-1)^{\varepsilon(\alpha)+\iota(\alpha,\eta;i,J)+1}\qs(\alpha^{1}\otimes\mathfrak{q}_{1,0}^{\beta_{0}}(\alpha_{i})\otimes\alpha^{2};\eta),
\end{align*}
and for the last term we need to compute $(\evbz|_{\mathcal{B}})_{*}\xi$ for each boundary component $\mathcal{B}$.
\par
\textbf{Case 1 (bubbling).} Let $\mathcal{B}$ be a boundary component of the form 
\begin{align*}
    \mathcal{B}=(-1)^{\delta_{1}}\mathcal{M}_{k_{1}+1,l_{1};o_{1}}(\beta_{1})\times_{L}\mathcal{M}_{k_{2}+1,l_{2};o_{2}}(\beta_{2}),
\end{align*}
where the set $o_{i}\in\{\emptyset,\{\pm,1\},...,\{\pm,k_{i}\}\}$ indicates the one-sided constraints. We proceed with a computation analogous to Case 1 in Section \ref{computegg}, the only changes being that $g_{i}$ is replaced by $o_{i}$ and $g$ is replaced by $0$. This yields
\begin{align*}
    (\evbz|_{\mathcal{B}})_{*}\xi
    &=(-1)^{\delta_{5}}\mathfrak{q}_{k_{1},l_{1};o_{1}}^{\beta_{1}}(\alpha^{1}\otimes \mathfrak{q}_{k_{2},l_{2};o_{2}}^{\beta_{2}}(\alpha^{2};\eta^{2})\otimes\alpha^{3};\eta^{1}),
\end{align*}
where
\begin{align*}
    \delta_{5}=\iota(\alpha,\eta;i,J_{1})+\varepsilon(\alpha)+n+|\alpha|+|\eta|+k+1.
\end{align*}
\par
\textbf{Case 2 (geodesic).} For the boundary component
\begin{align*}
    \mathcal{B}=\pm\Mg,
\end{align*}
we have by definition
\begin{align*}
    (\evbz|_{\mathcal{B}})_{*}\xi
    &=\pm(-1)^{\varepsilon(\alpha)+|\alpha|+k}\mathfrak{q}_{k_{1},l_{1};m}^{\beta}(\alpha;\eta).
\end{align*}
\par
Combining the above, multiplying by $T^{\beta}$ and summing over $\beta\in\Pi$, we get propositions \ref{lstruc} and \ref{rstruc}.

\section{Cohomologies}\label{cohomology}

\subsection{Quantum cohomology and currents}

For a closed $\gamma\in(\mathcal{I}E)_{2}$, let $\mathfrak{q}_{\emptyset,l}^{\gamma}$ be as in Section \ref{defop}. In the following $\gamma$ serves as a bulk deformation in the sense of \cite{LIFT}. Recall that the quantum cohomology $\qhx$ of $X$ is, as an additive group, the cohomology of the complex $E$ defined in Section \ref{op}, with differential $d$. Equivalently,
\begin{align*}
    \qhx=H^{*}(X)\otimes R,
\end{align*}
where $\otimes$ stands for the completed tensor product as in Section \ref{defop}.
\par
The quantum product is defined via the operation
\begin{align*}
    \xmx:E\otimes E\rightarrow E,
\end{align*}
which is a chain map and thus descends to cohomology (see \cite{Ainf}*{Proposition 3.13}). Denote by
\begin{align*}
    *:\qhx\otimes\qhx\rightarrow\qhx
\end{align*}
the resulting product on $\qhx$, which makes it a graded-commutative ring (see propositions \ref{degree}, \ref{symmetry}, and \ref{unit}, as well as \cite{RQH}*{Proposition 3.5}).
\begin{con}
We denote by $y$ a general element of $\qhx$, possibly adding an index.
\end{con}
Recall that, for a manifold $M$, the Poincaré pairing of $x_{1},x_{2}\in H^{*}(M)$ is defined by
\begin{align*}
    \langle x_{1},x_{2}\rangle_{M}=\int_{M}x_{1}\wedge x_{2}.
\end{align*}
The quantum product has the following property.
\begin{prop}[\cite{Jhol}*{Proposition 11.1.11}]\label{pairing}
We have
\begin{align*}
    \langle y_{1}*y_{2},y_{3}\rangle_{X}=\langle y_{1},y_{2}*y_{3}\rangle_{X}.
\end{align*}
\end{prop}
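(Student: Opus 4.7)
My plan is to unwind both sides to integrals over the moduli spaces $\Mx$ of stable spheres and then identify them using the permutation symmetry of marked points. First, I would expand $y_{1}*y_{2}=\xmx(y_{1},y_{2})$ via the definitions in Section \ref{defop}, so that
\begin{align*}
    \langle y_{1}*y_{2},y_{3}\rangle_{X}=\sum_{\beta,t}\frac{(-1)^{w_{\mathfrak{s}}(\beta)}T^{\varpi(\beta)}}{t!}\int_{X}(ev^{\beta}_{0})_{*}\Big(\bigwedge_{j=1}^{2+t}(ev^{\beta}_{j})^{*}\eta_{j}\Big)\wedge y_{3},
\end{align*}
where $\eta_{1}=y_{1}$, $\eta_{2}=y_{2}$, and $\eta_{j}=\gamma$ for $j\geq3$. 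Applying Proposition \ref{int} to absorb $y_{3}$ into the pushforward as $(ev^{\beta}_{0})^{*}y_{3}$, and then Proposition \ref{subcomp} with the projection $X\rightarrow\textup{pt}$, rewrites the right-hand side as a sum of integrals over $\Mx$ in which $y_{1},y_{2},y_{3}$ are pulled back via $ev^{\beta}_{1},ev^{\beta}_{2},ev^{\beta}_{0}$ respectively and $\gamma$ is pulled back via the remaining marked points.

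The identical manipulation applied to $\langle y_{1},y_{2}*y_{3}\rangle_{X}$ yields a sum of integrals over $\Mx$ in which $y_{1},y_{2},y_{3}$ are pulled back via $ev^{\beta}_{0},ev^{\beta}_{1},ev^{\beta}_{2}$, with $\gamma$ again filling the remaining marked points. The two integrands thus differ only by the cyclic relabeling $(0,1,2)\mapsto(2,0,1)$ of the three distinguished marked points. I would then invoke the natural $S_{l+1}$-action on $\Mx$ permuting labels: the underlying reparametrization is a biholomorphism of the moduli orbifold, hence orientation-preserving. Pulling the integrand back along the transposition that intertwines the two labelings identifies the two integrals up to a Koszul sign, which is precisely the sign given by Proposition \ref{symmetry} for $\mathfrak{q}^{\beta}_{\emptyset,\cdot}$. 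Combined with the Koszul sign incurred when sliding $y_{3}$ inside the pushforward via Proposition \ref{int}, and noting that $(-1)^{w_{\mathfrak{s}}(\beta)}$ appears identically on both sides, the remaining signs cancel.

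The main obstacle will be sign bookkeeping: I must coordinate the Koszul signs from rearranging the wedge on each side, the sign $(-1)^{\sigma^{\eta}_{s}}$ from Proposition \ref{symmetry}, and the fact that the relative dimension of $ev^{\beta}_{0}$ is even (so commuting forms past the pushforward is unproblematic). I expect this to reduce to a disciplined application of graded commutativity together with the symmetry property already established, but it requires a careful case analysis.
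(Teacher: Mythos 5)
The paper does not actually prove this proposition --- it is quoted from \cite{Jhol}*{Proposition 11.1.11} --- and your argument is exactly the standard one underlying that citation: after Propositions \ref{int} and \ref{subcomp}, both sides become the integral over $\Mx$ of $(ev^{\beta}_{0})^{*},(ev^{\beta}_{1})^{*},(ev^{\beta}_{2})^{*}$ of the three classes (with $\gamma$ filling the remaining points), which is symmetric under relabeling, and the parities involved ($\mu(\beta)$ and $\rdim ev^{\beta}_{0}$ even, $|\gamma|=2$) make the Koszul signs cancel as you claim. The one imprecision is that Proposition \ref{symmetry} as stated only permutes the \emph{input} marked points and does not cover the transposition involving the output point $z_{0}$, but your direct appeal to the orientation-preserving $S_{l+1}$-relabeling action on $\Mx$ supplies exactly the missing fact, so the proof goes through.
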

\par
We turn to define a modified quantum product, allowing one of the inputs to be a current. Set
\begin{align*}
    \overline{E}=\overline{A}^{*}(X)\otimes R.
\end{align*}
\begin{con}
We denote by $\zeta$ a general element of $\overline{E}$, possibly adding an index.
\end{con}
Define
\begin{align*}
    \overline{\mathfrak{q}}^{\beta,\gamma}_{\emptyset,2}:E\otimes\overline{E}\rightarrow\overline{E}
\end{align*}
via
\begin{align*}
    \overline{\mathfrak{q}}^{\beta,\gamma}_{\emptyset,2}(\eta_{1},\zeta)(\eta_{2})=\zeta(\mathfrak{q}^{\beta,\gamma}_{\emptyset,2}(\eta_{2},\eta_{1})),
\end{align*}
and set
\begin{align*}
    \xmcx=\sum_{\beta\in\Pi}T^{\beta}\overline{\mathfrak{q}}^{\beta,\gamma}_{\emptyset,2}.
\end{align*}
\begin{prop}
$\overline{\mathfrak{q}}^{\beta,\gamma}_{\emptyset,2}$ is well defined.
\end{prop}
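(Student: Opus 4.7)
The plan is to verify three things: that the right-hand side of the defining formula makes sense as an element of $R$, that it is $R$-multilinear in its inputs, and that it is continuous in $\eta_{2}$ with respect to the topology that makes currents functionals. Together these show $\overline{\mathfrak{q}}^{\beta,\gamma}_{\emptyset,2}(\eta_{1},\zeta)$ is an honest element of $\overline{E}$.

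First I would check degrees. By Proposition \ref{degree}, $\mathfrak{q}_{\emptyset,2+t}$ has degree $-2t$, so since each factor of $\gamma$ has degree $2$ and the factorial is scalar, the $t$-th term of $\mathfrak{q}^{\beta,\gamma}_{\emptyset,2}(\eta_{2},\eta_{1})$ has total degree $|\eta_{1}|+|\eta_{2}|$. Thus $\mathfrak{q}^{\beta,\gamma}_{\emptyset,2}(\eta_{2},\eta_{1})\in E$ is homogeneous of degree $|\eta_{1}|+|\eta_{2}|$. For $\zeta$ of degree $k$ (viewed as a functional on $A^{\dim X-k}(X)\otimes R$) to be applied, we need $|\eta_{1}|+|\eta_{2}|=\dim X-k$, i.e.\ $|\eta_{2}|=\dim X-k-|\eta_{1}|$; this is precisely the degree required of a test form against a current of degree $|\eta_{1}|+k$. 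So the proposed formula attempts to define a current of degree $|\eta_{1}|+|\zeta|$, which is the correct degree for the output of an operation of degree $0$.

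Next, convergence: since $\gamma\in(\mathcal{I}E)_{2}$, the element $\gamma^{\otimes t}/t!$ has valuation at least $t$ under the induced valuation, so the sum defining $\mathfrak{q}^{\beta,\gamma}_{\emptyset,2}(\eta_{2},\eta_{1})$ converges in $E$ with respect to the $v$-adic topology. Applying the $R$-linear functional $\zeta$ termwise preserves this valuation estimate, so the resulting series converges in $R$. Linearity of the output in $\eta_{2}$ (and separately in $\eta_{1}$ and $\zeta$) is immediate from the multilinearity of $\mathfrak{q}^{\beta}_{\emptyset,2+t}$ and the $R$-linearity of $\zeta$.

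Finally, continuity of $\eta_{2}\mapsto\zeta(\mathfrak{q}^{\beta,\gamma}_{\emptyset,2}(\eta_{2},\eta_{1}))$ in the smooth topology on $A^{\ast}(X)$ follows because each of pullback, wedge product, and pushforward along the proper submersion $ev^{\beta}_{0}$ (which is a proper submersion by our assumptions in Section \ref{reg}) is continuous for this topology, and the $v$-adic convergence is uniform in the test form on bounded sets; continuity of $\zeta$ as a current closes the argument. I expect no serious obstacle here, the only subtle point being to spell out that the bulk-deformed sum converges not only in $R$ but also in a way compatible with acting by $\zeta$, which boils down to the valuation estimate above.
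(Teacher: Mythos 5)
Your proposal is correct and its essential content coincides with the paper's proof: the key point is that $\eta_{2}\mapsto\zeta(\mathfrak{q}^{\beta,\gamma}_{\emptyset,2}(\eta_{2},\eta_{1}))$ is continuous, which both arguments obtain from continuity of pullback and pushforward composed with continuity of the current $\zeta$. The degree and valuation checks you add are harmless extras (the paper handles the degree count in the following proposition and the convergence implicitly via the completed tensor product).
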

\begin{proof}
The pullback and pushforward are continuous (see \cite{orb}), hence so is $\mathfrak{q}^{\beta,\gamma}_{\emptyset,2}$. Denote by $h_{\eta_{1}}:E\rightarrow E\otimes E$ the map given by $\eta_{2}\mapsto\eta_{2}\otimes\eta_{1}$, which is continuous. Note that $\overline{\mathfrak{q}}^{\beta,\gamma}_{\emptyset,2}(\eta_{1},\zeta)=\zeta\circ\mathfrak{q}^{\beta,\gamma}_{\emptyset,2}\circ h_{\eta_{1}}$ is continuous, thus if $\zeta$ is continuous then so is $\overline{\mathfrak{q}}^{\beta,\gamma}_{\emptyset,2}(\eta_{1},\zeta)$.
\end{proof}
\begin{prop}
$\xmcx$ is of degree $0$.
\end{prop}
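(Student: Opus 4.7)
The plan is to unpack the grading conventions on currents and apply Proposition \ref{degree} directly. Saying that $\xmcx\colon E\otimes\overline{E}\to\overline{E}$ has degree $0$ means that, for homogeneous inputs, $|\xmcx(\eta_1,\zeta)|=|\eta_1|+|\zeta|$, so the argument is a bookkeeping check.

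First, recall that $\mathfrak{q}_{\emptyset,2}$ has total degree $4-2\cdot 2=0$ by Proposition \ref{degree}, and that this is preserved by the $\gamma$-deformation (each insertion of $\gamma$ raises the form degree by $2$ but lowers the degree of $\mathfrak{q}_{\emptyset,l}$ by another $2$, cf.\ Proposition \ref{defprop}). In particular, each summand $T^{\beta}\mathfrak{q}^{\beta,\gamma}_{\emptyset,2}$ is of total degree zero, so on homogeneous $\eta_1,\eta_2\in E$ one has
\[
\bigl|T^{\beta}\mathfrak{q}^{\beta,\gamma}_{\emptyset,2}(\eta_2,\eta_1)\bigr|=|\eta_1|+|\eta_2|.
\]

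Next, I would unwind the grading on $\overline{E}=\overline{A}^{*}(X)\otimes R$: a pure tensor $\zeta=\lambda\otimes c$ with $\lambda\in\overline{A}^{r}(X)$ and $c\in R_{q}$ has $|\zeta|=r+q$, and by Section \ref{orientint} (currents are the continuous dual of forms of complementary degree), for $\eta\in E$ the evaluation $\zeta(\eta)\in R$ vanishes unless the form degree of $\eta$ equals $\dim X-r$, in which case $\zeta(\eta)$ has total degree $|\zeta|+|\eta|-\dim X$.

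Applying these two observations to the defining formula
\[
\xmcx(\eta_1,\zeta)(\eta_2)=\sum_{\beta\in\Pi}T^{\beta}\,\zeta\bigl(\mathfrak{q}^{\beta,\gamma}_{\emptyset,2}(\eta_2,\eta_1)\bigr),
\]
each $\beta$-term lands in $R$ in degree $|\eta_1|+|\eta_2|+|\zeta|-\dim X$. This is exactly the degree that would be forced on $\xmcx(\eta_1,\zeta)(\eta_2)$ if $\xmcx(\eta_1,\zeta)\in\overline{E}$ were a homogeneous current of degree $|\eta_1|+|\zeta|$, so this identifies the degree of the output and shows that $\xmcx$ has degree $0$. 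I do not expect any obstruction here, since the proof is purely a chase through the definitions; the only mild subtlety is simultaneously tracking the form-theoretic and Novikov components of the grading on $E$ and $\overline{E}$.
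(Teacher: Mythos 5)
Your proof is correct and follows essentially the same route as the paper: both arguments are a degree chase through the duality pairing, using that $\zeta(\xi)$ is nonzero only when the form degree of $\xi$ is complementary to the current degree of $\zeta$, combined with the degree of $\mathfrak{q}^{\beta,\gamma}_{\emptyset,2}$. The only cosmetic difference is that the paper computes the degree of each component $\overline{\mathfrak{q}}^{\beta,\gamma}_{\emptyset,2}$ separately as $|\zeta|+|\eta_{1}|-\mu(\beta)$ and lets the $T^{\beta}$ factor restore degree $0$, whereas you absorb $T^{\beta}$ from the start.
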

\begin{proof}
Take $\eta_{1},\eta_{2}\in A^{*}(X)$ and $\zeta\in\overline{A}^{*}(X)$. If $\xmcx(\eta_{1},\zeta)(\eta_{2})\neq0$ we obtain
\begin{align*}
    2n&=|\zeta|+|\mathfrak{q}^{\beta,\gamma}_{\emptyset,2}(\eta_{1},\eta_{2})|\\&=|\zeta|+|\eta_{1}|+|\eta_{2}|-\mu(\beta),
\end{align*}
so that
\begin{align*}
    |\overline{\mathfrak{q}}^{\beta,\gamma}_{\emptyset,2}(\eta_{1},\zeta)|&=2n-|\eta_{2}|\\&=2n-(2n-|\zeta|-|\eta_{1}|+\mu(\beta))\\&=|\zeta|+|\eta_{1}|-\mu(\beta).
\end{align*}
\end{proof}
Denote by $\qhcx$ the cohomology of $\overline{E}$ with the differential $d$ as defined in Section \ref{orientint}.
\begin{con}
We denote by $z$ a general element of $\qhcx$, possibly adding an index.
\end{con}
We have the following.
\begin{prop}
$\xmcx$ is a chain map, and thus decends to a map
\begin{align*}
    \star:\qhx\otimes\qhcx\rightarrow\qhcx.
\end{align*}
\end{prop}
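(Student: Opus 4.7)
The plan is to verify the chain-level Leibniz rule
\[
d\bigl(\xmcx(\eta,\zeta)\bigr) = \xmcx(d\eta,\zeta) + (-1)^{|\eta|}\xmcx(\eta,d\zeta)
\]
for homogeneous $\eta\in E$ and $\zeta\in\overline{E}$. Since $\xmcx$ has total degree $0$, this identity immediately implies that closed inputs yield a closed output and that exactness of either argument makes the output exact, so $\xmcx$ descends to the advertised well-defined degree-$0$ map $\star\colon\qhx\otimes\qhcx\to\qhcx$.

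To establish the identity I would pair both sides with an arbitrary test form $\eta'\in E$ and unwind the definitions. By the definition of $d$ on currents together with the definition of $\xmcx$,
\[
d\bigl(\xmcx(\eta,\zeta)\bigr)(\eta') \;=\; (-1)^{|\eta'|+1}\,\zeta\bigl(\xmx(d\eta',\eta)\bigr).
\]
Next I would invoke the chain-map property of the ordinary bulk-deformed quantum product $\xmx$, which is the content of \cite{Ainf}*{Proposition 3.13} combined with Proposition \ref{defprop}, in the form
\[
\xmx(d\eta',\eta) \;=\; d\,\xmx(\eta',\eta) \;-\; (-1)^{|\eta'|}\,\xmx(\eta',d\eta),
\]
and then apply $d\zeta(\xi)=(-1)^{|\xi|+1}\zeta(d\xi)$ with $\xi=\xmx(\eta',\eta)$ of total degree $|\eta'|+|\eta|$ to convert $\zeta\bigl(d\xmx(\eta',\eta)\bigr)$ into $(d\zeta)\bigl(\xmx(\eta',\eta)\bigr)$. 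Re-packaging via the definition of $\xmcx$ then identifies the two resulting terms with $\xmcx(d\eta,\zeta)(\eta')$ and $(-1)^{|\eta|}\xmcx(\eta,d\zeta)(\eta')$, respectively, yielding the claimed identity.

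Conceptually, $\xmcx(\eta,-)$ is the transpose of the graded chain map $\xmx(-,\eta)\colon E\to E$, so the Leibniz rule we need is just the transpose of the Leibniz rule for $\xmx$. The only non-trivial step is keeping the various signs straight through the two applications of $(-1)^{|\xi|+1}$ coming from the differential on currents and the argument-swap in the definition of $\xmcx$; I expect this bookkeeping to be the sole technical obstacle, and for the signs to cancel precisely as stated once one uses that the form appearing inside $\zeta$ has total degree $|\eta'|+|\eta|$ in $E$. Descent to cohomology is then immediate and requires no further work.
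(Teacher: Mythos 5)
Your proposal is correct and follows essentially the same route as the paper: both dualize the Leibniz rule for $\xmx$ by evaluating on a test form, using the definition of $d$ on currents twice and the chain-map property of $\xmx$, with the signs cancelling exactly as you describe. No gaps.
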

\begin{proof}
Since $\xmx$ is a chain map, we have
\begin{align*}
    d\xmcx(\eta_{1},\zeta)(\eta_{2})&=(-1)^{|\eta_{2}|+1}\xmcx(\eta_{1},\zeta)(d\eta_{2})\\
    &=(-1)^{|\eta_{2}|+1}\zeta(\xmx(d\eta_{2},\eta_{1}))\\
    &=\zeta(\xmx(\eta_{2},d\eta_{1}))+(-1)^{|\eta_{2}|+1}\zeta(d\xmx(\eta_{2},\eta_{1}))\\
    &=\xmcx(d\eta_{1},\zeta)(\eta_{2})+(-1)^{|\eta_{1}|}d\zeta(\xmx(\eta_{2},\eta_{1}))\\
    &=\xmcx(d\eta_{1},\zeta)(\eta_{2})+(-1)^{|\eta_{1}|}\xmcx(\eta_{1},d\zeta)(\eta_{2}).
\end{align*}
\end{proof}
\begin{rmk}
Recall that the tensor product of two cochain complexes $A_{1},A_{2}$ is the cochain complex $A_{1}\otimes A_{2}$ with the obvious grading and with coboundary given by the Leibniz rule
\begin{align*}
    d(x_{1}\otimes x_{2})=dx_{1}\otimes x_{2}+(-1)^{|x_{1}|}x_{1}\otimes dx_{2},
\end{align*}
so that any chain map $A_{1}\otimes A_{2}\rightarrow A_{3}$ descends to a map $H^{*}(A_{1})\otimes H^{*}(A_{2})\rightarrow H^{*}(A_{3})$ in cohomology.
\end{rmk}
\begin{rmk}
Denote by $\cqhx$ the dual of $\qhx$. We have a well defined linear map $\psi:\qhcx\rightarrow\cqhx$ given by $\psi([\zeta])([\eta])=\zeta(\eta)$, which is an isomorphism as a corollary of \cite{dR}*{Theorem 14}. Thus we can think of the elements of $\qhcx$ as elements of $\cqhx$, with
\begin{align*}
    (y_{1}\star z)(y_{2})=z(y_{2}*y_{1}).
\end{align*}
\end{rmk}
\begin{prop}
The map $\star$ makes $\qhcx$ a module over $\qhx$.
\end{prop}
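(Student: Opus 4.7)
The plan is to reduce the module axioms for $\star$ to the unital graded-commutative ring structure on $(\qhx,*)$ by leveraging the identification provided in the preceding remark. Under the isomorphism $\psi:\qhcx\to\cqhx$, the action satisfies $(y_{1}\star z)(y_{2})=z(y_{2}*y_{1})$; since $\psi$ is an $R$-linear isomorphism, verifying the module axioms for $\star$ on $\qhcx$ is equivalent to verifying the corresponding axioms for the operation $(y_{1},z)\mapsto z(-*y_{1})$ on $\cqhx$.

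For the unit axiom I would compute $(1\star z)(y)=z(y*1)=z(y)$, using that $1\in\qhx$ is a two-sided unit for $*$; hence $1\star z=z$. For associativity, given $y_{1},y_{2}\in\qhx$ and $z\in\qhcx$, I would test against an arbitrary $y_{3}\in\qhx$ to obtain
\begin{align*}
((y_{1}*y_{2})\star z)(y_{3})&=z(y_{3}*(y_{1}*y_{2})),\\
(y_{1}\star(y_{2}\star z))(y_{3})&=(y_{2}\star z)(y_{3}*y_{1})=z((y_{3}*y_{1})*y_{2}),
\end{align*}
and these two expressions agree by associativity of $*$ on $\qhx$. Bilinearity, including $R$-bilinearity, is inherited directly from the evident bilinearity of $\xmcx$ at the cochain level, which in turn follows from the bilinearity of $\mathfrak{q}^{\beta,\gamma}_{\emptyset,2}$ and of the pairing between forms and currents.

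The one subtle point, which I regard as the main obstacle, is justifying the dual formula $(y_{1}\star z)(y_{2})=z(y_{2}*y_{1})$ itself: at the cochain level the definition reads $\xmcx(\eta_{1},\zeta)(\eta_{2})=\zeta(\mathfrak{q}^{\beta,\gamma}_{\emptyset,2}(\eta_{2},\eta_{1}))$, and to obtain the convenient symmetric form on cohomology one must track the sign convention $(\xi_{1}\wedge\lambda)(\xi_{2})=\lambda(\xi_{2}\wedge\xi_{1})$ from Section \ref{orientint} together with the graded symmetry of $\mathfrak{q}_{\emptyset,2}$ guaranteed by Proposition \ref{symmetry}. Once this identification is granted, as the preceding remark asserts, the verification above involves only reassociations within $(\qhx,*)$ and introduces no further signs.
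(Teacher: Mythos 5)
Your proposal is correct and follows essentially the same route as the paper: the paper likewise verifies the module axioms by evaluating $((y_{1}*y_{2})\star z)(y_{3})$ and $(y_{1}\star(y_{2}\star z))(y_{3})$ against a test element and appealing to the associativity and unitality of $*$, via the identity $(y_{1}\star z)(y_{2})=z(y_{2}*y_{1})$. Your additional remarks on bilinearity and on tracking the sign conventions behind that identity are fine but not needed beyond what the paper already records in the surrounding remarks.
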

\begin{proof}
From the associativity of $*$ we obtain
\begin{align*}
    (y_{1}\star(y_{2}\star z))(y_{3})&=(y_{2}\star z)(y_{3}*y_{1})\\&=z(y_{3}*y_{1}*y_{2})\\&=((y_{1}*y_{2})\star z)(y_{3}).
\end{align*}
We also have
\begin{align*}
    (1\star z)(y)&=z(y*1)\\&=z(y).
\end{align*}
\end{proof}
Recall the map $\hat{\varphi}:H^{*}(X)\rightarrow\overline{H}^{*}(X)$ from Proposition \ref{curhomprop}. We may extend it to a map $\qhx\rightarrow\qhcx$, also denoted by $\hat{\varphi}$. Thinking of $\qhx$ with the quantum product as a module over itself, we show the following.
\begin{prop}\label{qcuriso}
$\hat{\varphi}$ is a module isomorphism.
\end{prop}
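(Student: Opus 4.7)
The plan is to prove Proposition \ref{qcuriso} in two steps: first establish that $\hat{\varphi}$ is a bijection, then verify the $QH^*(X)$-module compatibility.

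For the bijection, I would argue that since $R$ carries the trivial differential, the complexes $E = A^{*}(X)\otimes R$ and $\overline{E} = \overline{A}^{*}(X)\otimes R$ decompose cohomologically as
\begin{align*}
\qhx \cong H^{*}(X)\otimes R,\qquad \qhcx \cong \overline{H}^{*}(X)\otimes R,
\end{align*}
where the tensor products are the valuation-completed ones from Section \ref{op}. The chain-level map $\varphi$ is $\mathbb{R}$-linear and continuous, so it extends $R$-linearly (and continuously in the valuation) to a map of completed tensor products, inducing the desired $\hat{\varphi}$ on cohomology. Because Proposition \ref{curhomiso} already identifies $\hat{\varphi}:H^{*}(X)\to\overline{H}^{*}(X)$ as an isomorphism, the extension is an isomorphism on each graded piece and hence additively on all of $\qhx$.

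For the module compatibility, I would show $\hat{\varphi}(y_{1}*y_{2}) = y_{1}\star\hat{\varphi}(y_{2})$ by pairing both sides with an arbitrary $y_{3}\in\qhx$. Using the definitions of $\star$ (via $\xmcx$) and of $\hat{\varphi}$ (via $\varphi$ and the Poincaré pairing), I get
\begin{align*}
(y_{1}\star\hat{\varphi}(y_{2}))(y_{3}) &= \hat{\varphi}(y_{2})(y_{3}*y_{1}) = \langle y_{3}*y_{1},y_{2}\rangle_{X},\\
\hat{\varphi}(y_{1}*y_{2})(y_{3}) &= \langle y_{3},y_{1}*y_{2}\rangle_{X}.
\end{align*}
These two expressions agree directly by Proposition \ref{pairing}, applied with the relabeling $(y_{1},y_{2},y_{3})\mapsto(y_{3},y_{1},y_{2})$, which gives $\langle y_{3}*y_{1},y_{2}\rangle_{X} = \langle y_{3},y_{1}*y_{2}\rangle_{X}$. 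Thus $\hat{\varphi}$ intertwines $*$ on $\qhx$ with $\star$ on $\qhcx$, and combined with step one this concludes the proof.

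The main technicality, rather than obstacle, is ensuring that passing from the chain-level $\varphi$ to the cohomological $\hat{\varphi}$ commutes with the Novikov completion, so that one can legitimately reduce to the classical statement of Proposition \ref{curhomiso}. Once that is granted, the module property is essentially a tautology given Proposition \ref{pairing}, since the paper has been careful to set up the definition $(y_{1}\star z)(y_{2}) = z(y_{2}*y_{1})$ so that the required identity is literally Proposition \ref{pairing} after relabeling.
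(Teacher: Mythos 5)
Your proposal is correct and follows essentially the same route as the paper: bijectivity is reduced to Proposition \ref{curhomiso} (the paper is terser about the Novikov completion, but the point is the same), and the module compatibility is verified by exactly the same chain of equalities $(y_{1}\star\hat{\varphi}(y_{2}))(y_{3})=\hat{\varphi}(y_{2})(y_{3}*y_{1})=\langle y_{3}*y_{1},y_{2}\rangle_{X}=\langle y_{3},y_{1}*y_{2}\rangle_{X}=\hat{\varphi}(y_{1}*y_{2})(y_{3})$ via Proposition \ref{pairing}.
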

\begin{proof}
By Proposition \ref{curhomiso}, $\hat{\varphi}$ is a linear isomorphism over $\mathbb{R}$, and is therefore bijective. To see that $\hat{\varphi}$ is a homomorphism of modules over $\qhx$, note that Proposition \ref{pairing} gives
\begin{align*}
    (y_{1}\star\hat{\varphi}(y_{2}))(y_{3})
    &=\hat{\varphi}(y_{2})(y_{3}*y_{1})\\
    &=\langle y_{3}*y_{1},y_{2}\rangle_{X}\\
    &=\langle y_{3},y_{1}*y_{2}\rangle_{X}\\
    &=\hat{\varphi}(y_{1}*y_{2})(y_{3}).
\end{align*}
Thus $\hat{\varphi}$ is a bijective module homomorphism, that is, a module isomorphism.
\end{proof}

\subsection{Lagrangian Floer cohomology}\label{Lcohomology}

In this section we review the definition of the Lagrangian Floer cohomology $\qhl$. Let $(\gamma,b)$ be a bounding pair as in Definition \ref{bpdef}, and let $\mathfrak{q}_{k,l}^{\gamma,b}$ be as in Section \ref{defop}. In the terminology of \cite{LIFT}, $\gamma$ is a bulk deformation, and $b$ is a weakly bounding cochain for $\gamma$. 
\begin{prop}\label{lcomplex}
$(C,\dl)$ forms a cochain complex.
\end{prop}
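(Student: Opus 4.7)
The plan is to verify the two requirements for $(C,\dl)$ to be a cochain complex: first that $\dl$ has degree $1$, and second that $\dl\circ\dl=0$.

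The degree assertion is immediate from Proposition \ref{degree} (which applies to the deformed operators by the remark following Proposition \ref{defprop}), since $\mathfrak{q}_{1,0}^{\gamma,b}$ has degree $2-1-0=1$. The substantive content is the vanishing $\dl^{2}=0$, and I would extract it directly from the structure equation of Proposition \ref{struc}.

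Specifically, I apply Proposition \ref{struc} to the deformed operators in the case $k=1$, $l=0$, with input $\alpha_{1}\in C$ and empty $\eta$. The term $\mathfrak{q}_{1,0}^{\gamma,b}(\alpha_{1};d\eta)$ vanishes since $\eta$ is empty, and the sum on the right-hand side collapses to exactly three nonzero contributions corresponding to $(k_{1},k_{2},i)\in\{(1,1,1),(2,0,1),(2,0,2)\}$. The first contribution, after computing $\iota(\alpha,\emptyset;1,\emptyset)=2$, is precisely $\dl(\dl(\alpha_{1}))$. The other two contributions involve $\mathfrak{q}_{0,0}^{\gamma,b}$ plugged into $\mathfrak{q}_{2,0}^{\gamma,b}$ at positions $i=1$ and $i=2$.

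Here the bounding pair hypothesis enters: by Definition \ref{bpdef} we have $\mathfrak{q}_{0,0}^{\gamma,b}=c\cdot 1$ with $c\in(\mathcal{I}R)_{2}$. Since Proposition \ref{unit} also applies to the deformed operators, the two remaining terms evaluate to $(-1)^{|c|}c\cdot\alpha_{1}$ and $(-1)^{|\alpha_{1}|(|c|+1)}c\cdot\alpha_{1}$ respectively, with the additional signs $(-1)^{\iota(\alpha,\emptyset;1,\emptyset)}=1$ and $(-1)^{\iota(\alpha,\emptyset;2,\emptyset)}=(-1)^{|\alpha_{1}|+1}$ attached. Using $|c|=2$, one sign simplifies to $+c\cdot\alpha_{1}$ and the other to $-c\cdot\alpha_{1}$, so they cancel. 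What remains is $0=\dl(\dl(\alpha_{1}))$, as required.

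The main (minor) obstacle is purely bookkeeping: tracking the sign $\iota(\alpha,\eta;i,J_{1})$ across the three terms and combining it with the unit-property sign of Proposition \ref{unit} so that the two $c$-terms cancel with opposite signs. Nothing deeper than these sign computations is needed; the structural input (the $A_\infty$-relation from Proposition \ref{struc}, the bounding pair condition, and the unit property) does all the work.
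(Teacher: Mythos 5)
Your proof is correct and follows essentially the same route as the paper: apply the structure equation (Proposition \ref{struc}) for the deformed operators with $k=1$, $l=0$, isolate the three contributions $(k_{1},k_{2},i)\in\{(1,1,1),(2,0,1),(2,0,2)\}$, and use the bounding pair condition $\mathfrak{q}_{0,0}^{\gamma,b}=c\cdot1$ together with the unit property (Proposition \ref{unit}) to cancel the two $c$-terms, leaving $\dl\circ\dl=0$. Your sign bookkeeping matches the paper's.
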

\begin{proof}
Proposition \ref{struc} with $k=1$, $l=0$ gives
\begin{align*}
    0&=\dl(\dl(\alpha))+\lml(\zl,\alpha)+(-1)^{|\alpha|+1}\lml(\alpha,\zl).
\end{align*}
Writing $\zl=c\cdot1$ and applying Proposition \ref{unit}, we get
\begin{align*}
    0&=\dl(\dl(\alpha))+c\cdot\alpha+(-1)^{|\alpha|+1+|\alpha|}c\cdot\alpha\\&=\dl(\dl(\alpha)).
\end{align*}
\end{proof}
Define $\qhl$ to be the cohomology of the complex $(C,\dl)$.
\begin{con}
We denote by $a$ a general element of $\qhl$, possibly adding an index.
\end{con}
We turn to defining the ring structure.
\begin{prop}\label{lml}
The map $C\otimes C\rightarrow C$ given by
\begin{align*}
    (\alpha_{1},\alpha_{2})\mapsto(-1)^{|\alpha_{1}|}\lml(\alpha_{1},\alpha_{2})
\end{align*}
is a chain map, and thus descends to a map 
\begin{align*}
    \circ:\qhl\otimes\qhl\rightarrow\qhl.
\end{align*}
\end{prop}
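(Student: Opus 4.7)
The plan is to derive Proposition \ref{lml} directly from the structure equation of Proposition \ref{struc}, applied to the deformed operators via Proposition \ref{defprop}, with $k=2$ and $l=0$, in the same spirit that Proposition \ref{lcomplex} was obtained from the $k=1$, $l=0$ case. Since there are no interior inputs, the first term $-\mathfrak{q}_{2,0}^{\gamma,b}(\alpha;d\eta)$ is absent, and the sign $\iota(\alpha,\eta;i,J_1)$ collapses to $|\alpha^1|+i+1$.

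First I would enumerate the six summands arising from splits $k_1+k_2=3$ with $1\leq i\leq k_1$. Three of them, corresponding to $(k_1,k_2)=(3,0)$ with $i=1,2,3$, contain an inner factor $\mathfrak{q}_{0,0}^{\gamma,b}=\zl=c\cdot 1$ inserted among the $\alpha_j$. Under the deformation, such a unit insertion at one of the three distinguished positions of an operator $\mathfrak{q}_{3,0}^{\gamma,b}$ yields, after expanding into the underlying $\mathfrak{q}_{k',l'}^{\beta}$ with $b$- and $\gamma$-insertions, only operators with $k'\geq 3$, so the deformed analogue of Proposition \ref{unit} forces each of these three terms to vanish. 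This reduces the structure equation to
\begin{align*}
    0&=\dl(\lml(\alpha_1,\alpha_2))+\lml(\dl(\alpha_1),\alpha_2)+(-1)^{|\alpha_1|+1}\lml(\alpha_1,\dl(\alpha_2)),
\end{align*}
after substituting the three relevant values of $\iota$.

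The remainder is a bookkeeping check. Defining $\alpha_1\circ\alpha_2=(-1)^{|\alpha_1|}\lml(\alpha_1,\alpha_2)$ and using that $\dl$ has degree $1$ so that $|\dl\alpha_1|=|\alpha_1|+1$, I would verify that both sides of the Leibniz identity
\begin{align*}
    \dl(\alpha_1\circ\alpha_2)=(\dl\alpha_1)\circ\alpha_2+(-1)^{|\alpha_1|}\alpha_1\circ(\dl\alpha_2)
\end{align*}
equal $(-1)^{|\alpha_1|+1}\lml(\dl\alpha_1,\alpha_2)+\lml(\alpha_1,\dl\alpha_2)$, the first after multiplying the structure equation above by $(-1)^{|\alpha_1|}$, the second by a direct sign computation. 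This shows $\circ$ is a chain map on $C\otimes C$ (with the standard Leibniz differential on the tensor product), and hence descends to cohomology to give the asserted map $\circ:\qhl\otimes\qhl\rightarrow\qhl$.

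I do not expect a genuine obstacle here: the whole argument is a small instance of the $A_\infty$-pattern in which $\mathfrak{q}_{2}$ becomes associative-up-to-$\mathfrak{q}_1$ after the $\mathfrak{q}_0$-terms are killed by the bounding pair condition. The only point that needs some care is the verification that the three $\zl$-insertions vanish in the deformed setting; this is a purely formal consequence of the unit property (Proposition \ref{unit}) together with the definition of the deformed operators in Section \ref{defop}, and it is the analogue of the cancellation used in the proof of Proposition \ref{lcomplex}.
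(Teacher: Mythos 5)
Your proposal is correct and follows essentially the same route as the paper: apply the structure equation (Proposition \ref{struc}) with $k=2$, $l=0$ to the deformed operators, kill the three $\mathfrak{q}^{\gamma,b}_{3,0}$ terms containing $\zl=c\cdot1$ via the unit property, and then match signs with the Leibniz rule for $(\alpha_{1},\alpha_{2})\mapsto(-1)^{|\alpha_{1}|}\lml(\alpha_{1},\alpha_{2})$. The sign bookkeeping in your final check agrees with the paper's computation.
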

\begin{proof}
Proposition \ref{struc} with $k=2$, $l=0$ gives
\begin{align*}
    0&=\dl(\lml(\alpha_{1},\alpha_{2}))\\&+\lml(\dl(\alpha_{1}),\alpha_{2})\\&+(-1)^{|\alpha_{1}|+1}\lml(\alpha_{1},\dl(\alpha_{2}))\\
    &+\mathfrak{q}^{\gamma,b}_{3,0}(\zl,\alpha_{1},\alpha_{2})\\&+(-1)^{|\alpha_{1}|+1}\mathfrak{q}^{\gamma,b}_{3,0}(\alpha_{1},\zl,\alpha_{2})\\&+(-1)^{|\alpha_{1}|+|\alpha_{2}|}\mathfrak{q}^{\gamma,b}_{3,0}(\alpha_{1},\alpha_{2},\zl).
\end{align*}
Since $\zl=c\cdot1$, applying Proposition \ref{unit} in this case, we get
\begin{align*}
    0&=\dl(\lml(\alpha_{1},\alpha_{2}))\\&+\lml(\dl(\alpha_{1}),\alpha_{2})\\&+(-1)^{|\alpha_{1}|+1}\lml(\alpha_{1},\dl(\alpha_{2})).
\end{align*}
Rearranging and multiplying by $(-1)^{|\alpha_{1}|}$ gives
\begin{align*}
    \dl((-1)&^{|\alpha_{1}|}\lml(\alpha_{1},\alpha_{2}))=\\&=(-1)^{|\dl(\alpha_{1})|}\lml(\dl(\alpha_{1}),\alpha_{2})+(-1)^{|\alpha_{1}|}(-1)^{|\alpha_{1}|}\lml(\alpha_{1},\dl(\alpha_{2})).
\end{align*}
\end{proof}
\begin{rmk}
In order to achieve associativity our product differs by a sign from the one found in \cite{LIFT}, where it is defined via just $\lml(\alpha_{1},\alpha_{2})$.
\end{rmk}
In order to show that this product makes $\qhl$ a graded ring, we need to show respect for the grading, the existence of a unit, and associativity.
\begin{prop}\label{degprod}
$\circ$ if of degree $0$.
\end{prop}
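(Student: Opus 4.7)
The plan is to unwind the definition of the product and reduce to the degree count already recorded in Proposition \ref{degree}. Explicitly, by Proposition \ref{lml} the product is
\begin{align*}
\alpha_{1} \circ \alpha_{2} = (-1)^{|\alpha_{1}|}\lml(\alpha_{1},\alpha_{2}),
\end{align*}
so it suffices to show that the deformed operator $\lml = \mathfrak{q}^{\gamma,b}_{2,0}$ has degree $0$, since multiplying by the scalar $(-1)^{|\alpha_{1}|}$ cannot change the degree.

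First I would recall that Proposition \ref{degree} tells us $\mathfrak{q}_{k,l}$ has degree $2-k-2l$, so that the undeformed operator $\mathfrak{q}_{2,0}$ is already of degree $0$. Next I would address the effect of the deformation by $(\gamma,b)$. Writing out
\begin{align*}
\mathfrak{q}^{\gamma,b}_{2,0}(\alpha_{1},\alpha_{2}) = \sum_{t,s_{0},s_{1},s_{2}\geq 0} \mathfrak{q}_{2+s_{0}+s_{1}+s_{2},\,t}(\alpha^{b;s_{0},s_{1},s_{2}};\eta^{\gamma;t}),
\end{align*}
each term on the right is, by Proposition \ref{degree}, an operator of degree $2-(2+s_{0}+s_{1}+s_{2})-2t$. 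On the other hand, inserting $s_{0}+s_{1}+s_{2}$ copies of $b \in (\mathcal{I}C)_{1}$ and $t$ copies of $\gamma \in (\mathcal{I}E)_{2}$ contributes an additional $(s_{0}+s_{1}+s_{2}) + 2t$ to the degree of the output (relative to the remaining two arguments $\alpha_{1},\alpha_{2}$). These contributions cancel exactly, and summing over all $(t,s_{0},s_{1},s_{2})$ gives total degree $0$, as the valuation on $R$ ensures convergence and uniform degree.

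Therefore $\lml$ has degree $0$, so $\lml(\alpha_{1},\alpha_{2})$ has degree $|\alpha_{1}|+|\alpha_{2}|$, and the sign $(-1)^{|\alpha_{1}|}$ preserves this. Hence $\circ$ has degree $0$. There is no real obstacle here; the only mild subtlety is keeping track of how the formal variables $t_{j}$ and $T^{\beta}$ contribute to degrees in $R$, but since the Novikov variables $T^{\beta}$ are assigned degree $\mu(\beta)$ and cancel the $-\mu(\beta)$ appearing in the degree of $\mathfrak{q}^{\beta}_{k,l}$ (and analogously for the $t_{j}$), the bookkeeping is immediate from the conventions of Section \ref{op}.
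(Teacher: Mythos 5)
Your proof is correct and follows the same route as the paper, which simply observes that the statement is a special case of Proposition \ref{degree}. The only difference is that you verify by hand that the deformation by $(\gamma,b)$ preserves the degree, whereas the paper delegates this to Proposition \ref{defprop} (and the remark following it), which states that the properties of the undeformed operators, including Proposition \ref{degree}, carry over to the deformed ones.
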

\begin{proof}
This is a special case of Proposition \ref{degree}.
\end{proof}
\begin{prop}
$\circ$ has a unit $1:=[1]\in HF^{0}(L)$.
\end{prop}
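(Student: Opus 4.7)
The plan is to verify the two unit axioms $1 \circ a = a$ and $a \circ 1 = a$ directly on the chain level, after first confirming that the constant form $1 \in A^{0}(L) \subset C$ is $\dl$-closed so that $[1]$ is a well-defined class in $HF^{0}(L)$. The entire argument will rest on the strict unit property of the $\mathfrak{q}$-operators (Proposition \ref{unit}), together with the energy-zero evaluation of Proposition \ref{ezero}, both applied to the deformed operators via Proposition \ref{defprop}.

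For closedness, I would expand $\dl(1) = \mathfrak{q}_{1,0}^{\gamma,b}(1)$ as its defining sum over insertions of $b$ and $\gamma$. The distinguished input $1$ lies in position $s_{0}+1$ of a $(1+s_{0}+s_{1})$-tuple, and Proposition \ref{unit} kills every summand unless the tuple $(\beta,k,l,i)$ equals $(\beta_{0},2,0,1)$ or $(\beta_{0},2,0,2)$, leaving only two potentially nonzero contributions: the insertion $(s_{0},s_{1},t) = (0,1,0)$ producing $\mathfrak{q}_{2,0}^{\beta_{0}}(1,b) = b$, and the insertion $(s_{0},s_{1},t) = (1,0,0)$ producing $\mathfrak{q}_{2,0}^{\beta_{0}}(b,1) = -b$. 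The term $(s_{0},s_{1},t) = (0,0,0)$ reduces by Proposition \ref{ezero} to $d(1) = 0$. The two surviving terms cancel.

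For the unit axioms, I would expand $\mathfrak{q}_{2,0}^{\gamma,b}(1,\alpha)$ in the same manner: the distinguished input $1$ sits in position $s_{0}+1$ of a $(2+s_{0}+s_{1}+s_{2})$-tuple of boundary inputs. Proposition \ref{unit} forces any nonzero term to satisfy $k = 2$ and $l = 0$, which immediately requires $s_{0} = s_{1} = s_{2} = t = 0$. The only survivor is $\mathfrak{q}_{2,0}^{\beta_{0}}(1,\alpha) = \alpha$, so $1 \circ \alpha = (-1)^{|1|}\alpha = \alpha$ holds on the chain level. The calculation for $\alpha \circ 1$ is symmetric: only $\mathfrak{q}_{2,0}^{\beta_{0}}(\alpha,1) = (-1)^{|\alpha|}\alpha$ contributes, and this sign precisely cancels the factor $(-1)^{|\alpha|}$ appearing in the definition of $\circ$ given in Proposition \ref{lml}.

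The entire argument is essentially bookkeeping, and I do not anticipate any real obstacle. The only point requiring care is verifying that no insertions of $b$ or $\gamma$ can conspire to satisfy the unit constraint $(k,l) = (2,0)$ other than the trivial ones already identified; this is immediate once one notes that every $b$-insertion contributes $+1$ to $k$ and every $\gamma$-insertion contributes $+1$ to $l$, so only the configurations listed above are admissible.
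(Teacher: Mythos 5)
Your proposal is correct and follows essentially the same route as the paper: both closedness of $1$ and the identities $\lml(1,\alpha)=(-1)^{|\alpha|}\lml(\alpha,1)=\alpha$ rest on Proposition \ref{unit}. The only difference is that the paper invokes the deformed version of Proposition \ref{unit} directly (via Proposition \ref{defprop}), whereas you re-derive it by expanding the $b$- and $\gamma$-insertions and checking that only the trivial configurations survive — a harmless elaboration of the same argument.
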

\begin{proof}
Proposition \ref{unit} gives $\dl(1)=0$, thus $1$ is closed and represents a class $[1]\in HF^{0}(L)$. The same proposition also gives
\begin{align*}
    \lml(1,\alpha)=(-1)^{|\alpha|}\lml(\alpha,1)=\alpha,
\end{align*}
that is
\begin{align*}
    1\circ a=a\circ1=a.
\end{align*}
\end{proof}
\begin{prop}\label{QHLasc}
$\circ$ is associative.
\end{prop}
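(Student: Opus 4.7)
The plan is to apply the structure equation (Proposition \ref{struc}) with $(k,l)=(3,0)$ to closed representatives $\alpha_1,\alpha_2,\alpha_3$ of classes $a_1,a_2,a_3\in\qhl$. This will realize the chain-level associator of $\lml$ as a $\dl$-coboundary, with $\mathfrak{q}_{3,0}^{\gamma,b}(\alpha_1,\alpha_2,\alpha_3)$ serving as the primitive; passing to cohomology then kills it. The same idea (applying $A_\infty$ relations at arity $4$) was used implicitly in Proposition \ref{lml} at arity $3$.

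First I would expand Proposition \ref{struc} for $(k,l)=(3,0)$. The sum ranges over $k_1+k_2=4$ and $1\leq i\leq k_1$, and on closed inputs most terms drop out. Terms with $k_1=3, k_2=1$ insert $\mathfrak{q}_{1,0}^{\gamma,b}(\alpha_j)=\dl\alpha_j$, which vanishes by closedness. Terms with $k_1=4, k_2=0$ insert $\mathfrak{q}_{0,0}^{\gamma,b}=c\cdot 1$ into $\mathfrak{q}_{4,0}^{\gamma,b}$ at each of four possible positions; these vanish by the unit property (Proposition \ref{unit}, which holds for the deformed operators by Proposition \ref{defprop}), since the target arity $4$ is not $2$. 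The three surviving contributions are the $(k_1,k_2,i)=(1,3,1)$ term, equal to $\dl\bigl(\mathfrak{q}_{3,0}^{\gamma,b}(\alpha_1,\alpha_2,\alpha_3)\bigr)$, together with the $(2,2,1)$ and $(2,2,2)$ terms producing the two bracketings of $\lml$. Because $l=0$, the sign $\iota(\alpha,\eta;i,J_1)$ collapses to $|\alpha^1|+i+1$, so the structure equation reads
\begin{align*}
0 &= \dl\bigl(\mathfrak{q}_{3,0}^{\gamma,b}(\alpha_1,\alpha_2,\alpha_3)\bigr)+\lml(\lml(\alpha_1,\alpha_2),\alpha_3)\\
&\quad +(-1)^{|\alpha_1|+1}\lml(\alpha_1,\lml(\alpha_2,\alpha_3)).
\end{align*}

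Finally, I would translate the relation into one for $\circ$. Using $a_i\circ a_j=(-1)^{|a_i|}\lml(\alpha_i,\alpha_j)$ together with the fact that $|\lml(\alpha_i,\alpha_j)|=|a_i|+|a_j|$ (Proposition \ref{degprod}), a direct computation yields
\begin{align*}
(a_1\circ a_2)\circ a_3 &= (-1)^{|a_2|}\lml(\lml(\alpha_1,\alpha_2),\alpha_3),\\
a_1\circ(a_2\circ a_3) &= (-1)^{|a_1|+|a_2|}\lml(\alpha_1,\lml(\alpha_2,\alpha_3)).
\end{align*}
Substituting the structure equation shows that their difference equals $-(-1)^{|a_2|}\dl\bigl(\mathfrak{q}_{3,0}^{\gamma,b}(\alpha_1,\alpha_2,\alpha_3)\bigr)$, which is $\dl$-exact and therefore vanishes in $\qhl$. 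The main obstacle I expect is sign bookkeeping: verifying that the $(-1)^{|\alpha_1|}$ inserted in Proposition \ref{lml}'s definition of $\circ$ is exactly what is required to make the $(-1)^{|\alpha_1|+1}$ appearing in $\iota$ for the $(2,2,2)$ term combine into a bare coboundary, rather than a cochain with an uncontrolled sign.
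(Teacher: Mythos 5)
Your proposal is correct and follows essentially the same route as the paper: apply Proposition \ref{struc} with $(k,l)=(3,0)$, kill the $\mathfrak{q}^{\gamma,b}_{4,0}$ terms via the unit property and the $\dl\alpha_j$ terms by passing to cohomology, and observe that the surviving relation exhibits the associator of $\circ$ as $\dl$-exact. The sign bookkeeping you carry out (the $(-1)^{|\alpha_1|}$ in the definition of $\circ$ cancelling against the $(-1)^{|\alpha^1|+i+1}$ from $\iota$) matches the paper's computation.
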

\begin{proof}
Let $a_{i}=[\alpha_{i}]\in\qhl$, $1\leq i\leq3$. Similarly to the proof of \ref{lml} above, Proposition \ref{struc} for $k=3$, $l=0$ gives
\begin{align*}
        0&=\dl(\mathfrak{q}^{\gamma,b}_{3,0}(\alpha_{1},\alpha_{2},\alpha_{3}))\\&+\lml(\lml(\alpha_{1},\alpha_{2}),\alpha_{3})\\&+(-1)^{|\alpha_{1}|+1}\lml(\alpha_{1},\lml(\alpha_{2},\alpha_{3}))\\
        &+\mathfrak{q}^{\gamma,b}_{3,0}(\dl(\alpha_{1}),\alpha_{2},\alpha_{3})\\&+(-1)^{|\alpha_{1}|+1}\mathfrak{q}^{\gamma,b}_{3,0}(\alpha_{1},\dl(\alpha_{2}),\alpha_{3})\\&+(-1)^{|\alpha_{1}|+|\alpha_{2}|}\mathfrak{q}^{\gamma,b}_{3,0}(\alpha_{1},\alpha_{2},\dl(\alpha_{3}))\\&+\mathfrak{q}^{\gamma,b}_{4,0}(\zl,\alpha_{1},\alpha_{2},\alpha_{3})\\&+(-1)^{|\alpha_{1}|+1}\mathfrak{q}^{\gamma,b}_{4,0}(\alpha_{1},\zl,\alpha_{2},\alpha_{3})\\&+(-1)^{|\alpha_{1}|+|\alpha_{2}|}\mathfrak{q}^{\gamma,b}_{4,0}(\alpha_{1},\alpha_{2},\zl,\alpha_{3})\\&+(-1)^{|\alpha_{1}|+|\alpha_{2}|+|\alpha_{3}|+1}\mathfrak{q}^{\gamma,b}_{4,0}(\alpha_{1},\alpha_{2},\alpha_{3},\zl).
\end{align*}
Since $\zl=c\cdot1$, applying Proposition \ref{unit}, in cohomology we get
\begin{align*}
    0&=(-1)^{|a_{1}|+(|a_{1}\circ a_{2}|)}(a_{1}\circ a_{2})\circ a_{3}\\&+(-1)^{(|a_{1}|+1)+|a_{2}|+|a_{1}|}a_{1}\circ(a_{2}\circ a_{3}),
\end{align*}
that is, by Proposition \ref{degprod},
\begin{align*}
    (a_{1}\circ a_{2})\circ a_{3}=a_{1}\circ(a_{2}\circ a_{3}).
\end{align*}
\end{proof}
\begin{rmk}\label{QHLint}
Recall that Proposition \ref{defprop} gives $\int_{L}\dl(\alpha)=0$. Hence there is a well defined map $\int_{L}:\qhl\rightarrow R$. The same proposition also gives $\int_{L}a_{1}\circ a_{2}=\langle a_{1},a_{2}\rangle_{L}$.
\end{rmk}
\begin{prop}\label{Lpairing}
We have
\begin{align*}
    \langle a_{1}\circ a_{2},a_{3}\rangle_{L}=\langle a_{1},a_{2}\circ a_{3}\rangle_{L}
\end{align*}
\end{prop}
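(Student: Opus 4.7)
The plan is to deduce this immediately from the two ingredients already at hand: the identification of the Poincaré pairing with integration of the $\circ$-product (Remark \ref{QHLint}) and the associativity of $\circ$ (Proposition \ref{QHLasc}).

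More precisely, I would proceed as follows. First, apply Remark \ref{QHLint} to the pair $(a_1 \circ a_2, a_3)$ to rewrite
\begin{align*}
\langle a_1 \circ a_2, a_3 \rangle_L = \int_L (a_1 \circ a_2) \circ a_3.
\end{align*}
Next, invoke Proposition \ref{QHLasc} to replace $(a_1 \circ a_2) \circ a_3$ with $a_1 \circ (a_2 \circ a_3)$ inside the integral. Finally, apply Remark \ref{QHLint} once more, now to the pair $(a_1, a_2 \circ a_3)$, to obtain
\begin{align*}
\int_L a_1 \circ (a_2 \circ a_3) = \langle a_1, a_2 \circ a_3 \rangle_L,
\end{align*}
which yields the desired equality.

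There is essentially no obstacle: all the signs have already been absorbed into the definition of $\circ$ (with its $(-1)^{|\alpha_1|}$ twist, as introduced in Proposition \ref{lml}) precisely so that the integral formula in Remark \ref{QHLint} holds without further signs, and the associativity of $\circ$ was established in Proposition \ref{QHLasc}. The only minor check worth noting explicitly is that Remark \ref{QHLint} applies on cohomology classes (which follows from $\int_L \circ \, \dl = 0$, also recorded in that remark), so the rewriting is legitimate at the level of $\qhl$ rather than only at the chain level. The argument is thus a two-line formal consequence of prior results.
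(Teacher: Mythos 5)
Your proof is correct and is exactly the paper's argument: the paper likewise deduces the identity immediately from Remark \ref{QHLint} and Proposition \ref{QHLasc}. No issues.
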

\begin{proof}
This follows immediately from Remark \ref{QHLint} and Proposition \ref{QHLasc}.
\end{proof}
\begin{rmk}
Assume that there exists an anti-symplectic involution $\phi$ as in \cite{BP}*{Section 4.1}, that $b=\gamma=0$, and that the relative spin structure on $L$ is in fact induced by a spin structure. Assume also that the minimal Maslov number of $L$ is $\mu_{0}\in4\mathbb{Z}$ and take $R=\Lambda$, so that $\phi^{*}=id$. Then \cite{BP}*{Proposition 4.5} yields
\begin{align*}
    \mathfrak{q}_{2,0}(\alpha_{1},\alpha_{2})=(-1)^{|\alpha_{1}||\alpha_{2}|+|\alpha_{1}|+|\alpha_{2}|}\mathfrak{q}_{2,0}(\alpha_{2},\alpha_{1}),
\end{align*}
so that $\qhl$ is commutative in the graded sense, that is
\begin{align*}
    a_{1}\circ a_{2}=(-1)^{|a_{1}||a_{2}|}a_{2}\circ a_{1}.
\end{align*}
\end{rmk}

\section{Main theorems}

\subsection{Algebra structure}

This section is devoted to the proof of Theorem \ref{algebra}.
\begin{prop}\label{algmap}
The map
\begin{align*}
    (-1)^{n}\xml:E\otimes C\rightarrow C
\end{align*}
is a chain map, and thus descends to a map 
\begin{align*}
    \circledast:\qhx\otimes\qhl\rightarrow\qhl.
\end{align*}
\end{prop}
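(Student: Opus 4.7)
The plan is to apply the deformed geodesic structure equation (Proposition \ref{gstruc} via Proposition \ref{defprop}) in the case $k=l=m=1$, and collapse it using the unit property together with the bounding-pair hypothesis. With a single boundary input $\alpha\in C$ and a single interior input $\eta\in E$, the structure equation produces the $-\xml(\alpha;d\eta)$ term together with two families of boundary-bubbling contributions, indexed by whether the interior point $w_1$ stays on the disk carrying the geodesic ($1\in J_1$) or bubbles off with the secondary disk ($1\in J_2$).

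A case-by-case inspection of the index constraints for $(k_1,k_2,i,J_1,J_2)$ shows that most splittings vanish. In the $1\in J_1$ family, the two splittings with $(k_1,k_2)=(2,0)$ insert $\mathfrak{q}^{\gamma,b}_{0,0}=c\cdot 1$ (available via the bounding-pair assumption, cf.\ Proposition \ref{BPexist}) at a boundary slot $i$; a short check against $m'(m,i,k_2)$ shows $i\neq m'$ in both subcases, so Proposition \ref{gunit} forces these terms to vanish. Only the splitting $(k_1,k_2,i)=(1,1,1)$ survives, contributing a term of the shape $\xml(\dl\alpha;\eta)$. In the $1\in J_2$ family, the constraint $m-k_2+1\le i\le m$ combined with $l_2\ge 1$ again leaves only $(k_1,k_2,i)=(1,1,1)$, contributing $\dl\xml(\alpha;\eta)$. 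Note that Proposition \ref{gstruc} has no interior-bubbling term since $m\neq 0$, so no further contributions arise.

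The final step is sign bookkeeping. Evaluating the exponents in Section \ref{gop} at the surviving splittings gives $\iota_1(\alpha,\eta;1,\{1\})\equiv|\eta|+1$ and $\iota_2(\alpha,\eta;1,\emptyset)\equiv 0\pmod 2$, so the structure equation collapses to
\begin{align*}
0=-\xml(\alpha;d\eta)+(-1)^{|\eta|+1}\xml(\dl\alpha;\eta)+\dl\xml(\alpha;\eta).
\end{align*}
Rearranging and multiplying by $(-1)^n$ yields
\begin{align*}
\dl\bigl((-1)^n\xml(\alpha;\eta)\bigr)=(-1)^n\xml(\alpha;d\eta)+(-1)^{|\eta|}(-1)^n\xml(\dl\alpha;\eta),
\end{align*}
which is precisely the chain-map condition for the Leibniz differential $d\otimes\id+(-1)^{|\cdot|}\id\otimes\dl$ on $E\otimes C$, with $(-1)^n\xml$ viewed as the map $\eta\otimes\alpha\mapsto(-1)^n\xml(\alpha;\eta)$. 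It therefore descends to a well-defined map $\circledast:\qhx\otimes\qhl\to\qhl$ on cohomology.

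The main obstacle is entirely bookkeeping: enumerating the surviving index splittings, verifying that the unit property cancels the $(k_1,k_2)=(2,0)$ contributions, and carefully extracting $\iota_1$ and $\iota_2$ in the two non-vanishing cases. All the conceptual input comes from the structure equation, Proposition \ref{gunit}, and the bounding-pair property of $(\gamma,b)$; the overall $(-1)^n$ prefactor plays no role in the chain-map identity itself and is retained to match sign conventions used elsewhere in the paper.
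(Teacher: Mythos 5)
Your proposal is correct and follows essentially the same route as the paper: apply the deformed structure equation of Proposition \ref{gstruc} with $k=l=m=1$, observe that the only unit-insertion terms are $\mathfrak{q}^{\gamma,b}_{2,1;2}(\zl,\alpha;\eta)$ and $\mathfrak{q}^{\gamma,b}_{2,1;1}(\alpha,\zl;\eta)$, which vanish by Proposition \ref{gunit} since the slot of $\zl=c\cdot 1$ differs from the geodesic index, and read off the chain-map identity $\dl(\xml(\alpha;\eta))=\xml(\alpha;d\eta)+(-1)^{|\eta|}\xml(\dl(\alpha);\eta)$. Your sign bookkeeping ($\iota_1\equiv|\eta|+1$, $\iota_2\equiv0$) and the observation that the overall $(-1)^{n}$ is immaterial to the chain-map property both check out against the paper's computation.
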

\begin{proof}
Applying Proposition \ref{gstruc} with $k=m=l=1$, we get
\begin{align*}
    0
    &=\xml(\alpha;d\eta)\\
    &+(-1)^{|\eta|+|\alpha|+1}\mathfrak{q}^{\gamma,b}_{2,1;1}(\alpha,\zl;\eta)\\
    &+(-1)^{|\eta|}\mathfrak{q}^{\gamma,b}_{2,1;2}(\zl,\alpha;\eta)\\
    &-\dl(\xml(\alpha;\eta))\\
    &+(-1)^{|\eta|}\xml(\dl(\alpha);\eta).
\end{align*}
Since $\zl=c\cdot1$, Proposition \ref{gunit} then gives
\begin{align*}
    \dl(\xml(\alpha;\eta))=\xml(\alpha;d\eta)+(-1)^{|\eta|}\xml(\dl(\alpha);\eta).
\end{align*}
\end{proof}
\begin{prop}
$\circledast$ is of degree $0$.
\end{prop}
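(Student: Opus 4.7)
The plan is to reduce the statement directly to Proposition \ref{gdegree}, together with a short bookkeeping check that the bulk-and-bounding-cochain deformation preserves degrees. Since $\circledast$ is induced by $(-1)^{n}\xml$, it suffices to show that $\xml:E\otimes C\rightarrow C$ is of degree $0$, as the sign $(-1)^{n}$ does not affect the grading.

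First, I would recall from Proposition \ref{gdegree} that the undeformed operator $\mathfrak{q}_{k,l;m}$ has degree $3-k-2l$, so that in the case $k=l=1$ one gets degree $0$ on the nose. Next, I would verify that the deformation procedure of Section \ref{defop} does not shift the degree: by assumption $b\in(\mathcal{I}C)_{1}$ has degree $1$ and $\gamma\in(\mathcal{I}E)_{2}$ has degree $2$, so inserting $s=s_{0}+\cdots+s_{k}$ copies of $b$ and $t$ copies of $\gamma$ raises the total input degree by $s+2t$, while the contribution of the corresponding summand $\mathfrak{q}_{k+s,l+t;m+s_{0}+\cdots+s_{m-1}}$ is of degree $3-(k+s)-2(l+t)$. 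Hence the net shift is $-s-2t+s+2t=0$, and $\mathfrak{q}^{\gamma,b}_{k,l;m}$ has the same degree $3-k-2l$ as $\mathfrak{q}_{k,l;m}$.

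Specializing to $k=l=m=1$ yields that $\xml$ is of degree $0$ as a map $E\otimes C\rightarrow C$, where $E\otimes C$ is graded so that $|\eta\otimes\alpha|=|\eta|+|\alpha|$. There is no real obstacle here; the only point that requires a little care is keeping track of the fact that the degrees of $b$ and $\gamma$ were chosen precisely so that the deformation is degree-preserving, which is already built into the definitions in Section \ref{defop}. After passing to cohomology, the induced map $\circledast:\qhx\otimes\qhl\rightarrow\qhl$ inherits this property, completing the proof.
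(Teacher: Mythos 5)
Your proposal is correct and follows the paper's own route: the paper likewise deduces the claim immediately from Proposition \ref{gdegree} (with the degree-preservation of the $(\gamma,b)$-deformation already covered by Proposition \ref{defprop}). Your explicit check that inserting $b$'s and $\gamma$'s shifts input and output degrees equally is a fine elaboration of what the paper leaves implicit.
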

\begin{proof}
    This follows immediately from Proposition \ref{gdegree}.
\end{proof}
\begin{prop}\label{mod}
The map $\circledast$ makes $\qhl$ a module over $\qhx$.
\end{prop}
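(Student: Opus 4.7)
The plan is to verify the two module axioms for $\circledast$.

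For the unit axiom $1 \circledast a = a$, represent $a$ by a $\dl$-closed $\alpha \in C$. By definition $1 \circledast a = [(-1)^n \xml(\alpha; 1)]$, and by Proposition \ref{defprop} applied to Proposition \ref{gfunclass}, the only nonvanishing contribution to $\xml(\alpha; 1)$ arises from $(\beta, k, l, m) = (\beta_0, 1, 1, 1)$ with no $\gamma$- or $b$-insertions, giving $(-1)^n \alpha$. Hence $1 \circledast a = [(-1)^{2n}\alpha] = a$.

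For associativity $(y_1 * y_2) \circledast a = y_1 \circledast (y_2 \circledast a)$, I will apply the deformed structure equation for $\Mgg$ (Proposition \ref{ggstruc}) with $k = 1$, $l = 2$, $m = 1$, to $(\alpha; \eta_1, \eta_2)$, where $\alpha$ is a $\dl$-closed representative of $a$ and $\eta_i$ is a $d$-closed representative of $y_i$. The expected surviving contributions come from the third sum at $(k_1, k_2, i) = (1, 1, 1)$ with $J_1 = \{2\}$, $J_2 = \{1\}$, producing $\pm \xml\bigl(\xml(\alpha; \eta_1); \eta_2\bigr)$, and from the fourth sum with $J_1 = \emptyset$, producing $\pm \xml\bigl(\alpha; \xmx(\eta_1, \eta_2)\bigr)$. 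All remaining terms vanish for one of the following reasons: $d$-closedness of $\eta_i$, $\dl$-closedness of $\alpha$, the deformed Proposition \ref{ggunit} applied to boundary insertions of $\zl = c\cdot 1$ at positions $i \neq m'$, exactness (the second sum produces a single $\dl$-exact term $\pm \dl\bigl(\mathfrak{q}^{\gamma,b}_{1,2;0,1}(\alpha; \eta_1, \eta_2)\bigr)$), or the exclusion of constituent operators by the constraint $m' \leq k'$.

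Evaluating the Koszul signs $\iota_5$ and $\iota_6$ at this configuration yields, in cohomology, $[\xml(\xml(\alpha; \eta_1); \eta_2)] = (-1)^{n + |\eta_1||\eta_2|}\,[\xml(\alpha; \xmx(\eta_1, \eta_2))]$. Using linearity of $\xml$ in the boundary input to unwind the $(-1)^n$ normalizations in $\circledast$, the left-hand side equals $y_2 \circledast (y_1 \circledast a)$ while the right-hand side equals $(-1)^{|\eta_1||\eta_2|}\,(y_1 * y_2) \circledast a$. Graded commutativity of $*$ converts this sign into a swap $(y_1 * y_2) \mapsto (y_2 * y_1)$; relabeling $y_1 \leftrightarrow y_2$ then gives $y_1 \circledast (y_2 \circledast a) = (y_1 * y_2) \circledast a$.

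The main obstacle will be the sign bookkeeping, in particular the interplay between $\iota_5$, $\iota_6$, the two factors of $(-1)^n$ from the definition of $\circledast$, and the $(-1)^{|y_1||y_2|}$ from graded commutativity of $*$. A secondary point is checking termwise that each nonsurviving contribution collapses as claimed, which relies on the deformed Proposition \ref{ggunit} and the compatibility of all relevant structure equations with the $(\gamma, b)$-deformation supplied by Proposition \ref{defprop}.
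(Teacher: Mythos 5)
Your proposal is correct and follows essentially the same route as the paper: the unit axiom via Proposition \ref{gfunclass} (in its deformed form), and associativity by applying the deformed structure equation for $\Mgg$ with $k=m=1$, $l=2$, where exactly the two terms you identify (the $\iota_{5}$-term with $J_{1}=\{2\}$, $J_{2}=\{1\}$ and the $\iota_{6}$-term with $J_{1}=\emptyset$) survive after closedness, Proposition \ref{ggunit}, and exactness kill the rest, yielding $(y_{2}*y_{1})\circledast a=y_{2}\circledast(y_{1}\circledast a)$ up to the graded-commutativity sign, as in the paper.
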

\begin{proof}
By Proposition \ref{gfunclass} we have $1\circledast a=a$. Applying Proposition \ref{ggstruc} with $k=m=1$, $l=2$, we get
\begin{align*}
    0&=-\mathfrak{q}^{\gamma,b}_{1,2;0,1}(\alpha;d\eta_{1},\eta_{2})\\
    &-(-1)^{|\eta_{1}|}\mathfrak{q}^{\gamma,b}_{1,2;0,1}(\alpha;\eta_{1},d\eta_{2})\\
    &+(-1)^{|\eta_{1}|+|\eta_{2}|+1+n}\mathfrak{q}^{\gamma,b}_{1,1;1}(\alpha;\mathfrak{q}^{\gamma}_{\emptyset,2}(\eta_{1},\eta_{2}))\\
    &+(-1)^{|\eta_{1}|+|\eta_{2}|+|\alpha|}\mathfrak{q}^{\gamma,b}_{2,2;0,2}(\mathfrak{q}^{\gamma,b}_{0,0},\alpha;\eta_{1},\eta_{2})\\
    &+(-1)^{|\eta_{1}|+|\eta_{2}|}\mathfrak{q}^{\gamma,b}_{1,2;0,1}(\mathfrak{q}^{\gamma,b}_{1,0}(\alpha);\eta_{1},\eta_{2})\\
    &+(-1)^{|\eta_{1}|+|\eta_{2}|+|\alpha|+1}\mathfrak{q}^{\gamma,b}_{2,2;0,1}(\alpha,\mathfrak{q}^{\gamma,b}_{0,0};\eta_{1},\eta_{2})\\
    &+\mathfrak{q}^{\gamma,b}_{1,0}(\mathfrak{q}^{\gamma,b}_{1,2;0,1}(\alpha;\eta_{1},\eta_{2}))\\
    &+(-1)^{|\eta_{2}|+|\eta_{1}|+|\eta_{2}||\eta_{1}|}\mathfrak{q}^{\gamma,b}_{1,1;1}(\mathfrak{q}^{\gamma,b}_{1,1;1}(\alpha;\eta_{1});\eta_{2}).
\end{align*}
Since $\zl=c\cdot1$, Proposition \ref{ggunit} then gives
\begin{align*}
    0&=(-1)^{|y_{1}|+|y_{2}|+1+n+n}(y_{1}*y_{2})\circledast a\\&+(-1)^{|y_{2}|+|y_{1}|+|y_{2}||y_{1}|+n+n}y_{2}\circledast(y_{1}\circledast a),
\end{align*}
that is,
\begin{align*}
    (y_{2}*y_{1})\circledast a=y_{2}\circledast(y_{1}\circledast a).
\end{align*}
\end{proof}
\begin{prop}\label{alg}
The map $\circledast$ makes $\qhl$ an algebra over $\qhx$.
\end{prop}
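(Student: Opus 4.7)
Having established via Proposition \ref{mod} that $\circledast$ makes $\qhl$ a unital graded module over $\qhx$, it remains to establish compatibility with the product $\circ$, namely the two bimodule identities
\begin{align*}
    y \circledast (a_1 \circ a_2) &= (y \circledast a_1) \circ a_2,\\
    y \circledast (a_1 \circ a_2) &= (-1)^{|y||a_1|}\, a_1 \circ (y \circledast a_2),
\end{align*}
for $y \in \qhx$ and $a_1, a_2 \in \qhl$. Together with Proposition \ref{mod} these promote the module structure to a unital graded algebra structure over $\qhx$ in the usual sense of algebras over a graded-commutative ring.

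For the first identity, I would apply Proposition \ref{gstruc} to the deformed operator $\mathfrak{q}_{2,1;1}^{\gamma,b}(\alpha_1, \alpha_2; \eta_1)$, which corresponds to disks with three boundary marked points $z_0, z_1, z_2$, one interior marked point $w_1$, and a geodesic through $z_0, w_1, z_1$. Discarding the bubbling contributions carrying $\mathfrak{q}_{0,0}^{\gamma,b} = c \cdot 1$ at a boundary position $i \neq m$ via Proposition \ref{gunit}, together with those involving $\mathfrak{q}_{1,0}^{\gamma,b}$ of a $\dl$-closed input or $d$ of a $d$-closed $\eta_1$, the only surviving contributions in cohomology are $\mathfrak{q}_{1,1;1}^{\gamma,b}(\mathfrak{q}_{2,0}^{\gamma,b}(\alpha_1,\alpha_2);\eta_1)$, arising when both inputs bubble together onto a disk attached at the geodesic endpoint $z_1$, and $\mathfrak{q}_{2,0}^{\gamma,b}(\mathfrak{q}_{1,1;1}^{\gamma,b}(\alpha_1;\eta_1), \alpha_2)$, arising when $\alpha_2$ bubbles off while the geodesic and $\alpha_1$ stay on the main component. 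Folding in the $(-1)^n$ and $(-1)^{|\alpha_1|}$ prefactors from the definitions of $\circledast$ and $\circ$ yields the first identity.

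The second identity follows analogously by applying Proposition \ref{gstruc} to $\mathfrak{q}_{2,1;2}^{\gamma,b}(\alpha_1,\alpha_2;\eta_1)$, where the geodesic now terminates at $z_2$ rather than $z_1$. The analogous analysis produces $\mathfrak{q}_{1,1;1}^{\gamma,b}(\mathfrak{q}_{2,0}^{\gamma,b}(\alpha_1,\alpha_2);\eta_1)$ (both inputs bubble together, the geodesic endpoint landing on the main component at the nodal point) together with $\mathfrak{q}_{2,0}^{\gamma,b}(\alpha_1, \mathfrak{q}_{1,1;1}^{\gamma,b}(\alpha_2;\eta_1))$ (the geodesic and $\alpha_2$ bubble off together), and the Koszul reordering of $\eta_1$ past $\alpha_1$ generates precisely the expected $(-1)^{|y||a_1|}$ factor.

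The main obstacle is the sign bookkeeping: the exponents $\iota_1, \iota_2$ from Proposition \ref{gstruc} must combine correctly with the $(-1)^{\varepsilon(\alpha)+|\alpha|+k}$ in the definition of $\mathfrak{q}_{k,l;m}^{\gamma,b}$, the $(-1)^{|\alpha_1|}$ from $\circ$, and the $(-1)^n$ from $\circledast$, to produce exactly the asserted Koszul signs and no extraneous factors. I would verify these carefully by first reducing to the case $|\alpha_1| = |\alpha_2| = 0$ (so that $\varepsilon$ and the Koszul contributions collapse) and then restoring the contributions of $|\eta_1|$ and $|\alpha_1|$ in turn.
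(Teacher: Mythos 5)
Your proposal matches the paper's proof essentially verbatim: both identities are obtained by applying Proposition \ref{gstruc} to $\mathfrak{q}^{\gamma,b}_{2,1;1}(\alpha_{1},\alpha_{2};\eta)$ and $\mathfrak{q}^{\gamma,b}_{2,1;2}(\alpha_{1},\alpha_{2};\eta)$ respectively, killing the unit insertions of $\mathfrak{q}^{\gamma,b}_{0,0}=c\cdot1$ via Proposition \ref{gunit} and the exact terms in cohomology, leaving exactly the two boundary contributions you name, with the Koszul sign $(-1)^{|y||a_{1}|}$ emerging in the second case. The only quibble is a verbal one: in your first identity the term $\mathfrak{q}^{\gamma,b}_{2,0}(\mathfrak{q}^{\gamma,b}_{1,1;1}(\alpha_{1};\eta),\alpha_{2})$ comes from the configuration where $\alpha_{1}$, $w_{1}$, and the geodesic bubble off while $z_{0}$ and $\alpha_{2}$ remain on the root component (the reverse of your description), but the operator identity you wrote is the correct one.
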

\begin{proof}
Applying Proposition \ref{gstruc} with $k=2$, $m=l=1$, we get
\begin{align*}
    0
    &=\mathfrak{q}^{\gamma,b}_{2,1;1}(\alpha_{1},\alpha_{2};d\eta)\\
    &+(-1)^{|\eta|+|\alpha_{1}|+|\alpha_{2}|}\mathfrak{q}^{\gamma,b}_{3,1;1}(\alpha_{1},\alpha_{2},\zl;\eta)\\
    &+(-1)^{|\eta|+|\alpha_{1}|+1}\mathfrak{q}^{\gamma,b}_{3,1;1}(\alpha_{1},\zl,\alpha_{2};\eta)\\
    &+(-1)^{|\eta|+|\alpha_{1}|+1}\mathfrak{q}^{\gamma,b}_{2,1;1}(\alpha_{1},\dl(\alpha_{2});\eta)\\
    &+(-1)^{|\eta|}\mathfrak{q}^{\gamma,b}_{3,1;2}(\zl,\alpha_{1},\alpha_{2};\eta)\\
    &-\lml(\xml(\alpha_{1};\eta),\alpha_{2})\\
    &-\dl(\mathfrak{q}^{\gamma,b}_{2,1;1}(\alpha_{1},\alpha_{2};\eta))\\
    &+(-1)^{|\eta|}\mathfrak{q}^{\gamma,b}_{2,1;1}(\dl(\alpha_{1}),\alpha_{2};\eta)\\
    &+(-1)^{|\eta|}\xml(\lml(\alpha_{1};\alpha_{2});\eta).
\end{align*}
Since $\zl=c\cdot1$, Proposition \ref{gunit} then gives
\begin{align*}
    0&=(-1)^{|y|+|a_{1}|+n+1}(y\circledast a_{1})\circ a_{2}\\&+(-1)^{|y|+|a_{1}|+n}y\circledast(a_{1}\circ a_{2}),
\end{align*}
that is,
\begin{align*}
   y\circledast(a_{1}\circ a_{2})=(y\circledast a_{1})\circ a_{2}.
\end{align*}
Similarly, applying Proposition \ref{gstruc} with with $k=m=2$, $l=1$, we get
\begin{align*}
    0
    &=\mathfrak{q}^{\gamma,b}_{2,1;2}(\alpha_{1},\alpha_{2};d\eta)\\
    &+(-1)^{|\eta|+|\alpha_{1}|+|\alpha_{2}|}\mathfrak{q}^{\gamma,b}_{3,1;2}(\alpha_{1},\alpha_{2},\zl;\eta)\\
    &+(-1)^{|\eta|+|\alpha_{1}|+1}\mathfrak{q}^{\gamma,b}_{3,1;3}(\alpha_{1},\zl,\alpha_{2};\eta)\\
    &+(-1)^{|\eta|}\mathfrak{q}^{\gamma,b}_{3,1;3}(\zl,\alpha_{1},\alpha_{2};\eta)\\
    &+(-1)^{|\eta|}\mathfrak{q}^{\gamma,b}_{2,1;2}(\dl(\alpha_{1}),\alpha_{2};\eta)\\
    &+(-1)^{|\eta||\alpha_{1}|+|\eta|+1}\lml(\alpha_{1},\xml(\alpha_{2};\eta))\\
    &-\dl(\mathfrak{q}^{\gamma,b}_{2,1;2}(\alpha_{1},\alpha_{2};\eta))\\
    &+(-1)^{|\eta|+|\alpha_{1}|+1}\mathfrak{q}^{\gamma,b}_{2,1;2}(\alpha_{1},\dl(\alpha_{2});\eta)\\
    &+(-1)^{|\eta|}\xml(\lml(\alpha_{1},\alpha_{2});\eta).
\end{align*}
As before, this gives
\begin{align*}
    0&=(-1)^{|y||a_{1}|+|y|+|a_{1}|+n+1}a_{1}\circ(y\circledast a_{2})\\&+(-1)^{|y|+|a_{1}|+n}y\circledast(a_{1}\circ a_{2}),
\end{align*}
that is,
\begin{align*}
    y\circledast(a_{1}\circ a_{2})=(-1)^{|y||a_{1}|}a_{1}\circ(y\circledast a_{2}).
\end{align*}
\end{proof}

\subsection{Homomorphisms}

This section is devoted to the proof of Theorem \ref{maps}.

\subsubsection{The closed-open map}

\begin{prop}\label{cochain}
The map
\begin{align*}
    -\xtl:E\rightarrow C
\end{align*}
is a chain map, and thus descends to a map 
\begin{align*}
    \co:\qhx\rightarrow\qhl.
\end{align*}
\end{prop}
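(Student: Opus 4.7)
The plan is to derive the chain-map identity $\dl\circ\xtl = \xtl\circ d$ directly from the $A_\infty$ structure equation (Proposition \ref{struc}) applied in its $(\gamma,b)$-deformed form granted by Proposition \ref{defprop}, with $k=0$, $l=1$, and empty boundary input. The outer sum is indexed by $k_1+k_2=k+1=1$ with $1\leq i\leq k_1$, which forces $k_1=1$, $k_2=0$, $i=1$, so after splitting $J_1\sqcup J_2=\{1\}$ only two terms survive, distinguished by where the input $\eta$ is placed.

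For the splitting $J_1=\{1\}$, $J_2=\emptyset$, the inner operator is $\mathfrak{q}^{\gamma,b}_{0,0}=c\cdot 1$ by the bounding-pair hypothesis (Definition \ref{bpdef}), so the whole contribution reads $\mathfrak{q}^{\gamma,b}_{1,1}(c\cdot 1;\eta)$. Expanding this in the $t,s_0,s_1$ of the bulk/bounding-cochain expansion, every undeformed summand $\mathfrak{q}^{\beta}_{1+s_0+s_1,\,1+t}$ receives a $c\cdot 1$ boundary input; but the exceptional cases of Proposition \ref{unit} require $l=0$, whereas here $1+t\geq 1$, so every summand vanishes and the whole term is zero. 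For the splitting $J_1=\emptyset$, $J_2=\{1\}$, the inner operator is $\xtl(\eta)$ and the outer is $\dl(\xtl(\eta))$, with sign exponent
\begin{align*}
\iota(\emptyset,\eta;1,\emptyset) = |\eta^{1}|+(|\eta^{2}|+1)(|\alpha^{1}|+i+1)+\sigma^{\eta}_{J_1,J_2} = 0+(|\eta|+1)\cdot 2+0 \equiv 0 \pmod 2,
\end{align*}
so the sign is trivial.

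Collecting the two surviving contributions, the structure equation collapses to $0=-\xtl(d\eta)+\dl(\xtl(\eta))$, i.e.\ $\dl\circ\xtl=\xtl\circ d$. Multiplying by $-1$ preserves this identity, so $-\xtl$ is a chain map; its degree is $0$ by Proposition \ref{degree} (since $\mathfrak{q}_{0,1}$ has degree $2-0-2=0$), and it therefore descends to a well-defined graded map $\co:\qhx\rightarrow\qhl$ on cohomology. I do not expect any substantive obstacle here: the only point requiring genuine care is the sign bookkeeping in $\iota$ together with verifying the unit property uniformly across the infinite bulk/bounding-cochain expansion of $\mathfrak{q}^{\gamma,b}_{1,1}(c\cdot 1;\eta)$, both of which are routine given the machinery of Section \ref{op} and Section \ref{defop}. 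The sign convention $-\xtl$ (rather than $\xtl$) will matter later for normalising $\co(1)=1$ via Proposition \ref{funclass}, but plays no role in the chain-map identity itself.
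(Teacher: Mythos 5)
Your proposal is correct and follows essentially the same route as the paper: apply the deformed structure equation (Proposition \ref{struc}) with $k=0$, $l=1$, observe that the term containing $\mathfrak{q}^{\gamma,b}_{0,0}=c\cdot1$ is killed by the unit property (Proposition \ref{unit}), and read off $\dl\circ\xtl=\xtl\circ d$. Your extra unpacking of the sign $\iota(\emptyset,\eta;1,\emptyset)\equiv 0$ and of why the unit term vanishes term-by-term in the bulk expansion is consistent with the paper's (terser) argument.
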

\begin{proof}
By Proposition \ref{struc} with $k=0$, $l=1$, we have
\begin{align*}
    0&=-\xtl(d\eta)\\&+(-1)^{|\eta|}\mathfrak{q}^{\gamma,b}_{1,1}(\zl;\eta)\\&+\dl(\xtl(\eta)).
\end{align*}
Since $\zl=c\cdot1$, Proposition \ref{unit} then yields
\begin{align*}
    \xtl(d\eta)=\dl(\xtl(\eta)).
\end{align*}
\end{proof}
\begin{prop}\label{COdeg}
$\xtl$ is of degree $0$.
\end{prop}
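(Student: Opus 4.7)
The plan is to obtain this by a direct degree count, using Proposition \ref{degree} and the definition of the deformed operator $\mathfrak{q}^{\gamma,b}_{0,1}$ from Section \ref{defop}. By definition,
\begin{align*}
\xtl(\eta)=\mathfrak{q}^{\gamma,b}_{0,1}(\eta)=\sum_{t,s\geq0}\mathfrak{q}_{s,1+t}(b^{\otimes s};\eta\otimes\gamma^{\otimes t}/t!),
\end{align*}
so it suffices to show that each summand on the right-hand side has degree equal to $|\eta|$.

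First, I would invoke Proposition \ref{degree} to see that $\mathfrak{q}_{s,1+t}$ is a map of degree $2-s-2(1+t)=-s-2t$. Next, I would use the hypothesis that $(\gamma,b)$ is a bounding pair, so in particular $b\in(\mathcal{I}C)_1$ and $\gamma\in(\mathcal{I}E)_2$; thus the input $b^{\otimes s}\otimes\eta\otimes\gamma^{\otimes t}$ is homogeneous of degree $s+|\eta|+2t$ (the scalar factor $1/t!\in R_0$ contributes nothing). Adding the two contributions gives
\begin{align*}
|\mathfrak{q}_{s,1+t}(b^{\otimes s};\eta\otimes\gamma^{\otimes t}/t!)|=(-s-2t)+(s+|\eta|+2t)=|\eta|,
\end{align*}
which is exactly what is needed. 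Since each term has degree $|\eta|$, and an infinite sum of elements of the same degree remains in that degree, we conclude $|\xtl(\eta)|=|\eta|$, i.e.\ $\xtl$ is of degree $0$.

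There is no real obstacle here: the key cancellation $|b|\cdot s - s = 0$ and $|\gamma|\cdot t - 2t = 0$ is precisely the reason the degrees of $b$ and $\gamma$ were chosen to be $1$ and $2$. The only thing to note is that this uses $\gamma$ and $b$ as homogeneous elements of the prescribed degrees; for a general $\gamma\in(\mathcal{I}E)_2$ and $b\in(\mathcal{I}C)_1$ (in the sense of being degree-$2$ and degree-$1$ elements of the graded modules $E$, $C$), the same computation applies term-by-term after decomposing into homogeneous pieces, so no further argument is required.
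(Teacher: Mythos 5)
Your proof is correct and takes essentially the same route as the paper, which simply cites Proposition \ref{degree} (extended to the deformed operators via Proposition \ref{defprop}); you have just spelled out the underlying cancellation $|b|\cdot s-s=0$, $|\gamma|\cdot t-2t=0$ that makes the deformation degree-preserving. No gaps.
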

\begin{proof}
    This follows immediately from Proposition \ref{degree}.
\end{proof}
\begin{prop}\label{COmod}
The map $\co$ is a module homomorphism.
\end{prop}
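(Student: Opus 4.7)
The plan is to apply Proposition \ref{gzstruc}, the structure equation for the moduli space $\Mgz$, with parameters $k = 0$ and $l = 2$ and closed interior inputs $\eta_1, \eta_2$ representing classes $y_1, y_2 \in \qhx$. The relevant moduli space $\mathcal{M}_{1, 2; 0}(\beta)$ parameterizes disks with one boundary marked point $z_0$ (the output) and two interior marked points $w_1, w_2$, subject to the geodesic constraint on $z_0, w_2, w_1$.

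With $k_1 + k_2 = k + 1 = 1$ and the condition $1 \leq i \leq k_1$, each of the first three sums of Proposition \ref{gzstruc} forces $(k_1, k_2, i) = (1, 0, 1)$. I expect the four boundary contributions to simplify as follows. The first sum (with $1, 2 \in J_1$) collapses to $\mathfrak{q}^{\gamma,b}_{1, 2; 0}(\zl; \eta_1, \eta_2)$, which vanishes by Proposition \ref{gunit} since $\zl = c \cdot 1$ is at position $i = 1 \neq 0 = m$. The second sum (with $J_1 = \{2\}$, $J_2 = \{1\}$) contributes $\xml(\xtl(\eta_1); \eta_2)$; geometrically, $w_1$ bubbles off with $z_0$ onto a ghost disk, while the original geodesic constraint degenerates to the $\xml$-type constraint on the remaining disk. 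The third sum (with $1, 2 \in J_2$) is the exact term $\dl(\mathfrak{q}^{\gamma,b}_{0, 2; 0}(\eta_1, \eta_2))$. The fourth sum, corresponding to sphere bubbling of $w_1, w_2$, produces $\xtl(\xmx(\eta_1, \eta_2))$.

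Since $d\eta_1 = d\eta_2 = 0$, the leading term $-\mathfrak{q}^{\gamma,b}_{0, 2; 0}(d\eta)$ vanishes as well. Computing the Koszul indices (with $|\alpha^1| = 0$, $i=1$, and the relevant $\sigma$-sums being zero) one finds $\iota_0 = |\eta_1| + |\eta_2| + 1$ and $\iota_3 = |\eta_1| + |\eta_2| + n$. Passing to cohomology, where the exact third-sum term drops out, the surviving identity is
\begin{align*}
    [\xml(\xtl(\eta_1); \eta_2)] = (-1)^n [\xtl(\xmx(\eta_1, \eta_2))].
\end{align*}
Applying $\co(y) = -[\xtl(\eta)]$, $y_1 * y_2 = [\xmx(\eta_1, \eta_2)]$, and the definition $y \circledast a = (-1)^n [\xml(\alpha; \eta)]$ from Proposition \ref{algmap}, the factors of $(-1)^n$ cancel and this becomes $y_2 \circledast \co(y_1) = \co(y_1 * y_2)$ in $\qhl$, which is the desired module-homomorphism compatibility (graded commutativity of $*$ converts it into the standard form $\co(y_1 * y_2) = y_1 \circledast \co(y_2)$ up to a Koszul sign).

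The main obstacle is sign bookkeeping: carefully verifying the Koszul indices $\iota_0, \iota_3$ from Proposition \ref{gzstruc} and tracking the cancellation between the $(-1)^n$ from the sphere-bubbling term and the $(-1)^n$ built into the definition of $\circledast$. The argument is otherwise formally parallel to the proofs of Propositions \ref{algmap}, \ref{mod}, and \ref{alg}: apply the relevant structure equation, kill unit-insertion terms using Proposition \ref{gunit}, and translate the resulting cohomological identity.
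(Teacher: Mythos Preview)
Your approach is correct and matches the paper's proof exactly: apply Proposition \ref{gzstruc} with $k=0$, $l=2$, kill the unit-insertion term via Proposition \ref{gunit}, drop the exact term, and read off the cohomological identity between the $\iota_0$- and $\iota_3$-terms.

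One caveat on the signs: the paper's own expansion of the $\iota_0$-term carries an additional factor $(-1)^{|\eta_1||\eta_2|}$ (coming from $\sigma^\eta_{J_1,J_2}$ rather than $\sigma^\eta_{J_2,J_1}$ in the definition of $\iota_0$), and with it the identity reads $y_2\circledast\co(y_1)=\co(y_2*y_1)$, which is the module-homomorphism property on the nose. Without that factor your identity $y_2\circledast\co(y_1)=\co(y_1*y_2)$ differs from the correct statement by exactly $(-1)^{|y_1||y_2|}$, so your closing ``up to a Koszul sign'' is not innocuous; recheck $\iota_0$ against the paper's expansion.
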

\begin{proof}
By Proposition \ref{gzstruc} with $k=0$, $l=2$, we have
\begin{align*}
    0
    &=\mathfrak{q}^{\gamma,b}_{0,2;0}(d\eta_{1},\eta_{2})\\\
    &+(-1)^{|\eta_{1}|}\mathfrak{q}^{\gamma,b}_{0,2;0}(\eta_{1},d\eta_{2})\\
    &+(-1)^{|\eta_{1}|+|\eta_{2}|}\mathfrak{q}^{\gamma,b}_{1,2;0}(\zl;\eta_{1},\eta_{2})\\
    &-\dl(\mathfrak{q}^{\gamma,b}_{0,2;0}(\eta_{1},\eta_{2}))\\
    &+(-1)^{|\eta_{1}|+|\eta_{2}|+|\eta_{1}||\eta_{2}|}\xml(\xtl(\eta_{1});\eta_{2})\\
    &+(-1)^{|\eta_{1}|+|\eta_{2}|+n+1}\xtl(\xmx(\eta_{1},\eta_{2})).
\end{align*}
Since $\zl=c\cdot1$, Proposition \ref{gunit} then yields
\begin{align*}
    0&=(-1)^{|y_{1}|+|y_{2}|+|y_{1}||y_{2}|+n}y_{2}\circledast\co(y_{1})\\&+(-1)^{|y_{1}|+|y_{2}|+n+1}\co(y_{1}*y_{2}),
\end{align*}
that is,
\begin{align*}
    y_{2}\circledast\co(y_{1})=\co(y_{2}*y_{1}).
\end{align*}
\end{proof}
\begin{prop}\label{COrng}
The map $\co$ is a ring homomorphism.
\end{prop}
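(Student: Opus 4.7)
The plan is to reduce the claim to a formal manipulation using Proposition \ref{COmod} (which makes $\co$ a module homomorphism with respect to $\circledast$) and Proposition \ref{alg} (which gives the compatibility $y\circledast(a_{1}\circ a_{2})=(y\circledast a_{1})\circ a_{2}$). The crucial intermediate step is to verify that $\co$ preserves units, i.e.\ $\co(1)=1\in\qhl$.

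First, I would compute $\xtl(1)=\mathfrak{q}_{0,1}^{\gamma,b}(1)$ directly from the definition in Section \ref{defop}. Expanding the deformed operator, every summand has the form $\mathfrak{q}_{s_{0},1+t}^{\beta}(b^{\otimes s_{0}};1\otimes\gamma^{\otimes t})/t!$ and contains $1$ as an interior entry. By Proposition \ref{funclass} (fundamental class), all such summands vanish except the one with $(\beta,s_{0},t)=(\beta_{0},0,0)$, which contributes $\mathfrak{q}_{0,1}^{\beta_{0}}(1)=-1$. Hence $\xtl(1)=-1$, so $\co(1)=[-\xtl(1)]=[1]=1$.

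Second, applying Proposition \ref{COmod} with $y_{1}=1$ yields
\[
y\circledast 1\;=\;y\circledast\co(1)\;=\;\co(y*1)\;=\;\co(y),
\]
so the action of $y\in\qhx$ on the unit of $\qhl$ recovers $\co(y)$. Combining this identity with Proposition \ref{alg}, the unit property of $\circ$, and one more application of Proposition \ref{COmod} gives
\[
\co(y_{1})\circ\co(y_{2})\;=\;(y_{1}\circledast 1)\circ\co(y_{2})\;=\;y_{1}\circledast(1\circ\co(y_{2}))\;=\;y_{1}\circledast\co(y_{2})\;=\;\co(y_{1}*y_{2}),
\]
which is exactly the ring homomorphism property.

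Since every ingredient has already been established in the preceding sections, I do not expect a serious technical obstacle. The only point requiring direct verification is the identity $\co(1)=1$, which reduces to a fundamental class computation via Proposition \ref{funclass}; the rest is a purely formal consequence of the module structure on $\qhl$ over $\qhx$ and its compatibility with $\circ$.
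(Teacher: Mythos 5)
Your proof is correct, but it takes a genuinely different route from the paper's. The paper proves Proposition \ref{COrng} geometrically: it applies the structure equation for the horocyclic moduli space $\Mh$ (Proposition \ref{hstruc} with $k=0$, $l=2$), kills the $\zl=c\cdot1$ term with Proposition \ref{horunit}, and reads off a chain-level identity whose cohomological shadow is $\co(y_{2}*y_{1})=(-1)^{|y_{1}||y_{2}|}\co(y_{1})\circ\co(y_{2})$; graded commutativity of $*$ then finishes the argument. You instead derive multiplicativity purely formally from facts already established via the geodesic moduli spaces: the unit computation $\co(1)=1$ (which is a correct application of Proposition \ref{funclass} to the deformed operator, since every summand has the constant $1$ as an interior input), the consequence $y\circledast1=y\circledast\co(1)=\co(y)$ of Proposition \ref{COmod}, and the compatibility $(y\circledast a_{1})\circ a_{2}=y\circledast(a_{1}\circ a_{2})$ of Proposition \ref{alg}. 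All three inputs are proved in the paper independently of Proposition \ref{COrng}, so there is no circularity. What your route buys is that the horocyclic machinery of Section \ref{horop} is not needed for this statement at all; note also that your key identity $y\circledast1=\co(y)$ together with Proposition \ref{alg} yields $\co(y)\circ a=(y\circledast1)\circ a=y\circledast(1\circ a)=y\circledast a$, i.e., Theorem \ref{compare} for free, consistent with the paper's own remark after Proposition \ref{compare2} that these statements are interrelated. What the paper's route buys is an explicit cochain-level homotopy (the operator $\mathfrak{q}^{\gamma,b}_{0,2;\perp}$) witnessing the multiplicativity, and the space $\Mh$ is in any case the input for the Hochschild-level closed-open map of Hugtenburg to which the paper's $\co$ is meant to extend.
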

\begin{proof}
By Proposition \ref{hstruc} with $k=0$, $l=2$, we have
\begin{align*}
    0
    &=-\mathfrak{q}^{\gamma,b}_{0,2;\perp}(d\eta_{1},\eta_{2})\\
    &+(-1)^{|\eta_{1}|+1}\mathfrak{q}^{\gamma,b}_{0,2;\perp}(\eta_{1},d\eta_{2})\\
    &+\xtl(\xmx(\eta_{1},\eta_{2}))\\
    &-\dl(\mathfrak{q}^{\gamma,b}_{0,2;\perp}(\eta_{1},\eta_{2}))\\
    &+(-1)^{|\eta_{1}|+|\eta_{2}|}\mathfrak{q}^{\gamma,b}_{1,2;\perp}(\zl;\eta_{1},\eta_{2})\\
    &+(-1)^{|\eta_{1}||\eta_{2}|+|\eta_{2}|}\lml(\xtl(\eta_{2}),\xtl(\eta_{1})).
\end{align*}
Since $\zl=c\cdot1$, Proposition \ref{horunit} then gives
\begin{align*}
    0&=-\co(y_{2}*y_{1})\\&+(-1)^{|y_{2}||y_{1}|}\co(y_{1})\circ\co(y_{2}),   
\end{align*}
that is,
\begin{align*}
    \co(y_{1}*y_{2})&=(-1)^{|y_{1}||y_{2}|}\co(y_{2}*y_{1})\\&=\co(y_{1})\circ\co(y_{2}).
\end{align*}
\end{proof}
\begin{prop}\label{COcom}
We have $\co(y)\circ a=(-1)^{|y||a|}a\circ\co(y)$.
\end{prop}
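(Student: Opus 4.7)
My plan is to apply the structure equations of Propositions \ref{lstruc} and \ref{rstruc} with $k=1$, $l=1$, $m=1$, and add them together. The ``geodesic'' terms $\pm(-1)^{\iota_{7}(\eta_{1})}\mathfrak{q}^{\gamma,b}_{1,1;1}(\alpha_{1};\eta_{1})$ appear with opposite signs in the two equations and cancel, leaving only bubbling and differential contributions. This avoids any appeal to $\circledast$ or to Theorem \ref{compare}, which has not yet been proved at this point in the paper.

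To enumerate the surviving bubbles in each structure equation, I observe that every term in which the insertion $\zl = c\cdot 1$ (the contribution from the trivial bubble $\mathfrak{q}^{\gamma,b}_{0,0}$) lands in a non-$m$ position of a one-sided operator vanishes by Proposition \ref{sunit}. This kills every Case~1 contribution with $(k_{1},k_{2}) = (2,0)$ on both sides. What remains on each side are: Case~1 with $(k_{1},k_{2},i) = (1,1,1)$, giving $(-1)^{|\eta_{1}|}\mathfrak{q}^{\gamma,b}_{1,1;\pm,1}(\dl\alpha_{1};\eta_{1})$; Case~2 with the same indices, giving $\dl\mathfrak{q}^{\gamma,b}_{1,1;\pm,1}(\alpha_{1};\eta_{1})$; and the Case~3 bubble with $(k_{1},k_{2},i) = (2,0,2)$ for $\Ml$ and $(k_{1},k_{2},i) = (2,0,1)$ for $\Mr$, producing $\lml(\alpha_{1},\xtl(\eta_{1}))$ and $\lml(\xtl(\eta_{1}),\alpha_{1})$ respectively (up to sign).

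Specializing to closed cochains $\alpha_{1},\eta_{1}$ representing $a \in \qhl$ and $y \in \qhx$, and passing to cohomology (so that the $d\eta_{1}$, $\dl\alpha_{1}$ and $\dl(\cdots)$ terms vanish), the sum of the two structure equations reduces to a relation of the form
\begin{align*}
    0 = c_{1}\bigl[\lml(\alpha_{1},\xtl(\eta_{1}))\bigr] + c_{2}\bigl[\lml(\xtl(\eta_{1}),\alpha_{1})\bigr]
\end{align*}
for explicit signs $c_{1},c_{2} \in \{\pm 1\}$. Using $\co(y) = -[\xtl(\eta_{1})]$ from Proposition \ref{cochain}, the convention $a_{1}\circ a_{2} = (-1)^{|\alpha_{1}|}[\lml(\alpha_{1},\alpha_{2})]$ from Proposition \ref{lml}, and $|\co(y)| = |y|$ from Proposition \ref{COdeg}, a short sign chase turns this into the desired identity $\co(y)\circ a = (-1)^{|y||a|}a\circ\co(y)$.

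The main obstacle is purely bookkeeping. I must verify that the above enumeration is exhaustive, in particular that Case~3 contributes exactly one bubble on each side and that no contribution coming from non-trivial disk bubbles of positive Maslov index has been overlooked, and then track the signs $\iota(\alpha_{1},\eta_{1};i,J_{1})$ and $\iota_{7}(\eta_{1})$ together with the conventional signs hidden in the definitions of $\co$ and $\circ$. Since the two surviving boundary terms must combine with the precise sign $(-1)^{|y||a|}$, any slip in one of these intermediate signs would be fatal, so this step has to be executed with care.
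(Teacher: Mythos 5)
Your argument is correct --- the enumeration of surviving boundary terms is exhaustive and the signs do combine as claimed (the two Case~3 bubbles carry signs $(-1)^{(|\eta|+1)(|\alpha|+1)}$ and $+1$, which after unwinding the conventions for $\co$ and $\circ$ give exactly $\co(y)\circ a=(-1)^{|y||a|}a\circ\co(y)$) --- but it takes a detour the paper does not. The paper's proof simply applies the plain structure equation for $\M$ (Proposition \ref{struc}) with $k=l=1$: the two disk-bubble terms $\lml(\alpha,\xtl(\eta))$ and $\lml(\xtl(\eta),\alpha)$ already appear there with precisely the signs you end up with, the $\zl$-insertions die by Proposition \ref{unit}, and the result follows on cohomology. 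Your route through Propositions \ref{lstruc} and \ref{rstruc} works because summing the two one-sided structure equations cancels the geodesic boundary components and effectively reassembles $\partial\M$ from $\partial\Ml+\partial\Mr$, so you are re-deriving the relevant instance of \ref{struc} by hand; this costs you the heavier machinery of the one-sided moduli spaces (and the deformed versions of \ref{sunit}, \ref{lstruc}, \ref{rstruc}) for no gain. Your stated motivation --- avoiding circularity with $\circledast$ and Theorem \ref{compare} --- is already achieved by the direct use of \ref{struc}, which involves neither; so while nothing in your argument fails, the simpler proof was available from Section \ref{op} alone.
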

\begin{proof}
By Proposition \ref{struc} with $k=l=1$, we have
\begin{align*}
    0
    &=-\mathfrak{q}^{\gamma,b}_{1,1}(\alpha;d\eta)\\
    &+\dl(\mathfrak{q}^{\gamma,b}_{1,1}(\alpha;\eta))\\
    &+(-1)^{|\eta|}\mathfrak{q}^{\gamma,b}_{1,1}(\dl(\alpha);\eta)\\
    &+(-1)^{|\eta||\alpha|+|\eta|+|\alpha|+1}\lml(\alpha,\xtl(\eta))\\
    &+\lml(\xtl(\eta),\alpha)\\
    &+(-1)^{|\eta|}\mathfrak{q}^{\gamma,b}_{2,1}(\zl,\alpha;\eta)\\
    &+(-1)^{|\eta|+|\alpha|+1}\mathfrak{q}^{\gamma,b}_{2,1}(\alpha,\zl;\eta).
\end{align*}
Since $\zl=c\cdot1$, Proposition \ref{unit} then gives
\begin{align*}
    0
    &+(-1)^{|y||a|+|y|}a\circ\co(y)\\
    &+(-1)^{|y|+1}\co(y)\circ a,
\end{align*}
that is,
\begin{align*}
    \co(y)\circ a=(-1)^{|y||a|}a\circ\co(y).
\end{align*}
\end{proof}
\begin{prop}
The map
\begin{align*}
    \qhx\otimes\qhl&\rightarrow\qhl\\
    (y,a)&\mapsto\co(y)\circ a
\end{align*}
makes $\qhl$ a unital graded algebra over $\qhx$.
\end{prop}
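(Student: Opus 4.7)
The plan is to observe that this is a formal consequence of the ring-theoretic properties of $\co$ and $\circ$ already proved in this subsection and in Section \ref{Lcohomology}. An algebra over the graded-commutative ring $\qhx$ is, by definition, a graded ring $A$ equipped with a unital graded ring homomorphism $\qhx \to Z(A)$ into the graded center. I would take $A = \qhl$ (which is a unital graded ring by Propositions \ref{lml}, \ref{degprod}, \ref{QHLasc} and the unit statement that follows) and show that $\co$ lands in the graded center and is a unital ring homomorphism, from which the algebra structure $y \cdot a := \co(y) \circ a$ is automatic.

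The first step I would carry out is to verify that $\co(1) = 1$. Since $\co = -\xtl = -\mathfrak{q}^{\gamma,b}_{0,1}$, expanding the definition of the deformed operator gives
\begin{equation*}
    \co(1) = -\sum_{\beta \in \Pi} \sum_{t \geq 0} \frac{T^{\beta}}{t!}\, \mathfrak{q}^{\beta}_{0,1+t}(1 \otimes \gamma^{\otimes t}).
\end{equation*}
Every summand on the right has an input equal to $1$, so by Proposition \ref{funclass} all terms vanish except $\mathfrak{q}^{\beta_{0}}_{0,1}(1) = -1$, giving $\co(1) = 1$. Combined with Proposition \ref{COrng}, this shows $\co$ is a unital ring homomorphism.

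Next I would check the three compatibility conditions. The unit axiom reads $1 \cdot a = \co(1) \circ a = 1 \circ a = a$. For the action axiom, combining Proposition \ref{COrng} with the associativity of $\circ$ (Proposition \ref{QHLasc}) yields
\begin{equation*}
    (y_{1} * y_{2}) \cdot a = \co(y_{1} * y_{2}) \circ a = \bigl(\co(y_{1}) \circ \co(y_{2})\bigr) \circ a = \co(y_{1}) \circ \bigl(\co(y_{2}) \circ a\bigr) = y_{1} \cdot (y_{2} \cdot a).
\end{equation*}
For bilinearity of $\circ$ over $\qhx$, associativity of $\circ$ gives $y \cdot (a_{1} \circ a_{2}) = (y \cdot a_{1}) \circ a_{2}$, while Proposition \ref{COcom} together with associativity gives $a_{1} \circ (y \cdot a_{2}) = a_{1} \circ \co(y) \circ a_{2} = (-1)^{|y||a_{1}|} \co(y) \circ a_{1} \circ a_{2} = (-1)^{|y||a_{1}|}\, y \cdot (a_{1} \circ a_{2})$. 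Finally, $|y \cdot a| = |\co(y)| + |a| = |y| + |a|$ by Propositions \ref{COdeg} and \ref{degprod}, so the action has degree $0$ and respects the grading.

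There is no hard step here: the only non-formal computation is $\co(1) = 1$, which is a direct application of the fundamental class property of $\qop$. Everything else is bookkeeping once one has in hand the ring homomorphism property (Proposition \ref{COrng}) and the centrality property (Proposition \ref{COcom}), both of which were the real content of this subsection.
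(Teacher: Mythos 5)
Your proposal is correct and follows essentially the same route as the paper, whose proof is the one-line citation of Propositions \ref{funclass} (which you use to get $\co(1)=1$), \ref{COrng}, \ref{QHLasc}, and \ref{COcom}; you have simply spelled out the bookkeeping. The only cosmetic slip is that your displayed expansion of $\co(1)$ omits the $b^{\otimes s_{0}}$ insertions of the deformed operator $\mathfrak{q}^{\gamma,b}_{0,1}$, but this is harmless since Proposition \ref{funclass} kills every term with $(\beta,k,l)\neq(\beta_{0},0,1)$ regardless.
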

\begin{proof}
This follows from propositions \ref{funclass}, \ref{COrng}, \ref{QHLasc}, and \ref{COcom}.
\end{proof}

\subsubsection{The open-closed map}

Define
\begin{align*}
    \mathfrak{p}^{\beta,\gamma,b}_{1,0}:C\rightarrow\overline{E}
\end{align*}
by
\begin{align*}
    \mathfrak{p}^{\beta,\gamma,b}_{1,0}(\alpha)(\eta)=\langle\mathfrak{q}^{\beta,\gamma,b}_{0,1}(\eta),\alpha\rangle_{L}
\end{align*}
and set
\begin{align*}
    \mathfrak{p}^{\gamma,b}_{1,0}=\sum_{\beta\in\Pi}T^{\beta}\mathfrak{p}^{\beta,\gamma,b}_{1,0},
\end{align*}
so that
\begin{align*}
    \mathfrak{p}^{\gamma,b}_{1,0}(\alpha)(\eta)=\langle\mathfrak{q}^{\gamma,b}_{0,1}(\eta),\alpha\rangle_{L}.
\end{align*}
\begin{rmk}
The use of currents in the definition of $\mathfrak{p}^{\beta,\gamma,b}_{1,0}$ is required to push differential forms forward along maps which are not necessarily submersions. To demonstrate, note that
\begin{align*}
    \mathfrak{p}^{\beta,0,0}_{1,0}(\alpha)(\eta)&=-pt_{*}\big((\evbz)_{*}(evi^{\beta}_{1})^{*}\eta\wedge\alpha\big)\\
    &=(-1)^{|\alpha|\cdot\rdim\evbz+1}pt_{*}(\evbz)_{*}\big((evi^{\beta}_{1})^{*}\eta\wedge(\evbz)^{*}\alpha\big)\\
    &=(-1)^{|\alpha|\cdot n+1}pt_{*}\big((evi^{\beta}_{1})^{*}\eta\wedge(\evbz)^{*}\alpha\big)\\
    &=(-1)^{|\alpha|\cdot n+1}\hat{\varphi}((\evbz)^{*}\alpha)((evi^{\beta}_{1})^{*}\eta)\\
    &=(-1)^{|\alpha|\cdot n+1}\big((evi^{\beta}_{1})_{*}\hat{\varphi}((\evbz)^{*}\alpha)\big)(\eta),
\end{align*}
where the map $evi^{\beta}_{1}$ is not necessarily a submersion.
\end{rmk}
\begin{prop}
$\ltcx$ is of degree $n$.
\end{prop}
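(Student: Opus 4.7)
The plan is to unravel the definition of $\ltcx$ and track the degree contributions from each piece. First I would compute the degree of $\mathfrak{q}^{\gamma,b}_{0,1}(\eta)$. By expanding the deformed operator as
\begin{align*}
    \mathfrak{q}^{\gamma,b}_{0,1}(\eta) = \sum_{t,s_0\geq 0} \mathfrak{q}_{s_0,1+t}(b^{\otimes s_0}; \eta \otimes \gamma^{\otimes t}/t!)
\end{align*}
and applying Proposition \ref{degree} (which gives $|\mathfrak{q}_{s_0,1+t}| = -s_0 - 2t$) together with $|b|=1$ and $|\gamma|=2$, each term has output degree $s_0 + |\eta| + 2t + (-s_0 - 2t) = |\eta|$. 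Hence $|\mathfrak{q}^{\gamma,b}_{0,1}(\eta)| = |\eta|$.

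Next, since $\langle \cdot, \cdot\rangle_L = \int_L (\cdot)\wedge(\cdot)$, the Poincaré pairing on $L$ vanishes unless the total degree equals $n = \dim L$. Thus $\ltcx(\alpha)(\eta) = \langle \mathfrak{q}^{\gamma,b}_{0,1}(\eta), \alpha\rangle_L$ is nonzero only when $|\eta| + |\alpha| = n$, i.e. $\ltcx(\alpha)$ pairs nontrivially only with forms of degree $|\eta| = n - |\alpha|$.

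Finally, by definition (Section \ref{orientint}) a current of degree $k$ on $X$ is an element of the continuous dual of $A^{\dim X - k}(X) = A^{2n-k}(X)$. So $\ltcx(\alpha) \in \overline{E}$ being supported on test forms of degree $n - |\alpha|$ forces
\begin{align*}
    |\ltcx(\alpha)| = 2n - (n - |\alpha|) = n + |\alpha|,
\end{align*}
which means $\ltcx$ raises degree by $n$, as claimed. There is no real obstacle here; the only subtlety is remembering the degree shift convention between forms and currents and the fact that the Novikov variable $T^\beta$ contributes $\mu(\beta)$ to the degree, which exactly cancels the drop in $\mathfrak{q}^\beta_{0,1}$ so that the total degree of $\mathfrak{q}^{\gamma,b}_{0,1}(\eta)$ equals $|\eta|$.
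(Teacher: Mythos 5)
Your proof is correct and follows essentially the same route as the paper: both arguments combine the degree of $\mathfrak{q}_{0,1}$, the fact that the Poincar\'e pairing on $L$ forces complementary form degrees, and the dualization convention $\overline{A}^{k}(X)=(A^{2n-k}(X))^{\vee}$. The only cosmetic difference is that you work with the full deformed operator $\mathfrak{q}^{\gamma,b}_{0,1}$ at once, whereas the paper argues for each $\mathfrak{q}^{\beta,\gamma,b}_{1,0}$ separately and lets the $T^{\beta}$ factor restore the $\mu(\beta)$ at the end.
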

\begin{proof}
Take $\alpha\in A^{*}(L)$ and $\eta\in A^{*}(X)$. If $\mathfrak{p}^{\beta,\gamma,b}_{1,0}(\alpha)(\eta)\neq0$ we obtain
\begin{align*}
    n&=|\alpha|+|\mathfrak{q}^{\beta,\gamma,b}_{0,1}(\eta)|\\&=|\alpha|+|\eta|-\mu(\beta),
\end{align*}
so that
\begin{align*}
    |\mathfrak{p}^{\beta,\gamma,b}_{1,0}(\alpha)|&=2n-|\eta|\\&=2n-(n-|\alpha|+\mu(\beta))\\&=|\alpha|+n-\mu(\beta).
\end{align*}
\end{proof}
\begin{lem}\label{occhainlem}
We have
\begin{align*}
    \langle\alpha_{1},\dl(\alpha_{2})\rangle_{L}=(-1)^{|\alpha_{1}|+1}\langle\dl(\alpha_{1}),\alpha_{2}\rangle_{L}.\\
\end{align*}
\end{lem}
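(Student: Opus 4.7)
The strategy is to integrate an $A_{\infty}$ relation against the Poincar\'e pairing on $L$. First, I would apply Proposition \ref{struc} with $k=2$, $l=0$ in its deformed form, and use Proposition \ref{unit} together with $\zl=c\cdot 1$ to discard the terms that contain the unit in a non-distinguished slot. This produces the chain-level identity
\begin{equation*}
\dl(\lml(\alpha_{1},\alpha_{2})) = -\lml(\dl\alpha_{1},\alpha_{2}) + (-1)^{|\alpha_{1}|}\lml(\alpha_{1},\dl\alpha_{2}),
\end{equation*}
which is exactly the computation already carried out in the proof of Proposition \ref{lml}.

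Next, I would integrate both sides over $L$. By Proposition \ref{defprop} (equivalently Remark \ref{deftopint}), the deformed version of Proposition \ref{topint} gives $\int_{L}\dl(\psi)=0$ for every $\psi\in C$, since the case $(k,l)=(1,0)$ falls under the ``otherwise'' line; hence the left-hand side integrates to zero. For the right-hand side, the same statement yields $\int_{L}\lml(\xi_{1},\xi_{2})=(-1)^{|\xi_{1}|}\langle\xi_{1},\xi_{2}\rangle_{L}$. Substituting, and using $|\dl\alpha_{1}|=|\alpha_{1}|+1$, produces
\begin{equation*}
0 = -(-1)^{|\alpha_{1}|+1}\langle\dl\alpha_{1},\alpha_{2}\rangle_{L} + (-1)^{2|\alpha_{1}|}\langle\alpha_{1},\dl\alpha_{2}\rangle_{L},
\end{equation*}
and rearranging gives the stated identity.

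Conceptually, the lemma is the formal self-adjointness of $\dl$ (up to a sign) with respect to the Poincar\'e pairing on $L$, a direct consequence of the $A_{\infty}$ relation for $\lml$ and the compatibility of integration over $L$ with the deformed $\mathfrak{q}$-operators. No new moduli-space input is required; the only step that deserves care is the sign bookkeeping, and I do not expect any serious obstacle beyond that.
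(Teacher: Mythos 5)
Your proposal is correct and follows essentially the same route as the paper: apply the deformed Proposition \ref{struc} with $k=2$, $l=0$, discard the unit terms via Proposition \ref{unit}, and integrate over $L$ using Proposition \ref{defprop} (so that $\int_{L}\dl(\cdot)=0$ and $\int_{L}\lml(\xi_{1},\xi_{2})=(-1)^{|\xi_{1}|}\langle\xi_{1},\xi_{2}\rangle_{L}$). The sign bookkeeping checks out.
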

\begin{proof}
By Proposition \ref{struc} with $k=2$, $l=0$, we have
\begin{align*}
    0=\dl(\lml(\alpha_{1},\alpha_{2}))+\lml(\dl(\alpha_{1}),\alpha_{2})+(-1)^{|\alpha_{1}|+1}\lml(\alpha_{1},\dl(\alpha_{2})).
\end{align*}
Proposition \ref{defprop} then gives
\begin{align*}
    \langle\alpha_{1},\dl(\alpha_{2})\rangle_{L}
    &=(-1)^{|\alpha_{1}|}\int_{L}\lml(\alpha_{1},\dl(\alpha_{2}))\\
    &=\int_{L}\lml(\dl(\alpha_{1}),\alpha_{2})+\int_{L}\dl(\lml(\alpha_{1},\alpha_{2}))\\
    &=(-1)^{|\alpha_{1}|+1}\langle\dl(\alpha_{1}),\alpha_{2}\rangle_{L}.
\end{align*}
\end{proof}
\begin{prop}
The map
\begin{align*}
    -\ltcx:C\rightarrow\overline{E}
\end{align*}
is a chain map, and thus descends to a map 
\begin{align*}
    \occ:\qhl\rightarrow\qhcx
\end{align*}
given by
\begin{align*}
    \occ(a)(y)=\langle\co(y),a\rangle_{L}.
\end{align*}
\end{prop}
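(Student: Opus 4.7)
The plan is to reduce the chain map property to two ingredients already established: Proposition~\ref{cochain}, which gives $\dl\circ\xtl=\xtl\circ d$, and Lemma~\ref{occhainlem}, which says the Poincar\'e pairing on $L$ is almost-skew-symmetric for $\dl$. First I would verify that $\ltcx(\alpha)$ really is a well-defined current of degree $|\alpha|+n$: continuity follows since pullback, pushforward along proper submersions, and integration over $L$ are all continuous, and the degree count is immediate from $\xtl$ being degree-zero.

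For the chain map property, unwinding the definition of $d$ on currents from Section~\ref{orientint} gives, for $\eta\in E$,
\begin{align*}
d(\ltcx(\alpha))(\eta)
&=(-1)^{|\eta|+1}\ltcx(\alpha)(d\eta)\\
&=(-1)^{|\eta|+1}\langle\xtl(d\eta),\alpha\rangle_L\\
&=(-1)^{|\eta|+1}\langle\dl(\xtl(\eta)),\alpha\rangle_L,
\end{align*}
where the last equality is Proposition~\ref{cochain}. Applying Lemma~\ref{occhainlem} with $\alpha_1=\xtl(\eta)$ (of degree $|\eta|$) and $\alpha_2=\alpha$ then moves $\dl$ to the right factor at the cost of a sign $(-1)^{|\eta|+1}$, and the two signs cancel to yield $\langle\xtl(\eta),\dl(\alpha)\rangle_L=\ltcx(\dl(\alpha))(\eta)$. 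Hence $d\circ\ltcx=\ltcx\circ\dl$, so $-\ltcx$ commutes with the differentials and descends to a map $\occ\colon\qhl\to\qhcx$.

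Once the descent is in place, the formula is immediate: $\co$ is represented at the cochain level by $-\xtl$, so by definition
\begin{align*}
\occ(a)(y)=-\ltcx(\alpha)(\eta)=\langle-\xtl(\eta),\alpha\rangle_L=\langle\co(y),a\rangle_L,
\end{align*}
where the final equality uses that the cochain-level Poincar\'e pairing descends faithfully to $\qhl$ (as follows from Remark~\ref{QHLint} together with Proposition~\ref{defprop}, which identifies $\int_L a_1\circ a_2$ with $\langle\alpha_1,\alpha_2\rangle_L$). I do not anticipate any real obstacle; the proof is essentially a two-line sign computation, with the only point of care being the sign convention in the definition of $d$ on currents, whose $(-1)^{|\eta|+1}$ is precisely what matches the sign $(-1)^{|\alpha_1|+1}$ in Lemma~\ref{occhainlem}.
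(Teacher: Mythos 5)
Your proof is correct and is essentially the paper's own argument run in the reverse direction: the paper starts from $\ltcx(\dl(\alpha))(\eta)$ and ends at $d\ltcx(\alpha)(\eta)$, using exactly the same ingredients (Lemma \ref{occhainlem}, Proposition \ref{cochain}, Proposition \ref{COdeg} for the degree of $\xtl(\eta)$, and the sign convention for $d$ on currents). The sign bookkeeping matches, so no further comment is needed.
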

\begin{proof}
By Lemma \ref{occhainlem} together with propositions \ref{COdeg} and \ref{cochain}, we have
\begin{align*}
    \ltcx(\dl(\alpha))(\eta)&=\langle\xtl(\eta),\dl(\alpha)\rangle_{L}\\
    &=(-1)^{|\eta|+1}\langle\dl(\xtl(\eta)),\alpha\rangle_{L}\\
    &=(-1)^{|\eta|+1}\langle\xtl(d\eta),\alpha\rangle_{L}\\
    &=(-1)^{|\eta|+1}\ltcx(\alpha)(d\eta)\\
    &=d\ltcx(\alpha)(\eta)
\end{align*}
\end{proof}
Recall that by Proposition \ref{qcuriso} we have a module isomorphism
\begin{align*}
    \hat{\varphi}:\qhx\rightarrow\qhcx.
\end{align*}
\begin{defn}\label{ocdef}
Define
\begin{align*}
    \oc:\qhl\rightarrow\qhx
\end{align*}
by
\begin{align*}
    \oc=\hat{\varphi}^{-1}\circ\occ.
\end{align*}
\end{defn}
\begin{rmk}\label{OCchar}
Note that $\oc$ is characterized by the relation
\begin{align*}
    \langle y,\oc(a)\rangle_{X}=\langle\co(y),a\rangle_{L}
\end{align*}
\end{rmk}
\begin{prop}\label{OCmod}
The map $\oc$ is a module homomorphism.
\end{prop}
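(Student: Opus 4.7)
The strategy is to prove $y*\oc(a)=\oc(\co(y)\circ a)$ by pairing both sides against an arbitrary test element $y'\in\qhx$ and invoking non-degeneracy of the Poincaré pairing on $\qhx$. Since $X$ is closed and oriented, the Poincaré pairing on $H^{*}(X)$ is non-degenerate, and this remains true after extending scalars from $\mathbb{R}$ to the Novikov ring $R$; concretely, a dual-basis argument shows that if $\langle y',z\rangle_{X}=0$ for every $y'\in\qhx$ then $z=0$. Thus it suffices to verify
\begin{align*}
\langle y',y*\oc(a)\rangle_{X}=\langle y',\oc(\co(y)\circ a)\rangle_{X}
\end{align*}
for all $y'\in\qhx$.

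The plan is then to convert each side into a pairing on $L$ via Remark \ref{OCchar} and to use that $\co$ is a ring homomorphism. Concretely, Proposition \ref{pairing} transforms the left-hand side into $\langle y'*y,\oc(a)\rangle_{X}$; Remark \ref{OCchar} rewrites this as $\langle\co(y'*y),a\rangle_{L}$; Proposition \ref{COrng} identifies it with $\langle\co(y')\circ\co(y),a\rangle_{L}$; Proposition \ref{Lpairing} then yields $\langle\co(y'),\co(y)\circ a\rangle_{L}$; and a final application of Remark \ref{OCchar} gives $\langle y',\oc(\co(y)\circ a)\rangle_{X}$. Chaining these equalities and applying non-degeneracy completes the proof. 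A quick degree check confirms that both $y*\oc(a)$ and $\oc(\co(y)\circ a)$ lie in degree $|y|+|a|+n$, consistent with $\oc$ being a degree $n$ map.

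I do not expect a substantive obstacle here; the entire argument is a straightforward chain of previously established identities. The only mild subtlety is the non-degeneracy of the Poincaré pairing over the Novikov ring $R$, which follows routinely from the classical non-degeneracy on the underlying finite-dimensional de Rham cohomology $H^{*}(X)$ together with the fact that $\qhx=H^{*}(X)\otimes R$ as an $R$-module.
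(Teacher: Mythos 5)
Your chain of identities is correct step by step, but it proves the wrong statement. What you have established is
\begin{align*}
    y*\oc(a)=\oc(\co(y)\circ a),
\end{align*}
which is the paper's Proposition \ref{OCCOprod} (and your argument is essentially identical to the paper's proof of that proposition: pair against a test element $y'$, use Proposition \ref{pairing}, Remark \ref{OCchar}, Proposition \ref{COrng}, Proposition \ref{Lpairing}, and Remark \ref{OCchar} again). Proposition \ref{OCmod}, however, asserts that $\oc$ is a homomorphism with respect to the module structure $\circledast$ defined via the geodesic moduli spaces, i.e.\ that $\oc(y\circledast a)=y*\oc(a)$. That this is a genuinely different claim at this point in the paper is visible in the statement of Theorem \ref{maps}, which lists ``$\oc$ is a module homomorphism'' and ``$y*\oc(a)=\oc(\co(y)\circ a)$'' as two separate assertions; they only become interchangeable once Theorem \ref{compare} ($y\circledast a=\co(y)\circ a$) is available, and that theorem is proven later by constructing the one-sided constraint moduli spaces $\Ms$, which is a substantial additional input your argument does not supply.

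The fix stays within the same test-element strategy but routes through $\circledast$ instead of $\co(y)\circ a$: starting from $\langle y_{2},\oc(y_{1}\circledast a)\rangle_{X}=\langle\co(y_{2}),y_{1}\circledast a\rangle_{L}=\int_{L}\co(y_{2})\circ(y_{1}\circledast a)$, one uses the algebra identities of Proposition \ref{alg} to move $y_{1}$ onto $\co(y_{2})$ at the cost of a sign $(-1)^{|y_{1}||y_{2}|}$, then Proposition \ref{COmod} to identify $y_{1}\circledast\co(y_{2})$ with $\co(y_{1}*y_{2})$, and finally Remark \ref{OCchar} and Proposition \ref{pairing} to return to $\langle y_{2},y_{1}*\oc(a)\rangle_{X}$. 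Your remark on non-degeneracy of the Poincaré pairing over $R$ is fine and is implicitly what makes Remark \ref{OCchar} a valid characterization of $\oc$, via the injectivity of $\hat{\varphi}$.
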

\begin{proof}
By remarks \ref{OCchar} and \ref{QHLint} together with propositions \ref{alg}, \ref{COmod}, and \ref{pairing}, we have
\begin{align*}
    \langle y_{2},\oc(y_{1}\circledast a)\rangle_{X}
    &=\langle\co(y_{2}),y_{1}\circledast a\rangle_{L}\\
    &=\int_{L}\co(y_{2})\circ(y_{1}\circledast a)\\
    &=(-1)^{|y_{1}||y_{2}|}\int_{L}(y_{1}\circledast\co(y_{2}))\circ a\\
    &=(-1)^{|y_{1}||y_{2}|}\langle y_{1}\circledast \co(y_{2}),a\rangle_{L}\\
    &=(-1)^{|y_{1}||y_{2}|}\langle\co(y_{1}*y_{2}),a\rangle_{L}\\
    &=\langle\co(y_{2}*y_{1}),a\rangle_{L}\\
    &=\langle y_{2}*y_{1},\oc(a)\rangle_{X}\\
    &=\langle y_{2},y_{1}*\oc(a)\rangle_{X}.
\end{align*}
\end{proof}
\begin{prop}
We have $\oc(1)=\pdl$.
\end{prop}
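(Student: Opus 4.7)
The plan is to use the characterization of $\oc$ given in Remark \ref{OCchar}, namely
\begin{align*}
\langle y,\oc(a)\rangle_{X}=\langle\co(y),a\rangle_{L},
\end{align*}
together with the injectivity of $\hat{\varphi}$ (Proposition \ref{qcuriso}). Since $\oc=\hat{\varphi}^{-1}\circ\occ$, showing $\oc(1)=\pdl$ is equivalent to verifying $\occ(1)=\hat{\varphi}(\pdl)$, which in turn amounts to checking that
\begin{align*}
\langle\co(y),1\rangle_{L}=\langle y,\pdl\rangle_{X}\quad\text{for all }y\in\qhx.
\end{align*}

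The right-hand side unwinds via the defining property of the Poincaré dual: for any closed form representative of $y$, we have $\int_{X}y\wedge\pdl=\int_{L}i^{*}y=\int_{L}y|_{L}$, where $i:L\hookrightarrow X$ is the inclusion. This identity extends to $\qhx$ since Novikov and formal-variable coefficients are central. For the left-hand side, I would compute directly using the definition $\co(y)=-\xtl(y)=-\mathfrak{q}^{\gamma,b}_{0,1}(y)$, so that
\begin{align*}
\langle\co(y),1\rangle_{L}=\int_{L}\co(y)\wedge 1=-\int_{L}\mathfrak{q}^{\gamma,b}_{0,1}(y).
\end{align*}

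The key input at this step is Remark \ref{deftopint}, which guarantees that, because $(\gamma,b)$ is a bounding pair, Proposition \ref{topint} applies verbatim to the deformed operators (without the extra $(\beta_{0},0,0)$ term of Proposition \ref{defprop}). Expanding $\mathfrak{q}^{\gamma,b}_{0,1}$ as a sum over $\beta$, $s\geq0$, $t\geq0$ of $\mathfrak{q}^{\beta}_{s,1+t}(b^{\otimes s};y\otimes\gamma^{\otimes t}/t!)$, Proposition \ref{topint} kills every summand except the one with $(\beta,k,l)=(\beta_{0},0,1)$, which contributes $-\int_{L}y|_{L}$. Hence $\int_{L}\co(y)=\int_{L}y|_{L}$, and combining with the Poincaré-dual identity above yields $\langle\co(y),1\rangle_{L}=\langle y,\pdl\rangle_{X}$, as required.

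I do not anticipate a significant obstacle here; the proof is essentially a bookkeeping exercise once one identifies which characterization of $\oc$ to use. The only subtle point is making sure that the bounding-pair adjustment in Remark \ref{deftopint} is invoked so that no extra $\gamma|_{L}$ contribution appears, and that one correctly tracks the sign convention $\co=-\xtl$ from Proposition \ref{cochain}.
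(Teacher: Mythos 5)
Your proposal is correct and follows essentially the same route as the paper: the paper's proof is exactly the two-step computation $\langle y,\oc(1)\rangle_{X}=\langle\co(y),1\rangle_{L}=\int_{L}y|_{L}$ via Remark \ref{OCchar} and Proposition \ref{topint}, with the identification $\int_{L}y|_{L}=\langle y,\pdl\rangle_{X}$ left implicit. You have merely spelled out the bookkeeping (the expansion of $\mathfrak{q}^{\gamma,b}_{0,1}$ and the nondegeneracy of the pairing) that the paper suppresses.
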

\begin{proof}
Remark \ref{OCchar} and Proposition \ref{topint} yield
\begin{align*}
    \langle y,\oc(1)\rangle_{X}&=\langle\co(y),1\rangle_{L}\\&=\int_{L}y|_{L}.
\end{align*}
\end{proof}
\begin{prop}\label{OCCOprod}
We have $\oc(\co(y)\circ a)=y*\oc(a)$.
\end{prop}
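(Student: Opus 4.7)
The plan is to prove the identity by pairing both sides with an arbitrary $y_{1}\in\qhx$ and invoking non-degeneracy. By Remark \ref{OCchar}, the element $\oc(\co(y)\circ a)$ is characterized among elements of $\qhx$ by the pairings $\langle y_{1},-\rangle_{X}$ for $y_{1}\in\qhx$, since $\hat\varphi$ is an isomorphism (Proposition \ref{qcuriso}) and $\hat\varphi(z)(y_{1})=\langle y_{1},z\rangle_{X}$. So it suffices to establish
\begin{align*}
    \langle y_{1},\oc(\co(y)\circ a)\rangle_{X}=\langle y_{1},y*\oc(a)\rangle_{X}
\end{align*}
for every $y_{1}\in\qhx$.

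The key step is a short chain of rewrites, each one using a previously established identity. Starting from the left-hand side, I would first apply Remark \ref{OCchar} with the argument $\co(y)\circ a$ to move into the Lagrangian pairing, then associate the product on $L$ using Proposition \ref{Lpairing}, recombine the two closed-open images via the ring homomorphism property of $\co$ (Proposition \ref{COrng}), transport back to $X$ via Remark \ref{OCchar} applied to $a$, and finally reassociate the quantum product on $X$ using Proposition \ref{pairing}. Explicitly, the planned calculation is
\begin{align*}
    \langle y_{1},\oc(\co(y)\circ a)\rangle_{X}
    &=\langle\co(y_{1}),\co(y)\circ a\rangle_{L}\\
    &=\langle\co(y_{1})\circ\co(y),a\rangle_{L}\\
    &=\langle\co(y_{1}*y),a\rangle_{L}\\
    &=\langle y_{1}*y,\oc(a)\rangle_{X}\\
    &=\langle y_{1},y*\oc(a)\rangle_{X}.
\end{align*}

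I do not foresee a serious obstacle here: the statement is a formal consequence of the compatibilities already assembled in this section, namely the defining adjunction for $\oc$, the associativity of the Poincaré pairings on $L$ and on $X$ with respect to their respective products, and the multiplicativity of $\co$. The only point that deserves a brief comment is the final non-degeneracy step, which relies on Proposition \ref{qcuriso} to conclude that equality of $\langle y_{1},-\rangle_{X}$ for all $y_{1}$ implies equality of the two elements of $\qhx$.
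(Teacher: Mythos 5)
Your proposal is correct and follows essentially the same chain of equalities as the paper's proof: Remark \ref{OCchar}, then Proposition \ref{Lpairing}, then Proposition \ref{COrng}, then Remark \ref{OCchar} again, then Proposition \ref{pairing}, with non-degeneracy of the pairing (via Proposition \ref{qcuriso}) concluding. The paper leaves the final non-degeneracy step implicit, so your explicit remark on it is a harmless addition rather than a divergence.
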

\begin{proof}
Remark \ref{OCchar} together with propositions \ref{Lpairing}, \ref{COrng} and \ref{pairing} yield
\begin{align*}
    \langle y_{2},\oc(\co(y_{1})\circ a)\rangle_{X}&=\langle\co(y_{2}),\co(y_{1})\circ a\rangle_{L}\\
    &=\langle\co(y_{2})\circ\co(y_{1}),a\rangle_{L}\\
    &=\langle\co(y_{2}*y_{1}),a\rangle_{L}\\
    &=\langle y_{2}*y_{1},\oc(a)\rangle_{L}\\
    &=\langle y_{2},y_{1}*\oc(a)\rangle_{L}.
\end{align*}
\end{proof}
\begin{rmk}\label{occo}
Note that this implies
\begin{align*}
    \oc(\co(y))&=\oc(\co(y)\circ1)\\&=y*\oc(1)\\&=y*\pdl.
\end{align*}
\end{rmk}

\subsection{Comparison of algebra structures}\label{comparison}

This section is devoted to the proof of Theorem \ref{compare}.
\begin{prop}\label{compare2}
We have
\begin{align*}
    y\circledast a&=\co(y)\circ a.
\end{align*}
\end{prop}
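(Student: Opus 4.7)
The plan is to apply the structure equation for the one-sided moduli space $\Mr$ from Proposition \ref{rstruc} with parameters $k = l = m = 1$. In this case the operator $\mathfrak{q}^{\gamma,b}_{1,1;-,1}$ will serve as a chain homotopy whose boundary relates the geodesic operator $\xml$ (the chain-level ingredient of $y \circledast a$) to $\lml(\xtl(\eta),\alpha)$ (the chain-level ingredient of $\co(y) \circ a$), with all other terms vanishing in cohomology by the unit property (Proposition \ref{sunit}) or by closedness.

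First I would enumerate the surviving summands in Proposition \ref{rstruc}. In the first sum (boundary bubbling with $1 \in J_1$), the candidate triples are $(k_1,k_2,i) \in \{(1,1,1),(2,0,1),(2,0,2)\}$; the two triples $(2,0,\cdot)$ insert $\zl = c\cdot 1$ into a slot $i \neq m'(m,i,k_2)$ of a one-sided operator, so vanish by Proposition \ref{sunit}, leaving only $\epsilon_1\,\mathfrak{q}^{\gamma,b}_{1,1;-,1}(\dl\alpha;\eta)$. In the second sum ($1 \in J_2$, $m-k_2 < i \leq m$), the constraints force $(k_1,k_2,i) = (1,1,1)$, yielding $\epsilon_2\,\dl\bigl(\mathfrak{q}^{\gamma,b}_{1,1;-,1}(\alpha;\eta)\bigr)$. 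In the third sum ($1 \in J_2$, $i \leq m-k_2$), the constraints force $(k_1,k_2,i) = (2,0,1)$, yielding $\lml(\xtl(\eta),\alpha)$; one computes $\iota(\alpha,\eta;1,\emptyset) = 2(|\eta|+1) \equiv 0 \pmod 2$, so the sign is $+$. The geodesic boundary contributes $-(-1)^{\iota_7(\eta)}\xml(\alpha;\eta) = (-1)^{|\eta|+n}\xml(\alpha;\eta)$. Altogether, the chain-level identity reads
\begin{align*}
    0 ={}& -\mathfrak{q}^{\gamma,b}_{1,1;-,1}(\alpha;d\eta) + \epsilon_1\,\mathfrak{q}^{\gamma,b}_{1,1;-,1}(\dl\alpha;\eta) + \epsilon_2\,\dl\bigl(\mathfrak{q}^{\gamma,b}_{1,1;-,1}(\alpha;\eta)\bigr) \\
    & + \lml(\xtl(\eta),\alpha) + (-1)^{|\eta|+n}\,\xml(\alpha;\eta).
\end{align*}

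Passing to cohomology with $\alpha,\eta$ closed, the first three terms drop out, leaving $[\lml(\xtl(\eta),\alpha)] = (-1)^{|\eta|+n+1}[\xml(\alpha;\eta)]$ in $\qhl$. By the definitions of $\circ$ (Proposition \ref{lml}), $\co$ (Proposition \ref{cochain}), and $\circledast$ (Proposition \ref{algmap}), this translates as
\[
\co(y) \circ a = (-1)^{|\eta|+1}[\lml(\xtl(\eta),\alpha)] = (-1)^{|\eta|+1}(-1)^{|\eta|+n+1}[\xml(\alpha;\eta)] = (-1)^n[\xml(\alpha;\eta)] = y \circledast a,
\]
as desired. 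The main obstacle will be the sign accounting: the definitions of $\circledast$, $\circ$, and $\co$ each carry independent sign twists, and the several signs in Proposition \ref{rstruc} (namely the $\iota$, $\iota_7$, and the explicit minus sign on the last term) must combine precisely to cancel. As a redundancy check, one can instead apply Proposition \ref{lstruc} for $\Ml$, which yields $\lml(\alpha,\xtl(\eta))$ in place of $\lml(\xtl(\eta),\alpha)$; Proposition \ref{COcom} then bridges the two presentations and should produce the same identity.
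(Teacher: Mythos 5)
Your proposal is correct and follows essentially the same route as the paper: apply Proposition \ref{rstruc} with $k=l=m=1$, kill the $\zl$-insertion terms by Proposition \ref{sunit}, and pass to cohomology so that only $\lml(\xtl(\eta),\alpha)$ and $(-1)^{|\eta|+n}\xml(\alpha;\eta)$ survive. Your term enumeration and final sign bookkeeping agree with the paper's computation, so no changes are needed.
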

\begin{proof}
Proposition \ref{rstruc} with $k=m=l=1$ yields
\begin{align*}
    0&=-\mathfrak{q}^{\gamma,b}_{1,1;-,1}(\alpha;d\eta)\\
    &+(-1)^{|\eta|}\mathfrak{q}^{\gamma,b}_{2,2;-,2}(\zl,\alpha;\eta)\\
    &+(-1)^{|\eta|+|\alpha|+1}\mathfrak{q}^{\gamma,b}_{2,1;-,1}(\alpha,\zl;\eta)\\
    &+(-1)^{|\eta|}\mathfrak{q}^{\gamma,b}_{2,1;-,1}(\dl(\alpha);\eta)\\
    &+\mathfrak{q}^{\gamma,b}_{1,0}(\mathfrak{q}^{\gamma,b}_{1,1;-,1}(\alpha;\eta))\\
    &+\mathfrak{q}^{\gamma,b}_{2,0}(\mathfrak{q}^{\gamma,b}_{0,1}(\eta),\alpha)\\
    &+(-1)^{|\eta|+n}\mathfrak{q}^{\gamma,b}_{1,1;1}(\alpha;\eta).
\end{align*}
Since $\zl=c\cdot1$, applying Proposition \ref{sunit}, in $\qhl$ we get
\begin{align*}
    0=&(-1)^{|y|+1}\co(y)\circ a\\&+(-1)^{|y|}y\circledast a,
\end{align*}
that is,
\begin{align*}
    y\circledast a=\co(y)\circ a.
\end{align*}
\end{proof}
\begin{rmk}
Proposition \ref{compare2} (together with Proposition \ref{QHLasc}) gives another proof of Proposition \ref{alg}. It also gives an equivalence between propositions \ref{mod}, \ref{COmod}, and \ref{COrng}.
\end{rmk}

\section{An example}\label{exm}
Let $X=\Cp^{2}$, and let $L\subset X$ be a monotone Lagrangian satisfying our assumptions with minimal Maslov number $\mu_{\min}=2$. Choose $\Pi=H_{2}(X,L;\mathbb{Z})/\ker\mu\cong2\mathbb{Z}$, and let $\beta\in\Pi$ be the generator with $\mu(\beta)=2$. Choose also $(\gamma,b)=(0,0)$ as in Proposition \ref{BPexist} and $R=\Lambda$. Let $\hat{\beta}=[\Cp^{1}]\in H_{2}(X;\mathbb{Z})$, and set $y=PD(\hat{\beta})\in H^{2}(X)$. Recall that $\qhx$ is generated as a ring by $y$, hence the module structure is determined by the action of $y$. 

\begin{prop}\label{exclif}
Assume that $\hat{\beta}\cdot[L]=0$, where $\cdot$ stands for the intersection number. Then for all $a=[\alpha]\in\qhl$ we have
\begin{align*}
    y\circledast a=[T^{\beta}]\circ a=[T^{\beta}\alpha].
\end{align*}
\end{prop}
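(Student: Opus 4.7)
The plan is to apply Theorem \ref{compare} to reduce the claim to showing $\co(y) = T^\beta \cdot [1]$ in $\qhl$. Indeed, by Proposition \ref{compare2} we have $y \circledast a = \co(y) \circ a$, and since $\circ$ is $R$-bilinear with unit $[1]$, the identification $\co(y) = T^\beta \cdot [1]$ gives $y \circledast a = T^\beta \cdot ([1] \circ a) = T^\beta \cdot a = [T^\beta \alpha]$.

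With $(\gamma, b) = (0, 0)$, Proposition \ref{cochain} gives $\co(y) = -\mathfrak{q}_{0,1}(y) = -\sum_{\beta' \in \Pi} T^{\beta'}\,\mathfrak{q}^{\beta'}_{0,1}(y)$. By Proposition \ref{dim}, $\dim\mathcal{M}_{1,1}(\beta') = \mu(\beta')+2$, so $evb^{\beta'}_0$ has relative dimension $\mu(\beta')$, making $\mathfrak{q}^{\beta'}_{0,1}(y) \in A^{2-\mu(\beta')}(L) \otimes R$. Under the monotonicity assumption $\mu_{\min} = 2$, only $\mu(\beta') \in \{0, 2\}$ contribute non-trivially, and Proposition \ref{ezero} handles the $\beta_0$ term, yielding
\[
\co(y) = y|_L - T^\beta\,\mathfrak{q}^\beta_{0,1}(y),
\]
with $\mathfrak{q}^\beta_{0,1}(y) \in A^0(L) \otimes R$.

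Next I would choose the specific representative $y = \omega_{FS}/\pi$ of $PD(\hat\beta)$ (normalized so $\int_{\hat\beta} y = 1$); since $L$ is Lagrangian, $\omega_{FS}|_L = 0$, hence $y|_L = 0$ as a form. Moreover $\mathfrak{q}^\beta_{0,1}(y)$ is a \emph{constant} function on $L$: for $p \in L$, fiberwise integration gives $\mathfrak{q}^\beta_{0,1}(y)(p) = -\int_{(evb^\beta_0)^{-1}(p)}(evi^\beta_1)^* y = -\sum_u \int_D u^* y$, where $u$ ranges over Maslov-$2$ holomorphic disks through $p$, and each integral $\int_D u^* y = \omega(\beta)/\pi$ depends only on the class. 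Hence $\co(y) = c\, T^\beta \cdot [1]$ in $\qhl$ for some constant $c \in \mathbb{R}$.

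To pin down $c = 1$, I would exploit the ring homomorphism property of $\co$ from Theorem \ref{maps}. Since $\mu(\hat\beta) = 2\,c_1(\mathbb{C}P^2) \cdot \hat\beta = 6$, we have $\varpi(\hat\beta) = 3\beta$ in $\Pi = 2\mathbb{Z}$, and the quantum cohomology relation of $\mathbb{C}P^2$ reads $y^3 = T^{3\beta} \cdot [1]$ in $\qhx$. Applying $\co$ yields $\co(y)^3 = T^{3\beta} \cdot [1]$, and substituting $\co(y) = c T^\beta \cdot [1]$ gives $c^3 = 1$, hence $c = 1$, completing the proof. The main obstacle is the sign reconciliation: verifying that the quantum product in $\qhx$ produces the relation $y^3 = +T^{3\beta}\cdot[1]$ (rather than with a sign from the factor $(-1)^{w_\mathfrak{s}(\hat\beta)}$ appearing in the definition of $\mathfrak{q}^{\hat\beta}_{\emptyset,3}$ in Section \ref{op}), which holds automatically when $L$ carries a spin structure (such as the Clifford torus).
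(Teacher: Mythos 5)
Your overall strategy is the same as the paper's: reduce via Proposition \ref{compare2} to showing $\co(y)=[T^{\beta}]$, observe that $\co(y)=[cT^{\beta}]$ for some real constant $c$, and then pin down $c$ using the relation $y*y*y=[T^{3\beta}]$ together with the ring homomorphism property of $\co$. Your two local variations are harmless or nearly so: choosing the representative $\omega_{FS}/\pi$ kills $y|_{L}$ on the nose (the paper instead corrects by a primitive $\sigma$ of $\eta|_{L}$), and your geometric argument for constancy of $\mathfrak{q}^{\beta}_{0,1}(y)$ is essentially the degree of $\evbz:\mathcal{M}_{1,0}(\beta)\rightarrow L$ times $\omega(\beta)/\pi$ --- though you should justify why the signed count of disks through $p$ is independent of $p$ (it is the degree of a proper submersion from a boundaryless orbifold, boundaryless because $\mu_{\min}=2$ forbids disk bubbling in class $\beta$); the paper gets constancy more cheaply from $\mathfrak{q}_{1,0}(\lambda T^{\beta})=d\lambda\,T^{\beta}=0$.

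The genuine gap is in the last step. From $c^{3}[T^{3\beta}]=[T^{3\beta}]$ in $\qhl$ you conclude $c^{3}=1$, but this inference is only valid if $[T^{3\beta}]\neq0$ in $\qhl$, which you have not established and which can fail (the proposition is stated for an arbitrary monotone $L\subset\Cp^{2}$ with $\mu_{\min}=2$, not only for the Clifford torus, and $\qhl$ may vanish). All you actually get is $(c^{3}-1)[T^{3\beta}]=0$. The paper closes exactly this loophole: if $[T^{3\beta}]=0$, then $T^{3\beta}=\mathfrak{q}^{\gamma,b}_{1,0}(T^{2\beta}\xi)$ for some $\xi\in A^{1}(L)$, hence $T^{\beta}=\mathfrak{q}^{\gamma,b}_{1,0}(\xi)$ and $[T^{\beta}]=0$, so that $\co(y)=c[T^{\beta}]=0=[T^{\beta}]$ and the asserted identity holds anyway. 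You need to add this case analysis (or otherwise prove $[T^{3\beta}]\neq0$) for the argument to be complete. Your remark about the sign $(-1)^{w_{\mathfrak{s}}(\hat{\beta})}$ in $\mathfrak{q}^{\hat{\beta}}_{\emptyset,3}$ is a reasonable point of care, but it is secondary to this missing case.
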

\begin{proof}
By Proposition \ref{compare2}, it suffices to prove that
\begin{align*}
    \co(y)=[T^{\beta}].
\end{align*}
Note that $y=PD(\hat{\beta})\in H^{2}(X)$, therefore
\begin{align*}
    \int_{L}y|_{L}=\int_{X}y\wedge PD([L])=\hat{\beta}\cdot[L]=0,
\end{align*}
so that $y|_{L}=0\in H^{2}(L)$. Writing $y=[\eta]$ for $\eta\in A^{2}(X)$, we get that $\eta|_{L}$ is exact, hence we can write $\eta|_{L}=-d\sigma$ for some $\sigma\in A^{1}(L)$.
\par
By propositions \ref{degree} and \ref{ezero}, we can write
\begin{align*}
    \mathfrak{q}_{1,0}(\sigma)&=-\eta|_{L}+fT^{\beta},\\
    \mathfrak{q}_{0,1}(\eta)&=-\eta|_{L}+gT^{\beta},
\end{align*}
where $f,g\in A^{0}(L)$. We obtain
\begin{align*}
    \co(y)
    =[-\mathfrak{q}_{0,1}(\eta)]
    =[-\mathfrak{q}_{0,1}(\eta)+\mathfrak{q}_{1,0}(\sigma)]
    =[\lambda T^{\beta}],
\end{align*}
where $\lambda=f-g\in A^{0}(L)$. Since $T^{\beta}\lambda$ is closed, by propositions \ref{degree} and \ref{ezero}, we have
\begin{align*}
    0
    =\mathfrak{q}_{1,0}(\lambda T^{\beta})
    =d\lambda T^{\beta},
\end{align*}
thus $\lambda\in\mathbb{R}$.
\par
Recall that the standard computation of $QH^{*}(X)$ shows that $y*y*y=[T^{\varpi(\hat{\beta})}]\in QH^{6}(X)$. Since $\mu(\varpi(\hat{\beta}))=2c_{1}(\hat{\beta})=6=3\mu(\beta)$, it follows from the choice of $\Pi$ that $\varpi(\hat{\beta})=3\beta$. Therefore, by Proposition \ref{COrng}, we have
\begin{align*}
    [T^{3\beta}]
    =\co([T^{3\beta}])
    =\co(y*y*y)
    =\co(y)\circ\co(y)\circ\co(y)
    =[\lambda^{3}T^{3\beta}].
\end{align*}
Therefore
\begin{align*}
    0=[(\lambda^{3}-1)T^{3\beta}]=(\lambda^{3}-1)[T^{3\beta}].
\end{align*}
Thus either $\lambda=1$ as required or $[T^{3\beta}]=0$. In the latter case, note that we may write $T^{3\beta}=\mathfrak{q}_{1,0}(T^{2\beta}\xi)$ for some $\xi\in A^{1}(L)$. Equivalently, $T^{\beta}=\mathfrak{q}_{1,0}(\xi)$, and in particular
\begin{align*}
    \co(y)=\lambda[T^{\beta}]=0=[T^{\beta}].
\end{align*}
\end{proof}
\begin{exmp}
The Clifford torus $L=\mathbb{T}^{2}_{\textup{clif}}$ satisfies the assumptions of Proposition \ref{exclif}. This can be compared to the computation of the Biran-Cornea module structure for this case, which is also given by multiplication by a Novikov coefficient (see \cite{LQH}*{Theorem 2.3.2}).
\end{exmp}

\addcontentsline{toc}{section}{References}
\bibliography{bib}

\end{document}